\documentclass{article}

\usepackage[utf8]{inputenc}
\usepackage[a4paper, top=2.2cm, bottom=2.5cm, left=2cm, right=2cm]
{geometry}
\usepackage{amsmath,amssymb,amsfonts,bm,amscd,mathtools}
\usepackage{graphicx}
\usepackage{caption, subcaption}
\usepackage{comment}
\usepackage{cite}
\usepackage{float}
\usepackage{appendix}
\usepackage{physics}
\usepackage{tcolorbox}
\usepackage{tikz}
\usetikzlibrary{cd}
\usepackage{array}
\usepackage{xcolor}
\usepackage{pst-node}
\usepackage{tikz-cd} 
\usepackage{amsthm}
\usepackage{amsmath}
\usepackage{authblk}

\usepackage{
	amsmath,
 	amsfonts,
  	amssymb,
 	amsthm,
        datetime,
	}

\usepackage{tikz-cd}
\usepackage{tikz}

\tikzset{Rightarrow/.style={double equal sign distance,>={Implies},->},
triple/.style={-,preaction={draw,Rightarrow}}}

\usepackage{rotating}

\usetikzlibrary{decorations.pathmorphing, decorations.pathreplacing,decorations.markings}
 \usetikzlibrary{backgrounds,}
 \usetikzlibrary{shapes}

\usepackage{color}

\usepackage{stmaryrd}
\usepackage[cal=boondoxo]{mathalfa}
\usepackage{mathtools}
\usepackage{bbm}

\usepackage{hyperref}
\usepackage[capitalise]{cleveref}

\usepackage{a4wide}

\usepackage{mathabx}
\usepackage{mathbbol}
\usepackage{enumerate}
\usepackage{comment}
\usepackage{scalerel}

\newlength\bshft
\def\fakebold#1{\ThisStyle{\ooalign{$\SavedStyle#1$\cr%
  \kern-\bshft$\SavedStyle#1$\cr%
  \kern\bshft$\SavedStyle#1$}}}

\newcommand{\Z}{{\mathbb Z}}
\newcommand{\R}{{\mathbb R}}
\newcommand{\C}{{\mathbb C}}
\newcommand{\Q}{{\mathbb Q}}

\newcommand{\N}{{\mathbb N}}

\def\be{\begin{equation}}
\def\ee{\end{equation}}
\def\bea{\begin{align}}
\def\eea{\end{align}}
\newcommand{\Tb}{{\boldsymbol{T}}}
\newcommand{\Pb}{{\boldsymbol{P}}}
\newcommand{\Sb}{{\boldsymbol{S}}}
\newcommand{\Cb}{{\boldsymbol{C}}}
\newcommand{\Kb}{{\boldsymbol{K}}}
\newcommand{\Rb}{{\boldsymbol{R}}}
\newcommand{\kk}{{\mathbb{k}}}
\newcommand{\Cbb}{{\fakebold{\mathbb{C}}}}

\newcommand{\Kbb}{{\fakebold{\mathbb{K}}}}

\newcommand{\Bb}{{\fakebold{\mathbb{B}}}}
\newcommand{\cupp}{\check{\mathcal{C}}_\square}

\newcommand{\jj}{{\boldsymbol{j}}}

\tikzset{
  mid<-/.style={
    postaction={
      decorate,
      decoration={
        markings,
        mark=at position 0.5 with {\arrow{<}}
      }
    }
  },
  mid->/.style={
    postaction={
      decorate,
      decoration={
        markings,
        mark=at position 0.5 with {\arrow{>}}
      }
    }
  }
}

\tikzset{
  upper<-/.style={
    postaction={
      decorate,
      decoration={
        markings,
        mark=at position 0.75 with {\arrow{<}}
      }
    }
  },
  upper->/.style={
    postaction={
      decorate,
      decoration={
        markings,
        mark=at position 0.75 with {\arrow{>}}
      }
    }
  }
}

\tikzset{
  lower<-/.style={
    postaction={
      decorate,
      decoration={
        markings,
        mark=at position 0.25 with {\arrow{<}}
      }
    }
  },
  lower->/.style={
    postaction={
      decorate,
      decoration={
        markings,
        mark=at position 0.25 with {\arrow{>}}
      }
    }
  }
}

\theoremstyle{plain}
\newtheorem{thm}{Theorem}[section]
\newtheorem{cor}[thm]{Corollary}
\newtheorem{lem}[thm]{Lemma}
\newtheorem{prop}[thm]{Proposition}
\newtheorem{conj}[thm]{Conjecture}

\theoremstyle{definition}
\newtheorem{rem}[thm]{Remark}

\newtheorem{ex}[thm]{Example}

\theoremstyle{definition}
\newtheorem{defn}[thm]{Definition}

\definecolor{myblue}{rgb}{0,.5,1}
\definecolor{mypurple}{rgb}{.75,0,.75}
\definecolor{mygreen}{rgb}{.3,.75,.1}

\definecolor{vcolor}{rgb}{0,.5,1}
\definecolor{pcolor}{rgb}{.75,0,.75}

\newcommand{\tikzdiagh}[2][]{\tikz[#1,anchor= center,very thick,baseline={([yshift=1ex+#2]current bounding box.center)}]}

\newcommand{\tikzdiag}[1][]{\tikzdiagh[#1]{-1.5ex}}
\tikzstyle{tikzdot}=[fill, circle, inner sep=2pt]
\tikzstyle{tikzstar}=[star,star points=6,star point ratio=3,fill, inner sep=1pt]

\newcommand{\tikzbrace}[5][]{\draw[#1,decoration={brace,mirror,raise=-10pt},decorate]  (#2-.1,#4 -.35) -- node {#5} (#3+.1,#4-.35)}
\newcommand{\tikzbraceop}[5][]{\draw[#1,decoration={brace,raise=-10pt},decorate]  (#2-.1,#4 +.35) -- node {#5} (#3+.1,#4+.35)}

\tikzstyle{bund}=[black, double=white, double distance=3pt]
\tikzstyle{stdhl}=[red,double=red!30,double distance=1pt,line width=.8pt]
\tikzstyle{rstdhl}=[white,double=red!80,double distance=2pt]
\tikzstyle{bstdhl}=[white,double=blue!80,double distance=2pt]
\tikzstyle{vstdhl}=[vcolor,double=vcolor!30,double distance=1pt,line width=1.6pt]
\tikzstyle{pstdhl}=[mypurple,double=mypurple!30,double distance=1pt,line width=1.6pt]
\tikzstyle{nail}=[draw,color=black,fill=white!30,circle,inner sep=2pt]

\title{On Diagrammatic Categorification of Verma Modules I: Braiding }
\author{Pedro Guicardi}
\affil{California Institute of Technology, Pasadena, CA 91125, USA}
\date{\vspace{-1cm}}
\begin{document}
\maketitle

\begin{abstract}
In this paper, we study the extensions of KLRW algebras to tensor products of Verma module representations of $\mathfrak{sl}_2$. Our motivation is to construct a theory of Khovanov homology for knot complements in $S^3$ (and which also categorifies the Gukov-Manolescu two-variable series for knot complements), which will be done in the second part of this work. We construct the categorification of R-matrices for Verma modules as functors given by derived tensor products with diagrammatic bimodules and explicitly compute their projective resolutions. We also prove these braiding functors induce an action of the braid group on the relevant categories. Then, we describe how to incorporate strands in finite-dimensional representations of $\mathfrak{sl}_2$, thereby establishing functors that serve as the Khovanov homology on a braid complement. In the case of the unknot, this gives knot homologies in $S^1\times D^2$, which we compare to Annular Khovanov Homology through several examples and show they are very closely related, conjecturing they are of the same dimension. We conclude with a proposal for the categorification of the cups and caps of Verma module colored strands, which we build upon in the next paper.

\end{abstract}

\tableofcontents
\newpage

\section{Introduction}\label{sec:intro}
Since the advent of Khovanov Homology \cite{Kh}, categorification of quantum knot invariants has been an incredibly fruitful object of study in quantum topology. To this end, the venture of higher representation theory and 2-analogues was developed and led to the categorification of quantum groups of simple Lie algebras, $U_q(\mathfrak{g})$, into 2-categories $\mathcal{U}_q(\mathfrak{g})$. Some pioneering work on this includes \cite{CR}, \cite{Rou}, \cite{Lau}, \cite{KL}. Through this program, Webster (\cite{Web}) algebraically realized quantum knot homologies as well as the corresponding categorification of tangle invariants in the language of ``diagrammatic categorification". That is, the quantum knot polynomials associated to quantum groups (\cite{RT}) are defined diagrammatically through the Reshetikhin-Turaev dictionary, 
\begin{enumerate}
    \item A horizontal `slice' of a colored link diagram $\longrightarrow$ tensor product of irreducible, finite-dimensional representations of $U_q(\mathfrak{g})$, $V^{\mu_1}\otimes V^{\mu_2}\otimes \cdots \otimes V^{\mu_r}$
    \item A crossing of colored strands $\longrightarrow$ a homomorphism $R:V^{\mu_i} \otimes V^{\mu_j} \rightarrow V^{\mu_j} \otimes V^{\mu_i}$, referred to as the R-matrix
    \item Cups and caps of the link diagram $\longrightarrow$ (co)evaluation and (co)trace homomorphisms ($V^*\otimes V \rightarrow \kk$ and $\kk\rightarrow V^* \otimes V$). 
\end{enumerate}
The composition of these homomorphisms read, say, from bottom to top of a colored link diagram yield a polynomial link invariant. With the construction of the diagrammatic KLRW algebras, $\Tb^{\underline{\mu}}$, whose derived category of modules (loosely speaking) categorify the tensor products $V^{\mu_1}\otimes V^{\mu_2}\otimes \cdots \otimes V^{\mu_r}$, Webster promoted each homomorphism above into functors. Analogously, the composition of these functors yields some chain complex of graded $\kk$-modules, whose homology categorifies the corresponding knot invariants. Above, $\kk$ is just some background field. \\ \\
The question of whether this philosophy of diagrammatic categorification could apply to Verma module representations of $U_q(\mathfrak{g})$ (with generic weights) and what the resulting theory could mean is a natural one. The categorification of Verma modules has been explored in recent years (see, for instance, \cite{Na6}, \cite{Na5}, \cite{Na4}, \cite{Na2}, \cite{Rou2}) to great success. This led to the work of Dupont and Naisse \cite{Na}, which generalized the aforementioned KLRW algebras to include strands colored by Verma module representations (this was done for $\mathfrak{g}=\mathfrak{sl_2}$, but this is the only case of interest for the current paper). The resulting algebra of red (finite-dimensional) as well as blue (Verma) strands is a differential graded version of the KLRW algebras and was fittingly called the dg-KLRW algebra. These objects naturally led à la Webster to a categorification of bullet point 1) above. The purpose of the present paper is to explore the categorification of point 2), and the sequel will tackle point 3). \\ \\
One of the adjacent epic pursuits in quantum topology has been the categorification of the WRT invariants of $3$-manifolds along the same lines of their quantum link invariant counterparts. In \cite{GPPV}, a $q$-series valued invariant of $3$-manifolds that reduces (in a suitable sense) to the WRT invariants when $q$ is a root of unity was proposed for a certain class of 3-manifolds. The so-called $\hat{Z}$ invariants or BPS $q$-series, have integer coefficients and can be realized in physics as supersymmetric indices in M-theory (or equivalently, as a certain partition function in open topological string theory). This suggests these invariants can be categorified by BPS Hilbert spaces spanned by bound states of $M2$-$M5$ branes, which is the very same physical interpretation of Khovanov homology. \\ \\
The $\hat{Z}$ invariants for knot complements in $S^3$ were also constructed and studied in \cite{GM}. Amazingly, in \cite{Park}, it was shown that $\hat{Z}(S^3\backslash K)$ is the knot invariant resulting from following the Reshetikhin-Turaev dictionary, replacing finite-dimensional representations by Verma modules of generic highest (or lowest) weight. The resulting ``large color R-matrix'' is precisely the object we will categorify in this work. \\ \\
Of course, one can also include strands colored by finite-dimensional representations, in which case, one should find the analogue of the colored Jones polynomial for knot complements. This was done in \cite{Park2}, where given a colored link $L_{\underline{\mu}}$ in a knot complement $S^3\backslash K$, the Reshitikhin-Turaev procedure yielded $\hat{Z}\big(S^3\backslash K,L_{\underline{\mu}} \big)$, which was shown to depend only on the class of $L_{\underline{\mu}}$ in the relevant skein module of $S^3\backslash K$. Of course, when $K=\emptyset$, $\hat{Z}\big(S^3\backslash K,L_{\underline{\mu}} \big)$ reduces to the ordinary colored Jones polynomial (with a certain normalization). The generalization of Khovanov homology to knot complements would befittingly categorify these invariants, 
\[\begin{tikzcd}
\hat{Z}\big(S^3\backslash K,L_{\underline{\mu}} \big) 
\ar[rightsquigarrow]{rrrr}{\text{categorification}} &&&& \mathcal{H}^{S^3\backslash K}(L_{\underline{\mu}},\kk) 
\end{tikzcd}\]
Moreover, this setting would be conducive to a `functorial' theory of Khovanov homology. Among other things, this would include a categorified surgery formula for 3-manifold homological invariants. That is, suppose $M$ is a 3-manifold resulting from doing $\frac{p}{r}$ Dehn surgery on a knot $K$, for the $\hat{Z}$ invariants, there is a well-known surgery formula (see \cite{GJ}, \cite{GM} for details), 
$$\hat{Z}(M;q) = \mathcal{L}_{x,\frac{p}{r}}\big(\hat{Z}(S^3\backslash O;x^{\frac{1}{r}},q)\cdot \hat{Z}(S^3\backslash K;x,q)\big)$$
The resulting theory of homological invariants we aim to build with this work would enjoy some homological lift of this relation, 
$$\mathcal{H}(M,\kk) \cong H_*\bigg(\mathcal{C}(S^3\backslash O,\kk)\boxtimes_{\frac{p}{q}} \mathcal{C}(S^3\backslash K,\kk)\bigg)$$
The goal of this paper and its sequel is precisely to construct the homological invariants $\mathcal{H}^{S^3\backslash K}(L_{\underline{\mu}},\kk)$. \\ \\
The main feat of the present work is the construction of a categorical braid group action using dg-KLRW algebras, $\Tb^{\underline{\mu}}$. That is, for a suitable derived category of modules of $\Tb^{\underline{\mu}}$, which we call $\mathcal{D}_{dg}(\Tb^{\underline{\mu}})$ following \cite{Na}, we show,

\begin{thm}
    Let $\underline{\mu} = (\mu_1,\mu_2,...,\mu_r)$ be a tuple of (possibly Verma module) representations of $U_q(\mathfrak{sl}_2)$. The category $\mathcal{D}_{dg}(\Tb^{\underline{\mu}})$ carries functors, $\Bb_i$ for $i>1$, categorifying the action of the large color R-matrices ($R^{-1}$ in \cite{Park}, to be precise) corresponding to the braid group generators $\sigma_i$ and satisfying, 
    $$\Bb_i \circ \Bb_j \cong \Bb_j \circ \Bb_i \hspace{10mm} |i-j|>1$$
$$\Bb_i \circ \Bb_{i+1} \circ \Bb_i \cong \Bb_{i+1} \circ \Bb_i \circ \Bb_{i+1}$$
There are also functors $\Bb^{-1}_i$ categorifying the inverse R-matrices satisfying, 
 $$\Bb^{-1}_{\sigma_i} \circ \Bb_{\sigma_i} \cong \Bb_{\sigma_i}\circ \Bb^{-1}_{\sigma_i}\cong \mathbb{1}$$
    
\end{thm}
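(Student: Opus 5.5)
The plan is to follow Webster's strategy for KLRW algebras, adapted to the dg-KLRW algebras $\Tb^{\underline{\mu}}$ of Dupont--Naisse. For each $i$, let $s_i\underline{\mu}$ be the sequence with $\mu_i$ and $\mu_{i+1}$ swapped. I would define a dg-bimodule $\mathcal{B}_i$ over $(\Tb^{s_i\underline{\mu}},\Tb^{\underline{\mu}})$, spanned by diagrams with a single crossing between the $i$-th and $(i+1)$-st colored strands (red or blue), modulo the local relations of $\Tb$. I would then set
\[
\Bb_i := \mathcal{B}_i \otimes^{\mathbf{L}}_{\Tb^{\underline{\mu}}} (-).
\]
The inverse $\Bb_i^{-1}$ would be given by the mirror crossing bimodule, or equivalently by the right adjoint $\mathbf{R}\mathrm{Hom}(\mathcal{B}_i,-)$. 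Both functors should preserve the subcategory $\mathcal{D}_{dg}$, because $\mathcal{B}_i$ has a finite (or suitably bounded, c.b.l.) cofibrant resolution.

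The first substantive step is to compute this resolution explicitly. I expect it to be a cone of the form
\[
\mathrm{Cone}\big(P_{\mathrm{cap}} \to P_{\mathrm{id}}\big)
\]
for a red--blue or blue--blue crossing. For a blue--blue crossing, the nilHecke-type relation satisfied by the double crossing suggests instead an infinite, but grading-bounded, resolution. I would verify exactness using the explicit bases of $\Tb^{\underline{\mu}}$ from Dupont--Naisse. The next step checks decategorification: taking the Grothendieck group of the resolution and comparing with the standard-module basis recovers the matrix of $R^{-1}$ from \cite{Park}.

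The relations would be proved in increasing order of difficulty.

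\textbf{Far commutativity.} For $|i-j|>1$, the relation follows because $\mathcal{B}_i\otimes_{\Tb}\mathcal{B}_j$ and $\mathcal{B}_j\otimes_{\Tb}\mathcal{B}_i$ are both isomorphic to the bimodule of diagrams with two distant crossings, by the planar isotopy relations. It remains to show these tensor products are already derived, i.e.\ that the higher Tor groups vanish. I would get this from the explicit resolution together with the fact that $\mathcal{B}_i$ is free as a one-sided module, both of which follow from the basis theorem.

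\textbf{Invertibility.} I would compute $\mathcal{B}_i^{-1}\otimes^{\mathbf{L}}\mathcal{B}_i$ via the resolution and show that the resulting complex has a contractible piece, so it collapses to the diagonal bimodule $\Tb^{\underline{\mu}}$. The key input is the double-crossing (Reidemeister II) relation between colored strands in $\Tb$. For blue strands this relation involves the dg differential applied to the nail/tight floating dots, so the homotopy must intertwine $d$.

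\textbf{Braid relation.} This is the main obstacle. I would reduce it to showing that both triple products are quasi-isomorphic to a single bimodule $\mathcal{B}_{w_0}$, spanned by diagrams realizing the longest permutation of three colored strands. I would argue in two steps. First, the underived tensor product maps isomorphically onto $\mathcal{B}_{w_0}$, using the Reidemeister III relations for colored strands in $\Tb$ with black strands passing through. Second, derived and underived products agree, by the freeness and Tor-vanishing established above.

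If freeness fails for blue strands, which is possible because the blue--blue crossing does not square to the identity, I would fall back on a different route. The idea is to realize each $\Bb_i$ as the composite of a Rickard-type complex and a functor between standardly stratified categories. The braid relation would then be checked on the exceptional collection of standard modules, where it reduces to a dimension count matching \cite{Park}'s R-matrix relations, lifted via the fact that the functors are exact on standards with filtrations preserved.
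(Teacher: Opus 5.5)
Your proposal has genuine gaps in both the derived-tensor step and the invertibility step.

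\textbf{Derived versus underived tensor products.} Your arguments for far commutativity and for the braid relation both rest on the claim that the crossing bimodule is free as a one-sided module, so that the underived tensor products are already derived. That claim is false for every crossing, red or blue.

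By Proposition~\ref{prop:Rresolution}, $\Bb_i(\Pb_\rho)=\Rb_i1_\rho$ is resolved by a Koszul-type complex of length $b_i$. It is indexed by splittings $(\jj_L,\jj,\jj_R)$ of the black strands lying between the two colored strands. Its differentials are black/colored crossings of nonzero degree, so the resolution is minimal and $\Rb_i1_\rho$ is not projective. Flipping diagrams vertically gives the same conclusion on the right. The resolution is also neither a two-term cone nor infinite in the blue--blue case. So nothing in your plan kills the higher Tor groups.

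The missing idea is the standard-module argument:
\begin{itemize}
\item $\Rb_i$ has an iterated extension by standards as a right module, and by mirrored standards as a left module.
\item These are Tor-orthogonal by Section 7 of \cite{Na}.
\item Hence $M\otimes^L_\Tb N\to M\otimes_\Tb N$ is an isomorphism as soon as it is surjective (Lemma~\ref{lem:it.exts.tensor}). With nails present, this needs an extra step: pass to the quotient by the nail ideal, then use that high powers of that ideal vanish.
\item To reach the triple product, you must further show that $\Rb_{s_is_j}1_\rho$ again carries such a filtration, indexed by $\Phi^{s_is_j}_\rho$. The paper proves this by a two-sided graded-dimension count. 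The lower bound comes from filtering the resolution; the upper bound comes from the generating set of Lemma~\ref{lem:gen.set} and the minimal elements $\hat\phi_I$.
\end{itemize}
Your fallback mentions standards, but a dimension count on an exceptional collection cannot by itself produce a natural isomorphism of functors. You need the comparison map together with this Tor vanishing.

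Note also that the relations letting black strands pass through colored crossings are not relations of $\Tb$, since colored strands never cross there. They must be imposed in the definition of the bimodule for the basis theorem to hold.

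\textbf{Invertibility.} This step fails as well. There is no colored double-crossing relation in $\Tb$, and the bimodules $\Rb_w$ only involve reduced words, so the input you name does not exist. Since $\mu_1$ is blue, the differential is zero, so nails do not enter through $d$ either.

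More seriously, the mirror crossing bimodule is not an inverse, and it is not interchangeable with $\mathbf{R}\mathrm{Hom}(\Rb_i,-)$. A colored crossing carries no over/under data, so reading it upside down gives the crossing bimodule for $s_i\underline{\mu}$. Tensoring with it produces $\Bb_i\circ\Bb_i$, the full twist, which is far from the identity (see the Hopf link and $U_1$ computations).

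One must take $\Bb_i^{-1}=\mathbf{R}\mathrm{Hom}(\Rb_i,-)$ and prove that $\Rb_i\otimes^L_\Tb-$ is an equivalence. The paper argues in two steps:
\begin{itemize}
\item $\Rb_i$ is perfect, because its resolution is finite.
\item The map $\mathrm{Hom}(\Tb1_\rho,\Tb1_{\rho'})\to\mathrm{Hom}(\Rb_i1_\rho,\Rb_i1_{\rho'})$ is injective with matching graded dimensions. The match again comes from the standard filtrations: the graded dimension of $\Rb_i1_\rho$ is the sum, over $\Phi^{s_i}_\rho$, of shifted graded dimensions of standards, and this is compared with Corollary~\ref{cor:gdimPandS}.
\end{itemize}
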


Following the Reshetikhin-Turaev procedure using the large color R-matrices and applying it to a braid, $\beta$, (as opposed to a knot) colored by highest weight Verma modules, one can also find the $\hat{Z}$ invariants for `braid complements.' These are homomorphisms from tensor products of Verma modules of generic highest weight $V^{\underline{\mu}} = V^{\mu_1}\otimes V^{\mu_2}\otimes \cdots \otimes V^{\mu_r}$ to $V^{\sigma_\beta (\underline{\mu})}$, 
$\hat{Z}(\beta): V^{\underline{\mu}}\rightarrow V^{\sigma_\beta (\underline{\mu})}$. One can also easily include links in $D^2\times I \backslash\beta$ colored by finite-dimensional representations of $U_q(\mathfrak{sl}_2)$, $\underline{\nu}$ , to find invariants, 
\be \hat{Z}(\beta, L_{\underline{\nu}}): V^{\underline{\mu}}\rightarrow V^{\sigma_\beta (\underline{\mu})} \label{eq: zhat_braids}\ee
Here, we also categorify these invariants in the case that all finite-dimensional representations are the fundamental representation, 

\begin{thm}
Given a braid $\beta$ and link in its complement colored by the fundamental representation of $U_q(\mathfrak{sl}_2)$, $L$, there is a functor-valued invariant of oriented tangles $\Phi'(\beta,L): \mathcal{D}_{dg}(\Tb^{\underline{\mu}}) \rightarrow \mathcal{D}_{dg}(\Tb^{\sigma_\beta(\underline{\mu})})$ categorifying \eqref{eq: zhat_braids}. 
\end{thm}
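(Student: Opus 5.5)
The plan is to build $\Phi'(\beta,L)$ by composing elementary functors, one for each generator of the tangle category, and then to check the tangle relations one by one. First, put the pair $(\beta,L)$ in generic position with respect to a height function. It then becomes a word in four kinds of piece: crossings between two blue (Verma) strands, crossings between a blue and a red strand (in either order, with either sign), crossings between two red strands, and cups and caps on red strands only. No blue strand has a local extremum, because $\beta$ is a braid.

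The elementary functors are as follows.
\begin{itemize}
\item For a blue--blue crossing, use the braiding functor $\Bb_i$ or $\Bb_i^{-1}$ from the first theorem. It is given as a derived tensor product with a diagrammatic dg-bimodule, whose projective resolution is computed explicitly.
\item For a crossing involving a red strand, use the analogous derived tensor product with the bimodule spanned by dg-KLRW diagrams containing one crossing of the two coloured strands. This is in the spirit of Webster, but inside the dg-KLRW algebras $\Tb^{\underline{\mu}}$ of Dupont--Naisse.
\item For red cups and caps, adapt Webster's (co)evaluation functors. Their bimodules should involve a pair of adjacent red strands of weight $1$, with idempotents forcing a black strand between them. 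They are extended to the dg setting by tensoring on the remaining (possibly blue) strands.
\end{itemize}
Define $\Phi'(\beta,L)$ as the composite $\mathcal{D}_{dg}(\Tb^{\underline{\mu}})\to\mathcal{D}_{dg}(\Tb^{\sigma_\beta(\underline{\mu})})$. The finite-dimensional red strands contribute to the intermediate categories, but the source and target contain only blue labels.

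Invariance is then checked move by move, up to isomorphism of functors, which suffices for an invariant of oriented tangles. Braid-like Reidemeister II and III moves involving only blue strands are covered by the first theorem. For mixed moves containing red strands, the same argument should go through: exhibit the relevant composite bimodules and construct explicit quasi-isomorphisms between the resolved complexes. It helps to use the fact that red strands of weight $1$ admit at most one black strand between adjacent copies, which keeps the complexes small.

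The remaining moves involve cups and caps:
\begin{itemize}
\item zig-zag (isotopy) moves;
\item Reidemeister I on a red strand, which holds only up to a grading shift, fixed by a framing normalization;
\item pitchfork moves sliding a red cup or cap past a blue strand;
\item the move taking a red crossing through a cup.
\end{itemize}
Webster proves the purely red versions in the non-dg setting. I would transport his arguments by checking that his bimodules remain sweet (projective on each side) and compatible with the differential. Finally, I would confirm decategorification by computing the classes of the bimodules in the Grothendieck group and matching them with the large colour $R$-matrices and (co)evaluations of \cite{Park2}.

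The main obstacle will be the pitchfork move, where a red cap passes over or under a Verma strand. This is where the dg structure, coming from the blue strands' nails and differential, interacts with Webster's cap functor. I would first reduce it, via the Reidemeister II isomorphisms already established, to the adjunction between cup and cap together with a check that the crossing functors commute with the cap functor. That commutation I would try to obtain from an explicit isomorphism of bimodules, sliding diagrams past the cap, before resorting to spectral sequence arguments on the resolutions.
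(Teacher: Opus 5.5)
Your overall architecture matches the paper's. $\Phi$ in Definition \ref{def:tangle_functor} is a composite of braiding functors $\Rb_w\otimes^L-$ (defined uniformly for red and blue labels) and their inverses, together with Webster-style cup/cap bimodules carrying a single black strand at the red cup. Invariance comes from the braid relations, R1 up to the shift $q^{3/2}h$ (absorbed by the writhe normalization defining $\Phi'$), the zigzag, and the pitchfork.

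\textbf{The gap: the leftmost strand.} You never address its special role. In a dg-KLRW algebra, nails attach only to the leftmost strand, and no black strand may pass to its left. The differential \eqref{eq:differential} is nonzero exactly when that strand is red. Accordingly, $\Rb_w$ only permutes $\mu_2,\dots,\mu_r$, and the first theorem only provides $\Bb_i$ for $i>1$. As written, you therefore have no functor when $\beta$ moves its first Verma strand. Likewise, a red cup or cap at the far left would make a red strand leftmost, which takes you outside the setting in which the braiding results were proved. The missing idea is the first rule of Definition \ref{def:tangle_functor}: prepend an extra vertical Verma strand on the far left that never braids. With it, $d_{\underline{\mu}}=0$ throughout.

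This also shows your ``main obstacle'' is misidentified. There is no differential for the cap to interact with. The other blue strands carry no nails at all; they differ from red strands only through the bigon and triple-point relations.

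\textbf{Further points.}
\begin{enumerate}
\item Your heuristic that adjacent weight-$1$ red strands admit at most one black strand between them is false here. With a Verma strand on the left there is no cyclotomic truncation, and nonzero summands with two black strands between the first two red strands already occur in \eqref{eq:doublecap_res}.
\item For the pitchfork, ``crossing functors commute with the cap'' is the content of the move, not a reduction of it. No local relation slides a coloured crossing through a red cup, so a direct bimodule isomorphism (plus the needed Tor-vanishing) is not available from the definitions. The paper instead proceeds as follows.
\begin{itemize}
\item It uses Proposition \ref{prop:Inverse_Braiding} to rewrite the move as $\Bb_{\sigma'}\circ\Bb_\sigma\circ\Cbb_i\cong\Cbb_{i-1}$.
\item It treats the four-term resolution \eqref{eq:cup_proj_res} as a nested cone and braids each piece via Proposition \ref{prop:Rresolution}.
\item It cancels identity components to reach the resolution of $\Cbb_{i-1}$.
\end{itemize}
This is essentially your fallback, so commit to it.
\item Mixed red/blue R2 and R3 need no new explicit quasi-isomorphisms. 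The argument of Section \ref{sec:braiding} (standard filtrations plus Lemma \ref{lem:it.exts.tensor}) is already stated for arbitrary labels $\mu_2,\dots,\mu_r$.
\end{enumerate}
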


\subsection*{Summary}
We structure the paper as follows, 
\begin{itemize}
    \item In Section \ref{sec: prelim}, we give an overview of some of the necessary background for the later sections, including the diagrammatic definition of dgKLRW algebras and the relevant results from \cite{Na}. 
    \item In Section \ref{sec:Standards}, we slightly generalize the definition of the standard modules from \cite{Na} and study the iterated extension of projectives by these modules. This is analogous to the filtration by standards of the projectives in \cite{Web}. 
    \item In Section \ref{sec:braiding}, we define the braiding functors through derived tensor products with certain braiding bimodules. We study the braid group action through the projective resolutions of the braiding bimodules and show they endow the category $\mathcal{D}_{dg}(\Tb^{\underline{\mu}})$ with an honest braid group action, proving the first theorem above.
    \item In Section \ref{sec:fund_strands}, we adapt Webster's constructions for finite-dimensional representations to our case with blue strands. We check several topological moves needed for tangle invariance through explicit computations and define functor-valued invariants of oriented tangles mentioned in the second theorem above, $\Phi'$. 
    \item In Section \ref{sec:example}, we examine the homological invariant $\Phi'$ in the case that the braid $\beta$ is a single blue strand. This defines a homology theory of knots in $S^1\times D^2$, $\mathcal{H}^{S^1\times D^2}(\hspace{2mm}\cdot \hspace{2mm},\kk) $, which we compare to Annular Khovanov Homology in several explicit examples. Short of an honest isomorphism between them, we show the two theories seem to at least have the same dimension. 
    \item In Section \ref{sec:discussion}, we propose a framework for describing the cups and caps for blue strands using the `doubled' version of our KLRW algebra. We show that our proposed caps reduce to the correct $U_q(\mathfrak{sl}_2)$ expressions in the Grothendieck group and supply evidence to suggest the categorified caps for Verma modules appropiately reduce to their finite-dimensional counterparts after taking `finite-color' differentials. 
\end{itemize}
\section{Preliminaries}\label{sec: prelim}
\subsection{\texorpdfstring{$U_q(\mathfrak{sl_2})$}{Uq(sl2)} Conventions}
We follow the conventions of \cite{Na}. Let $U_q(\mathfrak{sl}_2)$ be the $\Q((q))$-algebra generated by elements $K,K^{-1},E,F$ with relations, 
$$
\begin{matrix}
    KE =q^2 EK &&&&&& KF = q^{-2}FK\\ \\ \\
    KK^{-1} = K^{-1}K =1 &&&&&& EF-FE = \frac{K-K^{-1}}{q-q^{-1}}
\end{matrix}$$

With comultiplication $\Delta: U_q(\mathfrak{sl_2}) \rightarrow U_q(\mathfrak{sl_2}) \otimes U_q(\mathfrak{sl_2})$ given by, 
$$\Delta(K^{\pm1}) = K^{\pm1}\otimes K^{\pm1}$$
$$\Delta(F) = F\otimes K +1\otimes F$$
$$\Delta(E) = E\otimes 1 +K^{-1}\otimes E$$

And counit, $\varepsilon(K^{\pm1}) = \pm1$, $\varepsilon(E) = \varepsilon(F) =0$. Additionally, we have the antipode, 
\begin{align*}
S(E)=-KE\\
S(F) = -FK^{-1}\\
S(K^{\pm1}) = K^{\mp1 }
\end{align*}
\subsubsection{Finite Dimensional Representations}
For each $N\in \N$, there is a finite dimensional, irreducible representation of $U_q(\mathfrak{sl}_2)$, $V(N)$. We use the basis $(v_{N,0}, v_{N,1},...,v_{N,N})$ with $U_q(\mathfrak{sl}_2)$ action given by, 
\begin{align*}
    K\cdot v_{N,i} = q^{N-2i}v_{N,i}\\
    F\cdot v_{N,i} = v_{N,i+1}\\
    E\cdot v_{N,i} = [i]_q [N-i+1]_qv_{N,i-1}
\end{align*}
\subsubsection{Verma Module Representations}
For a generic complex weight $\lambda \in \C$, the highest weight verma module $M(\lambda)$ is an infinite dimensional representation over $\Q((x,q))$, where $x=q^{2\lambda}$. It has basis $(v_{\lambda, 0}, v_{\lambda,1},...)$ with $U_q(\mathfrak{sl}_2)$ action given by, 
\begin{align*}
    K\cdot v_{\lambda,i} = xq^{-2i}v_{\lambda,i}\\
    F\cdot v_{\lambda,i} = v_{\lambda,i+1}\\
    E\cdot v_{\lambda,i} = [i]_q [2\lambda-i+1]_qv_{\lambda,i-1}
\end{align*}
Where we use the shorthand, 
$$[m \lambda +n]_q := \frac{x^\frac{m}{2} q^n - x^{-\frac{m}{2}} q^{-n}}{q-q^{-1}}$$

\subsection{Tensor Product Algebras}\label{sec:KLRW_def}
In \cite{Web}, the categorification of the tensor product of finite dimensional representations of quantum groups was introduced via an algebra $\Tb$, whose category of modules categorified the vector space corresponding to the tensor product. In \cite{Na}, this was generalized to include tensor products with Verma module representations of generic highest weight. In this section, we briefly review the necessary background and definitions from \cite{Na}. \\
\begin{defn}
    Fix some field $\kk$. Let $\underline{\mu} = (\mu_1,\mu_2,...,\mu_r)$ be a string of weights taking values in $\mathbb{N}$ or generically ( meaning $\mu_i=2\lambda$ with $\lambda\in \C/\frac{1}{2}\Z$). The \textbf{pre-dgKLRW algebra} $\tilde{\Tb}^{\underline{\mu}}$ algebra is the diagrammatic $\kk$-algebra defined as follows,
    \begin{itemize}
        \item $\tilde{\Tb}^{\underline{\mu}}$ is generated by diagrams made up from a collection of finitely many curves in $\R\times [0,1]$, which can be either colored or black. For $\tilde{\Tb}^{\underline{\mu}}$, we require for there to be $r$ colored curves, which we draw as red if $\mu_i \in \mathbb{N}$ and blue if $\mu_i$ is generic. We allow for arbitrarily many black strands. Colored strands may not intersect each other, but black strands can intersect each other or colored strands. Such diagrams will be called \textbf{string diagrams}.
        \item Black strands may carry dots or may be nailed to the leftmost strand along a white dot.  $\tilde{\Tb}^{\underline{\mu}}$ is $\Z\times \Z^2$ graded, with every element having a homological grading, $x$-grading, and $q$-grading. Here, $x=q^{\mu_i} = q^{2\lambda}$ whenever $\mu_i$ is a generic weight. The elementary diagrams are graded as follows, 
        \[
        deg\hspace{2mm} \tikzdiag[scale=1]{
        \draw (1,0)..controls (1,0.5) and (0,0.5) .. (0,1);
        \draw (0,0) ..controls (0,0.5) and (1,0.5) .. (1,1);
        }\hspace{2mm} =
        \hspace{2mm} q^{-2}
        \]
        \[
       deg\hspace{2mm} \tikzdiag[scale=1]{
        \draw (1,0)..controls (1,0.5) and (0,0.5) .. (0,1);
        \draw[pstdhl] (0,0) node[below]{\small $\mu_i$} ..controls (0,0.5) and (1,0.5) .. (1,1);
        }\hspace{2mm} =
        \hspace{2mm} deg\hspace{2mm} \tikzdiag[scale=1]{
        \draw (0,0)  ..controls (0,0.5) and (1,0.5) .. (1,1);
        \draw[pstdhl] (1,0) node[below]{\small $\mu_i$} ..controls (1,0.5) and (0,0.5) .. (0,1);
        } \hspace{2mm} = q^{\mu_i}
        \]
        \[
        deg\hspace{2mm} \tikzdiag[scale=1]{
        \draw (0,0) -- (0,1) node[tikzdot, pos=0.5]{};
        
        }\hspace{2mm} =
        \hspace{2mm} q^{2}
        \]
        \[
        deg\hspace{2mm} \tikzdiag[scale=1]{
        \draw (1,0)..controls (1,0.5) and (0,0.5) .. (0,0.5);
        \draw (0,0.5) ..controls (0,0.5) and (1,0.5) .. (1,1);
        \draw[pstdhl] (0,0) node[below]{\small $\mu_i$} --(0,1) node[midway,nail]{};
        }\hspace{2mm} =
        \hspace{2mm} h q^{2 \mu_i}
        \]
        \item We endow $\tilde{\Tb}^{\underline{\mu}}$ with algebraic structure by declaring that multiplication is vertical composition of diagrams when the ends match and $0$ otherwise. Our conventions are such that $a\cdot b$ is the composition of the diagrams $a$ and $b$ with $a$ at the top and $b$ at the bottom. 
        \item Any diagram with a black strand as the leftmost strand at some point along the vertical is 0
        
    \end{itemize}
\end{defn}

\begin{rem}
    In fact, we can just as easily allow for multiple strands colored by Verma modules of different generic weights. In this case the algebra would be $\Z_h \times \Z_q \times \Z^v$-graded, where $v$ is the number of distinct generic weights. For most of the paper we focus on $v=1$, so we will not dwell on this point. 
\end{rem}
With this in mind, we can define our main object of study,
\begin{defn}{\cite{Na}}\label{def:dgKLRWalgebra}
    The \textbf{dgKLRW} algebra $\Tb^{\underline{\mu}}$ is the quotient of $\tilde{\Tb}^{\underline{\mu}}$ by the following local relations, 
    \begin{itemize}
        \item KLR relations
        \be \tikzdiag[scale=2]{ 
    \draw (0,0.5)..controls (0,0.75) and (0.5, 0.75).. (0.5, 1);
      \draw (0,0) -- (1,1)  ;
      \draw(1,0) --(0,1);
      \draw (0.5,0)..controls (0.5,0.25) and (0,0.25) .. (0,0.5);
      } \hspace{2mm}
      =\hspace{2mm}
      \tikzdiag[scale=2]{ 
    \draw (0.5,0)..controls (0.5,0.25) and (1,0.25) .. (1,0.5);
      \draw (0,0) -- (1,1)  ;
      \draw (1,0) --(0,1);
      \draw (1,0.5)..controls (1,0.75) and (0.5, 0.75).. (0.5, 1);
      } \hspace{40mm} \tikzdiag[scale =2]{
      \draw (0,0) ..controls (1,0.25) and (1,0.75) .. (0,1);
      \draw (1,0) ..controls (0,0.25) and (0,0.75) .. (1,1);
      } \hspace{2mm}= \hspace{2mm}0 \ee

      \be 
      \tikzdiag[scale=1]{
        \draw (1,0)..controls (1,0.5) and (0,0.5) .. (0,1);
        \draw (0,0) ..controls (0,0.5) and (1,0.5) .. (1,1) node[tikzdot, pos=0.25]{};
        } \hspace{2mm}= \hspace{2mm}\tikzdiag[scale=1]{
        \draw (1,0)..controls (1,0.5) and (0,0.5) .. (0,1);
        \draw (0,0) ..controls (0,0.5) and (1,0.5) .. (1,1) node[tikzdot, pos=0.75]{};
        } \hspace{2mm}+ \hspace{2mm}\tikzdiag[scale=1]{
        \draw (0,0)--(0,1);
        \draw (1,0)--(1,1);
        } \hspace{30mm} \tikzdiag[xscale=-1]{
        \draw (1,0)..controls (1,0.5) and (0,0.5) .. (0,1);
        \draw (0,0) ..controls (0,0.5) and (1,0.5) .. (1,1) node[tikzdot, pos=0.75]{};
        } \hspace{2mm}= \hspace{2mm}\tikzdiag[xscale=-1]{
        \draw (1,0)..controls (1,0.5) and (0,0.5) .. (0,1);
        \draw (0,0) ..controls (0,0.5) and (1,0.5) .. (1,1) node[tikzdot, pos=0.25]{};
        } \hspace{2mm}+ \hspace{2mm}\tikzdiag[scale=1]{
        \draw (0,0)--(0,1);
        \draw (1,0)--(1,1);
        }
      \ee
      \item The relations that hold for all $\mu_i$,
      \be 
      \tikzdiag[scale=1]{
        \draw[pstdhl] (1,0)..controls (1,0.5) and (0,0.5) .. (0,1);
        \draw (0,0) ..controls (0,0.5) and (1,0.5) .. (1,1) node[tikzdot, pos=0.25]{};
        } \hspace{2mm}= \hspace{2mm}\tikzdiag[scale=1]{
        \draw (1,0)[pstdhl]..controls (1,0.5) and (0,0.5) .. (0,1);
        \draw (0,0) ..controls (0,0.5) and (1,0.5) .. (1,1) node[tikzdot, pos=0.75]{};
        } 
        \hspace{40mm}
        \tikzdiag[xscale=-1]{
        \draw (1,0)[pstdhl]..controls (1,0.5) and (0,0.5) .. (0,1);
        \draw (0,0) ..controls (0,0.5) and (1,0.5) .. (1,1) node[tikzdot, pos=0.75]{};
        } \hspace{2mm}= \hspace{2mm}\tikzdiag[xscale=-1]{
        \draw (1,0)[pstdhl]..controls (1,0.5) and (0,0.5) .. (0,1);
        \draw (0,0) ..controls (0,0.5) and (1,0.5) .. (1,1) node[tikzdot, pos=0.25]{};
        }
      \ee
      \be 
      \tikzdiag[scale=2]{ 
    \draw (0,0.5)..controls (0,0.75) and (0.5, 0.75).. (0.5, 1);
      \draw (0,0) -- (1,1)  ;
      \draw[pstdhl](1,0) --(0,1);
      \draw (0.5,0)..controls (0.5,0.25) and (0,0.25) .. (0,0.5);
      } \hspace{2mm}
      =\hspace{2mm}
      \tikzdiag[scale=2]{ 
    \draw (0.5,0)..controls (0.5,0.25) and (1,0.25) .. (1,0.5);
      \draw (0,0) -- (1,1)  ;
      \draw[pstdhl] (1,0) --(0,1);
      \draw (1,0.5)..controls (1,0.75) and (0.5, 0.75).. (0.5, 1);
      } \hspace{30mm} \tikzdiag[scale=2,xscale=-1]{ 
    \draw (0,0.5)..controls (0,0.75) and (0.5, 0.75).. (0.5, 1);
      \draw (0,0) -- (1,1)  ;
      \draw[pstdhl](1,0) --(0,1);
      \draw (0.5,0)..controls (0.5,0.25) and (0,0.25) .. (0,0.5);
      } \hspace{2mm}
      =\hspace{2mm}
      \tikzdiag[scale=2,xscale=-1]{ 
    \draw (0.5,0)..controls (0.5,0.25) and (1,0.25) .. (1,0.5);
      \draw (0,0) -- (1,1)  ;
      \draw[pstdhl] (1,0) --(0,1);
      \draw (1,0.5)..controls (1,0.75) and (0.5, 0.75).. (0.5, 1);
      }
      \ee
      \item The relations that hold only for red strands $\mu_i \in \mathbb{N}$, 
      \be 
      \tikzdiag[scale=2,scale =1]{
      \draw[stdhl] (0,0) node[below]{\small $\mu_i$} ..controls (1,0.25) and (1,0.75) .. (0,1);
      \draw (1,0) ..controls (0,0.25) and (0,0.75) .. (1,1);
      } \hspace{2mm}= \hspace{2mm} \tikzdiag[scale=2,scale =1]{
      \draw[stdhl] (0,0) node[below]{\small $\mu_i$} -- (0,1);
      \draw (1,0) -- (1,1) node[midway,tikzdot]{} node[midway,right]{\small $\mu_i$};
      } \hspace{30mm}
      \tikzdiag[scale=2,xscale =-1]{
      \draw[stdhl] (0,0) node[below]{\small $\mu_i$} ..controls (1,0.25) and (1,0.75) .. (0,1);
      \draw (1,0) ..controls (0,0.25) and (0,0.75) .. (1,1);
      } \hspace{2mm}= \hspace{2mm} \tikzdiag[scale=2,xscale =-1]{
      \draw[stdhl] (0,0) node[below]{\small $\mu_i$} -- (0,1);
      \draw (1,0) -- (1,1) node[midway,tikzdot]{} node[midway,right]{\small $\mu_i$};
      }
      \ee
      \be 
      \tikzdiag[scale=2]{ 
      \draw (0,0) -- (1,1)  ;
      \draw(1,0) --(0,1);
      \draw[stdhl] (0,0.5) ..controls (0,0.75) and (0.5, 0.75).. (0.5, 1);
      \draw[stdhl] (0.5,0) node[below]{\small $\mu_i$} ..controls (0.5,0.25) and (0,0.25) .. (0,0.5);
      } \hspace{2mm}
      =\hspace{2mm}
      \tikzdiag[scale=2]{ 
      \draw (0,0) -- (1,1)  ;
      \draw (1,0) --(0,1);
      \draw[stdhl]  (0.5,0) node[below]{\small $\mu_i$} ..controls (0.5,0.25) and (1,0.25) .. (1,0.5);
      \draw[stdhl] (1,0.5)..controls (1,0.75) and (0.5, 0.75).. (0.5, 1);
      }\hspace{2mm} + \hspace{2mm} \sum_{a+b +1 = \mu_i}\tikzdiag[scale=2]{ 
      \draw (0,0) -- (0,1) node[midway,tikzdot]{} node[midway,left]{\small $b$};
      \draw (1,0) --(1,1) node[midway,tikzdot]{} node[midway,right]{\small $a$};
      \draw[stdhl] (0.5,0) node[below]{\small $\mu_i$}--(0.5,1);
      }
      \ee
      \item The relations that hold only for blue strands ($\mu_i$ generic)
      \be 
      \tikzdiag[scale=2,scale =1]{
      \draw[vstdhl] (0,0) node[below]{\small $\mu_i$} ..controls (1,0.25) and (1,0.75) .. (0,1);
      \draw (1,0) ..controls (0,0.25) and (0,0.75) .. (1,1);
      } \hspace{2mm}= \hspace{2mm} 0 \hspace{30mm}
      \tikzdiag[scale=2,xscale =-1]{
      \draw[vstdhl] (0,0) node[below]{\small $\mu_i$} ..controls (1,0.25) and (1,0.75) .. (0,1);
      \draw (1,0) ..controls (0,0.25) and (0,0.75) .. (1,1);
      } \hspace{2mm}= \hspace{2mm} 0
      \ee
      \be 
      \tikzdiag[scale=2]{ 
      \draw (0,0) -- (1,1)  ;
      \draw(1,0) --(0,1);
      \draw[vstdhl] (0,0.5) ..controls (0,0.75) and (0.5, 0.75).. (0.5, 1);
      \draw[vstdhl] (0.5,0) node[below]{\small $\mu_i$} ..controls (0.5,0.25) and (0,0.25) .. (0,0.5);
      } \hspace{2mm}
      =\hspace{2mm}
      \tikzdiag[scale=2]{ 
      \draw (0,0) -- (1,1)  ;
      \draw (1,0) --(0,1);
      \draw[vstdhl]  (0.5,0) node[below]{\small $\mu_i$} ..controls (0.5,0.25) and (1,0.25) .. (1,0.5);
      \draw[vstdhl] (1,0.5)..controls (1,0.75) and (0.5, 0.75).. (0.5, 1);
      }\hspace{2mm}  \ee
      \item The nail relations, 
      \be 
      \tikzdiag[scale=1]{
        \draw (1,0)..controls (1,0.5) and (0,0.5) .. (0,0.5);
        \draw (0,0.5) ..controls (0,0.5) and (1,0.5) .. (1,1) node[pos=.75,tikzdot]{};
        \draw[pstdhl] (0,0) node[below]{\small $\mu_i$} --(0,1) node[midway,nail]{};
        } \hspace{2mm} = \hspace{2mm} \tikzdiag[scale=1]{
        \draw (1,0)..controls (1,0.5) and (0,0.5) .. (0,0.5) node[pos=.25,tikzdot]{};
        \draw (0,0.5) ..controls (0,0.5) and (1,0.5) .. (1,1);
        \draw[pstdhl] (0,0) node[below]{\small $\mu_i$} --(0,1) node[midway,nail]{};
        } 
        \hspace{20mm} \tikzdiag[scale=1]{
        \draw (1,0)..controls (1,0.5) and (0,0.5) .. (0,0.5);
        \draw (0,0.5) ..controls (0,0.5) and (1,0.5) .. (1,2);
        \draw (2,0)..controls (2,1.5) and (0,1.5) .. (0,1.5);
        \draw (0,1.5) ..controls (0,1.5) and (2,1.5) .. (2,2);
        \draw[pstdhl] (0,0) node[below]{\small $\mu_i$} --(0,2) node[pos=.25,nail]{} node[pos=.75,nail]{} ;
        }\hspace{2mm} = -\hspace{2mm}
        \tikzdiag[scale=1]{
        \draw (1,0)..controls (1,1.5) and (0,1.5) .. (0,1.5);
        \draw (0,1.5) ..controls (0,1.5) and (1,1.5) .. (1,2);
        \draw (2,0)..controls (2,.5) and (0,.5) .. (0,.5);
        \draw (0,.5) ..controls (0,.5) and (2,.5) .. (2,2);
        \draw[pstdhl] (0,0) node[below]{\small $\mu_i$} --(0,2) node[pos=.25,nail]{} node[pos=.75,nail]{} ;
        }
      \ee
      \be \tikzdiag[scale=1]{
        \draw (1,0)..controls (1,.5) and (0,.5) .. (0,.5);
        \draw (0,.5) .. controls (1,.5) and (1,1.5) ..(0,1.5);
        \draw (0,1.5) ..controls (1,1.5) and (1,2) .. (1,2);
        \draw[pstdhl] (0,0) node[below]{\small $\mu_i$} --(0,2) node[pos=.25,nail]{} node[pos=.75,nail]{} ;
        } \hspace{2mm} = \hspace{2mm} 0 \ee
      
    \end{itemize}
    Additionally, one turns $\Tb^{\underline{\mu}}$ into a differentially graded algebra, $(\Tb^{\underline{\mu}}, d_{\underline{\mu}})$, by defining the differential $d_{\underline{\mu}}$ to be zero on dots and crossings and,
    \be \label{eq:differential} d_{\underline{\mu}} \left(
    \tikzdiag[scale=1]{
        \draw (1,0)..controls (1,0.5) and (0,0.5) .. (0,0.5);
        \draw (0,0.5) ..controls (0,0.5) and (1,0.5) .. (1,1);
        \draw[pstdhl] (0,0) node[below]{\small $\mu_1$} --(0,1) node[midway,nail]{};
        } \right) = \begin{cases}
            \tikzdiag[scale=1,scale =1]{
      \draw[stdhl] (0,0) node[below]{\small $\mu_1$} -- (0,1);
      \draw (1,0) -- (1,1) node[midway,tikzdot]{} node[midway,right]{\small $\mu_1$};
      } &\text{if }\mu_1 \in \mathbb{N} \\
            0&\text{otherwise}
        \end{cases}
        \ee
    and extending it via the Leibnitz rule.
\end{defn}
\begin{rem}
    For most of this paper, we will be focused on the case that $\mu_1$, the leftmost strand, is generic so that $d_{\underline{\mu}} =0$ and $(\Tb^{\underline{\mu}}, d_{\underline{\mu}}) = (\Tb^{\underline{\mu}}, 0)$. For this reason, we often don't write the differential and denote $(\Tb^{\underline{\mu}}, 0) \rightarrow \Tb^{\underline{\mu}}$. This will greatly simplify problems which apriori require careful analyses of the underlying dg-enhanced derived dg-categories and other flavors of abstract non-sense, and mostly deal in the familiar land of homological algebra. For instance, for a dg $\Tb^{\underline{\mu}}$-module, $(M,d_M)$, this makes it so that the graded dimension, 
    $$gdim_{q,x} (M) := \sum_{i,j,k} x^i q^j t^k H_k(M_{(i,j)},d_M)$$
    is simply,
    $$gdim_{q,x} (M) := \sum_{i,j,k} x^i q^j t^k M_{(i,j,k)}$$
    Therefore $gdim_{q,x}$ will behave well under exact sequences and allow us to make dimensional arguments. 
    
\end{rem}

The multiplication structure implies the algebra decomposes into, 
$$\Tb^{\underline{\mu}} = \bigoplus_{\rho}\Tb^{\underline{\mu}}1_\rho$$
Where the direct sum in $\rho = (b_1,b_2,...,b_r)$ is over all tuples in $\mathbb{N}^r$ (recall $r$ is the number of colored strands) and $1_\rho$ is the idempotent corresponding to the configuration of black strands and colored strands associated to such a tuple. We abuse notation a bit to include the data $\underline{\mu}$ in $\rho$. 
\[1_\rho \hspace{2mm} = \hspace{2mm }\tikzdiag[scale=1]{
\draw[pstdhl] (0,0) node[below]{\small $\mu_1$}--(0,1);
\draw (0.25,0)--(0.25,1);
\draw (0.75,0)--(0.75,1) node[midway,left]{$...$};
\tikzbrace{0.25}{0.75}{-0.1}{$b_1$}
}
\tikzdiag[scale=1]{
\draw[pstdhl] (0,0) node[below]{\small $\mu_2$}--(0,1);
\draw (0.25,0)--(0.25,1);
\draw (0.75,0)--(0.75,1) node[midway,left]{$...$} node[midway,right]{\large $\cdots$};
\tikzbrace{0.25}{0.75}{-0.1}{$b_2$}
}
\tikzdiag[scale=1]{
\draw[pstdhl] (0,0) node[below]{\small $\mu_r$}--(0,1);
\draw (0.25,0)--(0.25,1);
\draw (0.75,0)--(0.75,1) node[midway,left]{$...$};
\tikzbrace{0.25}{0.75}{-0.1}{$b_r$}
}
\]
The resulting projective (or cofibrant) modules over this algebra, $\Pb^{\underline{\mu}}_\rho$, are therefore drawn as, 
$$\Pb^{\underline{\mu}}_\rho \hspace{2mm} = \hspace{2mm} \tikzdiag[yscale=.75]{
\draw[pstdhl] (0,0) node[below]{\small $\mu_1$}--(0,1);
\draw (0.25,0)--(0.25,1);
\draw (0.75,0)--(0.75,1) node[midway,left]{$...$};
\draw[pstdhl] (1,0) node[below]{\small $\mu_2$}--(1,1);
\draw (1.25,0)--(1.25,1);
\draw (1.75,0)--(1.75,1) node[midway,left]{$...$} node[midway, right]{\large $\hspace{2mm}\cdots$};
\draw[pstdhl] (3,0) node[below]{\small $\mu_r$}--(3,1);
\draw (3.25,0)--(3.25,1);
\draw (3.75,0)--(3.75,1) node[midway,left]{$...$};
\filldraw [fill=white, draw=black] (-0.25,1) rectangle (4.25,2) node[midway] { $\Tb$};
}$$

From the dgKLRW algebra, $\Tb^{\underline{\mu}}$, one constructs the (dg-enhanced) derived dg-category of dg $\Tb^{\underline{\mu}}$-modules, which we denote $\mathcal{D}_{dg}(\Tb^{\underline{\mu}})$ (see details of the general construction in Section 3.1.5 of \cite{Na}).\\ Let us describe the categorification of the $F$ and $E$ operators in this category. Let $\Tb^{\underline{\mu}}_b$ be the subalgebra of $\Tb^{\underline{\mu}}$ on $b$ black strands (clearly $\Tb^{\underline{\mu}} = \bigoplus_{b\geq 0} \Tb^{\underline{\mu}}_b$). Then, we have, 

\be \label{eq:F_functor}\mathcal{F}_b(-) := \Tb^{\underline{\mu}}_{b+1}1_{b,1} \bigotimes_{\Tb^{\underline{\mu}}_b} -:\mathcal{D}_{dg}(\Tb^{\underline{\mu}}_b)\rightarrow \mathcal{D}_{dg}(\Tb^{\underline{\mu}}_{b+1})\ee 
\be \mathcal{E}_b(-) := q^{2b+1-|\underline{\mu}|}\hspace{2mm}1_{b,1}\Tb^{\underline{\mu}}_{b+1} \bigotimes_{\Tb^{\underline{\mu}}_{b+1}} -:\mathcal{D}_{dg}(\Tb^{\underline{\mu}})_{b+1}\rightarrow \mathcal{D}_{dg}(\Tb^{\underline{\mu}}_b)\ee 

We also define the grading shift functor, 
$$\mathcal{K}_b(-):= q^{|\underline{\mu}|-2b}\cdot \mathbb{1}(-) :\mathcal{D}_{dg}(\Tb^{\underline{\mu}}_b)\rightarrow \mathcal{D}_{dg}(\Tb^{\underline{\mu}}_{b})$$

Of course, to extend these to the full $\Tb^{\underline{\mu}}$ algebra, we simply take, $\mathcal{F} := \bigoplus_{b\geq0}\mathcal{F}_b$, $\mathcal{E} := \bigoplus_{b\geq0}\mathcal{E}_b$, $\mathcal{K} := \bigoplus_{b\geq0}\mathcal{K}_b$. This leads to the categorification of the commutation relation $EF-FE = \frac{K-K^{-1}}{q-q^{-1}}$, 
\begin{thm}{\cite{Na}}
    There is a quasi-isomorphism
    $$Cone(\mathcal{F}\mathcal{E} \rightarrow \mathcal{E}\mathcal{F}) \cong Cone(\bigoplus_{p\geq 0} q^{2p+1}\mathcal{K} \rightarrow \bigoplus_{p\geq 0} q^{2p+1}\mathcal{K}^{-1} )$$
\end{thm}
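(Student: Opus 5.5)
We will prove the quasi-isomorphism at the level of bimodules. Both $\mathcal{F}\mathcal{E}$ and $\mathcal{E}\mathcal{F}$ restricted to $b$ black strands are given by tensoring with explicit $(\Tb^{\underline{\mu}}_b,\Tb^{\underline{\mu}}_b)$-bimodules. These are
$$\mathcal{F}\mathcal{E} \simeq \Tb_b 1_{b-1,1}\otimes_{\Tb_{b-1}} 1_{b-1,1}\Tb_b, \qquad \mathcal{E}\mathcal{F}\simeq 1_{b,1}\Tb_{b+1}1_{b,1},$$
up to the grading shifts $q^{2b+1-|\underline{\mu}|}$ and $q^{2b-1-|\underline{\mu}|}$. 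The natural map $\mathcal{F}\mathcal{E}\to\mathcal{E}\mathcal{F}$ is induced by adding a black strand on the right and composing. Concretely, $x\otimes y\mapsto x\,\tau\, y$, where $\tau$ is the crossing moving the last black strand to the far right. The task is then to identify the cokernel and kernel of this map, or rather its mapping cone, with a direct sum of shifted identity bimodules.

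The key step is a basis theorem and filtration for $1_{b,1}\Tb_{b+1}1_{b,1}$, following the Webster/KLR strategy as adapted in \cite{Na}. We consider diagrams in $1_{b,1}\Tb_{b+1}1_{b,1}$ and sort them by whether the rightmost black strand (the one added by $\mathcal{F}$) at the top is connected to the rightmost strand at the bottom. If it is not connected, the diagram factors through the image of $\mathcal{F}\mathcal{E}$. If it is connected, the diagram can be reduced, using the KLR relations, the colored-crossing relations, and the nail relations, to the form (element of $\Tb_b$) $\otimes$ (a vertical strand on the right carrying $p$ dots, possibly nailed to the leftmost blue strand). Here a dot contributes $q^2$ and a nail contributes $hq^{2\mu_1}=h x^2$-type degree. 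This yields a short exact sequence of bimodules
$$0\to \mathcal{F}\mathcal{E}\to\mathcal{E}\mathcal{F}\to \bigoplus_{p\ge0}\big(q^{2p+1}\mathcal{K}^{-1}\oplus h\,q^{2p+1}\mathcal{K}\big)\to0,$$
after the grading shifts are accounted for. The dotted terms give $\mathcal{K}^{-1}$. The nailed terms give the $\mathcal{K}$ summands, placed in homological degree shifted by one because of the $h$-degree of the nail.

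Next we verify injectivity and exactness. This is a graded dimension count, legitimate here because $d_{\underline{\mu}}=0$ when $\mu_1$ is generic (see the remark above), and in general by a filtration argument. We compute $gdim$ of $1_{b,1}\Tb_{b+1}1_{b,1}$ from the basis of \cite{Na}, and check that it equals the sum of the $gdim$ of $\mathcal{F}\mathcal{E}$ and that of the quotient. Once this short exact sequence holds, the cone of $\mathcal{F}\mathcal{E}\to\mathcal{E}\mathcal{F}$ is quasi-isomorphic to the quotient. This quotient is, as a dg-bimodule, the cone of the zero-type map $\bigoplus q^{2p+1}\mathcal{K}\to\bigoplus q^{2p+1}\mathcal{K}^{-1}$, because the nailed part sits one homological degree up. When $\mu_1\in\mathbb{N}$ the differential \eqref{eq:differential} makes this map nonzero: it sends a nail to $\mu_1$ dots. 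That is exactly the map in the cone, which recovers the finite-dimensional relation $[\mu_1-2b]$.

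The main obstacle is the basis and straightening step. We must show that every diagram where the new strand passes through is equivalent to a unique normal form with dots and at most one nail on that strand, and that the nail relations (nail squares zero, and two nails anticommute) do not produce additional terms. To address this, we would first try induction on $b$, sliding the rightmost strand leftward past each colored and black strand using the blue double-crossing relation and the dot-slide relations. The correction terms from the red relations would be tracked explicitly, and we would show they land in the image of $\mathcal{F}\mathcal{E}$.
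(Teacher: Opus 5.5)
Your architecture is the standard one. It is also the route of \cite{Na}; this paper does not prove the theorem and only quotes it. That route is an exact sequence of $(\Tb_b,\Tb_b)$-bimodules $0\to\mathcal{F}\mathcal{E}\to\mathcal{E}\mathcal{F}\to\bigoplus_{p\geq 0}\big(q^{2p+1}\mathcal{K}^{-1}\oplus h\,q^{2p+1}\mathcal{K}\big)\to 0$, read off from the basis theorem, followed by $\mathrm{Cone}\simeq$ quotient. Your degree count for the dotted and nailed summands is right. Note that $\mathcal{F}\mathcal{E}$ carries $q^{2b-1-|\underline{\mu}|}$ and $\mathcal{E}\mathcal{F}$ carries $q^{2b+1-|\underline{\mu}|}$; this is what makes $x\otimes y\mapsto x\tau y$ degree-preserving.

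The step that fails as stated is the sorting. A diagram whose top-right strand is not connected to the bottom-right one need not lie in $\mathrm{Im}(\mathcal{F}\mathcal{E})$, and the correction terms you plan to track do not land there in general. Take $\underline{\mu}=(\lambda)$ and $b=1$, and write $X_j$ for a dot on the $j$-th black strand and $\tau_1$ for the black crossing.
\begin{itemize}
\item The basis element $X_2\tau_1$ of Theorem~\ref{thm:Tbasis} (dot at the top) is not connected.
\item The dot-slide relation gives $X_2\tau_1=\tau_1X_1-1_{1,1}$, where $\tau_1X_1$ is the image of $1\otimes X_1$.
\item Hence $X_2\tau_1\equiv -1_{1,1}$ modulo the image, and $1_{1,1}$ is the $p=0$ generator of the quotient.
\item This is nonzero. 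In this degree the algebra has basis $1_{1,1},X_1\tau_1,X_2\tau_1$, while the image is spanned by $X_1\tau_1$ and $\tau_1X_1=X_2\tau_1+1_{1,1}$, which do not produce $1_{1,1}$.
\end{itemize}
The red relation with the sum over $a+b+1=\mu_i$ behaves the same way. Its correction terms delete the black--black crossing and reconnect the top-right strand to the bottom-right one, so they land in the complement, not in the image.

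The repair is routine. Let $C\subset 1_{b,1}\Tb_{b+1}1_{b,1}$ be spanned by $X_{b+1}^p(\Tb_b\otimes 1)$ and $\theta_{b+1}X_{b+1}^p(\Tb_b\otimes 1)$. Here $\Tb_b\otimes 1$ means adding a vertical strand on the right, and $\theta_{b+1}$ pulls the last strand to the nail and back.
\begin{itemize}
\item Prove $\mathrm{Im}(\mathcal{F}\mathcal{E})+C$ is everything by a single induction on the number of crossings. Non-connected diagrams lie in $\mathrm{Im}(\mathcal{F}\mathcal{E})$, and connected ones in $C$, only modulo diagrams with fewer crossings, which may be of either type.
\item The degreewise identity $\mathrm{gdim}\,\mathcal{E}\mathcal{F}=\mathrm{gdim}\,\mathcal{F}\mathcal{E}+\mathrm{gdim}\,C$ then forces $\mathcal{F}\mathcal{E}\oplus C\to\mathcal{E}\mathcal{F}$ to be bijective.
\item This gives injectivity and $\mathcal{E}\mathcal{F}/\mathcal{F}\mathcal{E}\cong C$ at once. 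The sign twist in the left action on the nailed part, coming from anticommuting nails, is the sign of the shift $[1]$.
\end{itemize}
Uniqueness of normal forms, which you name as the main obstacle, is already Theorem~\ref{thm:Tbasis}. The real work is the dimension identity. Its left side needs an explicit basis of $\mathcal{F}\mathcal{E}$, i.e.\ freeness of $1_{b-1,1}\Tb_b$ over $\Tb_{b-1}$ with a known basis.

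Similarly, when $\mu_1\in\mathbb{N}$ the map in the cone is the class of $d_{\underline{\mu}}(\theta_{b+1})$ modulo $\mathrm{Im}(\mathcal{F}\mathcal{E})$, not literally ``$\mu_1$ dots'': the black bigons of the nailed loop must be straightened first. This is harmless, since the statement leaves the map unspecified.
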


Similarly, we have the categorification theorem, 
\begin{thm}{\cite{Na}} Let $\boldsymbol{K}^\Delta_{0,\Q}(-)$ denote the asymptotic Grothendieck group (see \cite{Na3}) with rational coefficients. Additionally, let $V(\underline{\mu}) := V(\mu_1)\otimes V(\mu_2)\otimes\cdots \otimes V(\mu_r)$, where we mean that if $\mu_i$ is generic, then $V(\mu_i) = M(\lambda)$. There is an isomorphism
    $$V(\underline{\mu}) \rightarrow \boldsymbol{K}^\Delta_{0,\Q}(\Tb^{\underline{\mu}}) $$
    sending, 
    $$F^{b_r}(\cdots F^{b_2}(F^{b_1}v_{\mu_1,0}\otimes v_{\mu_2,0})\otimes \cdots \otimes v_{\mu_r,0} ) \mapsto [\Pb^{\underline{\mu}}_\rho]$$
    
\end{thm}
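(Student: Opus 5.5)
The statement is the categorification theorem of \cite{Na}: an isomorphism $V(\underline{\mu}) \to \boldsymbol{K}^\Delta_{0,\Q}(\Tb^{\underline{\mu}})$ sending the vector $F^{b_r}(\cdots F^{b_1}v_{\mu_1,0}\otimes\cdots)\otimes v_{\mu_r,0}$ to $[\Pb^{\underline{\mu}}_\rho]$. I would argue by induction on the number $r$ of colored strands, building the tensor product one factor at a time and exploiting that $\Tb^{\underline{\mu}}$ carries the categorical $\mathfrak{sl}_2$ action by $\mathcal{F},\mathcal{E},\mathcal{K}$ defined above.

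\textbf{Step 1: the map is $U_q(\mathfrak{sl}_2)$-linear.} Define the map on the distinguished vectors as stated and extend linearly. The vectors $F^{b_r}(\cdots F^{b_1}v_{\mu_1,0}\cdots)\otimes v_{\mu_r,0}$ span $V(\underline{\mu})$; this follows by induction on $r$, since $\Delta(F)=F\otimes K+1\otimes F$ lets us solve triangularly for $w\otimes v_{\mu_r,k}$. The projective $\Pb^{\underline{\mu}}_\rho$ is obtained from the empty idempotent by inducing black strands. Concretely, $\mathcal{F}$ adds a black strand on the far right, and for the rightmost block this matches applying $F$ to the last factor. So $[\mathcal{F}\Pb_\rho]=[\Pb_{\rho+e_r}]$, and $\mathcal{K}$ acts by the correct grading shift. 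For $E$, I would use the quasi-isomorphism $Cone(\mathcal{F}\mathcal{E}\to\mathcal{E}\mathcal{F})\simeq Cone(\bigoplus q^{2p+1}\mathcal{K}\to\bigoplus q^{2p+1}\mathcal{K}^{-1})$. In the asymptotic Grothendieck group the direct sums $\bigoplus_p q^{2p+1}$ converge to $\frac{1}{q^{-1}-q}$-type series, and this yields $[E,F]=\frac{K-K^{-1}}{q-q^{-1}}$. Hence the images of the distinguished vectors satisfy the relations of $V(\underline{\mu})$, and the map is a well-defined module homomorphism.

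\textbf{Step 2: surjectivity.} The classes of the $\Pb_\rho$ generate $\boldsymbol{K}^\Delta_{0,\Q}$, since every compact (c.b.l.f.) dg-module has a resolution by such projectives whose class converges in the asymptotic topology.

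\textbf{Step 3: injectivity.} This is the main obstacle. I would compare graded dimensions weight space by weight space. In the $b$-black-strand part I would construct standard modules, namely quotients of $\Pb_\rho$ killing diagrams where black strands cross leftward over colored strands. I would then show that the $\Pb_\rho$ have finite filtrations by standards, unitriangular with respect to a partial order on $\rho$. The classes of standards match the basis $v_{\mu_1,b_1}\otimes\cdots\otimes v_{\mu_r,b_r}$ up to unitriangular change of basis, and the number of standards equals the dimension of the weight space. Linear independence of their classes can be detected by the Euler form pairing $\mathrm{gdim}\,\mathrm{Hom}(\Pb_\rho,-)$. This pairing is computable from a basis theorem for $\Tb^{\underline{\mu}}$ (dots, crossings, and nails on reduced words). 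The hard part is verifying that the pairing matrix on standards is invertible over $\Q((x,q))$, which I would attempt by exhibiting its triangularity via the filtration.
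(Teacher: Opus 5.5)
The paper gives no proof of this statement; it quotes it from \cite{Na}. Your outline therefore has to stand on its own, and as written it has two genuine gaps.

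\textbf{Step 1 does not define a map.} ``Extend linearly'' from the vectors $F^{b_r}(\cdots F^{b_1}v_{\mu_1,0}\otimes\cdots\otimes v_{\mu_r,0})$ is only legitimate if they are linearly independent.
\begin{itemize}
\item That holds when every $\mu_i$ is generic: via $\Delta(F)=F\otimes K+1\otimes F$ these vectors are unitriangular in the pure tensors.
\item It fails once some $\mu_i\in\mathbb{N}$. For $\underline{\mu}=(\lambda,1)$ and two black strands there are three $\rho$'s in a two-dimensional weight space, and one computes $F^2(v_{\lambda,0}\otimes v_{1,0})=[2]_q\,F(Fv_{\lambda,0}\otimes v_{1,0})-F^2v_{\lambda,0}\otimes v_{1,0}$. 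So you would have to prove $[\Pb_{0,2}]=[2]_q[\Pb_{1,1}]-[\Pb_{2,0}]$ in $\boldsymbol{K}^\Delta_{0,\Q}$.
\end{itemize}
Such relations encode the red-strand relations of the algebra. They do not follow from having a $U_q(\mathfrak{sl}_2)$-action on $\boldsymbol{K}^\Delta_{0,\Q}$ with $[\mathcal{F}][\Pb_\rho]=[\Pb_{\rho+e_r}]$. Starting from the empty idempotent, $\mathcal{F}$ only reaches $\Pb_{(0,\dots,0,b)}$, so your ``hence'' is a non sequitur.

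The categorified commutator alone does not give $E$-equivariance either. It only propagates $E$-compatibility along $F$-strings, and $V(\underline{\mu})$ is not generated under $F$ by $v_{\mu_1,0}\otimes\cdots\otimes v_{\mu_r,0}$. You would still need to compute $[\mathcal{E}\Pb_\rho]$ for all $\rho$ with $b_r=0$ and match it with $Ew\otimes v_{\mu_r,0}$. That is the strand-adding induction you announce but never set up.

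A cleaner repair is to define the map on the pure-tensor basis:
\begin{itemize}
\item send $F^{c_1}v_{\mu_1,0}\otimes\cdots\otimes F^{c_r}v_{\mu_r,0}$ to the class of the cone standard $\Sb_c$;
\item prove $\Sb_c$ is acyclic when $c_i>\mu_i$ at a red strand;
\item recover $[\Pb_\rho]$ from the iterated extension of Proposition~\ref{prop:it.ext.proj}, checking that the shifts $q^{\deg x_\phi}$ reproduce the coproduct coefficients.
\end{itemize}
With this, no $U_q(\mathfrak{sl}_2)$-equivariance is needed for the statement as formulated.

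\textbf{The standard modules in Step 3 are the wrong objects once blue strands are present.} A black strand that crosses a blue strand and comes back is zero. Hence the right-multiplication maps $\tau$ in the iterated cones of Section~\ref{sec:Standards} have kernels, and $\Sb_\rho$ is not quasi-isomorphic to the quotient of $\Pb_\rho$ you describe.

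Take $\underline{\mu}=(\lambda,\lambda)$ and $\rho=(0,1)$.
\begin{itemize}
\item The map $x\Pb_{1,0}\to\Pb_{0,1}$ kills the un-nailed diagrams in $1_{0,1}\Tb 1_{1,0}$, since they create a black/blue bigon. So $H(\Sb_{0,1})$ is not concentrated in one degree.
\item The quotient $\Pb_{0,1}/\mathrm{Im}\,\tau$ has the infinite periodic resolution $\cdots\to x^2\Pb_{0,1}\to x\Pb_{1,0}\to\Pb_{0,1}$.
\item Its class is therefore $(1-x^2)^{-1}[\Sb_{0,1}]$, not the pure tensor $v_{\lambda,0}\otimes Fv_{\lambda,0}$.
\end{itemize}

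You should use the cone standards, whose classes are exactly the pure tensors. Your Euler-form argument then needs two things your sketch does not supply:
\begin{itemize}
\item $1_\kappa\Sb_\rho$ is acyclic, or at least has vanishing Euler characteristic, for $\kappa$ not above $\rho$ in the preorder of Section~\ref{sec:Standards}. Here $1_{1,0}\Sb_{0,1}$ is acyclic by a direct basis check.
\item The diagonal entries are invertible. For example, at $h=-1$ one gets $\chi(1_{0,1}\Sb_{0,1})=(1-x^2)/(1-q^2)$. Both the nails and the vanishing blue bigon enter this computation.
\end{itemize}
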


\section{Standard Modules}\label{sec:Standards}
In \cite{Web}, the notion of standard modules were introduced to categorify the tensor product basis for tensors of finite-dimensional representations of the quantum group $V_{\lambda_1}\otimes V_{\lambda_2}\otimes \cdots \otimes V_{\lambda_r}$. In \cite{Na}, this notion was extended to the case when the tensor products include Verma module representations of generic complex weight. There, modules $\Sb^{\underline{\mu}}_{\rho} $ categorifying the basis elements $F^{b_r}v_{\mu_r}\otimes \cdots \otimes F^{b_1}v_{\mu_1} $ were constructed as well as the functor,
$$\mathbb{S}^{\underline{\mu}}:\mathcal{D}_{dg}(\Tb^{\mu_1} \otimes \cdots \otimes \Tb^{\mu_r}) \rightarrow \mathcal{D}_{dg}(\Tb^{\underline{\mu}} ) $$
$$\mathbb{S}^{\underline{\mu}} (-) = \Sb^{\underline{\mu}}\otimes^L_{\Tb}-$$
sending $\Pb_{b_1}^{\mu_1} \otimes \cdots \otimes \Pb_{b_r}^{\mu_r} \mapsto \Sb^{\underline{\mu}}_{\rho}$. In this section, we will briefly review these results as well as their generalization,

\be \label{eq: standard_functor}\mathbb{S}^{\underline{\mu_1}; ... ;\underline{\mu_\ell}}:\mathcal{D}_{dg}(\Tb^{\underline{\mu_1}} \otimes \cdots \otimes \Tb^{\underline{\mu_\ell}}) \rightarrow \mathcal{D}_{dg}(\Tb^{\underline{\mu}} ) \ee 

$$
\mathbb{S}^{\underline{\mu_1}; ... ;\underline{\mu_\ell}} (-) = \Sb^{\underline{\mu_1}; ... ;\underline{\mu_\ell}}\otimes^L_{\Tb}-$$

sending $\Pb_{\rho_1}^{\underline{\mu_1}} \otimes \cdots \otimes \Pb_{\rho_\ell}^{\underline{\mu_\ell}} \mapsto \Sb^{\underline{\mu_1}; ... ;\underline{\mu_\ell}}_{\rho_1;...;\rho_\ell}$.   \\ \\
The construction closely parallels that of \cite{Na}. First, fix $\underline{\mu_1},\underline{\mu_2},...,\underline{\mu_\ell}$ with each $\underline{\mu_i} = (\mu^i_1,\mu^i_2,...,\mu^i_{n_i})$. Similarly fix the black strand configurations $\rho_1, ..., \rho_\ell$ with $\rho_i = (b^i_1,...,b^i_{n_i})$. Let $J_{k,\rho_i}:= \{1,2,...,b^i_k\}$ and define, 
$$J_{\rho_i}:= \bigsqcup^{n_i}_{k=1}J_{k,\rho_i}$$
$$J_\rho = \bigsqcup^\ell_{i=2}J_{\rho_i}$$
For a subset $\mathbf{j}\subset J_\rho$, define $j_{k,\rho_i} = \mathbf{j}\cap J_{k,\rho_i} $ and $j_{\rho_i}:= \mathbf{j} \cap J_{\rho_i}$. We say $b<b'$ in $j_{\rho_i}$ if $b\in j_{k,\rho_i}$ and $b' \in j_{k',\rho_i}$ with $k\leq k'$. Let $j^i := (|j_{1,\rho_i}|,|j_{2,\rho_i}|,...,|j_{n_i,\rho_i}|)$. We consider the black strand configurations $\rho(\mathbf{j})$, defined by, 
$$\rho(\mathbf{j}) = (\rho_1 +(0,0,...,n(j^2)), \rho_2-j^2 + (0,0,...,n(j^3)),..., \rho_\ell -j^\ell)$$
where $n(j^i) = \sum^{n_i}_{k=1}|j_{k,\rho_i}|$. Then, for
shifted projectives of the kind, 
$$\Pb^{\underline{\mu}}_{\rho(\mathbf{j})} [|\mathbf{j}|]$$
where $\underline{\mu}$ is the concatenation of $\underline{\mu_1}, ... ,\underline{\mu_\ell}$, we may define maps of the following form. \\ \\
Whenever $\mathbf{j'}\subset \mathbf{j}$ such that $|\mathbf{j'}|= |\mathbf{j}|-1$, we know $\mathbf{j'}\cup\{b'\} =  \mathbf{j}$ for some $b'\in J_{k,\rho_i}$ for some $i$ and $k$. Then, there is a map, 
$$\tau_{\mathbf{j},\mathbf{j'}}:q^{deg\hspace{1mm}\tau_{\mathbf{j},\mathbf{j'}}}\Pb^{\underline{\mu}}_{\rho(\mathbf{j})} [|\mathbf{j}|]\rightarrow \Pb^{\underline{\mu}}_{\rho(\mathbf{j'})} [|\mathbf{j'}|] $$
constructed through right multiplication by the following element, 
$$\tau_{\mathbf{j},\mathbf{j'}} = \tikzdiag[yscale=2]{
\draw (-0.25,0)--(-.25,1)node[midway,left]{$...$};
\draw (-0.75,0)--(-.75,1) node[midway,left]{\large $\cdots$};
\draw[pstdhl] (0,0) node[below]{\small $\mu^i_1$}--(0,1);
\draw (0.25,0)--(0.25,1);
\draw (0.75,0)--(0.75,1) node[midway,left]{$...$};
\draw[pstdhl] (1,0) node[below]{\small $\mu^i_2$}--(1,1);
\draw (1.25,0)--(1.25,1);
\draw (1.75,0)--(1.75,1) node[pos=0.25,left]{$...$} node[midway, right]{\large $\hspace{2mm}\cdots$};
\draw[pstdhl] (3,0) node[below]{\small $\mu^i_k$}--(3,1);
\draw (3.25,0)--(3.25,1);
\draw (3.75,0)--(3.75,1) node[midway,left]{$...$} node[midway,right]{\large $\cdots$};
\draw (4,0)..controls (4,0.5) and (-1,0.5) .. (-1,1);
\tikzbrace{3.25}{3.75}{0.1}{\small $p_2$};
\tikzbrace{-.75}{-.25}{0.1}{\small $p_1$};
\tikzbraceop{1.25}{1.75}{0.9}{\small $\rho(\mathbf{j})_{i,2}$};
\tikzbraceop{.25}{.75}{0.9}{\small $\rho(\mathbf{j})_{i,1}$};
}$$

Where $p_1+p_2 = b'-1$ and $p_1 = |\{b'' \in j_{\rho_i}| b''<b'\}|$. Notice $deg\hspace{1mm}\tau_{\boldsymbol{j},\boldsymbol{j}'} = q^{-2(b'-1)-2\sum^{k-1}_{m=1}\rho(\boldsymbol{j})_{i,m}}q^{\sum^k_{m=1} \mu^i_k}$. As an extension of a result from \cite{Na}, we have, 
\begin{lem} \cite{Na}
    If $\boldsymbol{j}''' \subset \boldsymbol{j}' \subset \boldsymbol{j}$ and $\boldsymbol{j}''' \subset \boldsymbol{j}'' \subset \boldsymbol{j}$ such that $|\boldsymbol{j} | = |\boldsymbol{j}'|+1 = |\boldsymbol{j}''|+1 = |\boldsymbol{j}'''|+2$, then we have, 
    $$\tau_{\mathbf{j},\mathbf{j'}} \tau_{\mathbf{j}',\mathbf{j}'''} = \tau_{\mathbf{j},\mathbf{j}''} \tau_{\mathbf{j}'',\mathbf{j}'''} $$
\end{lem}

\begin{defn} \cite{Na}
    The standard module $\Sb^{\underline{\mu_1}; ... ;\underline{\mu_\ell}}_{\rho_1;...;\rho_\ell}$ is defined by the chain complex specified by, 
    $$\bigoplus_{\mathbf{j}\subset J_\rho} q^{\#_{\mathbf{j}}}\Pb^{\underline{\mu}}_{\rho(\mathbf{j})} [|\mathbf{j}|]$$
    With differentials given by (we omit the differentials coming from the differentially graded structure since we are again assuming $d_\mu =0$),
    $$d = \underset{}{\sum_{\boldsymbol{j}\subset J_\rho}} d_{\boldsymbol{j}}$$
    $$d_{\boldsymbol{j}} = \underset{|\boldsymbol{j}'|= |\boldsymbol{j}|-1}{\sum_{\boldsymbol{j}' \subset\boldsymbol{j}}} (-1)^{c_{\boldsymbol{j}-\boldsymbol{j}'}} \tau_{\boldsymbol{j},\boldsymbol{j}'}$$
    Above, $c_{b'} := |\{b'' \in \boldsymbol{j} | b''>b'\}|$ and $\#_{\boldsymbol{j}}$ is the uniquely determined $q,x$ grading shifts determined by the degree of the maps $\tau_{\boldsymbol{j},\boldsymbol{j}'}$. 
\end{defn}
The standard modules categorify the tensor product basis elements in the sense below, 
\begin{prop}
    In the asymptotic Grothenidieck group $\boldsymbol{K}^\Delta_{0,\Q}(-)$, we have, 
    $$[\Sb^{\underline{\mu_1}; ... ;\underline{\mu_\ell}}_{\rho_1;...;\rho_\ell}] = [\Pb^{\underline{\mu_1}}_{\rho_1}]\otimes [\Pb^{\underline{\mu_2}}_{\rho_2}] \otimes ... \otimes [\Pb^{\underline{\mu_\ell}}_{\rho_\ell}]$$
\end{prop}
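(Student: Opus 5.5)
Since the standard module is defined as a finite (bounded) complex of shifted projectives, its class in $\boldsymbol{K}^\Delta_{0,\Q}$ is the alternating sum of the classes of its terms:
$$[\Sb^{\underline{\mu_1}; ... ;\underline{\mu_\ell}}_{\rho_1;...;\rho_\ell}] = \sum_{\mathbf{j}\subset J_\rho} (-1)^{|\mathbf{j}|} q^{\#_{\mathbf{j}}}[\Pb^{\underline{\mu}}_{\rho(\mathbf{j})}].$$
Under the isomorphism $V(\underline{\mu})\cong \boldsymbol{K}^\Delta_{0,\Q}(\Tb^{\underline{\mu}})$, each $[\Pb^{\underline{\mu}}_{\rho(\mathbf{j})}]$ is the iterated vector $F^{b_r}(\cdots(F^{b_1}v_{\mu_1,0}\otimes v_{\mu_2,0})\cdots)$ with exponents read off from $\rho(\mathbf{j})$. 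The target $[\Pb^{\underline{\mu_1}}_{\rho_1}]\otimes\cdots\otimes[\Pb^{\underline{\mu_\ell}}_{\rho_\ell}]$ is the analogous vector for each block, tensored together. So the claim is a purely decategorified identity in $V(\underline{\mu})$.

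I would prove this identity by induction on $\ell$, reducing to the case $\ell=2$. Indeed, $J_\rho=\bigsqcup_{i\geq 2}J_{\rho_i}$ splits, and the sum over $\mathbf{j}$ factors into independent sums over $j_{\rho_2},\dots,j_{\rho_\ell}$, as does the grading $\#_{\mathbf{j}}$, which is additive in the $\tau$-degrees. Peeling off the last block $\rho_\ell$ reduces to the statement with $(\underline{\mu_1}\cdots\underline{\mu_{\ell-1}};\underline{\mu_\ell})$. For $\ell=2$, I would induct further on the number of colored strands $n_2$ in the second block, and within that on $b^2_{n_2}$. The key input is the coproduct $\Delta(F)=F\otimes K+1\otimes F$. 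Writing $w\otimes F^{b}u$ for $u$ a highest-weight-type vector of the second factor, one has
$$F(w\otimes F^{b-1}u)-w'\otimes F^{b}u = Fw\otimes KF^{b-1}u,$$
and $K$ acts by the scalar $xq^{\cdot}$ or $q^{\cdot}$. This rearranges to the $q$-binomial-free inclusion–exclusion formula
$$w\otimes F^{b}u=\sum_{k=0}^{b}(-1)^k c_k\,F^{b-k}\bigl(F^k w\otimes u\bigr),$$
where $c_k$ is a product of $K$-eigenvalues. Here $\binom{b}{k}$-many subsets appear because each of the $b$ black strands independently either stays in the second block or is moved left.

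Multiple strands in the second block are handled the same way: $F$ applied after later colored strands passes through using $\Delta$ iteratively, with each moved black strand picking up the $K$-eigenvalues of the colored strands and black strands it crosses. One then matches this expansion term by term with the subsets $\mathbf{j}$. A subset $\mathbf{j}$ records which black strands of $\rho_2$ are dragged to the far right end of block one, yielding $\rho(\mathbf{j})$; the sign $(-1)^{|\mathbf{j}|}$ matches the homological shift.

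The main obstacle is the grading bookkeeping. I must check that $\#_{\mathbf{j}}$, defined as the accumulated $\deg\tau$, equals exactly the product of $K$-eigenvalues $c_{\mathbf{j}}$ from the coproduct. That is, a strand in position $b'$ in column $k$ crossing $b'-1$ black strands contributes $q^{-2}$ each, and crossing colored strands $\mu^2_1,\dots,\mu^2_k$ contributes $q^{\mu}$ (or $x$) each, matching $K$ acting by $q^{\mu-2b}$.

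I would verify this first in the single-strand case $\ell=2$, $n_2=1$, $b^2_1=1$, where the complex is $\Pb_{\rho_1,1}\to q^{\mu}\Pb_{\rho_1+1,0}$-type. Then I would use multiplicativity of the degrees over the cube $\{\mathbf{j}\}$ to conclude, being careful with the asymptotic Grothendieck group only insofar as all sums are finite here.
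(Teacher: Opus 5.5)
Your proposal is correct and follows the same route as the paper. The paper's proof is the one-line assertion that the identity follows by induction using $\Delta(F)=F\otimes K+1\otimes F$; your Euler-characteristic reduction, followed by the induction over blocks, colored strands and black strands, supplies exactly the details that one line leaves implicit.

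Two points to fix in the grading bookkeeping. First, for $b\ge 2$ the coefficient $c_k$ is a sum over $k$-element subsets of products of $K$-eigenvalues (a $q$-binomial-type coefficient), not a single product. Second, the $q^{-2}$ factors for a moved strand must count all black strands of its block $\rho_2$ lying to its left, including those in earlier columns $m<k$, and not only the $b'-1$ strands in its own column.
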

\begin{proof}
    This is a standard proof by induction using the coproduct structure of decategorified $U_q(\mathfrak{sl}_2)$ (see Section \ref{sec: prelim}). 
\end{proof}

\begin{ex}
    As an example, let us consider arbitrary $\underline{\mu_1}$ and $\rho_1$ with $\underline{\mu_2} = (\lambda,\lambda)$ and $\rho_2 = (1,1)$. In this case, 
    \[
    \Sb^{\underline{\mu_1}; \underline{\mu_2}}_{\rho_1; \rho_2} = 
    \begin{tikzcd}[column sep=large,row sep=tiny]
& x^2 q^{-2} \hspace{2mm}\tikzdiagh[yscale=1.5]{0}{
\draw[vstdhl](0.5,0)--(.5,0.5);
\draw[vstdhl](1.5,0)--(1.5,0.5);
\draw (0.25,0)--(0.25,0.5) node[midway,left]{\large $\cdots$};
\draw (1,0)--(1,0.5);
\filldraw [fill=white, draw=black] (-0.5,.5) rectangle (2,1) node[midway] { $\Tb$};
\tikzbrace{-0.5}{0}{0.1}{\small $\rho_1$};
}
\ar{rd}
\ar[dash]{rd}{
\tikzdiag[xscale=-0.75]{
\draw[vstdhl](0.5,0)--(0.5,0.5);
\draw[vstdhl](1.5,0)--(1.5,0.5);
\draw (0,0)..controls (0,0.25) and (2,0.25)..(2,0.5);
\draw (1,0)--(1,0.5);
}
}
& \\
x^3 q^{-2}\hspace{2mm}\tikzdiagh[yscale=1.5]{0}{
\draw[vstdhl](1,0)--(1,0.5);
\draw[vstdhl](1.5,0)--(1.5,0.5);
\draw (0.25,0)--(0.25,0.5) node[midway,left]{\large $\cdots$};
\draw (0.5,0)--(0.5,0.5);
\filldraw [fill=white, draw=black] (-0.5,.5) rectangle (2,1) node[midway] { $\Tb$};
\tikzbrace{-0.5}{0}{0.1}{\small $\rho_1$};
}
\ar{ru}
\ar[dash]{ru}{-
\tikzdiag[xscale=-0.75,yscale=1]{
\draw[vstdhl](0.5,0)--(0.5,0.5);
\draw[vstdhl](1.5,0)--(1.5,0.5);
\draw (1,0)..controls (1,0.25) and (2,0.25)..(2,0.5);
\draw (2.25,0)--(2.25,0.5);
}
}
\ar{rd}
\ar[swap,dash]{rd}{
\tikzdiag[xscale=0.75,yscale=1]{
\draw[vstdhl](0.5,0)--(0.5,0.5);
\draw[vstdhl](1.5,0)--(1.5,0.5);
\draw (2,0)..controls (2,0.25) and (0,0.25)..(0,0.5);
\draw (.25,0)--(.25,0.5);
}
}
&
\bigoplus &
\tikzdiagh[yscale=1.5]{0}{
\draw[vstdhl](0.25,0)--(0.25,0.5) node[midway,left,color = black]{\large $\cdots$};
\draw[vstdhl](1,0)--(1,0.5);
\draw (0.65,0)--(0.65,0.5);
\draw (1.5,0)--(1.5,0.5);
\filldraw [fill=white, draw=black] (-0.5,.5) rectangle (2,1) node[midway] { $\Tb$};
\tikzbrace{-0.5}{0}{0.1}{\small $\rho_1$};
} \\
& 
x\hspace{2mm}\tikzdiagh[yscale=1.5]{0}{
\draw[vstdhl](0.5,0)--(0.5,0.5);
\draw[vstdhl](1.5,0)--(1.5,0.5);
\draw (0.25,0)--(0.25,0.5) node[midway,left]{\large $\cdots$};
\draw (1.75,0)--(1.75,0.5);
\filldraw [fill=white, draw=black] (-0.5,.5) rectangle (2,1) node[midway] { $\Tb$};
\tikzbrace{-0.5}{0}{0.1}{\small $\rho_1$};
}
\ar{ru}
\ar[swap,dash]{ru}{
\tikzdiag[xscale=-0.75,yscale=1]{
\draw[vstdhl](0.5,0)--(0.5,0.5);
\draw[vstdhl](1.5,0)--(1.5,0.5);
\draw (1,0)..controls (1,0.25) and (2,0.25)..(2,0.5);
\draw (0,0)--(0,0.5);
}
}
&
\end{tikzcd}
    \]
\end{ex}

One can give the modules $\Sb^{\underline{\mu_1}; ... ;\underline{\mu_\ell}} := \bigoplus_{\rho_1,...,\rho_\ell} \Sb^{\underline{\mu_1}; ... ;\underline{\mu_\ell}}_{\rho_1;...;\rho_\ell}$ the structure of a $(\Tb^{\underline{\mu}},d_\mu) - (\Tb^{\underline{\mu_1}} \otimes \Tb^{\underline{\mu_2}} \otimes \cdots \otimes \Tb^{\underline{\mu_\ell}}, d_{\mu_1} + d_{\mu_2} + \cdots + d_{\mu_\ell} )$ dg-bimodule as in Proposition 7.16 in Section 7 of \cite{Na} to construct the functor \eqref{eq: standard_functor}. The analogous results of that section follow here as well, though we do not devote the time to prove so.

\subsection{Iterated Extensions}
In \cite{Web}, it was shown that the projective modules over the KLRW algebra associated to tensors of finite dimensional irreducible representations are filtrated by standard modules. Here, we show an analogous result for the dg-enhanced KLRW algebras. Namely, the projective modules in our category have an iterated extension by the standard modules defined in \cite{Na}. \\
Let us think about what this looks like in simple examples. 
\begin{ex}{$\Pb^{\lambda,\lambda,\lambda}_{0,0,1}$}
In this case we write the iterated extension as a series of maps: 

\[
\begin{tikzcd}[row sep=0ex]
0\ar{r}
&
x^2 \hspace{2mm}\tikzdiagh[yscale=1]{0}{
\draw[vstdhl](0,0)--(0,0.5);
\draw[vstdhl](0.5,0)--(0.5,0.5);
\draw[vstdhl](1,0)--(1,0.5);
\draw (0.25,0)--(0.25,0.5);
\filldraw [fill=white, draw=black] (-0.25,.5) rectangle (1.5,1) node[midway] { $\Tb$};
}
\ar{rr}
\ar[dash]{rr}{
\tikzdiag[scale=1]{
	\draw[vstdhl](0,0)--(0,0.5);
\draw[vstdhl](0.5,0)--(0.5,0.5);
\draw[vstdhl](1,0)--(1,0.5);
\draw (0.75,0)..controls (0.75,0.25) and (0.25,0.25)..(0.25,0.5);
}
} & &
x \hspace{2mm}\tikzdiagh[yscale=1]{0}{
\draw[vstdhl](0,0)--(0,0.5);
\draw[vstdhl](0.5,0)--(0.5,0.5);
\draw[vstdhl](1,0)--(1,0.5);
\draw (0.75,0)--(0.75,0.5);
\filldraw [fill=white, draw=black] (-0.25,.5) rectangle (1.5,1) node[midway] { $\Tb$};
} 
\ar{rr}
\ar[dash]{rr}{
\tikzdiag[scale=1]{
	\draw[vstdhl](0,0)--(0,0.5);
\draw[vstdhl](0.5,0)--(0.5,0.5);
\draw[vstdhl](1,0)--(1,0.5);
\draw (1.25,0)..controls (1.25,0.25) and (0.75,0.25)..(0.75,0.5);
}
}
& &
\tikzdiagh[yscale=1]{0}{
\draw[vstdhl](0,0)--(0,0.5);
\draw[vstdhl](0.5,0)--(0.5,0.5);
\draw[vstdhl](1,0)--(1,0.5);
\draw (1.25,0)--(1.25,0.5);
\filldraw [fill=white, draw=black] (-0.25,.5) rectangle (1.5,1) node[midway] { $\Tb$};
} 
\end{tikzcd}
\]

Note that the mapping cone of the first map is exactly $\Sb^{\lambda,\lambda,\lambda}_{0,0,1}$,

\[\Sb^{\lambda,\lambda,\lambda}_{0,0,1} \cong 
\begin{tikzcd}
    x \hspace{2mm} \tikzdiagh[yscale=1]{0}{
\draw[vstdhl](0,0)--(0,0.5);
\draw[vstdhl](0.5,0)--(0.5,0.5);
\draw[vstdhl](1,0)--(1,0.5);
\draw (0.75,0)--(0.75,0.5);
\filldraw [fill=white, draw=black] (-0.25,.5) rectangle (1.5,1) node[midway] { $\Tb$};
} 
\ar{rr}
\ar[dash]{rr}{
\tikzdiag[scale=1]{
	\draw[vstdhl](0,0)--(0,0.5);
\draw[vstdhl](0.5,0)--(0.5,0.5);
\draw[vstdhl](1,0)--(1,0.5);
\draw (1.25,0)..controls (1.25,0.25) and (0.75,0.25)..(0.75,0.5);
}
}
& &
\tikzdiagh[yscale=1]{0}{
\draw[vstdhl](0,0)--(0,0.5);
\draw[vstdhl](0.5,0)--(0.5,0.5);
\draw[vstdhl](1,0)--(1,0.5);
\draw (1.25,0)--(1.25,0.5);
\filldraw [fill=white, draw=black] (-0.25,.5) rectangle (1.5,1) node[midway] { $\Tb$};
} 
\end{tikzcd}
\]

Similarly, for the cone of the 2nd map, we find, 
\[ x \Sb^{\lambda,\lambda,\lambda}_{0,1,0} \cong
\begin{tikzcd}
    x^2 \hspace{2mm}\tikzdiagh[yscale=1]{0}{
\draw[vstdhl](0,0)--(0,0.5);
\draw[vstdhl](0.5,0)--(0.5,0.5);
\draw[vstdhl](1,0)--(1,0.5);
\draw (0.25,0)--(0.25,0.5);
\filldraw [fill=white, draw=black] (-0.25,.5) rectangle (1.5,1) node[midway] { $\Tb$};
}
\ar{rr}
\ar[dash]{rr}{
\tikzdiag[scale=1]{
	\draw[vstdhl](0,0)--(0,0.5);
\draw[vstdhl](0.5,0)--(0.5,0.5);
\draw[vstdhl](1,0)--(1,0.5);
\draw (0.75,0)..controls (0.75,0.25) and (0.25,0.25)..(0.25,0.5);
}
} & &
x \hspace{2mm}\tikzdiagh[yscale=1]{0}{
\draw[vstdhl](0,0)--(0,0.5);
\draw[vstdhl](0.5,0)--(0.5,0.5);
\draw[vstdhl](1,0)--(1,0.5);
\draw (0.75,0)--(0.75,0.5);
\filldraw [fill=white, draw=black] (-0.25,.5) rectangle (1.5,1) node[midway] { $\Tb$};
} 
\end{tikzcd}
\]
And finally, the cone of the 0 map is clearly just $x^2 \Sb^{\lambda,\lambda,\lambda}_{1,0,0} \cong x^2 \Pb^{\lambda,\lambda,\lambda}_{1,0,0} $. 
\end{ex}
\begin{ex}{$\Pb^{\lambda,\lambda}_{0,2}$} In this slightly more complicated case, we consider the iterated extension,
\[
\begin{tikzcd}[row sep=large]
&&&&x^2q^{-2}\hspace{1mm}\tikzdiagh{0}{
    \draw[vstdhl](0,0)--(0,0.5);

\draw[vstdhl](1,0)--(1,0.5);
\draw (0.33,0)--(0.33,0.5);
\draw (0.66,0)--(0.66,0.5);

\filldraw [fill=white, draw=black] (-0.25,.5) rectangle (1.25,1) node[midway] { $\Tb$};
    } 
    \ar[d,
    start anchor={[xshift=8ex]},
end anchor={[xshift=12ex]}]
    \ar[dash,
    start anchor={[xshift=8ex]},
end anchor={[xshift=12ex]}]{d}{
\tikzdiagh[scale=0.75]{4ex}{
\draw[vstdhl](0,0)--(0,0.5);
\draw[vstdhl](1,0)--(1,0.5);
\draw (0.66,0)--(0.66,0.5);
\draw (1.33,0)..controls (1.33, 0.25) and (0.33,0.25) ..(0.33,0.5);
}
}
\ar[d,
    start anchor={[xshift=-1ex]},
end anchor={[xshift=-7ex]}]
    \ar[swap,dash,
    start anchor={[xshift=-1ex]},
end anchor={[xshift=-7ex]}]{d}{
\tikzdiagh[scale=0.75]{0}{
\draw[vstdhl](0,0)--(0,0.5);
\draw[vstdhl](1,0)--(1,0.5);
\draw (0.33,0)--(0.33,0.5);
\draw (1.33,0)..controls (1.33, 0.25) and (0.66,0.25) ..(0.66,0.5);
}
}
\\
    x^2  \hspace{1mm} \tikzdiagh{0}{
    \draw[vstdhl](0,0)--(0,0.5);

\draw[vstdhl](1,0)--(1,0.5);
\draw (0.33,0)--(0.33,0.5);
\draw (0.66,0)--(0.66,0.5);

\filldraw [fill=white, draw=black] (-0.25,.5) rectangle (1.25,1) node[midway] { $\Tb$};
    }
\ar{rr}
\ar[dash]{rr}{
\tikzdiagh{0}{
\draw[vstdhl](0,0)--(0,0.5);
\draw[vstdhl](1,0)--(1,0.5);
\draw (0.33,0)--(0.33,0.5);
\draw (1.33,0)..controls (1.33, 0.25) and (0.66,0.25) ..(0.66,0.5);
}
}
& & x \hspace{1mm} \tikzdiagh{0}{
    \draw[vstdhl](0,0)--(0,0.5);

\draw(1,0)--(1,0.5);
\draw (0.33,0)--(0.33,0.5);
\draw[vstdhl] (0.66,0)--(0.66,0.5);

\filldraw [fill=white, draw=black] (-0.25,.5) rectangle (1.25,1) node[midway] { $\Tb$};
    } 
\ar[rr,"\mathbf{0\bigoplus 1}"]
& & x q^{-2}\hspace{1mm}\tikzdiagh{0}{
    \draw[vstdhl](0,0)--(0,0.5);

\draw(1,0)--(1,0.5);
\draw (0.33,0)--(0.33,0.5);
\draw[vstdhl] (0.66,0)--(0.66,0.5);

\filldraw [fill=white, draw=black] (-0.25,.5) rectangle (1.25,1) node[midway] { $\Tb$};
    } 
     \bigoplus 
    x\hspace{1mm}\tikzdiagh{0}{
    \draw[vstdhl](0,0)--(0,0.5);

\draw(1,0)--(1,0.5);
\draw (0.33,0)--(0.33,0.5);
\draw[vstdhl] (0.66,0)--(0.66,0.5);

\filldraw [fill=white, draw=black] (-0.25,.5) rectangle (1.25,1) node[midway] { $\Tb$};
    } 
\ar{rr}
\ar[dash]{rr}{
}
& & \tikzdiagh{0}{
    \draw[vstdhl](0,0)--(0,0.5);
\draw(1,0)--(1,0.5);
\draw[vstdhl] (0.33,0)--(0.33,0.5);
\draw (0.66,0)--(0.66,0.5);
\filldraw [fill=white, draw=black] (-0.25,.5) rectangle (1.25,1) node[midway] { $\Tb$};
    } 
\end{tikzcd}
\]
Where the first map is the obvious one in the complex of $\Sb^{\lambda,\lambda}_{0,2}$. That is, the cone of the first map is precisely $\Sb^{\lambda,\lambda}_{0,2}$. The cone of the second map is given by, 

\[
\begin{tikzcd}[row sep=small]
x^2q^{-2}\hspace{1mm}\tikzdiagh{0}{
    \draw[vstdhl](0,0)--(0,0.5);

\draw[vstdhl](1,0)--(1,0.5);
\draw (0.33,0)--(0.33,0.5);
\draw (0.66,0)--(0.66,0.5);

\filldraw [fill=white, draw=black] (-0.25,.5) rectangle (1.25,1) node[midway] { $\Tb$};
    } 
    \ar{rr}
\ar[dash]{rr}{
\tikzdiagh[scale=0.75]{0}{
\draw[vstdhl](0,0)--(0,0.5);
\draw[vstdhl](1,0)--(1,0.5);
\draw (0.33,0)--(0.33,0.5);
\draw (1.33,0)..controls (1.33, 0.25) and (0.66,0.25) ..(0.66,0.5);
}
}
\ar{rrdd}
\ar[dash]{rrdd}{
\tikzdiagh[scale=0.75]{0}{
\draw[vstdhl](0,0)--(0,0.5);
\draw[vstdhl](1,0)--(1,0.5);
\draw (0.66,0)--(0.66,0.5);
\draw (1.33,0)..controls (1.33, 0.25) and (0.33,0.25) ..(0.33,0.5);
}
}
    &&x q^{-2}\hspace{1mm}\tikzdiagh{0}{
    \draw[vstdhl](0,0)--(0,0.5);

\draw(1,0)--(1,0.5);
\draw (0.33,0)--(0.33,0.5);
\draw[vstdhl] (0.66,0)--(0.66,0.5);

\filldraw [fill=white, draw=black] (-0.25,.5) rectangle (1.25,1) node[midway] { $\Tb$};
    } \\\bigoplus &&\bigoplus \\
   x\hspace{1mm}\tikzdiagh{0}{
    \draw[vstdhl](0,0)--(0,0.5);

\draw(1,0)--(1,0.5);
\draw (0.33,0)--(0.33,0.5);
\draw[vstdhl] (0.66,0)--(0.66,0.5);

\filldraw [fill=white, draw=black] (-0.25,.5) rectangle (1.25,1) node[midway] { $\Tb$};
    }  
    \ar[rr,"\mathbf{1}"]
    &&x\hspace{1mm}\tikzdiagh{0}{
    \draw[vstdhl](0,0)--(0,0.5);

\draw(1,0)--(1,0.5);
\draw (0.33,0)--(0.33,0.5);
\draw[vstdhl] (0.66,0)--(0.66,0.5);

\filldraw [fill=white, draw=black] (-0.25,.5) rectangle (1.25,1) node[midway] { $\Tb$};
    } 
\end{tikzcd} \cong \begin{tikzcd}
x^2q^{-2}\hspace{1mm}\tikzdiagh{0}{
    \draw[vstdhl](0,0)--(0,0.5);

\draw[vstdhl](1,0)--(1,0.5);
\draw (0.33,0)--(0.33,0.5);
\draw (0.66,0)--(0.66,0.5);

\filldraw [fill=white, draw=black] (-0.25,.5) rectangle (1.25,1) node[midway] { $\Tb$};
    } 
    \ar{rr}
\ar[dash]{rr}{
\tikzdiagh[scale=0.75]{0}{
\draw[vstdhl](0,0)--(0,0.5);
\draw[vstdhl](1,0)--(1,0.5);
\draw (0.33,0)--(0.33,0.5);
\draw (1.33,0)..controls (1.33, 0.25) and (0.66,0.25) ..(0.66,0.5);
}
}
    &&x q^{-2}\hspace{1mm}\tikzdiagh{0}{
    \draw[vstdhl](0,0)--(0,0.5);

\draw(1,0)--(1,0.5);
\draw (0.33,0)--(0.33,0.5);
\draw[vstdhl] (0.66,0)--(0.66,0.5);

\filldraw [fill=white, draw=black] (-0.25,.5) rectangle (1.25,1) node[midway] { $\Tb$};
    }
\end{tikzcd}
\]

Where by ``$\cong$", we mean quasi-isomorphism. In other words, the cone of the second map is quasi-isomorphic to $xq^{-2}\Sb^{\lambda,\lambda}_{1,1}$. The cone of the third map is clearly just $x \Sb^{\lambda,\lambda}_{1,1}$ again. The fourth map, which we omitted, is just the $0$ morphism, and its cone is simply $x^2 \Sb^{\lambda,\lambda}_{2,0}$. 
\end{ex}
In general, let us describe what standards will appear in the iterated extensions of the projective $\Pb^{\underline{\mu}}_\rho$. Let $\Phi_\rho$ be the set of permutations of black strands at the top of $1_\rho$ generated by moving black strands to blocks to their left and such that the permutations within blocks at the bottom are minimal. 
\begin{ex}
Suppose $\underline{\mu} = (\lambda, \lambda)$ and $\rho = (b_1, 2)$. Then, the elements in $\Phi_\rho$ are given by, 
\[
\begin{tikzcd}
\tikzdiagh[yscale=1.5]{0}{
\draw[vstdhl](-0.33,0)--(-0.33,0.5);
\draw[vstdhl](1,0)--(1,0.5);
\draw (0.4,0)--(0.4,0.5);
\draw (-0.1,0)--(-0.1,0.5);
\draw (1.66,0)--(1.66,0.5);
\draw (1.33,0)--(1.33,0.5);
\node at(0.18,0.25){\small $\cdots$};
\tikzbrace{-0.1}{0.4}{.1}{$b_1$};
} &
\tikzdiagh[yscale=1.5]{0}{
\draw[vstdhl](-0.33,0)--(-0.33,0.5);
\draw[vstdhl](1,0)--(1,0.5);
\draw (0.4,0)--(0.4,0.5);
\draw (-0.1,0)--(-0.1,0.5);
\draw (1.66,0)--(1.66,0.5);
\draw (1.33,0)..controls (1.33, 0.25) and (0.66,0.25) ..(0.66,0.5);
\node at(0.18,0.25){\small $\cdots$};
\tikzbrace{-0.1}{0.4}{.1}{$b_1$};
}\\ 
\tikzdiagh[yscale=1.5]{0}{
\draw[vstdhl](-0.33,0)--(-0.33,0.5);
\draw[vstdhl](1,0)--(1,0.5);
\draw (0.4,0)--(0.4,0.5);
\draw (-0.1,0)--(-0.1,0.5);
\draw (1.33,0)--(1.33,0.5);
\draw (1.66,0)..controls (1.66, 0.25) and (0.66,0.25) ..(0.66,0.5);
\node at(0.18,0.25){\small $\cdots$};
\tikzbrace{-0.1}{0.4}{.1}{$b_1$};
}
&
\tikzdiagh[yscale=1.5]{0}{
\draw[vstdhl](-0.33,0)--(-0.33,0.5);
\draw[vstdhl](1,0)--(1.33,0.5);
\draw (0.4,0)--(0.4,0.5);
\draw (-0.1,0)--(-0.1,0.5);
\draw (1.66,0).. controls (1.66,.25) and (1,.25)..(1,.5);
\draw (1.33,0)..controls (1.33, 0.25) and (0.66,0.25) ..(0.66,0.5);
\node at(0.18,0.25){\small $\cdots$};
\tikzbrace{-0.1}{0.4}{.1}{$b_1$};
}
\end{tikzcd}
\]
Note that for $b_1=0$, the top of the diagrams in $\Phi$ are precisely the standards that appear in the iterated extension of $\Pb^{\lambda, \lambda}_{0,2}$. 
\end{ex}
Indeed, at the Grothendieck group level, we have, 
\begin{lem} Let $x_\phi$ denote the string diagram associated to a permuation $\phi\in \Phi_\rho$. Additionally, let $\rho_\phi$ denote the indempotent at the top of the diagram. Then, we have,
    $$[\Pb^{\underline{\mu}}_\rho] = \sum_{\phi\in\Phi_\rho} q^{deg \hspace{1mm} x_\phi} [\Sb^{\underline{\mu}}_{\rho_\phi}]$$
\end{lem}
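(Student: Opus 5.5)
We prove the identity entirely at the Grothendieck group level, using the categorification isomorphism $V(\underline{\mu})\to \boldsymbol{K}^\Delta_{0,\Q}(\Tb^{\underline{\mu}})$ of \cite{Na}. Under it, $[\Pb^{\underline{\mu}}_\rho]$ corresponds to $F^{b_r}(\cdots F^{b_2}(F^{b_1}v_{\mu_1,0}\otimes v_{\mu_2,0})\otimes\cdots\otimes v_{\mu_r,0})$. By the Proposition above on standard modules, $[\Sb^{\underline{\mu}}_{\rho'}]$ with $\rho'=(b'_1,\dots,b'_r)$ corresponds to the pure tensor $F^{b'_1}v_{\mu_1,0}\otimes\cdots\otimes F^{b'_r}v_{\mu_r,0}$, taking $\ell=r$ and each $\underline{\mu_i}=(\mu_i)$. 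The lemma therefore reduces to an identity in $V(\underline{\mu})$: we expand the iterated action of $F$ in the tensor product basis, and then match coefficients with $\sum_{\phi\in\Phi_\rho} q^{\deg x_\phi}$.

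We argue by induction on $r$, and within fixed $r$ on $b_r$. For the step, write $F^{b_r}(w\otimes v_{\mu_r,0})$, where $w\in V(\mu_1)\otimes\cdots\otimes V(\mu_{r-1})$ is the vector for $(b_1,\dots,b_{r-1})$. Using $\Delta(F)=F\otimes K+1\otimes F$, iterate the coproduct. Since $(F\otimes K)$ and $(1\otimes F)$ $q$-commute, we get
\[
\Delta(F)^{b}=\sum_{k=0}^{b} q^{k(b-k)}\begin{bmatrix} b\\ k\end{bmatrix}_{q}\,\bigl(F^{k}\otimes K^{k}F^{b-k}\bigr)
\]
(up to the precise power of $q$ fixed by the convention $KF=q^{-2}FK$). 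Apply this with $b=b_r$. The $k$ strands that leave the last block are pushed into $w$, and $F^k$ acting on $w$ is again a vector of the same shape with $b_{r-1}$ increased by $k$. The induction hypothesis expands it as a sum over $\Phi$ of the shorter configuration. The factor $K^k$ acting on $F^{b_r-k}v_{\mu_r,0}$ contributes $x^k q^{-2k(b_r-k)}$.

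Next we match this with the diagrammatic side. An element $\phi\in\Phi_\rho$ is determined by choosing which $k$ of the $b_r$ strands of the last block move left, with minimal crossings inside the bottom block, followed recursively by an element of $\Phi$ for the remaining blocks. The $q$-degree of $x_\phi$ is additive under this decomposition. The crossings of the moving strands with the blue strand $\mu_r$ contribute $x^k$. The black–black crossings among the $b_r$ strands, summed over all minimal shuffles choosing $k$ of $b_r$, give the generating function $\sum q^{-2\,\mathrm{inv}}$, which is the Gaussian binomial in the balanced normalization times the appropriate power of $q$. The remaining black strands passing to the left also contribute their degrees. Comparing these with the scalars coming from $K^k$ and the $q$-binomial closes the induction. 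The base case $r=1$ is trivial, since $\Phi_\rho=\{\mathrm{id}\}$ and $\Sb=\Pb$.

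The main obstacle is the bookkeeping of $q$- and $x$-powers. Specifically, we must check that the grading of black–blue crossings ($q^{\mu_i}$ each), together with the black–black crossings incurred when moving strands past several blocks, reproduces exactly the $K$-eigenvalues $xq^{-2i}$ and the $q$-binomial normalization. We would handle this by first checking the two-block case $\rho=(b_1,b_2)$ explicitly against the example $\Pb^{\lambda,\lambda}_{0,2}$ above, which gives the weights $x^2,\ xq^{-2},\ x,\ 1$. We would then observe that moving strands through farther blocks factors as successive one-block moves, so the degrees compose multiplicatively in the same way the coproduct iterates.
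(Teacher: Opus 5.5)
Your proposal is correct and follows essentially the same route as the paper. The paper's proof just cites the $K_0$ isomorphism and the identification of standards with pure tensors, then defers to Webster's decategorified argument. You make that last step explicit by expanding $\Delta(F)^{b_r}$ and factoring each $x_\phi$ as the move of a $k$-subset $S$ of the last block across $\mu_r$, followed by an element of $\Phi$ for the first $r-1$ blocks.

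Your hedge about the power of $q$ is unnecessary: with $KF=q^{-2}FK$ your displayed formula is exact. It gives the coefficient $x^k\sum_{|S|=k}q^{-2\,\mathrm{inv}(S)}$ on $F^k w\otimes F^{b_r-k}v_{\mu_r,0}$ (with $x$ replaced by $q^{\mu_r}$ for a red strand), which is precisely the degree contribution of that first move.
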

\begin{proof}
    This follows from Theorem 6.14 and Proposition 7.18 of \cite{Na} together with the same argument for the analogous statement in \cite{Web}.
\end{proof}
We wish to categorify this relation through the aforementioned iterated extension. 
Notice we can put a preorder on the elements of $\Phi_\rho$, just as in \cite{Na} and \cite{Web}. That is, for indempotents $\rho$ and $\rho'$, we say $\rho \leq \rho'$ if one can acquire $\rho$ my moving black strands of $\rho'$ to blocks to their left. Then, the preorder on $\Phi_\rho$ is given by preordering the indempotents at the top of the diagrams. The following Proposition works for any total order that respects this preorder. 

\begin{prop}\label{prop:it.ext.proj}
    Any projective $\Pb^{\underline{\mu}}_{\rho}$ has an iterated extension by (shifted) standards of the form, 
    \[
    \begin{tikzcd}[column sep =small]
        0=Q_0 \ar[rr,"f_1"] && Q_1 \ar[rr,"f_2"] && Q_2 & \cdots & Q_{r-1} \ar[rr,"f_r"] && Q_r = \Pb^{\underline{\mu}}_{\rho} \\
        &Cone(f_1)\ar[ru,leftarrow] \ar[lu]&&Cone(f_2)\ar[ru,leftarrow] \ar[lu]&&\cdots && Cone(f_r)\ar[ru,leftarrow] \ar[lu]
    \end{tikzcd}
    \]
Where $r = |\Phi_\rho|$ and each consecutive cone is a standard module, 
$$Cone(f_i) \cong q^{deg \hspace{1mm}x_\phi}\Sb^{\underline{\mu}}_{\rho_\phi}$$
for some $\phi \in \Phi_\rho$ that fits into the short exact sequence,
$$0\rightarrow Q_i\rightarrow Cone(f_i) \rightarrow Q_{i-1}[1] \rightarrow 0$$
\end{prop}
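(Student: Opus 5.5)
We follow Webster's strategy for the standard filtration of projectives, adapted to the dg setting as in \cite{Na}. Fix a total order $\phi_1,\dots,\phi_r$ on $\Phi_\rho$ refining the preorder, listed so that idempotents $\rho_{\phi}$ obtained by moving more black strands further left come first. For each $i$ let $\Pb^{\underline{\mu}}_{\rho}{}^{\geq i}$ denote the submodule of $\Pb^{\underline{\mu}}_\rho$ spanned by diagrams that factor through an idempotent $\rho_{\phi_k}$ with $k\le i$. In other words, $\Pb_\rho^{\ge i}$ is the image of $\bigoplus_{k\le i} q^{\deg x_{\phi_k}}\Pb^{\underline{\mu}}_{\rho_{\phi_k}}$ under right multiplication by the $x_{\phi_k}$. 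This gives a filtration
$$0=F_0\subset F_1\subset\cdots\subset F_r=\Pb^{\underline{\mu}}_\rho .$$
We will not realize $Q_i$ as a naive submodule. Instead we build each $Q_i$ as a cofibrant model, namely an explicit complex of projectives quasi-isomorphic to $F_i$, so that the maps $f_i$ and the cones make sense in $\mathcal{D}_{dg}(\Tb^{\underline{\mu}})$, just as in the examples above. Since $d_{\underline{\mu}}=0$, everything reduces to ordinary homological algebra of complexes of projective $\Tb^{\underline{\mu}}$-modules.

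The key steps are as follows. \textbf{(1)} Show that each quotient $F_i/F_{i-1}$ is isomorphic to $q^{\deg x_{\phi_i}}\Sb^{\underline{\mu}}_{\rho_{\phi_i}}$, or more precisely to its homology, which is concentrated in degree $0$. To do this, use a basis theorem for $\Tb^{\underline{\mu}}$ (Theorem 6.14 of \cite{Na}). In $F_i/F_{i-1}$ every diagram factoring through a strictly smaller idempotent is killed. Conversely, the image of the complex $\Sb_{\rho_{\phi_i}}$, given by the cokernel of the $\tau$ maps in degree $-1$, kills exactly the diagrams where a black strand passes to a block further left, because the $\tau_{\mathbf{j},\mathbf{j}'}$ are precisely those strand-moving maps. \textbf{(2)} Show that $\Sb_{\rho_\phi}$ is a resolution of this quotient, i.e.\ that its higher homology vanishes. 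This is the content of Proposition 7.18 of \cite{Na}, extended to our generalized standards using the lemma on commuting squares $\tau_{\mathbf{j},\mathbf{j}'}\tau_{\mathbf{j}',\mathbf{j}'''}=\tau_{\mathbf{j},\mathbf{j}''}\tau_{\mathbf{j}'',\mathbf{j}'''}$, which guarantees $d^2=0$. \textbf{(3)} From the short exact sequences $0\to F_{i-1}\to F_i\to F_i/F_{i-1}\to 0$, obtain distinguished triangles. Rotate them to produce the maps $f_i:Q_{i-1}\to Q_i$ whose cones are the shifted standards, and arrange the shift conventions so that the stated sequence $0\to Q_i\to \mathrm{Cone}(f_i)\to Q_{i-1}[1]\to 0$ holds. \textbf{(4)} As a consistency check, compare with the Grothendieck group lemma $[\Pb^{\underline{\mu}}_\rho]=\sum_{\phi}q^{\deg x_\phi}[\Sb^{\underline{\mu}}_{\rho_\phi}]$. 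Because $gdim_{q,x}$ is additive on exact sequences when $d_{\underline{\mu}}=0$, it suffices to prove injectivity and surjectivity in step (1). A graded dimension count then upgrades a surjection $q^{\deg x_{\phi_i}}H_0(\Sb_{\rho_{\phi_i}})\twoheadrightarrow F_i/F_{i-1}$ to an isomorphism.

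We expect the main obstacle to be step (1), specifically well-definedness and exactness. One must show that right multiplication by $x_{\phi_i}$ induces a well-defined surjection from $H_0(\Sb_{\rho_{\phi_i}})$ onto $F_i/F_{i-1}$. The blue-strand relations make this delicate. A black strand bubbling around a blue strand is zero rather than a dot, so straightening diagrams modulo $F_{i-1}$ behaves differently from Webster's red case. Nails also interact with the leftmost blue strand, which creates extra basis elements.

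Our first attempt at this step is to work one block at a time, inducting on the number of black strands and the number of colored strands. Left-adjoint restriction via the functor $\mathcal{F}$ of \eqref{eq:F_functor} shows that adding a black strand on the far right sends the filtration for $\rho$ to a filtration for $\rho+(0,\dots,0,1)$, reducing to the case handled in the examples $\Pb_{0,0,1}$ and $\Pb_{0,2}$. Injectivity then follows from the dimension count in step (4).
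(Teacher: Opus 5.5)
Your plan is Webster's submodule-filtration argument, and its key input fails here. With blue strands, the standard modules are not quasi-isomorphic to modules concentrated in degree $0$, so steps (1), (2) and (4) cannot be carried out.

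Already for $\underline{\mu}=(\lambda,\lambda)$ one has $\Sb^{\lambda,\lambda}_{0,1}=\mathrm{Cone}\bigl(x\,\Pb^{\lambda,\lambda}_{1,0}\xrightarrow{\ \cdot\tau\ }\Pb^{\lambda,\lambda}_{0,1}\bigr)$. Here $\tau\in 1_{(1,0)}\Tb 1_{(0,1)}$ is the crossing that moves the black strand left past the second blue strand. Let $\sigma\in 1_{(0,1)}\Tb 1_{(1,0)}$ be the crossing moving it back. Then $\sigma\tau$ is a black/blue bigon, hence $0$ by the blue-strand relation. But $\sigma$ (with any number of dots) is a nonzero basis element by Theorem \ref{thm:Tbasis}, so $H_1(\Sb^{\lambda,\lambda}_{0,1})\neq 0$. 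For red strands the bigon equals $\mu_i$ dots, right multiplication by $\tau$ is injective, and Webster's argument works; this is exactly what breaks for Verma strands.

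Consequences for your proposal:
\begin{itemize}
\item No chain of submodules $F_1\subset\cdots\subset F_r=\Pb^{\underline{\mu}}_\rho$ can have subquotients quasi-isomorphic to shifted standards.
\item The maps $f_i$ in the Proposition are in general not injective. In the paper's example for $\Pb^{\lambda,\lambda,\lambda}_{0,0,1}$, the map $f_3\colon x\,\Pb^{\lambda,\lambda,\lambda}_{0,1,0}\to\Pb^{\lambda,\lambda,\lambda}_{0,0,1}$ kills the crossing that returns the strand to the third block.
\item The dimension count in (4) fails for the same reason. The Grothendieck-group identity controls Euler characteristics of the standards, which include these higher homology groups, not $\mathrm{gdim}\,H_0$.
\end{itemize}
Two smaller problems also remain. ``Diagrams factoring through $\rho_{\phi_k}$'' and ``the image of $\cdot\, x_{\phi_k}$'' are different submodules when several $\phi$ share a top idempotent, as for the two elements of $\Phi_{(b_1,2)}$ with top $(b_1+1,1)$. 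And your induction via $\mathcal{F}$ only adds strands to the last block, and does not send standards to standards.

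The extension has to be built in the derived category, at the level of complexes of projectives, which is what the paper does. By definition $\Sb^{\underline{\mu}}_\rho$ is the complex $\bigoplus_{\mathbf j\subset J_\rho}q^{\#_{\mathbf j}}\Pb^{\underline{\mu}}_{\rho(\mathbf j)}[|\mathbf j|]$, whose $\mathbf j=\emptyset$ term is $\Pb^{\underline{\mu}}_\rho$. Taking $Q_{r-1}$ to be the (shifted) part with $\mathbf j\neq\emptyset$ makes $\mathrm{Cone}(f_r)=\Sb^{\underline{\mu}}_\rho$ tautological. The paper then proceeds as follows:
\begin{enumerate}
\item Take the next idempotent $\rho_{\phi_1}$ in a total order refining the preorder, and identify the strand $b_1$ whose move produces it.
\item Isolate the summands of $Q_{r-1}$ with $b_1\in\mathbf j$.
\item Use the iterated-cone description of standards in \cite{Na} to split off $q^{\deg x_{\phi_1}}\Sb^{\underline{\mu}}_{\rho_{\phi_1}}$ as a further cone.
\item Recurse until reaching the minimal idempotent, whose projective is already standard.
\end{enumerate}
This uses only the combinatorics of the maps $\tau_{\mathbf j,\mathbf j'}$ and never needs the standards to be acyclic in positive degrees, which is why it works where the submodule approach cannot.
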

\begin{proof}
Starting with $Q_r = \Pb^{\underline{\mu}}_\rho$, we can simply point to the definition of $\Sb^{\underline{\mu}}_\rho$ in the previous section in order to see that there is a complex $Q_{r-1}$ together with a map $f_r$ such that $Cone(Q_{r-1}\overset{f_r}{\rightarrow} Q_r) = \Sb^{\underline{\mu}}_\rho$. We are assuming there is some total order on the elements of $\Phi_\rho$ that respects the preorder mentioned above. Therefore, we pick the next indempotent $\rho_{\phi_1}$ in the iterated extension. The indempotent $\rho_{\phi_1}$ must show up in $0$th homological degree of $Q_{r-1}$ since the total order respects the preorder. Therefore, we can identify some choice of strand $b_1 \in J_\rho$ corresponding to the projective $\Pb_{\rho_{\phi_1}}^{\underline{\mu}}$. We consider the subcomplex of $Q_{r-1}$, $\tilde{Q}_{r-1}$, consisting of all the summands with $b_1 \in \mathbf{j}$ (and we keep the same maps). In other words, 
$$\tilde{Q}_{r-1} =\underset{b_1\in \mathbf{j}}{\bigoplus_{\mathbf{j}\subset J_\rho} }q^{\#_{\mathbf{j}}}\Pb^{\underline{\mu}}_{\rho(\mathbf{j})} [|\mathbf{j}|] $$
By the iterated cone construction of standard modules in Section 7.1.2 of \cite{Na} (which is equivalent to the construction given here), there is a complex $\tilde{Q}_{r-2}$, and map $\tilde{f}_{r-1}$ such that, 
$$Cone(\tilde{Q}_{r-2} \overset{\tilde{f}_{r-1}}{\rightarrow} \tilde{Q}_{r-1}) = q^{deg\hspace{1mm} x_{\phi_1}}\Sb_{\rho_{\phi_1}}^{\underline{\mu}}$$
It also follows then, that there exists $Q_{r-2}$ such that $Cone(Q_{r-2} \overset{f_{r-1}}{\rightarrow} Q_{r-1}) \cong q^{deg\hspace{1mm} x_{\phi_1}}\Sb_{\rho_{\phi_1}}^{\underline{\mu}}$, where $f_{r-1} = \tilde{f}_{r-1} \oplus \iota$ with $\iota$ the inclusion map of the remainder complex of $Q_{r-1}$.\\
One then repeats this process, which must terminate since with each step, we are sliding black strands further and further down to the left, until we are left $\rho_{\phi_{min}}$, upon which, we are finished since $\Pb^{\underline{\mu}}_{\rho_{\phi_{min}}}$ is already a standard module. 
\end{proof}

\begin{cor}\label{cor:gdimPandS}
    $$gdim_{x,q}\Pb_\rho ^{\underline{\mu}} = \sum_{\phi\in \Phi_\rho} q^{deg\hspace{1mm}x_\phi} gdim_{x,q} \Sb^{\underline{\mu}}_{\rho_\phi}$$
\end{cor}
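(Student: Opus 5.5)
The plan is to deduce the corollary directly from Proposition \ref{prop:it.ext.proj}, using additivity of the graded dimension. The remark following Definition \ref{def:dgKLRWalgebra} allows this: since the leftmost strand is generic, $d_{\underline{\mu}}=0$, and $gdim_{x,q}$ of a dg-module is simply the generating function of the dimensions of its graded pieces. It therefore behaves additively on short exact sequences of complexes, with the homological variable $t$ recording shifts. We set $t=-1$, so that $[1]$ contributes a sign; equivalently, we work with the Euler characteristic version of $gdim$, which is what is implicitly meant when comparing $\Pb$ and the complexes $\Sb$. This convention is the one point I must fix carefully at the start. The formula is presumably meant in that Euler-characteristic sense, since the cone pieces sit in various homological degrees, and I will state this explicitly.

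First, I take the iterated extension $0=Q_0\to Q_1\to\cdots\to Q_r=\Pb^{\underline{\mu}}_\rho$ supplied by Proposition \ref{prop:it.ext.proj}, with $r=|\Phi_\rho|$. This comes with a bijection $i\mapsto\phi_i$ onto $\Phi_\rho$ such that $Cone(f_i)\cong q^{deg\,x_{\phi_i}}\Sb^{\underline{\mu}}_{\rho_{\phi_i}}$. Because the construction in the proof of that proposition visits each $\phi$ exactly once, following a total order refining the preorder, the map $i\mapsto\phi_i$ is indeed a bijection.

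Second, I apply $gdim$ to the triangle for each cone. Since $Cone(f_i)$ is built degreewise as $Q_i\oplus Q_{i-1}[1]$, additivity with the sign convention gives
\[ gdim_{x,q}Cone(f_i)=gdim_{x,q}Q_i-gdim_{x,q}Q_{i-1}. \]
Telescoping over $i=1,\dots,r$ and using $Q_0=0$, I obtain
\[ gdim_{x,q}\Pb^{\underline{\mu}}_\rho=\sum_{i=1}^r gdim_{x,q}Cone(f_i). \]

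Third, I need that $gdim$ is invariant under the quasi-isomorphisms $Cone(f_i)\cong q^{deg\,x_{\phi_i}}\Sb^{\underline{\mu}}_{\rho_{\phi_i}}$. For the Euler-characteristic version this holds: a quasi-isomorphism preserves homology, so it preserves the alternating sum. In the example of $\Pb^{\lambda,\lambda}_{0,2}$ above, these quasi-isomorphisms only cancel acyclic pieces of the form $x\Pb\xrightarrow{1}x\Pb$, which contribute zero to that sum. Finally, I substitute $gdim(q^{a}M)=q^{a}gdim(M)$ and reindex the sum by $\phi\in\Phi_\rho$ to obtain the stated formula.

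The main obstacle is the sign and grading bookkeeping. I need to ensure that the shift $[1]$ and the quasi-isomorphism replacements interact correctly with $gdim$, so that the literal dimension count is not mistakenly used where only the homological Euler characteristic is invariant. If needed, I would instead compute at the level of homology, $H_*$, using the long exact sequences, but the Euler-characteristic route is the one I would try first.
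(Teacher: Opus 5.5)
Your argument is the paper's. The corollary is given there without a separate proof, as an immediate consequence of Proposition~\ref{prop:it.ext.proj} together with the additivity of $gdim_{x,q}$ noted after Definition~\ref{def:dgKLRWalgebra}, telescoping along the triangles $Q_{i-1}\to Q_i\to Cone(f_i)$.

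The point you treat as a convention to be fixed is in fact forced, and you can settle it. The sequence $0\to Q_i\to Cone(f_i)\to Q_{i-1}[1]\to 0$ only gives $gdim\,Cone(f_i)=gdim\,Q_i+t\,gdim\,Q_{i-1}$, which telescopes only at $t=-1$. Moreover, the $t$-refined identity, and the one with honest dimensions ($t=1$), is false.

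Take $\underline{\mu}=(\lambda,\lambda)$ and $\rho=(0,1)$, so $\Sb_{0,1}=\big(x\Pb_{1,0}[1]\xrightarrow{\cdot\tau}\Pb_{0,1}\big)$, and let $\tau'\in 1_{(0,1)}\Tb 1_{(1,0)}$ be the reverse crossing.
\begin{itemize}
\item Right multiplication by $\tau$ kills every $\tau' x^a$: slide the dots through the crossing, then use the relation killing a black strand that crosses a blue strand twice.
\item These elements are nonzero by Theorem~\ref{thm:Tbasis}.
\item Every other basis element of $\Pb_{1,0}$ is sent to a distinct basis element of $\Pb_{0,1}$.
\end{itemize}
Hence $H_1(\Sb_{0,1})\neq 0$, and a direct count gives $gdim\,\Sb_{0,1}+x\,gdim\,\Pb_{1,0}-gdim\,\Pb_{0,1}=(1+t)\frac{x^2}{1-q^2}$.

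Consequently your fallback through long exact sequences in homology cannot produce anything sharper, since $H(f_i)$ need not be injective. You should state the corollary explicitly as an identity of Euler characteristics. With that reading your proof is complete, modulo the routine check that each $(x,q)$-graded piece is finite-dimensional, so the alternating sums make sense.

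Be aware that in this form the corollary does not supply the nonnegative dimension counts the paper later draws from it. These are the inequalities in Lemma~\ref{lem:RRit.ext.} and the ``injective with equal $gdim$'' step in Proposition~\ref{prop:Inverse_Braiding}.
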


\section{Braiding}\label{sec:braiding}
Assuming the results of \cite{Na}, for a choice of representations $\underline{\mu} = (\mu_1, \mu_2, ..., \mu_r)$, we can assign the dgKLRW algebra $\Tb^{\underline{\mu}}$. Here, each $\mu_i$ denotes a choice of finite-dimensional representation of $\mathfrak{sl}_2$, taking values in $\mathbb{N}_{>0}$, or it may denote a generic complex highest weight $\lambda_i$ of a Verma module. In this section we will always assume $\mu_1 =\lambda$. For a choice of $w\in S_{r-1}$, we can define a braiding bimodule $\Rb_w$ of these algebras as follows, 
\begin{defn}
    For $w\in S_{r-1}$, we let $\Rb^{\underline{\mu}}_w$ be the $\Tb^{w\cdot\underline{\mu}}-\Tb^{\underline{\mu}}$-bimodule consisting of string diagrams where the colored strands in the set $\{\mu_2,...,\mu_r\}$, when read from bottom to top, trace out a reduced string diagram of the permutation $w$. We subject these diagrams to the local relations in Definition \ref{def:dgKLRWalgebra} as well as the new relations (and their mirror images), 
    \begin{align}
        \label{eq:RmoveR3p}
        \tikzdiagh[scale=2]{-1ex}{ 
    \draw (0.5,0)..controls (0.5,0.25) and (0,0.25) .. (0,0.5);
    \draw (0,0.5)..controls (0,0.75) and (0.5, 0.75).. (0.5, 1);
      \draw[pstdhl] (0,0) -- (1,1)  ;
      \draw[pstdhl] (1,0) --(0,1);
      } 
      =
      \tikzdiagh[scale=2]{-1ex}{ 
    \draw (0.5,0)..controls (0.5,0.25) and (1,0.25) .. (1,0.5);
    \draw (1,0.5)..controls (1,0.75) and (0.5, 0.75).. (0.5, 1);
      \draw[pstdhl] (0,0) -- (1,1)  ;
      \draw[pstdhl] (1,0) --(0,1);
      } 
    \end{align}
    
    \begin{align}
        \label{eq:RmoveR3pp}
        \tikzdiagh[scale=2]{-1ex}{ 
    \draw (1.25,0)..controls (1.25,0.75) and (-0.25,0.75) .. (-0.25,1);
      \draw[pstdhl] (0,0) -- (1,1)  ;
      \draw[pstdhl] (1,0) --(0,1);
      } 
      =
      \tikzdiagh[scale=2]{-1ex}{ 
    \draw (1.25,0)..controls (1.25,0.25) and (-0.25,0.25) .. (-0.25,1);
      \draw[pstdhl] (0,0) -- (1,1)  ;
      \draw[pstdhl] (1,0) --(0,1);
      } 
      \end{align}
      \begin{align}
        \label{eq:RmoveR3}
        \tikzdiagh[scale=2]{-1ex}{ 
    \draw[pstdhl] (0,0.5)..controls (0,0.75) and (0.5, 0.75).. (0.5, 1);
      \draw[pstdhl] (0,0) -- (1,1)  ;
      \draw[pstdhl] (1,0) --(0,1);
      \draw[pstdhl] (0.5,0)..controls (0.5,0.25) and (0,0.25) .. (0,0.5);
      } 
      =
      \tikzdiagh[scale=2]{-1ex}{ 
    \draw[pstdhl] (0.5,0)..controls (0.5,0.25) and (1,0.25) .. (1,0.5);
      \draw[pstdhl] (0,0) -- (1,1)  ;
      \draw[pstdhl] (1,0) --(0,1);
      \draw[pstdhl] (1,0.5)..controls (1,0.75) and (0.5, 0.75).. (0.5, 1);
      } 
      \end{align}

The left action of $\Tb^{w\cdot \underline{\mu}}$ is given by gluing diagrams at the top and the right action of $\Tb^{ \underline{\mu}}$ by gluing diagrams at the bottom. Furthermore, we set the degree of the colored crossing to, 
\begin{align}
    deg \tikzdiagh{-1ex}{ 
      \draw[vstdhl] (0,0) node[below]{$\lambda$} -- (1,1);
      \draw[vstdhl] (1,0)node[below]{$\lambda$} --(0,1);
      } 
      =
      0
\end{align}
if both strands are colored by $\lambda$, and otherwise, 
\begin{align}
    deg \tikzdiagh{-1ex}{ 
      \draw[pstdhl] (0,0) node[below]{$\mu_i$} -- (1,1);
      \draw[pstdhl] (1,0)node[below]{$\mu_j$} --(0,1);
      } 
      =
      q^{-\mu_i \mu_j \frac{1}{2} }
\end{align}
      
\end{defn}

\subsection{Basis}
Recall that the $\kk$-basis of $\Tb^{\underline{\mu}}$ can be described as follows. Let $\rho = (b_1,b_2,...,b_r)$ and $\kappa = (b'_1, b'_2,...,b_r')$ denote the data defining the idempotents at the bottom and top of a string diagram. For $b = \sum_i b_i = \sum_i b_i'$ and $w\in S_b$, we can pick a diagram $\psi_w \in 1_\kappa \Tb^{\underline{\mu}} 1_\rho$ such that,
\begin{itemize}
    \item the permutation on black strands from bottom to top read out a reduced word for $w$ (note that the identity word corresponds to drawing straight black lines from the leftmost top strand to the leftmost bottom strand, and similarly with the strand to their right, and so on... )
    \item no pair of colored and black strand cross twice
    \item $\psi_w$ is a left-adjusted diagram
\end{itemize}

Now, for each such $\psi_w $, we further specify $\underline{l} \in \Z^b_2$ and $\underline{a} \in \mathbb{N}^b $, and to the triple $(w,\underline{l},\underline{a})$ we assign the diagram $\psi_{(w,\underline{l},\underline{a})}$ defined by modifying $\psi_w$ via, 
\begin{itemize}
    \item for all $1\leq i\leq b$, add $a_i$ black dots on the $i$-th strand at the top (counted at the top)
    \item for all $1\leq i\leq b$, if $l_i =1$, nail the $i$-th black strand on the $\mu_1$ colored strand by pulling it from its leftmost position. 
\end{itemize}
Let $B^\kappa_\rho(\Tb)$ be the set of all such diagrams. 

\begin{thm}{\cite{Na}}
\label{thm:Tbasis}
    The set $B^\kappa_\rho(\Tb)$ is a $\kk$-basis for $1_\kappa \Tb^{\underline{\mu}} 1_\rho$ treated as a $\kk$-module
\end{thm}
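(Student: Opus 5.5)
The statement is a basis theorem for the diagrammatic algebra, and I would prove it the standard way: a spanning argument by diagrammatic straightening, followed by a linear independence argument through a faithful polynomial-type representation.

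\emph{Spanning.} I will show that every string diagram in $1_\kappa \Tb^{\underline{\mu}} 1_\rho$ is a $\kk$-linear combination of elements of $B^\kappa_\rho(\Tb)$, by induction on the number of crossings plus nails. Given a diagram, I first use the dot-slide relations to move all dots to the top. The error terms have strictly fewer crossings, since the correction term of the black/black dot slide removes a crossing and dots slide freely past colored strands. Next I use the nail relations to slide nails. Dots pass through a nail, and two nails anticommute, so I can order them and push each nail to the position ``pull the strand from its leftmost position.'' The last nail relation kills any black strand nailed twice, so each strand carries at most one nail, which gives $l_i\in\Z_2$. Double crossings are removed as follows: black/black by the KLR quadratic relation, black/blue by the blue relation (which gives $0$), and black/red by the red relation (which gives dots, hence lower terms). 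Triple points are resolved by the braid relations, again up to lower-order terms in the red case. The upshot is that any two reduced expressions of the same $w$ with the same endpoint data agree modulo diagrams with fewer crossings. Finally, diagrams with a black strand leftmost vanish, which forces left-adjustment. Together these show that $B^\kappa_\rho(\Tb)$ spans.

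\emph{Linear independence.} I will construct a faithful action of $\Tb^{\underline{\mu}}$ on $\bigoplus_\rho \kk[y_1,\dots,y_b]\otimes \Lambda(\omega_1,\dots,\omega_b)$, where the exterior generators $\omega_i$ record nails. The generators act as follows. Dots act by multiplication by $y_i$. Black crossings act by Demazure operators (or by a permutation, depending on the color labels of the strands). A black strand crossing a red strand acts as multiplication by $y_i^{\mu_j}$ in one direction and as the identity in the other. A black strand crossing a blue strand acts in a way that encodes the vanishing of the blue double crossing, for example by using a formal variable so that one direction of crossing is multiplication by that variable while the composite vanishes. A nail acts by a twisted exterior multiplication by $\omega$, mixed with divided difference operators, as in the \cite{Na} construction for the dg nilHecke algebra. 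I then check that all relations of Definition \ref{def:dgKLRWalgebra} hold, which is a routine verification on generators. Finally I show that the images of the elements $\psi_{(w,\underline{l},\underline{a})}$ are linearly independent. For this I filter by the length of $w$ and the number of nails, and check that the leading term of each image is a distinct operator of the form $y^{\underline{a}}\,\omega^{\underline{l}}\,\partial_w$.

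The step I expect to be the main obstacle is independence in the presence of nails combined with blue strands. There the action must make the nail relations hold exactly, including the anticommutation sign and the vanishing of a doubly nailed strand, while still producing enough independent operators. As a fallback, I would reduce to \cite{Na}: first establish the basis for a single blue strand via the dg-enhanced nilHecke basis, then pass to several colored strands by comparing graded dimensions. Concretely, I would show that $\mathrm{gdim}\, 1_\kappa\Tb 1_\rho$ is at least the generating function of $B^\kappa_\rho(\Tb)$, using the standard filtration of Proposition \ref{prop:it.ext.proj}, so that spanning forces the set to be a basis.
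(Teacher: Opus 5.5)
Your spanning argument is essentially the one the paper relies on. It is the same straightening argument the paper reproduces for $\Rb_w$ in Proposition~\ref{prop:Rbasis}: dots to the top, nails slid to the leftmost position, bigons removed, and braid moves taken modulo diagrams with fewer crossings.

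The gap is in linear independence, specifically at the black/blue crossings. As you describe it, your action is not a representation of $\Tb^{\underline{\mu}}$. If one crossing direction acts by multiplication by a formal variable $t$ and the other by the identity, or by any nonzero polynomial, then the black/blue bigon acts by a nonzero element, not by $0$.

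A cleverer choice of variable does not fix this. Dots slide freely through black/colored crossings, so in an action of the type you sketch these crossings are multiplication operators of homological degree $0$ on the rank-one free modules $\kk[y]\otimes\Lambda(\omega)\,\varepsilon_\rho$. That means they are multiplication by polynomials in the $y_i$ (and any formal parameter). Since that polynomial ring is a domain, the relation ``blue bigon $=0$'' forces one of the two crossings to act by zero. Every diagram moving a black strand across that blue strand in that direction contains this generator as a factor, so the whole space $1_\kappa\Tb^{\underline{\mu}}1_\rho$ acts by zero. Faithfulness therefore fails exactly where independence must be detected. Note that the obstacle you flagged, the nails, is not the one that breaks; the Naisse--Vaz type action on $\Lambda(\omega)$ handles them.

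The missing idea is a deformation. The paper indicates this in the proof of Proposition~\ref{prop:Rbasis}, mirroring \S5.2 of \cite{Na}. One passes to a \emph{perturbed} algebra over $\kk[\delta]$ in which blue strands behave like red strands of a formal weight (the paper's shorthand $\delta = x_i^{\lambda}$), so the blue bigon no longer vanishes. One then builds a faithful polynomial action on $\bigoplus_\rho \kk[\delta][x_1,\dots,x_b]\otimes\bigwedge(\omega_1,\dots,\omega_b)\,\varepsilon_\rho$ and establishes the basis there. Finally one deduces the basis of $\Tb^{\underline{\mu}}$ from it, for instance by freeness over $\kk[\delta]$ and specialization.

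Your fallback does not repair the gap; it is circular. Proposition~\ref{prop:it.ext.proj} and Corollary~\ref{cor:gdimPandS} rest on the standard-module theory of \cite{Na} (Theorem 6.14 and Proposition 7.18), which presupposes this basis theorem. Moreover, the graded dimension of a standard module is computed from those of the projectives, that is, of the spaces $1_\kappa\Tb^{\underline{\mu}}1_\rho$ themselves. So the promised lower bound on $\mathrm{gdim}$ is not independent of what you are trying to prove. The single-blue-strand (dg nilHecke) basis also never sees a non-leftmost blue strand, which is exactly where the difficulty lives.
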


For the braiding bimodules $\Rb_w $, we claim a very similar set forms a basis for it. Namely, we define $B^\kappa_\rho(\Rb_w)$ as follows. For $v\in S_b$, we can pick diagrams $\psi_v \in 1_\kappa \Rb_w^{\underline{\mu}} 1_\rho$, where the colored strands read from bottom to top follow a reduced expression of $w$, and the black strands a reduced word for $v$ (where no pair of colored or black strands cross twice and such that it is left-adjusted). Given $\underline{l},\underline{a}$ as above, we then add black dots and nails according to the same procedure as before. The set of all such diagrams for fixed $\kappa$ and $\rho$, we call $B^\kappa_\rho(\Rb_w)$. 
\begin{ex}
    Take, for instance, $\underline{\mu} = (\lambda, \lambda, \lambda)$, $\rho =(0,1,1)$, $\kappa = (1,1,0)$, and $w$ being an adjacent transposition of the 2nd and 3rd strands. Then set of possible $\psi_v$ has two elements:
    \[ \tikzdiagh{-1ex}{ 
    \draw (1.5,0)..controls (1.5,0.2) and (-0.5, .2) .. (-0.5, 1);
    \draw (0.5,0)..controls (0.5,0.25) and (-0.25, 0.25) ..(-0.25, .5);
    \draw (-0.25, .5)..controls (-0.25, 0.75) and (0.5,0.75) .. (0.5,1);
    \draw[vstdhl] (-1,0) node[below]{$\lambda$} -- (-1,1);
      \draw[vstdhl] (0,0) node[below]{$\lambda$} -- (1,1);
      \draw[vstdhl] (1,0)node[below]{$\lambda$} --(0,1);
      } \] 
      \[ \tikzdiagh{-1ex}{ 
    \draw (1.5,0)--(0.5,1);
    \draw (0.5,0)--(-0.5,1);
    \draw[vstdhl] (-1,0) node[below]{$\lambda$} -- (-1,1);
      \draw[vstdhl] (0,0) node[below]{$\lambda$} -- (1,1);
      \draw[vstdhl] (1,0)node[below]{$\lambda$} --(0,1);
      } \] 
\end{ex}
Before proving this is a basis, we will define a useful map 
\begin{prop}\label{prop:Rbasis}
    The set $B^\kappa_\rho(\Rb_w)$ is a $\kk$-basis for $1_\kappa \Rb_w^{\underline{\mu}} 1_\rho$ treated as a $\kk$-module
\end{prop}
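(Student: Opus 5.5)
We prove spanning and linear independence separately, following the proof of Theorem~\ref{thm:Tbasis} in \cite{Na}. The natural tool is a map comparing $\Rb_w$ with $\Tb$ (the map alluded to just before the statement). Fix a reduced expression for $w$. Define a $\kk$-linear map
\[
\Theta_w : 1_\kappa \Rb^{\underline{\mu}}_w 1_\rho \longrightarrow 1_{\kappa'} \Tb^{\underline{\mu}'} 1_{\rho'}
\]
as follows. Each colored crossing is replaced by a "smeared out" version: the two colored strands are pulled apart, so that the bimodule locally becomes a KLRW diagram in which the colored strands do not cross. Concretely, we may insert the colored crossings at the very top of the diagram. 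Relations \eqref{eq:RmoveR3p}, \eqref{eq:RmoveR3pp} and \eqref{eq:RmoveR3} say that black strands, dots and nails slide freely through colored crossings. Hence every diagram in $\Rb_w$ equals one of the form $c_w\cdot t$, with $t\in \Tb^{\underline{\mu}}$ and $c_w$ the fixed diagram consisting only of the colored crossings of $w$ together with vertical black strands.

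\textbf{Spanning.} First we show that $\Rb_w \cong c_w\cdot \Tb^{\underline{\mu}}$ is generated as a right $\Tb^{\underline{\mu}}$-module by the elements $c_w 1_\rho$. Take any diagram in $1_\kappa\Rb_w 1_\rho$. Using the sliding relations, push all colored crossings upward past the black strands. We keep track of where black strands end up relative to the colored strands: a black strand between colored strands $i$ and $i+1$ at the top moves between the corresponding strands of $w\cdot\underline{\mu}$. This rewrites the diagram as $c_w$ times an element of $1_{w^{-1}\kappa}\Tb^{\underline{\mu}}1_\rho$. Expanding that element in the basis $B(\Tb)$ of Theorem~\ref{thm:Tbasis} and pushing $c_w$ back down by the same relations gives a combination of elements of $B^\kappa_\rho(\Rb_w)$. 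Here we use that black/colored double crossings in the result can be removed, or are zero for blue strands, exactly as in $\Tb$. Different reduced words for $w$ or for $v$ differ by the braid relations \eqref{eq:RmoveR3} and the KLR relations, up to lower-order terms with fewer crossings. So induction on the number of crossings shows that $B^\kappa_\rho(\Rb_w)$ spans.

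\textbf{Linear independence.} This is the step I expect to be the main obstacle, since relations in a diagrammatically presented bimodule may collapse more than expected. The plan is to show that right multiplication $t\mapsto c_w t$ is injective from $1_{w^{-1}\kappa}\Tb^{\underline{\mu}}1_\rho$ to $\Rb_w$. Under the rewriting above, $B^\kappa_\rho(\Rb_w)$ corresponds bijectively to $B^{w^{-1}\kappa}_\rho(\Tb)$, so independence would then follow from Theorem~\ref{thm:Tbasis}.

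To prove injectivity, I would construct an inverse directly. Define $\Rb'_w := \Tb^{\underline{\mu}}$ with left $\Tb^{w\cdot\underline{\mu}}$-action given by conjugating through $c_w$, meaning strands are relabeled by $w$. Each generator of $\Tb^{w\cdot\underline{\mu}}$ is sent to the corresponding generator of $\Tb^{\underline{\mu}}$. This is well-defined because every local relation of Definition~\ref{def:dgKLRWalgebra} involves at most one colored strand, or none, together with black strands. Such relations are insensitive to the labels of the other colored strands. The only subtlety is the red double-crossing and red R3 corrections, whose coefficients depend on $\mu_i$; these are preserved since the labels are permuted consistently. Then check that the defining relations of $\Rb_w$ hold in $\Rb'_w$, which amounts to checking that the sliding relations are compatible, and obtain a surjection $\Rb_w\to\Rb'_w$ sending $c_w t\mapsto t$. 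The images of $B^\kappa_\rho(\Rb_w)$ are exactly the basis $B(\Tb)$, which are independent. Care is needed with the nail relations and with the requirement that the leftmost strand $\mu_1=\lambda$ is never permuted, which holds since $w\in S_{r-1}$ fixes it.
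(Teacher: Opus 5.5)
Your argument rests on the claim that every element of $1_\kappa\Rb_w 1_\rho$ factors as $c_w\cdot t$ with $t\in\Tb^{\underline{\mu}}$, and that $t\mapsto c_w t$ is injective. In other words, it assumes $\Rb_w$ is a relabelled copy of $\Tb^{\underline{\mu}}$. This is false, and it is exactly where the braiding bimodule differs from the algebra.

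The relations \eqref{eq:RmoveR3p}, \eqref{eq:RmoveR3pp}, \eqref{eq:RmoveR3} are only Reidemeister~III moves. They let a black strand that crosses \emph{both} colored strands pass the colored crossing point on either side. However, a black strand ending at the top between two colored strands that cross must pass the crossing point on the left or on the right. In doing so it crosses a \emph{different} colored strand, and no relation exchanges the two options. So a ``$c_w$ with vertical black strands'' does not exist once $\kappa$ has black strands between swapped colored strands, and no single choice of $c_w$ works.

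Concretely, take $\underline{\mu}=(\lambda,\lambda,\lambda)$, $w=s_2$, one black strand, $\kappa=(0,1,0)$, and let $c^L$ bring the strand in from the left region.
\begin{itemize}
\item Let $t\in 1_{(1,0,0)}\Tb 1_{(0,0,1)}$ be the minimal diagram, crossing $\mu_3$ and then $\mu_2$. Applying \eqref{eq:RmoveR3p} and then the blue bigon relation gives $c^L t=0$, so $t\mapsto c^L t$ is not injective.
\item The $x$-degree-$1$ element of $1_\kappa\Rb_w 1_{(0,0,1)}$ (strand passing right of the crossing point and crossing only $\mu_2$) is not of the form $c^L t$. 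Every $t\in 1_{(1,0,0)}\Tb 1_{(0,0,1)}$ has $x$-degree at least $2$, so the map is not surjective either.
\item The same happens symmetrically for $c^R$.
\item The nail-free minimal diagrams in $1_\kappa\Rb_w 1_\rho$ for $\rho=(1,0,0),(0,1,0),(0,0,1)$ have degrees $(x,x^2,x)$. This is not a shift of $(1,x,x^2)$, $(x,1,x)$ or $(x^2,x,1)$, the corresponding degrees for $1_{\kappa'}\Tb$.
\end{itemize}

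The independence step fails for the same reason. When labels differ there is no relabelling isomorphism $\Tb^{w\cdot\underline{\mu}}\cong\Tb^{\underline{\mu}}$, because permuting colored strands does not permute the regions between consecutive colored strands. When all labels equal $\lambda$, your $\Rb'_w$ is just $\Tb$, so your argument would give $\Rb_w\cong\Tb$ and make $\Bb_w$ the identity functor. That cannot categorify the R-matrix. It also contradicts Proposition~\ref{prop:Rresolution}: for $b_i=1$ the class of $\Rb_i1_\rho$ is $x[\Pb_{\mathrm{left}}]+x[\Pb_{\mathrm{right}}]-x^2[\Pb_{\mathrm{mid}}]$, not the class of a single projective. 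Since $\Rb_w$ is genuinely not projective as a one-sided module, independence cannot be transported from Theorem~\ref{thm:Tbasis}.

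The paper avoids any factorization through $\Tb$:
\begin{itemize}
\item \textbf{Spanning} is by induction on the number of crossings. Dots are moved to the top, nails are slid left via \eqref{eq:RmoveR3pp}, and bigons are removed and braid moves applied modulo diagrams with fewer crossings, as in Webster's Lemma~4.10.
\item \textbf{Independence} comes from extending the faithful polynomial representation of \cite{Na} on $\bigoplus_\rho Pol_b\,\varepsilon_\rho$ to the perturbed bimodule. A colored crossing acts by $f\varepsilon_\rho\mapsto f\varepsilon_\kappa$, and one checks \eqref{eq:RmoveR3}, \eqref{eq:RmoveR3p}, \eqref{eq:RmoveR3pp}.
\end{itemize}
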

\begin{proof}
    The proof of this is very similar to the arguments in Section 5 of \cite{Na}. 
    First, we show $B^\kappa_\rho(\Rb_w)$ spans $1_\kappa \Rb_w^{\underline{\mu}} 1_\rho$. We wish to show any $d \in 1_\kappa \Rb_w^{\underline{\mu}} 1_\rho$ may be written as a linear combination elements of $B^\kappa_\rho(\Rb_w)$. Notice that by the KLR relations on black strands and dot sliding relation on colored strands, any diagram in $1_\kappa \Rb_w^{\underline{\mu}} 1_\rho$ is equivalent to a linear combination of diagrams where all black dots are at the top of the diagram, so we may assume all black dots are at the top. \\ \\
    We proceed by induction on the number of crossings. If $d$ has the minimum number of crossings allowed, then it must be that $d = \psi_{(1, \underline{l}, \underline{a})}$ for some choice of nails and black dots. In the induction step, it is sufficient to show $d$ is equivalent to some $\psi_{(v, \underline{l}, \underline{a})}$ modulo sums of diagrams with lower crossing number. Note that using \ref{eq:RmoveR3pp} together with similar arguments as in Section 5.1 of \cite{Na}, we may slide all nails of any black strand of $d$ such that they are positioned at the leftmost position of the black strand in question at the cost of introducing diagrams of lower crossing number. Therefore, we are reduced to the case that $d$ has no black dots or nails.\\ \\
    By similar arguments as Lemma 4.10 of \cite{Web}, we may also assume $d$ has no bigons and that applying braid group relations on colored/ black strands of $d$ does not change the class of $d$ modulo diagrams with fewer crossings. We let $v'\in S_{b+r}$ denote the permutation of black strands and colored strands of $d$ (read from bottom to top). The expression we read out for $v'$ is necessarily already a reduced word due the lack of bigons. We may then go from this expression for $v'$ to its left-adjusted representative by applying a sequence of braid relations, each of which does not change the class of $d$ modulo diagrams with fewer crossings. Calling the induced permutation on the black strands $v \in S_b$, the class of $d$ is necessarily $\psi_v$.
    \\  \\
    To show linear independence, we essentially repeat Section 5.2 of \cite{Na}. As such, we mention only the parts not contained there. Fixing the number of black strands to be $b$, we wish to define a faithful polynomial action of the perturbed algebra $\Rb^{\underline{\mu}}_{w,b} (\delta)$ on the free module $\oplus_\rho Pol_b \varepsilon_\rho$ over the polynomial ring $Pol_b = \kk [\delta] [x_1,...,x_b]\otimes \bigwedge (\omega_1,...,\omega_b)$. To the local pieces of any diagram in $1_\kappa \Rb^{\underline{\mu}}_{w,b} (\delta)1_\rho$ we assign the actions described in \cite{Na} as well as:
  \begin{align}
     \tikzdiagh{-1ex}{ 
     \node at(-0.5, 0.5){\small$\dots$};
     \node at(1.5, 0.5){\small $\dots$};
      \draw[pstdhl] (0,0) node[below]{$\mu_i$} -- (1,1);
      \draw[pstdhl] (1,0)node[below]{$\mu_j$} --(0,1);
      } \cdot f \varepsilon_\rho 
      =
      f \varepsilon_\kappa
\end{align}
We check this is well-defined on the local relations \ref{eq:RmoveR3},\ref{eq:RmoveR3p}, and \ref{eq:RmoveR3pp}. \ref{eq:RmoveR3} and \ref{eq:RmoveR3pp} are clear. For \ref{eq:RmoveR3p}, we use the shorthand $\delta = x_i^\lambda$ to write, 
\begin{align}
        \tikzdiagh[scale=1]{-1ex}{ 
        \node at(-0.5, 0.5){\small$\dots$};
     \node at(1.5, 0.5){\small $\dots$};
    \draw (0.5,0)..controls (0.5,0.25) and (0,0.25) .. (0,0.5);
    \draw (0,0.5)..controls (0,0.75) and (0.5, 0.75).. (0.5, 1);
      \draw[pstdhl] (0,0) node[below]{$\mu_k$} -- (1,1)  ;
      \draw[pstdhl] (1,0) node[below]{$\mu_l$} --(0,1);
      } \cdot f \varepsilon_\rho
      = x_i^{\mu_k} f \varepsilon_\kappa = 
      \tikzdiagh[scale=1]{-1ex}{ 
      \node at(-0.5, 0.5){\small$\dots$};
     \node at(1.5, 0.5){\small $\dots$};
    \draw (0.5,0)..controls (0.5,0.25) and (1,0.25) .. (1,0.5);
    \draw (1,0.5)..controls (1,0.75) and (0.5, 0.75).. (0.5, 1);
      \draw[pstdhl] (0,0) node[below]{$\mu_k$} -- (1,1)  ;
      \draw[pstdhl] (1,0) node[below]{$\mu_l$} --(0,1);
      } \cdot f \varepsilon_\rho
\end{align}
Thus, the polynomial action is well-defined. The rest of the argument for linear independence closely follows \cite{Na}.

\end{proof}

\begin{cor} \label{cor:Rtensor}
    If $l(w w') = l(w)+l(w')$, then $\Rb_w \otimes_\Tb \Rb_{w'} \cong \Rb_{w w'}$ as $\Tb^{ww'\cdot\underline{\mu}}-\Tb^{\underline{\mu}}$-bimodules
\end{cor}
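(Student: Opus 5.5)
We construct the natural multiplication map
$$m:\Rb_w\otimes_{\Tb^{w'\cdot\underline{\mu}}}\Rb_{w'}\longrightarrow \Rb_{ww'},\qquad d\otimes d'\mapsto d\cdot d',$$
given by vertical stacking, with $d$ on top. The composite diagram has colored strands tracing $w$ on top of $w'$. Because $l(ww')=l(w)+l(w')$, concatenating a reduced word for $w$ with a reduced word for $w'$ gives a reduced word for $ww'$. Different reduced words for $ww'$ are related by braid moves on colored strands, and relation \eqref{eq:RmoveR3} identifies the corresponding diagrams. So the stacked diagram is a legitimate element of $\Rb_{ww'}$.

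The map is well defined on the balanced tensor product, since a middle $\Tb^{w'\cdot\underline{\mu}}$ element can be slid across the junction. It is a map of $\Tb^{ww'\cdot\underline{\mu}}$–$\Tb^{\underline{\mu}}$ bimodules, since both actions are gluing at the top and bottom. It also preserves gradings, because the degree of a colored crossing is additive and the diagram gains no extra crossings.

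For \textbf{surjectivity}, we take a basis element $\psi_{(v,\underline{l},\underline{a})}\in B^\kappa_\rho(\Rb_{ww'})$ from Proposition \ref{prop:Rbasis}. Using \eqref{eq:RmoveR3} we choose its colored skeleton to be the concatenated reduced word. We isotope the diagram so that a horizontal line $y=1/2$ separates the $w$ crossings from the $w'$ crossings. The black strands cross this line at some idempotent $1_\sigma$ of $\Tb^{w'\cdot\underline{\mu}}$, and cutting there writes the diagram as $d\cdot d'$.

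The one subtlety is that black strands must not need to cross colored strands exactly at the colored crossings. This is handled by perturbing positions, which is an isotopy. Relations \eqref{eq:RmoveR3p} and \eqref{eq:RmoveR3pp} allow black strands to pass through colored crossings where needed.

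For \textbf{injectivity}, we first note that $\Rb_w\otimes_\Tb\Rb_{w'}$ is spanned by the elements $d\otimes d'$ with $d'\in B(\Rb_{w'})$. We then push all of $d'$'s black-strand data (dots, nails, black crossings) upward through the tensor symbol. To do this, we factor $d'=y\cdot c$, where $c$ contains only the colored crossings together with straight black strands, and $y\in\Tb^{w'\cdot\underline{\mu}}$. This factorization is possible using \eqref{eq:RmoveR3p} and \eqref{eq:RmoveR3pp} by sliding black strands above the colored crossings.

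As a result, the tensor product is spanned by the elements $d\otimes c_\sigma$, where $d\in B(\Rb_w)$ and $c_\sigma$ is the pure colored-crossing diagram on idempotent $\sigma$. These spanning elements are in bijection with (or surject onto a set at most the size of) $B^\kappa_\rho(\Rb_{ww'})$. Since $m$ sends the spanning set onto the basis, comparing graded dimensions weight space by weight space gives that $m$ is an isomorphism. Equivalently, $m$ maps a spanning set to a linearly independent set, so that spanning set is linearly independent and $m$ is bijective.

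The \textbf{main obstacle} is the factorization $d'=y\cdot c$ and the resulting count. We need to check that every black strand in $d'$ can be moved to lie entirely above the colored crossings, or below them (via the right action), without leaving correction terms. Relation \eqref{eq:RmoveR3p} has no correction term, unlike the red–black case, but nails and dots must also be handled. We would try to resolve this with the faithful polynomial representation used in the proof of Proposition \ref{prop:Rbasis}, where a colored crossing acts by relabeling idempotents. We would check that the spanning set $\{d\otimes c_\sigma\}$, viewed through $m$, goes to distinct basis elements $\psi_{(v,\underline{l},\underline{a})}$.
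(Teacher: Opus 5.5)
Your multiplication map $m$ and your surjectivity argument are fine, and they follow the paper's route: take a basis element from Proposition \ref{prop:Rbasis} whose colored skeleton is the concatenated reduced word, and cut it at a horizontal line between the $w'$-crossings and the $w$-crossings. The paper's proof only asserts that this composition map is surjective and injective "using Proposition \ref{prop:Rbasis}".

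Your injectivity argument, however, has a genuine gap. The factorization $d'=y\cdot c$ does not exist in general, where $c$ is a pure colored-crossing diagram with straight black strands and $y\in\Tb^{w'\cdot\underline{\mu}}$ is glued on top. The obstruction is topological, not a matter of correction terms.

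Suppose the bottom idempotent $\rho$ has a black strand in a block between two colored strands that cross in $w'$; already $w'=s_i$ with $b_i>0$ is an example. The region containing that strand pinches to a point at the colored crossing. So the strand must cross one of the two colored strands \emph{below} that crossing, and relation \eqref{eq:RmoveR3p} only lets you choose which one.

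Hence no $c_\sigma$ has bottom idempotent $\rho$. Your proposed spanning set $\{d\otimes c_\sigma\}$ of $\Rb_w\otimes_\Tb\Rb_{w'}1_\rho$ is then empty, although $\Rb_{ww'}1_\rho\neq 0$. Only the black-strand data lying above the colored crossings can be pushed through the tensor sign.

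What survives is a spanning set of the form $d\otimes g$, where $g$ runs over left-module generators of $\Rb_{w'}1_\rho$, such as the elements $p_0^I$ of Proposition \ref{prop:Rresolution}. These elements are not independent: $\Rb_{s_i}1_\rho$ is generated by the $2^{b_i}$ elements $p_0^I$, subject to the relations encoded in $p_1$, and its resolution has length $b_i$.

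For instance, take $b_i=1$. The diagram whose black strand starts and ends between the two crossing colored strands equals, up to sign, both $y_L\cdot p_0^{\{1\}}$ and $y_R\cdot p_0^{\emptyset}$. This is exactly relation \eqref{eq:RmoveR3p}, and it is the degree-one term of $\boldsymbol{p}\Rb^i_\rho$.

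Consequently, any count asserting that the spanning set is in bijection with $B^\kappa_\rho(\Rb_{ww'})$ overcounts; this is visible already for $w=1$. Injectivity is really the statement that the kernel of $\bigoplus_I q^{f(I)}\Rb_w1_{\rho_I}\to\Rb_{ww'}1_\rho$ is exactly the image of $\Rb_w\otimes p_1$.

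Your argument only settles the case where no black strand of $\rho$ lies in a block pinched by $w'$. In that case $\Rb_{w'}1_\rho$ is free of rank one on $c_\rho$, and a basis comparison suffices.

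The faithful polynomial representation from Proposition \ref{prop:Rbasis} controls linear independence in the target, not the size of the tensor product, so it cannot close this gap. You need an additional ingredient, for example:
\begin{enumerate}
\item an inverse ``cutting'' map defined on $B^\kappa_\rho(\Rb_{ww'})$ and shown to be well defined modulo these relations; or
\item a graded-dimension upper bound for $\Rb_w\otimes_\Tb\Rb_{w'}1_\rho$, obtained from the $\Phi^{w}_\rho$ and standard-module filtrations in the spirit of Lemma \ref{lem:RRit.ext.}.
\end{enumerate}
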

\begin{proof}
    There is a obvious morphism $\Rb_w \otimes_\Tb \Rb_{w'} \longrightarrow \Rb_{w w'}$ given by composition. This can be shown to be both surjective and injective using Proposition \ref{prop:Rbasis}. 
\end{proof}

\subsection{Braid Group Action}
For each $w \in S_r$, we can now define the braiding functors $\Bb_w$, which will be the categorifications of the product of R-matrices. 
\begin{defn}
    We let $\Bb_w$ denote the functor $ \Bb_w: \mathcal{D}_{dg}(\Tb^{\underline{\mu}}) \rightarrow \mathcal{D}_{dg}(\Tb^{w\cdot\underline{\mu}})$ defined by, 
    $$\Bb_w(-) := \Rb_w \otimes^L_\Tb-$$
    In the case that $w$ is an adjacent transposition, say $s_i: (...,i,i+1,...) \mapsto (...,i+1,i,...)$, we will just say $\Bb_{s_i} = \Bb_i$.
\end{defn}
As is appropriate for objects categorifying R-matrices, these functors commute with 1-morphisms. 
\begin{prop}
    The functors $\Bb_i$ commute with 1-morphisms. More precisely, we have natural isomorphisms $\Bb_i \circ \mathcal{F} \cong \mathcal{F} \circ \Bb_i$ and $\Bb_i \circ \mathcal{E} \cong \mathcal{E} \circ \Bb_i$. 
\end{prop}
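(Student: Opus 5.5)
Both $\mathcal{F}$ and $\Bb_i$ are derived tensor products with bimodules, so the plan is to reduce each claimed natural isomorphism to an isomorphism of bimodules. By associativity of the derived tensor product,
$$\Bb_i\circ\mathcal{F} \cong \big(\Rb_{s_i}\otimes^L_{\Tb}\Tb_{b+1}1_{b,1}\big)\otimes^L_{\Tb}-,\qquad \mathcal{F}\circ\Bb_i\cong\big(\Tb^{s_i\cdot\underline{\mu}}_{b+1}1_{b,1}\otimes^L_{\Tb}\Rb_{s_i}\big)\otimes^L_{\Tb}-.$$
It therefore suffices to produce an isomorphism (or quasi-isomorphism) of $\Tb^{s_i\cdot\underline{\mu}}_{b+1}$-$\Tb^{\underline{\mu}}_b$ dg-bimodules between the two composites. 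The same applies to $\mathcal{E}$; there one also has to check that the grading shift $q^{2b+1-|\underline{\mu}|}$ is unchanged, which holds because $|\underline{\mu}|=|s_i\cdot\underline{\mu}|$.

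The first step is to simplify the derived tensor products. The bimodule $\Tb_{b+1}1_{b,1}$ is projective as a left module, and it is also cofibrant as a right $\Tb_b$-module, since it is free over $\Tb_b$ by the basis Theorem \ref{thm:Tbasis}: the rightmost strand can be pulled out. Hence $\otimes^L$ can be replaced by the ordinary $\otimes$ on that side. For $\Rb_{s_i}$, Proposition \ref{prop:Rbasis} shows that it is free as a right $\Tb^{\underline{\mu}}$-module. The basis diagrams factor as a diagram containing the single colored crossing placed on top, times an element of $\Tb$, by left-adjustment. The same argument applies on the left, so the underived tensor products compute the derived ones. The differential is zero because $\mu_1=\lambda$.

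Next I build the explicit maps. In the composite $\Rb_{s_i}\otimes_\Tb \Tb_{b+1}1_{b,1}$, an element is a diagram whose bottom has the new black strand added at the far right, followed above by the colored crossing. In $\Tb 1_{b,1}\otimes_\Tb\Rb_{s_i}$, the crossing sits below and the extra strand is added on the right above it. Both composites map by vertical concatenation into the space $\Rb_{s_i}1_{b,1}$, meaning diagrams in $\Rb_{s_i}$ with one more black strand on the right. Because the extra black strand lies to the right of all colored strands, it never has to cross the colored crossing. Relations \eqref{eq:RmoveR3pp} and \eqref{eq:RmoveR3p} then let any black strand slide through the crossing, so both maps are surjective. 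Injectivity follows by comparing bases: using Proposition \ref{prop:Rbasis} and Theorem \ref{thm:Tbasis}, the basis of each tensor product is in bijection with $B(\Rb_{s_i})$ restricted to the idempotents $1_{\kappa}\,\cdot\,1_{\rho,1}$. This is the same dimension-count argument as in Corollary \ref{cor:Rtensor}. The construction is natural because it is an isomorphism of bimodules. For $\mathcal{E}$ the argument is dual: cap the rightmost black strand at the top instead, i.e. use $1_{b,1}\Rb_{s_i}\cong \Rb_{s_i}1_{b,1}$ restricted appropriately.

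The main obstacle is the injectivity/freeness step. One must verify that every basis element of $\Rb_{s_i}1_{b,1}$, whose black strands follow an arbitrary left-adjusted reduced word, including nails and dots, factors uniquely as (crossing part)$\cdot$(element of $\Tb$) and also as (element of $\Tb$)$\cdot$(crossing part). In particular, black strands that cross the region between the two swapped colored strands have to be moved past the colored crossing without producing correction terms. I would first try to show that \eqref{eq:RmoveR3p} and \eqref{eq:RmoveR3pp} hold on the nose and that no bigons of black and colored strands can be created; if that works, both factorizations come from reading off the minimal left-adjusted representative. For $\mathcal{E}$ the same check is needed with the rightmost strand at the top.
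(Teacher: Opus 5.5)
Your skeleton matches the paper's proof. You reduce to an isomorphism of bimodules $\beta_{\mathcal{F}}\otimes_{\Tb}\Rb_i\cong\Rb_i\otimes_{\Tb}\beta_{\mathcal{F}}=\Rb_i1_{b,1}$ given by concatenation. You get surjectivity by sliding the added strand through the colored crossing with \eqref{eq:RmoveR3pp} (and \eqref{eq:RmoveR3p}), and you handle $\mathcal{E}$ by mirroring. The paper simply asserts injectivity by viewing $\beta_{\mathcal{F}}\otimes_{\Tb}\Rb_i$ as a subspace, so adding a basis or dimension argument is worthwhile.

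\textbf{The false step.} The claim you lean on for injectivity, and name as the main obstacle, is false. $\Rb_{s_i}$ is not free, not even projective, as a one-sided $\Tb$-module, so the unique factorization ``(crossing part)$\cdot$(element of $\Tb$)'' cannot be proved.
\begin{itemize}
\item On the right: take one black strand ending between the two swapped colored strands, and let $c_L,c_R$ be the crossing diagrams in which it enters from the left, resp.\ right. Let $t_1,t_2\in\Tb$ move a strand sitting between $\mu_i$ and $\mu_{i+1}$ out to the left, resp.\ right. Then $c_Lt_1=c_Rt_2$ by \eqref{eq:RmoveR3p}, a nontrivial relation among your would-be free generators.
\item On the left: this failure is exactly why Proposition \ref{prop:Rresolution} is needed. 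For $b_i=1$ and blue strands, the resolution $x^2P\to xP\oplus xP$ of $\Rb_i1_\rho$ cannot split, because $\Tb$ has no elements of negative $x$-degree.
\end{itemize}

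\textbf{Why the argument survives without it.}
\begin{itemize}
\item Each derived tensor product is already underived because the \emph{other} factor is one-sidedly cofibrant. $\Tb_{b+1}1_{b,1}$ is projective as a left module, which handles $\Rb\otimes^L\beta_{\mathcal{F}}$, and free as a right $\Tb_b$-module, which handles $\beta_{\mathcal{F}}\otimes^L\Rb$. The symmetric statements hold for $\beta_{\mathcal{E}}$.
\item For injectivity, write $\beta_{\mathcal{F}}=\bigoplus_\alpha g_\alpha\Tb_b$, where $g_\alpha$ moves the new strand, with its dots and possible nail, into position on top. Then $\beta_{\mathcal{F}}\otimes_{\Tb_b}\Rb_b=\bigoplus_\alpha g_\alpha\otimes\Rb_b$.
\item What you actually need to check is a degree-preserving bijection between pairs $(g_\alpha,d)$ with $d\in B(\Rb_b)$ and the basis $B(\Rb_{b+1}1_{b,1})$ of Proposition \ref{prop:Rbasis}. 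No factorization through the crossing is involved. Surjectivity together with finite-dimensionality of the graded pieces then gives the isomorphism.
\item For $\mathcal{E}$, the statement to prove is $\Rb_b\otimes_{\Tb_b}1_{b,1}\Tb_{b+1}\cong 1_{b,1}\Rb_{b+1}$.
\end{itemize}
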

\begin{proof}
Recall from Section \ref{sec: prelim} that the $\mathcal{F}$ and $\mathcal{E}$ functors are given by tensor products with right (or left, respectively) projective modules, which we shall refer to as $\beta_\mathcal{F}$ and $\beta_\mathcal{E}$. Let us consider $\mathcal{F}$, and the case for $\mathcal{E}$ will follow similarly. Then, it is sufficient to show $\beta_\mathcal{F} \otimes_\Tb \Rb_i \cong \Rb_i \otimes_\Tb \beta_\mathcal{F}$. \\
First, notice that $\Rb_i \otimes_\Tb \beta_\mathcal{F}$ is the space of diagrams of $\Rb_i $ where the rightmost strand is always a black strand. Now, $\beta_\mathcal{F} \otimes_\Tb \Rb_i $ is a subspace of this module, where the rightmost strand is black \textit{and} it must have been pulled to the right above the crossing (this is because $\Rb_i$ does not interact with the rightmost black strand in the tensor product \eqref{eq:F_functor}). So, we can consider this inclusion as an injective map $\beta_\mathcal{F} \otimes_\Tb \Rb_i \rightarrow \Rb_i \otimes_\Tb \beta_\mathcal{F}$. However, by \eqref{eq:RmoveR3pp}, we can always slide this strand above and below the colored crossing, thereby showing the map is also surjective. 
\end{proof}
It will be useful for us to study the projective resolutions of these braiding bimodules in order to prove the braiding functors provide a strong categorical group action on our category. In fact, we will now explicitly describe the projective resolution of $\Bb_i(P^{\underline{\mu}}_\rho)$ for $P^{\underline{\mu}}_\rho$ an arbitrary projective module. \\ \\
We wish to find all summands and maps of a resolution, 
$$0\rightarrow\cdots \rightarrow (\boldsymbol{p}\Rb^i_\rho)_1 \overset{p_1}{\rightarrow }(\boldsymbol{p}\Rb^i_\rho)_0 \overset{p_0}{\rightarrow }\Bb_i(P^{\underline{\mu}}_\rho)\rightarrow 0$$
We shall denote the relevant projectives in the resolution only by their $(i-1)$th-$(i+1)$th strands. For instance, if $\rho$ specifies the tuple $(b_1,b_2,...,b_r)$ denoting the configuration of black strands in the idempotent $1_\rho$, then $P^{\underline{\mu}}_\rho = P^{\underline{\mu}}_{\cdots,b_{i-1},b_i,b_{i+1},\cdots } = P^{\underline{\mu}}_{b_{i-1},b_i,b_{i+1} }$. We begin by setting, 
$$(\boldsymbol{p}\Rb^i_\rho)_0 = \bigoplus_{I\in C(b_i)} q^{f(I)}P_{b_{i-1}+|I|, 0, b_{i+1}+b_i-|I|}$$
Where $C(b) = \bigcup_{b\geq k\geq 0} C(b,k)$ for $C(b,k)$ the set of length $k$ subsets of $(1,...,b)$. Furthermore, we set the map, 
$$q^{f(I)}P_{b_{i-1}+|I|, 0, b_{i+1}+b_i-|I|} \overset{p_0^I}{\longrightarrow}\Bb_i(P^{\underline{\mu}}_\rho)  $$
to be the composition with an element of the braiding bimodule, which we now describe. \\ \\
For $I \subset\{1,2,..,b\}$, let \begin{align}\label{eq:omegas}\omega^I_{b} \in NH_{b}\end{align} be an element of the nilHecke algebra on $b$ strands defined by pulling every strand in $I$ to the left and keeping the remaining strands on the right, without braiding any strands in $I$ or its complement $I^c$. For instance, if $b=4$ and $|I|=2$, then, 
\[ \omega^{1,3}_{4}: \tikzdiagh{-1ex}{
    \draw (-1,0)-- (-1,1);
      \draw (0,0)  -- (1.5,1);
      \draw (1,0) --(-0.5,1);
      \draw (2,0)--(2,1); } \hspace{5mm}
      \omega^{1,4}_{4}: \tikzdiagh{-1ex}{
    \draw (-1,0)-- (-1,1);
      \draw (0,0)  -- (1.5,1);
      \draw (1,0) --(2,1);
      \draw (2,0)--(-0.5,1);}
      \]
      
\[ \omega^{3,4}_{4}: \tikzdiagh{-1ex}{
    \draw (-1,0)-- (1.5,1);
      \draw (0,0)  -- (2,1);
      \draw (1,0) --(-1,1);
      \draw (2,0)--(-0.5,1); }
      \hspace{5mm} 
      \omega^{1,2}_{4}: \tikzdiagh{-1ex}{
    \draw (-1,0)-- (-1,1);
      \draw (0,0)  -- (-0.5,1);
      \draw (1,0) --(1.5,1);
      \draw (2,0)--(2,1); }
      \]

\[ \omega^{2,3}_{4}: \tikzdiagh{-1ex}{
    \draw (-1,0)-- (1.5,1);
      \draw (0,0)  -- (-1,1);
      \draw (1,0) --(-0.5,1);
      \draw (2,0)--(2,1);} 
      \hspace{5mm}
       \omega^{2,4}_{4}: \tikzdiagh{-1ex}{
    \draw (-1,0)-- (1.5,1);
      \draw (0,0)  -- (-1,1);
      \draw (1,0) --(2,1);
      \draw (2,0)--(-0.5,1);} 
      \]
The maps $p_0^I$ are then composition with the element, 
\[p_0^I = \tikzdiagh[scale=2]{0}{
	\draw[vstdhl] (0,0) node[below]{\small $\mu_{i}$}..controls (0,1.75) and (2,1.75) .. (2,2);
    \draw[vstdhl] (2,0) node[below]{\small $\mu_{i+1}$} ..controls (2,1.75) and (0,1.75) ..  (0,2);
	\draw	 	(0.5,0)..controls (0.5,1.5) and (-0.5,1.5).. (-0.5,2);
    \draw	 	(0.75,0)..controls (0.75,1.5) and (-0.25,1.5).. (-0.25,2);
    \node at(-0.5,1){$\cdots$};
    \node at(2.5,1){$\cdots$};
	\node at(1,.1){\small $\cdots$};
    \node at(1.82,1.3){\small $\cdots$};
    \node at(0.19,1.3){\small $\cdots$};
    \tikzbrace{.5}{1.5}{.1}{$b_i$};
    \tikzbraceop{-0.5}{-0.25}{1.9}{$|I|$};
    \tikzbraceop{2.25}{2.5}{1.9}{$b_i-|I|$};
	\draw (1.25,0)..controls (1.25,1.5) and (2.25,1.5) ..(2.25, 2);
    \draw (1.5,0)..controls (1.5,1.5) and (2.5,1.5) ..(2.5, 2);
	\filldraw [fill=white, draw=black] (.25,.25) rectangle (1.75,.75) node[midway] { $\omega^{I}_{b_i}$};
}
\]
Furthermore, we set $q^{f(I)} = deg(p^I_0) = q^{\mu_i |I|} q^{\mu_{i+1}(b_i - |I|) }\cdot deg\hspace{1mm}\omega_{b_i}^I$. The total map $p_0 = \sum_I p_0^I$ is easily seen to be surjective by using Prop. \ref{prop:Rbasis}. \\
Now, for constructing the rest of the resolution, it will be useful to keep track of two sets for each projective: $\jj_L$ and $\jj_R$. These will keep track of which strands are to the right of the crossing and which strands are to its left. We may also keep track of the black strands immediately after the $i$-th colored strand by using the set $\jj$. We use the shorthand $\underline{\jj} = (\jj_L,\jj,\jj_R)$. For instance, in the 0th homological degree constructed above, given $I$, $\jj_L = I$, $\jj = \{\emptyset\}$ , and $\jj_R = I^c$. Now, we can more generally consider the complex, 

$$\bigoplus_{\underline{\jj} =(\jj_L,\jj,\jj_R)} q^{\#_{\underline{\jj}}} P_{b_{i-1}+|\jj_L|, \jj , b_{i+1}+|\jj_R|} [|\jj|]\hspace{10mm}, \hspace{10mm} q^{\#_{\underline{\jj}}}:=\prod_{s\in \jj}q^{\mu_i -2(s-1)} \cdot q^{f(\jj_L)}$$

The direct sum above ranges over all $\underline{\jj} =(\jj_L,\jj,\jj_R)$ such that $\jj_L\cup \jj\cup\jj_R = \{1,2,...,b_i\}$ and such that $(\jj_L,\jj,\jj_R)$ are pairwise disjoint. Simply put, we are dividing up the $b_i$ black strands into the ones at the center, the ones at the left, and the ones at the right. As we increase the homological grading, we bring more and more strands into the middle, until $\jj_L = \jj_R = \{\emptyset\}$, at which point the homological grading is $|\jj| =b_i$ and there is only the projective module $P_{b_{i-1},b_i,b_{i+1}}$ that we started with (shifted by some $q$ and $x$ factors). There are also non-trivial differentials, $p_n$, which we now describe. \\
We say $\underline{\jj'} \rightharpoonup \underline{\jj}$ whenever $\jj \subset \jj' $ such that $|\jj| = |\jj'|-1$ and if $\jj_L' \subset\jj_L$ and $\jj_R' \subset\jj_R$. In this case, we may assume $\jj' = \jj \cup\{b'\}$ for some $b'\in \{1,...,b_i\}$. It also follows that either $|\jj_R'| = |\jj_R|-1$ and $|\jj_L'| = |\jj_L|$ OR that $|\jj_L| = |\jj_L|-1$ and $|\jj_R'| = |\jj_R|$. Let's assume the former as the two cases are nearly identical. Then, we define a map,
$$q^{\#_{\underline{\jj'}}}P_{\underline{\jj}'} \overset{\tau^R_{\underline{\jj'}, \underline{\jj}}}{\longrightarrow}q^{\#_{\underline{\jj}}} P_{\underline{\jj}}$$
through composition with the element, 
\[\tau^R_{\underline{\jj'}, \underline{\jj}} = \tikzdiagh[scale=2]{0}{
\draw[vstdhl] (0,0) --(0,1)node[above]{\small $\mu_{i}$};
\draw[vstdhl] (1,0) --(1,1)node[above]{\small $\mu_{i+1}$};
\draw (1.5,0)..controls (1.5,0.5) and (0.5,0.5)..(0.5,1);
\draw (0.65,0)--(0.65,1);
\draw (0.85,0)--(0.85,1);
\node at(0.76,.5){\small $...$};
\draw (1.15,0)--(1.15,1);
\draw (1.35,0)--(1.35,1);
\node at(1.26,.5){\small $...$};
\tikzbrace{.7}{0.8}{.1}{$h_1$};
\tikzbrace{1.2}{1.3}{.1}{$h_2$};
\node at(0.25,.5){$\cdots$};
\node at(-0.25,.5){$\cdots$};
\node at(1.75,.5){$\cdots$};
}\]
Where $h_1 := \#\{k\in \jj | k<b'\}$ and $h_2:= b'-1-h_1$. Similarly, if instead $\underline{\jj'} \rightharpoonup \underline{\jj}$ such that $|\jj_L| = |\jj_L|-1$ and $|\jj_R'| = |\jj_R|$, then we define the map, 
$$q^{\#_{\underline{\jj'}}}P_{\underline{\jj}'} \overset{\tau^L_{\underline{\jj'}, \underline{\jj}}}{\longrightarrow}q^{\#_{\underline{\jj}}} P_{\underline{\jj}}$$
via the diagram,

\[\tau^L_{\underline{\jj'}, \underline{\jj}} = \tikzdiagh[scale=2,xscale=-1]{0}{
\draw[vstdhl] (0,0) --(0,1)node[above]{\small $\mu_{i+1}$};
\draw[vstdhl] (1,0) --(1,1)node[above]{\small $\mu_{i}$};
\draw (1.5,0)..controls (1.5,0.5) and (0.5,0.5)..(0.5,1);
\draw (0.65,0)--(0.65,1);
\draw (0.85,0)--(0.85,1);
\node at(0.76,.5){\small $...$};
\draw (1.15,0)--(1.15,1);
\draw (1.35,0)--(1.35,1);
\node at(1.26,.5){\small $...$};
\tikzbraceop{.7}{0.8}{-0.6}{$h_2$};
\tikzbraceop{1.2}{1.3}{-0.6}{$h_1$};
\node at(0.25,.5){$\cdots$};
\node at(-0.25,.5){$\cdots$};
\node at(1.75,.5){$\cdots$};
}\]
Where now the numbers $h_1,h_2$ are determined through $h_1 := \#\{k\in \jj | k>b' \}$ and $h_2:= b'-1-h_1$. In general, we can set $\Tilde{\tau}_{\underline{\jj'}, \underline{\jj}}$ to be the map $\tau^L_{\underline{\jj'}, \underline{\jj}}$ or $\tau^R_{\underline{\jj'}, \underline{\jj}}$ depending on which direction the strand $b'$ comes from. Now, for $\underline{\jj} \rightharpoonup \underline{\jj}'$ let $b_{\underline{\jj},\underline{\jj'}} := \# \{k\in \jj |k>\jj-\jj'\}$. Note that $\jj-\jj'$ is always a one-set element in this case. We can finally write down the differentials as,
$$p_n = \underset{|\jj|=n}{\sum_{\underline{\jj}}}d_{\underline{\jj}}$$
$$d_{\underline{\jj}} = \underset{\underline{\jj} \rightharpoonup \underline{\jj}'}{\sum_{\underline{\jj}'}} (-1)^{b_{\underline{\jj},\underline{\jj'}}} \Tilde{\tau}_{\underline{\jj}, \underline{\jj}'}$$
\begin{lem}
    $p_np_{n-1} =0 $ for all $n$.
\end{lem}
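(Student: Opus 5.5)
The plan is to reduce $p_np_{n-1}=0$ to a commutation relation among pairs of elementary maps, exactly as in the proof that the standard modules of \cite{Na} are complexes (the Lemma on $\tau_{\mathbf{j},\mathbf{j'}}$ recalled in Section~\ref{sec:Standards}). Fix $\underline{\jj}$ with $|\jj|=n$ and a target $\underline{\jj}''$ with $|\jj''|=n-2$ such that $\underline{\jj}\rightharpoonup\underline{\jj}'\rightharpoonup\underline{\jj}''$ for some intermediate $\underline{\jj}'$. Then $\jj-\jj''=\{b',b''\}$ consists of two strands. There are exactly two intermediate triples: $\underline{\jj}'_1$, obtained by first sending $b'$ out of the middle block, and $\underline{\jj}'_2$, obtained by first sending $b''$ out. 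The component of $p_np_{n-1}$ from $\underline{\jj}$ to $\underline{\jj}''$ is therefore a sum of two terms,
\[
(-1)^{b_{\underline{\jj},\underline{\jj}'_1}+b_{\underline{\jj}'_1,\underline{\jj}''}}\,\tilde\tau\tilde\tau
\;+\;
(-1)^{b_{\underline{\jj},\underline{\jj}'_2}+b_{\underline{\jj}'_2,\underline{\jj}''}}\,\tilde\tau\tilde\tau .
\]
It suffices to show that the two diagram compositions agree and that the two signs are opposite.

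For the signs, assume $b'<b''$. Removing $b''$ first contributes $\#\{k\in\jj\mid k>b''\}$, and then removing $b'$ contributes $\#\{k\in\jj-\{b''\}\mid k>b'\}$. In the other order, the count attached to $b'$ still includes $b''$ while the count attached to $b''$ is unchanged. So the exponents differ by exactly one, and the two terms cancel once the diagrams are shown equal. I will check that this counting is insensitive to whether each strand leaves to the left or to the right, since $b_{\underline{\jj},\underline{\jj}'}$ only depends on the middle set $\jj$.

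For the diagrams, I will split into three cases according to the directions in which $b'$ and $b''$ leave: both to the right ($\tau^R\tau^R$), both to the left ($\tau^L\tau^L$), or one in each direction ($\tau^L\tau^R$ versus $\tau^R\tau^L$).

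In the case where both leave to the right, each $\tau^R$ drags one black strand from the block right of $\mu_{i+1}$ leftward across $\mu_{i+1}$ and into the middle block at the position determined by $h_1,h_2$. The composite in either order is a diagram in which two black strands cross $\mu_{i+1}$ and land in prescribed positions. The two strands need not cross each other, since the positions $h_1$ are computed relative to the current $\jj$ and so preserve the relative order of $b'$ and $b''$. Hence both composites are isotopic representatives of the same minimal diagram, and they agree up to black--black and black--colored Reidemeister III moves. Those moves hold on the nose by the KLR and colored relations of Definition~\ref{def:dgKLRWalgebra}, with no correction terms because $\mu_{i+1}$ carries no dots here and for blue strands there is no error term. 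The case where both leave to the left is the mirror image, using $\mu_i$.

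The mixed case is where I expect the main obstacle. There, one strand crosses $\mu_i$ from the left while the other crosses $\mu_{i+1}$ from the right. The two crossings occur in disjoint regions, so the composites should agree by a far-commutativity isotopy. The difficulty is that the position of each strand in the middle block depends on whether the other has already been inserted, and I must verify that $h_1,h_2$ are consistently updated so that both composites terminate at the same idempotent with the same strand ordering. If some pair of strands crosses twice in one composite, I would fall back on the basis of Proposition~\ref{prop:Rbasis} (equivalently Theorem~\ref{thm:Tbasis}) to compare them, checking that any double crossing is black over a blue strand, which can be removed without cost, and not a black--black bigon, which is zero. Finally, triples $\underline{\jj}''$ reachable through only one intermediate cannot occur: two distinct removed strands always yield two orders.
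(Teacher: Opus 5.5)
Your proposal is correct and follows the paper's proof. Both fix the two intermediate triples between $\underline{\jj}$ and $\underline{\jj}''$, show that the two composites of $\tilde\tau$'s coincide (the paper only cites the argument of Lemma 7.1 of \cite{Na}, while you give an isotopy/Reidemeister~III check organized by direction), and observe that the two sign exponents differ by exactly one.

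One slip is in your fallback: a black strand crossing a blue strand twice gives zero, not something removable for free. The fallback is never needed, though, since with order-preserving placement no pair of strands crosses twice in any composite, and in the mixed case the only possible triple points are black--black--black, which hold on the nose.
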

\begin{proof}
    It is sufficient to show that starting at any projective labeled by $\underline{\jj}$ and applying the differentials gives 0. Therefore, 
    $$d_{\underline{\jj}}p_{n-1} = \sum_{\underline{\jj^\circ}, \underline{\jj'},\underline{\jj'^\circ}} (-1)^{b_{\underline{\jj},\underline{\jj^\circ}} +b_{\underline{\jj'},\underline{\jj'^\circ}}}\Tilde{\tau}_{\underline{\jj},\underline{\jj^\circ}} \Tilde{\tau}_{\underline{\jj'},\underline{\jj'^\circ}}$$
    $$= \sum_{ \underline{\jj'},\underline{\jj'^\circ}} (-1)^{b_{\underline{\jj},\underline{\jj'}} +b_{\underline{\jj'},\underline{\jj'^\circ}}} \Tilde{\tau}_{\underline{\jj},\underline{\jj'}} \Tilde{\tau}_{\underline{\jj'},\underline{\jj'^\circ}}$$
    We note that if $\underline{\jj} \rightharpoonup \underline{\jj'} \rightharpoonup \underline{\jj'^\circ } $ and $\underline{\jj} \rightharpoonup \underline{\jj''} \rightharpoonup \underline{\jj'^\circ } $, then by very similar arguments to the proof of Lemma 7.1 of \cite{Na}, we can show $\Tilde{\tau}_{\underline{\jj},\underline{\jj'}} \Tilde{\tau}_{\underline{\jj'},\underline{\jj'^\circ}} = \Tilde{\tau}_{\underline{\jj},\underline{\jj''}} \Tilde{\tau}_{\underline{\jj''},\underline{\jj'^\circ}}$. Therefore, we can rewrite the above expression,
    $$d_{\underline{\jj}}p_{n-1} = \sum_{\underline{\jj'^\circ}} \bigg(\sum_{ \underline{\jj'}} (-1)^{b_{\underline{\jj},\underline{\jj'}} +b_{\underline{\jj'},\underline{\jj'^\circ}}} \bigg)\Tilde{\tau}_{\underline{\jj},\underline{\jj''}} \Tilde{\tau}_{\underline{\jj''},\underline{\jj'^\circ}}$$
    for some fixed $\underline{\jj''}$. For a fixed $\underline{\jj'^\circ}$, we must have $\underline{\jj} = \underline{\jj'^\circ}\cup\{b_1,b_2\}$. Therefore, there are only two possible $\underline{\jj'}$'s that contribute to the sum. Assuming $b_2>b_1$ or $b_2<b_1$, it is easy to check that $b_{\underline{\jj},\underline{\jj'}} +b_{\underline{\jj'},\underline{\jj'^\circ}}$ evaluated on both of these choices differ by 1. This implies the result. 
\end{proof}
And so, we finally arrive at the result, 
\begin{prop}\label{prop:Rresolution}
    Let $\boldsymbol{p}\Rb^i_\rho$ denote the chain complex defined by, 
    $$\bigoplus_{\underline{\jj} =(\jj_L,\jj,\jj_R)} q^{\#_{\underline{\jj}}} P_{b_{i-1}+|\jj_L|, \jj , b_{i+1}+|\jj_R|} [|\jj|]$$
    with differentials, 
    $$p_n = \underset{|\jj|=n}{\sum_{\underline{\jj}}}d_{\underline{\jj}}$$
    $$d_{\underline{\jj}} = \underset{\underline{\jj} \rightharpoonup \underline{\jj}'}{\sum_{\underline{\jj}'}} (-1)^{b_{\underline{\jj},\underline{\jj'}}} \Tilde{\tau}_{\underline{\jj}, \underline{\jj}'}$$
    Then $\boldsymbol{p}\Rb^i_\rho$ is a projective resolution for $\Bb_i(P^{\underline{\mu}}_\rho)$.
\end{prop}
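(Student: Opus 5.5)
We need exactness of $\boldsymbol{p}\Rb^i_\rho$ in positive degrees and $H_0 \cong \Bb_i(P^{\underline{\mu}}_\rho)=\Rb_i\otimes_\Tb P_\rho = \Rb_i 1_\rho$. Since each term is a direct sum of shifted projectives, the preceding lemma ($p_np_{n-1}=0$) makes it a complex of projectives. It remains to show that it is acyclic except in degree $0$, with cokernel of $p_1$ equal to $\Rb_i1_\rho$. The plan has three steps: reduce to a standard-module statement via a filtration, check the augmentation, and then compare graded dimensions using Corollary~\ref{cor:gdimPandS} and the basis of Proposition~\ref{prop:Rbasis}.

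\textbf{Structure of the complex.} Filter the complex by the pair $(\jj_L,\jj_R)$, with $\jj_L\cup\jj_R$ the strands that have already been pulled out of the $i$-th block. Fix the partition into "left-going" and "right-going" strands, i.e.\ fix which strands are destined to leave on each side. The summands with $\jj\subset$ (fixed set) then form a complex which is a tensor product of two Koszul-type complexes. One of them is exactly the complex defining a standard-type module $\Sb$ built from the maps $\tau$ of Section~\ref{sec:Standards}, with the $\mu_i$ strand on the left. The other is its mirror, for the $\mu_{i+1}$ strand on the right. This identifies $\boldsymbol{p}\Rb^i_\rho$, up to a filtration, with iterated cones of the complexes defining standard modules. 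In each of these, the homology is concentrated in degree $0$ by the results of \cite{Na} on standards; this is Section 7 there, where the complexes $\Sb$ are shown to have homology only in degree zero.

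\textbf{Augmentation.} The map $p_0=\sum_I p_0^I$ is surjective, as already noted via Proposition~\ref{prop:Rbasis}. For $p_0p_1=0$, each $\tau^R$ or $\tau^L$ composed with $p_0^I$ produces a black strand crossing the colored strand $\mu_i$ (resp.\ $\mu_{i+1}$) and then being pulled back across the colored crossing. Using \eqref{eq:RmoveR3pp} together with the blue bigon relation, or its red analogue, this vanishes, or it cancels in pairs with the sign $(-1)^{b_{\underline{\jj},\underline{\jj}'}}$.

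\textbf{Exactness.} I would prove exactness at degree $0$ and above by a graded dimension count, which the Remark on $d_{\underline\mu}=0$ makes legitimate. First compute $\mathrm{gdim}\,\Rb_i1_\rho$ using $B^\kappa_\rho(\Rb_i)$. A left-adjusted diagram factors as an element $\omega^I_{b_i}$ times a diagram of $\Tb$ on the idempotent with $|I|$ strands moved left and $b_i-|I|$ moved right. This gives $\mathrm{gdim}\,\Rb_i1_\rho=\sum_I q^{f(I)}\,\mathrm{gdim}(\text{quotient of } P_{b_{i-1}+|I|,0,b_{i+1}+b_i-|I|})$. Then compare this with the Euler characteristic of $\boldsymbol{p}\Rb^i_\rho$, which is computed from the $q^{\#_{\underline\jj}}$ shifts. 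The identity reduces to the $q$-binomial identity underlying the Grothendieck-group statement for standards, namely the proposition that $[\Sb]$ is a tensor of $[\Pb]$'s together with Corollary~\ref{cor:gdimPandS}. Combined with the vanishing of higher homology from the filtration step, the equality of Euler characteristic and $\mathrm{gdim}$ forces $H_0\cong \Rb_i1_\rho$.

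\textbf{Main obstacle.} I expect the hardest step to be the filtration/identification in the first step. The maps $\tau^L$ and $\tau^R$ must interact compatibly, and the complex is a genuine "two-sided" standard complex only after the signs $(-1)^{b_{\underline\jj,\underline\jj'}}$ are matched with the Koszul signs of the tensor product. If that identification is awkward, I would fall back on an induction on $b_i$. Removing the last black strand of the block splits $\boldsymbol{p}\Rb^i_\rho$ as a cone of three copies of the complex for $b_i-1$: the strand goes left, goes right, or stays in the middle. This gives a short exact sequence of complexes, and exactness then follows from the long exact sequence plus the base case $b_i=0$, where the complex is just $\Rb_i1_\rho\cong P$ by Corollary~\ref{cor:Rtensor}.
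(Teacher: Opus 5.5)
\textbf{There is a genuine gap, and it is in Step 1.} The paper itself only checks $p_np_{n-1}=0$ and the surjectivity of $p_0$ (via Proposition \ref{prop:Rbasis}) and then asserts the result. So you are right that exactness needs an argument, but the one you give rests on a false premise.

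When blue strands are present, the complexes defining standard modules do \emph{not} have homology concentrated in degree $0$, because a black strand encircling a blue strand is zero. Already for $\underline{\mu}=(\lambda,\lambda)$ one has $\Sb^{\lambda,\lambda}_{0,1}=\mathrm{Cone}\big(x\,\Pb_{1,0}\xrightarrow{\cdot\tau}\Pb_{0,1}\big)$. The opposite crossing $\tau'\in 1_{(0,1)}\Tb 1_{(1,0)}$ is nonzero by Theorem \ref{thm:Tbasis}, while $\tau'\tau=0$, so $H_1\neq 0$.

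The same thing happens inside the proposition. Take $b_i=1$ and $b_{i-1}=b_{i+1}=0$, with all strands blue (e.g.\ $\underline\mu=(\lambda,\lambda,\lambda)$, $i=2$). Then $p_1\colon x^2P_{0,1,0}\to xP_{1,0,0}\oplus xP_{0,0,1}$ is $a\mapsto(a\tau^L,\pm a\tau^R)$.
\begin{itemize}
\item $\ker(\cdot\tau^R)$ contains the crossing back to the right.
\item $\ker(\cdot\tau^L)$ contains the crossing back to the left, and also the nailed loop $\theta\in 1_{(0,1,0)}\Tb 1_{(0,1,0)}$ (the middle strand pulled to $\mu_1$, nailed, and brought back).
\item Nevertheless $p_1$ is injective, because these kernels meet trivially. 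For example, $\theta\tau^R$ is a nonzero nailed basis element.
\end{itemize}
So vanishing of $H_{>0}$ is a global property produced by the interplay of $\tau^L$ and $\tau^R$. In any filtration by one-sided, standard-type complexes (including the one in Corollary \ref{cor:R.it.ext.}), some subquotient has nonzero $H_1$.

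Your filtration also does not exist as described. A summand with $\jj\neq\emptyset$ belongs to $2^{|\jj|}$ of your left/right partitions, so your pieces are overlapping quotient complexes, not subquotients of a filtration.

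\textbf{Consequences for Step 3 and the fallback.} Step 3 depends on Step 1. Matching the Euler characteristic with $\mathrm{gdim}\,\Rb_i1_\rho$ only shows that the alternating sum of the higher homologies vanishes; it forces nothing without independent vanishing. In any case that gdim must be computed from Proposition \ref{prop:Rbasis}, not as a sum over $I$ of quotients of projectives, since the images of different $p_0^I$ overlap.

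The fallback induction has the same hole. After splitting off a fixed strand, the long exact sequence at best kills $H_{\ge 2}$. But $H_1$ is the kernel of the connecting map $H_0(C_{\mathrm{mid}})\to H_0(C_L)\oplus H_0(C_R)$. Showing this map is injective with cokernel $\Rb_i1_\rho$ is exactly the content of the proposition; for $b_i=1$ it is the joint injectivity above. Moreover, $C_L$, $C_R$, $C_{\mathrm{mid}}$ are not literally copies of the $b_i-1$ complex. The fixed strand changes which black crossings occur in the $\tau$'s, and in $C_{\mathrm{mid}}$ it sits permanently in the middle block.

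What is missing is a direct, idempotent-by-idempotent basis argument (via Theorem \ref{thm:Tbasis}) that $\ker p_n\subseteq\operatorname{im}p_{n+1}$ for $n\ge 1$. It has to use that whatever one pull kills through a blue bigon, nails included, survives the other pull. Only after that does your dimension count identify $H_0$.

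\textbf{A smaller point on the augmentation.} $p_0p_1=0$ does not come from bigons; a red bigon would give dots, not zero. For $\underline\jj=(I,\{b'\},\jj_R)$ one has $\tau^Lp_0^{I\cup\{b'\}}=\tau^Rp_0^{I}$ by \eqref{eq:RmoveR3p} together with braid-type moves. So the two terms must carry opposite signs. With $b_{\underline\jj,\underline\jj'}$ as defined, both signs are $+1$ when $|\jj|=1$. An extra sign distinguishing left moves from right moves is needed, as in the $b_i=1$ example.
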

Below, we list the first few non-trivial examples, 
\begin{ex}{$b_i=1$}
\[
\begin{tikzcd}[row sep=small]
& x \hspace{2mm} \tikzdiagh{0}{
    \draw[vstdhl](0,0)--(0,0.5);
    \draw(1.25,0)--(1.25,0.5);
    \draw[vstdhl] (1,0)--(1,0.5);
    \filldraw [fill=white, draw=black] (-0.5,.5) rectangle (1.5,1) node[midway] { $\Tb$};
    } \ar{rd}
\ar[dash]{rd}{
\tikzdiagh[scale=0.75]{0}{
\draw[vstdhl](0,0)..controls (0,.5) and (1,.5) .. (1,1);
\draw[vstdhl](1,0)..controls (1,.5) and (0,.5) .. (0,1);
\draw (.5,0)..controls (.5, 0.5) and (1.33,0.5) ..(1.33,1);
}
}& \\
    x^2 \hspace{2mm} \tikzdiagh{0}{
    \draw[vstdhl](0,0)--(0,0.5);
    \draw(.5,0)--(.5,0.5);
    \draw[vstdhl] (1,0)--(1,0.5);
    \filldraw [fill=white, draw=black] (-0.5,.5) rectangle (1.5,1) node[midway] { $\Tb$};
    }
    \ar{rd}
    \ar[dash,swap]{rd}{-
\tikzdiag[scale=0.75,yscale=-1,xscale=-1]{
\draw[vstdhl](0,0)--(0,0.5);
\draw[vstdhl](1,0)--(1,0.5);
\draw (.5,0)..controls (.5, 0.25) and (1.33,0.25) ..(1.33,0.5);
} }
\ar{ru}
    \ar[dash]{ru}{
\tikzdiag[scale=0.75,yscale=-1]{
\draw[vstdhl](0,0)--(0,0.5);
\draw[vstdhl](1,0)--(1,0.5);
\draw (.5,0)..controls (.5, 0.25) and (1.33,0.25) ..(1.33,0.5);
} }
    &
    \bigoplus
    &
    \tikzdiagh{0}{
    \draw[vstdhl](0,0) node[below]{\small $\mu_i$}--(0,0.5) node[color=black, pos=.5,left=2mm]{$\cdots$};
    \draw(.5,0)--(.5,0.5);
    \draw[vstdhl] (1,0)--(1,0.5) node[color=black, pos=.5,right=2mm]{$\cdots$};
    \filldraw [fill=white, draw=black] (-0.5,.5) rectangle (1.5,1) node[midway] { $\Rb_i$};
    } \\
    & x \hspace{2mm} \tikzdiagh{0}{
    \draw[vstdhl](0,0)--(0,0.5);
    \draw(-.25,0)--(-.25,0.5);
    \draw[vstdhl] (1,0)--(1,0.5);
    \filldraw [fill=white, draw=black] (-0.5,.5) rectangle (1.5,1) node[midway] { $\Tb$};
    } \ar{ru}
    \ar[dash,swap]{ru}{
\tikzdiagh[scale=0.75]{0}{
\draw[vstdhl](0,0)..controls (0,.5) and (1,.5) .. (1,1);
\draw[vstdhl](1,0)..controls (1,.5) and (0,.5) .. (0,1);
\draw (.5,0)..controls (.5, 0.5) and (-.33,0.5) ..(-.33,1);
}
}
\end{tikzcd}
\]
\end{ex}

\begin{ex}{$b_i=2$}
\[
\begin{tikzcd}[row sep=tiny,column sep=large]
    &
    x^3\hspace{2mm}\tikzdiag[xscale=1]{
    \draw(0,0)--(0,0.5);
    \draw[vstdhl] (.33,0)--(.33,0.5);
    \draw(.66,0)--(.66,0.5);
    \draw[vstdhl] (1,0)--(1,0.5);
    \filldraw [fill=white, draw=black] (-.25,.5) rectangle (1.25,1) node[midway] { $\Tb$};
    } 
    \ar[yshift=-3pt]{r}
    \ar[dash, yshift=-3pt]{r}{-
\tikzdiag[xscale=0.5,yscale=0.5]{
\draw[vstdhl](0,0)--(0,1);
\draw[vstdhl](1,0)--(1,1);
\draw (-.5,0)..controls (-.5, 0.5) and (.5,0.5) ..(.5,1);
\draw (-1,0)--(-1,1);
}
}
\ar[xshift=3mm,yshift=-3mm]{rdd}
    \ar[pos=0.7,dash, xshift=3mm,yshift=-3mm]{rdd}{
\tikzdiag[xscale=0.5,yscale=0.5]{
\draw[vstdhl](0,0)--(0,1);
\draw[vstdhl](1,0)--(1,1);
\draw (1.5,0)..controls (1.5, 0.5) and (.5,0.5) ..(.5,1);
\draw (-.5,0)--(-.5,1);
}
}
    & 
    x^2\hspace{2mm}\tikzdiag[xscale=1]{
    \draw(0,0)--(0,0.5);
    \draw(.33,0)--(.33,0.5);
    \draw[vstdhl](.66,0)--(.66,0.5);
    \draw[vstdhl] (1,0)--(1,0.5);
    \filldraw [fill=white, draw=black] (-.25,.5) rectangle (1.25,1) node[midway] { $\Tb$};
    } 
    \ar{rddd}
    \ar[dash]{rddd}{
\tikzdiag[xscale=0.75,yscale=0.75]{
\draw[vstdhl](0,0)..controls (0,.75) and (1,.75) ..(1,1);
\draw[vstdhl](1,0) ..controls (1,.75) and (0,.75) .. (0,1);
\draw (.66,0)..controls (.66, 0.5) and (-.33,0.5) ..(-.33,1);
\draw (.33,0)..controls (.33, 0.5) and (-.66,0.5) ..(-.66,1);
}
}\\
    &\bigoplus & \bigoplus \\
    &
    x^3 q^{-2}\hspace{2mm}\tikzdiag[xscale=1]{
    \draw(0,0)--(0,0.5);
    \draw[vstdhl] (.33,0)--(.33,0.5);
    \draw(.66,0)--(.66,0.5);
    \draw[vstdhl] (1,0)--(1,0.5);
    \filldraw [fill=white, draw=black] (-.25,.5) rectangle (1.25,1) node[midway] { $\Tb$};
    }
    \ar[yshift=-3pt]{ruu}
    \ar[dash, yshift=-3pt]{ruu}{-
\tikzdiag[xscale=0.5,yscale=0.5]{
\draw[vstdhl](0,0)--(0,1);
\draw[vstdhl](1,0)--(1,1);
\draw (-1,0)..controls (-1, 0.5) and (.5,0.5) ..(.5,1);
\draw (-.5,0)--(-.5,1);
}
}
\ar[yshift=-3pt]{rdd}
    \ar[dash, yshift=-3pt,pos=.25]{rdd}{
\tikzdiag[xscale=0.5,yscale=0.5]{
\draw[vstdhl](0,0)--(0,1);
\draw[vstdhl](1,0)--(1,1);
\draw (1.5,0)..controls (1.5, 0.5) and (.5,0.5) ..(.5,1);
\draw (-.5,0)--(-.5,1);
}
}
    & 
    x^2\hspace{2mm}\tikzdiag[xscale=1]{
    \draw(0,0)--(0,0.5);
    \draw[vstdhl] (.33,0)--(.33,0.5);
    \draw[vstdhl](.66,0)--(.66,0.5);
    \draw (1,0)--(1,0.5);
    \filldraw [fill=white, draw=black] (-.25,.5) rectangle (1.25,1) node[midway] { $\Tb$};
    }
    \ar{rd}
    \ar[dash,pos=0.25,swap]{rd}{
\tikzdiag[xscale=0.75,yscale=0.75]{
\draw[vstdhl](0,0)..controls (0,.75) and (1,.75) ..(1,1);
\draw[vstdhl](1,0) ..controls (1,.75) and (0,.75) .. (0,1);
\draw (.66,0)..controls (.66, 0.5) and (1.33,0.5) ..(1.33,1);
\draw (.33,0)..controls (.33, 0.5) and (-.33,0.5) ..(-.33,1);
}
}
    \\
    x^4 q^{-2}\hspace{2mm}\tikzdiag[xscale=1]{
    \draw[vstdhl](0,0)--(0,0.5);
    \draw (.33,0)--(.33,0.5);
    \draw(.66,0)--(.66,0.5);
    \draw[vstdhl] (1,0)--(1,0.5);
    \filldraw [fill=white, draw=black] (-.25,.5) rectangle (1.25,1) node[midway] { $\Tb$};
    } 
    \ar{ruuu}
    \ar[dash]{ruuu}{-
\tikzdiag[xscale=0.5,yscale=0.5]{
\draw[vstdhl](0,0)--(0,1);
\draw[vstdhl](1,0)--(1,1);
\draw (-.5,0)..controls (-.5, 0.5) and (.66,0.5) ..(.66,1);
\draw (.33,0)--(.33,1);
}
}
\ar{ru}
    \ar[dash,swap]{ru}{
\tikzdiag[xscale=0.5,yscale=0.5]{
\draw[vstdhl](0,0)--(0,1);
\draw[vstdhl](1,0)--(1,1);
\draw (-.5,0)..controls (-.5, 0.5) and (.33,0.5) ..(.33,1);
\draw (.66,0)--(.66,1);
}
}
\ar{rd}
    \ar[dash]{rd}{-
\tikzdiag[xscale=-0.5,yscale=0.5]{
\draw[vstdhl](0,0)--(0,1);
\draw[vstdhl](1,0)--(1,1);
\draw (-.5,0)..controls (-.5, 0.5) and (.66,0.5) ..(.66,1);
\draw (.33,0)--(.33,1);
}
}
\ar{rddd}
    \ar[dash,swap]{rddd}{
\tikzdiag[xscale=-0.5,yscale=0.5]{
\draw[vstdhl](0,0)--(0,1);
\draw[vstdhl](1,0)--(1,1);
\draw (-.5,0)..controls (-.5, 0.5) and (.33,0.5) ..(.33,1);
\draw (.66,0)--(.66,1);
}
}
    &\bigoplus & \bigoplus &
    \tikzdiag[xscale=1]{
    \draw[vstdhl](0,0) node[below]{\small $\mu_i$}--(0,0.5);
    \draw (.33,0)--(.33,0.5);
    \draw(.66,0)--(.66,0.5);
    \draw[vstdhl] (1,0)--(1,0.5);
    \filldraw [fill=white, draw=black] (-.25,.5) rectangle (1.25,1) node[midway] { $\Rb_i$};
    } \\
    &
    x^3\hspace{2mm}\tikzdiag[xscale=1]{
    \draw[vstdhl] (0,0)--(0,0.5);
    \draw (.33,0)--(.33,0.5);
    \draw[vstdhl](.66,0)--(.66,0.5);
    \draw (1,0)--(1,0.5);
    \filldraw [fill=white, draw=black] (-.25,.5) rectangle (1.25,1) node[midway] { $\Tb$};
    } 
    \ar[yshift=-3pt]{ruu}
    \ar[dash, yshift=-3pt,pos=0.25]{ruu}{-
\tikzdiag[xscale=0.5,yscale=0.5]{
\draw[vstdhl](0,0)--(0,1);
\draw[vstdhl](1,0)--(1,1);
\draw (-.5,0)..controls (-.5, 0.5) and (.5,0.5) ..(.5,1);
\draw (1.5,0)--(1.5,1);
}
}
\ar[xshift=1mm,yshift=2mm]{rdd}
    \ar[pos=0.1,dash, xshift=  1mm,yshift=2mm]{rdd}{
\tikzdiag[xscale=0.5,yscale=0.5]{
\draw[vstdhl](0,0)--(0,1);
\draw[vstdhl](1,0)--(1,1);
\draw (1.5,0)..controls (1.5, 0.5) and (.5,0.5) ..(.5,1);
\draw (2,0)--(2,1);
}
}
    & 
    x^2 q^{-2}\hspace{2mm}\tikzdiag[xscale=1]{
    \draw(0,0)--(0,0.5);
    \draw[vstdhl](.33,0)--(.33,0.5);
    \draw[vstdhl](.66,0)--(.66,0.5);
    \draw (1,0)--(1,0.5);
    \filldraw [fill=white, draw=black] (-.25,.5) rectangle (1.25,1) node[midway] { $\Tb$};
    } 
    \ar[yshift=-3mm,xshift=3mm]{ru}
    \ar[dash,pos=0.25,yshift=-3mm,xshift=3mm]{ru}{
\tikzdiag[xscale=0.75,yscale=0.75]{
\draw[vstdhl](0,0)..controls (0,.75) and (1,.75) ..(1,1);
\draw[vstdhl](1,0) ..controls (1,.75) and (0,.75) .. (0,1);
\draw (.66,0)..controls (.66, 0.5) and (-.33,0.5) ..(-.33,1);
\draw (.33,0)..controls (.33, 0.5) and (1.33,0.5) ..(1.33,1);
}
}
    \\
    &\bigoplus & \bigoplus \\
    &
    x^3 q^{-2}\hspace{2mm}\tikzdiag[xscale=1]{
    \draw[vstdhl](0,0)--(0,0.5);
    \draw (.33,0)--(.33,0.5);
    \draw[vstdhl](.66,0)--(.66,0.5);
    \draw(1,0)--(1,0.5);
    \filldraw [fill=white, draw=black] (-.25,.5) rectangle (1.25,1) node[midway] { $\Tb$};
    }
    \ar[xshift=2mm, yshift=-3pt]{ruu}
    \ar[dash, xshift=2mm, yshift=-3pt,pos=.1]{ruu}{-
\tikzdiag[xscale=0.5,yscale=0.5]{
\draw[vstdhl](0,0)--(0,1);
\draw[vstdhl](1,0)--(1,1);
\draw (-.5,0)..controls (-.5, 0.5) and (.5,0.5) ..(.5,1);
\draw (1.5,0)--(1.5,1);
}
}
\ar[yshift=-3pt]{r}
    \ar[dash, yshift=-3pt,swap]{r}{
\tikzdiag[xscale=0.5,yscale=0.5]{
\draw[vstdhl](0,0)--(0,1);
\draw[vstdhl](1,0)--(1,1);
\draw (2,0)..controls (2, 0.5) and (.5,0.5) ..(.5,1);
\draw (1.5,0)--(1.5,1);
}
}
    & 
    x^2\hspace{2mm}\tikzdiag[xscale=1]{
    \draw[vstdhl](0,0)--(0,0.5);
    \draw[vstdhl] (.33,0)--(.33,0.5);
    \draw(.66,0)--(.66,0.5);
    \draw (1,0)--(1,0.5);
    \filldraw [fill=white, draw=black] (-.25,.5) rectangle (1.25,1) node[midway] { $\Tb$};
    } 
    \ar{ruuu}
    \ar[dash,swap]{ruuu}{
\tikzdiag[xscale=-0.75,yscale=0.75]{
\draw[vstdhl](0,0)..controls (0,.75) and (1,.75) ..(1,1);
\draw[vstdhl](1,0) ..controls (1,.75) and (0,.75) .. (0,1);
\draw (.66,0)..controls (.66, 0.5) and (-.33,0.5) ..(-.33,1);
\draw (.33,0)..controls (.33, 0.5) and (-.66,0.5) ..(-.66,1);
}
}
\end{tikzcd}
\]
\end{ex}
\begin{rem}
These two examples first appeared in an unpublished draft of Dupont and Naisse, and it is worth remarking that we find the same projective resolutions they do in these cases. Interestingly, the draft aimed to categorify the Lawrence-Krammer-Bigelow representation of the braid group using these resolutions of the first few weight spaces. It would be interesting to see if this can be done using the results in this paper. 
\end{rem}

For the purpose of proving that the braiding functors fulfill the braid group relations, it will be useful to study iterated extensions by standards that the braiding bimodules admit. This is largely inspired by the proof of Lemma 6.12 of \cite{Web}. As in Section \ref{sec:Standards}, we introduce the set $\Phi^w_\rho$, which is acquired from $\Phi_\rho$ by permuting the colored strands according to $w\in S_r$ whilst keeping the blocks of black strands to the right of each colored strand together. We take the string diagram representatives $y_\phi$ of the permutations $\phi \in \Phi^w_\rho$ to be diagrams realizing $\phi$ with the minimal number of crossings. 
\begin{ex}
    Consider $\underline{\mu} = (\lambda,\lambda,\lambda,\lambda )$ and $\rho = (1, 1, 1, 0)$. In this case, the set $\Phi_\rho$ has 6 elements, 
    \[
    \begin{tikzcd}
        \tikzdiagh[yscale=2, xscale=0.75]{0}{
\draw[vstdhl](0,0)--(0,0.5);
\draw (0.5,0)--(0.5, 0.5);
\draw[vstdhl](1,0)--(1,0.5);
\draw (1.5,0)--(1.5, 0.5);
\draw[vstdhl](2,0)--(2,0.5);
\draw (2.5,0)--(2.5, 0.5);
\draw[vstdhl](3,0)--(3,0.5);
}
&
\tikzdiagh[yscale=2, xscale=0.75]{0}{
\draw[vstdhl](0,0)--(0,0.5);
\draw (0.5,0)--(0.5, 0.5);
\draw[vstdhl](1,0)--(1,0.5);
\draw (1.33,0)--(1.33, 0.5);
\draw[vstdhl](2,0)--(2,0.5);
\draw (2.5,0)..controls (2.5,0.25) and (1.66, 0.25)..(1.66, 0.5);
\draw[vstdhl](3,0)--(3,0.5);
}
&
\tikzdiagh[yscale=2, xscale=0.75]{0}{
\draw[vstdhl](0,0)--(0,0.5);
\draw (0.33,0)--(0.33, 0.5);
\draw[vstdhl](1,0)--(1,0.5);
\draw (1.5,0)--(1.5, 0.5);
\draw[vstdhl](2,0)--(2,0.5);
\draw (2.5,0)..controls (2.5,0.25) and (.66, 0.25)..(.66, 0.5);
\draw[vstdhl](3,0)--(3,0.5);
}\\
\tikzdiagh[yscale=2, xscale=0.75]{0}{
\draw[vstdhl](0,0)--(0,0.5);
\draw[vstdhl](1,0)--(1,0.5);
\draw[vstdhl](2,0)--(2,0.5);
\draw[vstdhl](3,0)--(3,0.5);
\draw (0.33,0)--(0.33, 0.5);
\draw (2.5,0)--(2.5, 0.5);
\draw (1.5,0)..controls (1.5,0.25) and (.66, 0.25)..(.66, 0.5);
}
&
\tikzdiagh[yscale=2, xscale=0.75]{0}{
\draw[vstdhl](0,0)--(0,0.5);
\draw[vstdhl](1,0)--(1,0.5);
\draw[vstdhl](2,0)--(2,0.5);
\draw[vstdhl](3,0)--(3,0.5);
\draw (0.33,0)--(0.33, 0.5);
\draw (2.5,0)..controls (2.5,0.25) and (1.5, 0.25)..(1.5, 0.5);
\draw (1.5,0)..controls (1.5,0.25) and (.66, 0.25)..(.66, 0.5);
}
&
\tikzdiagh[yscale=2, xscale=0.75]{0}{
\draw[vstdhl](0,0)--(0,0.5);
\draw[vstdhl](1,0)--(1,0.5);
\draw[vstdhl](2,0)--(2,0.5);
\draw[vstdhl](3,0)--(3,0.5);
\draw (0.25,0)--(0.25, 0.5);
\draw (2.5,0)..controls (2.5,0.25) and (.75, 0.25)..(.75, 0.5);
\draw (1.5,0)..controls (1.5,0.25) and (.5, 0.25)..(.5, 0.5);
}
\end{tikzcd}
\]
Consider $w = s_2 s_3$. Then $\Phi^w_\rho$ has the representative elements, 
\[
\begin{tikzcd}
    \tikzdiagh[yscale=1, xscale=0.75]{0}{
    \draw[vstdhl] (0,0)--(0,1);
    \draw[vstdhl] (1,0)..controls (1,0.5) and (3,0.5) ..(3,1);
    \draw[vstdhl] (2,0)--(1,1);
    \draw[vstdhl] (3,0)--(2,1);
    \draw (0.5,0)--(0.5, 1);
    \draw(1.5,0)..controls (1.5,0.5) and (3.5,0.5) ..(3.5,1);
    \draw (2.5,0) --(1.5,1);
    }
    &
    \tikzdiagh[yscale=1, xscale=0.75]{0}{
    \draw[vstdhl] (0,0)--(0,1);
    \draw[vstdhl] (1,0)..controls (1,0.5) and (3,0.5) ..(3,1);
    \draw[vstdhl] (2,0)--(1,1);
    \draw[vstdhl] (3,0)--(2,1);
    \draw (0.5,0)--(0.5, 1);
    \draw(1.5,0)..controls (1.5,0.5) and (3.5,0.5) ..(3.25,1);
    \draw (2.5,0)..controls (2.5,0.5) and (3.5,0.5) ..(3.5,1);
    }
    &
    \tikzdiagh[yscale=1, xscale=0.75]{0}{
    \draw[vstdhl] (0,0)--(0,1);
    \draw[vstdhl] (1,0)..controls (1,0.5) and (3,0.5) ..(3,1);
    \draw[vstdhl] (2,0)--(1,1);
    \draw[vstdhl] (3,0)--(2,1);
    \draw (0.33,0)--(0.33, 1);
    \draw(1.5,0)..controls (1.5,0.5) and (3.5,0.5) ..(3.5,1);
    \draw (2.5,0)..controls (2.5,0) and (0.66,0) ..(0.66,1);
    }\\
    \tikzdiagh[yscale=1, xscale=0.75]{0}{
    \draw[vstdhl] (0,0)--(0,1);
    \draw[vstdhl] (1,0)..controls (1,0.5) and (3,0.5) ..(3,1);
    \draw[vstdhl] (2,0)--(1,1);
    \draw[vstdhl] (3,0)--(2,1);
    \draw (0.33,0)--(0.33, 1);
    \draw(1.5,0)..controls (1.5,0.5) and (0.66,0.5) ..(0.66,1);
    \draw (2.5,0) --(1.5,1);
    }
    &
    \tikzdiagh[yscale=1, xscale=0.75]{0}{
    \draw[vstdhl] (0,0)--(0,1);
    \draw[vstdhl] (1,0)..controls (1,0.5) and (3,0.5) ..(3,1);
    \draw[vstdhl] (2,0)--(1,1);
    \draw[vstdhl] (3,0)--(2,1);
    \draw (0.33,0)--(0.33, 1);
    \draw(1.5,0)..controls (1.5,0.5) and (.66,0.5) ..(.66,1);
    \draw (2.5,0)..controls (2.5,0.5) and (3.5,0.5) ..(3.5,1);
    }
    &
    \tikzdiagh[yscale=1, xscale=0.75]{0}{
    \draw[vstdhl] (0,0)--(0,1);
    \draw[vstdhl] (1,0)..controls (1,0.5) and (3,0.5) ..(3,1);
    \draw[vstdhl] (2,0)--(1,1);
    \draw[vstdhl] (3,0)--(2,1);
    \draw (0.25,0)--(0.25, 1);
    \draw(1.5,0)..controls (1.5,0) and (.5,0) ..(.5,1);
    \draw (2.5,0)..controls (2.5,0) and (0.75,0) ..(0.75,1);
    }
\end{tikzcd}
\]
\end{ex}
Now, we can more clearly describe the desired iterated extension. 
\begin{cor}\label{cor:R.it.ext.}
$\Bb_i(P^{\underline{\mu}}_\rho)$ has a length $|\Phi^w_\rho|$ iterated extension, where each $\phi\in \Phi^w_\rho$ appears once in the sense that consecutive cones are quasi-isomorphic to $q^{deg \hspace{1mm}y_\phi}\cdot \Sb^{\underline{\mu}}_{\rho_\phi}$ for each $\phi$. 
\end{cor}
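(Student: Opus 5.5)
Our plan is to combine the explicit resolution of Proposition \ref{prop:Rresolution} with the iterated extension of projectives in Proposition \ref{prop:it.ext.proj}.

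First, the resolution $\boldsymbol{p}\Rb^i_\rho$ is a complex of projectives $P_{b_{i-1}+|\jj_L|,\jj,b_{i+1}+|\jj_R|}[|\jj|]$. We filter it by the pair $(\jj_L,\jj_R)$ of strands that are pulled to the left or right of the crossing. Fix the subset $\jj_L\cup\jj_R$ and the splitting of its complement into strands later sent left versus right. The summands with $\jj$ ranging over subsets of $\jj_L^{\max}$, resp.\ $\jj_R^{\max}$, together with the maps $\tau^L$, resp.\ $\tau^R$, look exactly like the complexes defining standard modules in Section \ref{sec:Standards}. After precomposing with the colored crossing, the sub-quotients for a fixed choice of which strands end up left of $\mu_{i+1}$ and which right of $\mu_i$ should recover $\Rb_i\otimes \Sb$-type pieces. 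Concretely, I would first prove the intermediate statement that
\[
\Bb_i(\Sb^{\underline{\mu}}_{\rho'})\cong q^{\deg(\text{crossing})}\,\Sb^{s_i\cdot\underline{\mu}}_{s_i\cdot\rho'} .
\]
Here $s_i\cdot\rho'$ swaps the colored strands together with the black blocks to their right. To prove this, apply $\Rb_i\otimes_\Tb-$ termwise to the defining complex of $\Sb^{\underline{\mu}}_{\rho'}$. Termwise this is legitimate since the terms are projective. Then use Proposition \ref{prop:Rbasis} and the relations \eqref{eq:RmoveR3p}, \eqref{eq:RmoveR3pp} to slide black strands through the colored crossing. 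This identifies $\Rb_i\otimes P_{\rho'(\jj)}$ with shifted projectives of the target, compatibly with the $\tau$ maps, via Corollary \ref{cor:Rtensor}-style composition arguments.

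Granting this, the corollary follows formally. Apply the exact functor $\Bb_i$ to the iterated extension of $\Pb^{\underline{\mu}}_\rho$ from Proposition \ref{prop:it.ext.proj}. Exact triangles are preserved, so $\Bb_i(\Pb_\rho)$ acquires an iterated extension of length $|\Phi_\rho|=|\Phi^w_\rho|$ (with $w=s_i$). Its consecutive cones are $\Bb_i(q^{\deg x_\phi}\Sb_{\rho_\phi})$. By the intermediate statement these are shifted standards of $\Tb^{s_i\cdot\underline{\mu}}$ indexed by $s_i\cdot\phi\in\Phi^w_\rho$. The grading shift $\deg x_\phi$ plus the crossing degrees of black strands crossing $\mu_i,\mu_{i+1}$ equals $\deg y_\phi$, because $y_\phi$ is $x_\phi$ composed with the colored crossing and has minimal crossings. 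Checking this degree bookkeeping is a direct count, using $\deg$ of black/colored crossings being $q^{\mu}$.

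The main obstacle is the intermediate isomorphism when some black strands of the block after $\mu_i$ are moved left in the standard complex. For those terms, $\Rb_i\otimes P$ is not itself projective but has the resolution of Proposition \ref{prop:Rresolution}. I would handle this by showing that the total complex, built as a bicomplex from the standard complex and these resolutions, is filtered by $\jj$. On associated graded pieces, the differentials $\tau^R$ and $\tau^L$ pair up into acyclic cones, as in the $\Pb^{\lambda,\lambda}_{0,2}$ example where a $\mathbf{1}$ component cancels. What survives is the standard complex for $s_i\cdot\rho'$. Verifying that the signs $(-1)^{b_{\underline{\jj},\underline{\jj}'}}$ and $(-1)^{c}$ match so that these cancellations are genuine isomorphisms is where I expect the real work. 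I would first test it on the $b_i=1,2$ resolutions listed above.
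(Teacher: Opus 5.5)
\textbf{There is a genuine gap.} Your intermediate claim $\Bb_i(\Sb^{\underline{\mu}}_{\rho'})\cong q^{(\cdot)}\Sb^{s_i\cdot\underline{\mu}}_{s_i\cdot\rho'}$ is false in general. So transporting the filtration of Proposition~\ref{prop:it.ext.proj} through $\Bb_i$ does not produce standard cones. Decategorified, your claim would make the braiding diagonal on the tensor-product basis, but it is only triangular. With the paper's conventions, a standard goes to a single term only when the block right after $\mu_{i+1}$ is empty.

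Here is a concrete counterexample using only the paper's data. Take $\underline{\mu}=(\lambda,\lambda,\lambda)$ and $i=2$.
\begin{enumerate}
\item Since the block after $\mu_2$ is empty, $\Bb_2(\Pb_{0,0,1})\cong\Pb_{0,0,1}$ and $\Bb_2(\Pb_{1,0,0})\cong\Pb_{1,0,0}$, both unshifted.
\item The $b_i=1$ resolution gives $[\Bb_2\Pb_{0,1,0}]=x[\Sb_{0,0,1}]+x[\Sb_{1,0,0}]$, hence $[\Bb_2\Sb_{0,1,0}]=x[\Sb_{0,0,1}]$.
\item Combining with $[\Pb_{0,0,1}]=[\Sb_{0,0,1}]+x[\Sb_{0,1,0}]+x^2[\Sb_{1,0,0}]$ forces
\[ [\Bb_2\Sb^{\lambda,\lambda,\lambda}_{0,0,1}]=(1-x^2)[\Sb_{0,0,1}]+x[\Sb_{0,1,0}]. \]
\end{enumerate}
This is not the class of any shifted standard.

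Consequences for your argument:
\begin{itemize}
\item As soon as some $\rho_\phi$ has black strands right after $\mu_{i+1}$, the transported filtration $\Bb_i(Q_\bullet)$ has non-standard cones. The cones in the corollary come from a different filtration, and $\phi\mapsto s_i\cdot\phi$ matches index sets, not cones.
\item Your degree bookkeeping fails for the same reason. For $\rho=(0,0,1)$ you have $\deg x_{\mathrm{id}}=1$ but $\deg y_{\mathrm{id}}=x$. For the $\phi$ moving the strand into block $2$, $\deg x_\phi=x$ but $\deg y_\phi=1$. The diagram $y_\phi$ is not $x_\phi$ followed by the colored crossing, since that composite is not minimal.
\item The obstacle you flagged (strands of the block after $\mu_i$) is not where things break; they break on strands after $\mu_{i+1}$. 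A bicomplex cancellation cannot repair this, because it targets a false statement.
\item Also, $\Rb_i\otimes P$ is not projective once block $i$ is nonempty, contrary to your first step.
\end{itemize}

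\textbf{The missing idea} is to read the filtration directly off the explicit resolution, rather than transporting it from $\Pb_\rho$. The paper reduces to $\ell(\underline{\mu})=3$, $i=2$, $b_3=0$, and passes to general $\rho$ via Proposition~\ref{prop:it.ext.proj}. In the reduced case, the differential of $\boldsymbol{p}\Rb^i_\rho$ either keeps $\jj_L$ fixed (the $\tau^R$ components) or enlarges it by one element (the $\tau^L$ components). Hence each $\bigoplus_{|\jj_L|\ge k}$ is a subcomplex. For fixed $\jj_L=I$, consider the summands $q^{\#_{\underline{\jj}}}P_{b_1+|I|,\jj,|\jj_R|}[|\jj|]$ with $\jj\sqcup\jj_R=I^c$, together with the $\tau^R$ maps. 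These form exactly the defining complex of the standard $\Sb^{s_2\cdot\underline{\mu}}_{(b_1+|I|,0,b_2-|I|)}$, with the appropriate shift, one for each $\phi\in\Phi^{s_2}_\rho$. No statement about $\Bb_i$ of a standard is needed. In this reduced case every $\rho_\phi$ has block $3$ empty, so a version of your intermediate claim becomes consistent there. Even then it is better seen as a consequence of this filtration than as an input to it.
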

\begin{proof}
    It is sufficient to prove this for the case $length(\underline{\mu})=3$, $b_3 =0$, and $i=2$, since we can use Proposition \ref{prop:it.ext.proj} to extend to the general case. In this case, it is immediate from Proposition \ref{prop:Rresolution}, where the chain complexes for $q^{deg \hspace{1mm}y_\phi}\cdot \Sb^{\underline{\mu}}_{\rho_\phi}$ are, by observation, honest sub-complexes of $\boldsymbol{p}\Rb^i_\rho$, and we have an honest filtration compatible with the preorder. 
\end{proof}

Moreover, this set of elements is a very useful generating set for the braiding bimodules. 
\begin{lem}\label{lem:gen.set}
$\Phi^w_\rho$ is a generating set for $\Rb_{w}1_{\rho}$ as a right $\Tb^{w(\underline{\mu})}$-module. 
\end{lem}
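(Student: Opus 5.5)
The plan is to read the statement as saying that every element of $\Rb_w 1_\rho$ lies in $\Tb^{w\cdot\underline{\mu}}\cdot\{y_\phi\}_{\phi\in\Phi^w_\rho}$: the algebra is glued on top, since $1_\rho$ is fixed at the bottom. I will prove it first for a single adjacent transposition $w=s_i$, and then pass to general $w$ by induction on $\ell(w)$ using Corollary \ref{cor:Rtensor}.

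\textbf{Case $w=s_i$.} By Proposition \ref{prop:Rresolution} the augmentation $p_0=\sum_I p_0^I$ is surjective onto $\Bb_i(P^{\underline{\mu}}_\rho)=\Rb_i 1_\rho$. Hence $\Rb_i1_\rho$ is generated as a left $\Tb^{s_i\cdot\underline{\mu}}$-module by the elements $p_0^I$, for $I\in C(b_i)$. Each $p_0^I$ is the colored crossing with the strands of $I$ exiting to the left and those of $I^c$ exiting to the right, precomposed with $\omega^I_{b_i}$.

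The next step is to check that each $p_0^I$ lies in $\Tb\cdot\{y_\phi\}$. The permutation realised by $p_0^I$ (strands in $I$ pass to the block left of the crossing, the others stay with $\mu_i$, no braiding within $I$ or $I^c$) is exactly an element $\phi\in\Phi^{s_i}_\rho$. Since $y_\phi$ is chosen with the minimal number of crossings, $p_0^I$ and $y_\phi$ differ only by isotopies and by sliding black strands through the colored crossing. These moves are allowed by \eqref{eq:RmoveR3pp} and \eqref{eq:RmoveR3p}, and they introduce no lower-order terms in $\Rb$ because the black--colored R3 relations in $\Tb$ carry no correction term for blue strands. So $p_0^I=y_\phi$, possibly after composing with a diagram in $\Tb$ on top. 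The remaining $\phi\in\Phi^{s_i}_\rho$, where strands also move further left past $\mu_{i-1},\dots$, are harmless: they only enlarge the generating set. Outside strands $i,i+1$, the blocks are fixed, and elements of $\Phi_\rho$ moving those strands left factor as $\Tb$-elements on top, after sliding through the crossing via \eqref{eq:RmoveR3pp}.

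\textbf{Induction on $\ell(w)$.} Write $w=w's_i$ with $\ell(w)=\ell(w')+1$, so that Corollary \ref{cor:Rtensor} gives $\Rb_w1_\rho\cong\Rb_{w'}\otimes_\Tb\Rb_i1_\rho$.
\begin{enumerate}
\item By the base case, any element of $\Rb_{w'}\otimes_\Tb\Rb_i1_\rho$ is a sum of terms $r\otimes a\,y_\psi$ with $\psi\in\Phi^{s_i}_\rho$, $a\in\Tb$ and $r\in\Rb_{w'}$.
\item Moving $a$ across the tensor product, each term becomes $(ra)\otimes y_\psi$, with $ra\in\Rb_{w'}1_{\rho_\psi}$.
\item By the induction hypothesis, $ra$ is a $\Tb$-combination of elements $y_{\phi'}$ with $\phi'\in\Phi^{w'}_{\rho_\psi}$.
\end{enumerate}
It therefore suffices to show that each composite $y_{\phi'}\circ y_\psi$ lies in $\Tb^{w\cdot\underline{\mu}}\cdot\{y_\phi:\phi\in\Phi^w_\rho\}$. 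The composite has colored strands tracing a reduced word for $w$. Each black strand has only moved leftward into blocks or travelled with its colored strand. So, after removing bigons, the composite realises some $\phi\in\Phi^w_\rho$, possibly followed by a $\Tb$-diagram on top.

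\textbf{Main obstacle.} The hard part is this last normal-form step. When black strands are slid through colored crossings, and black crossings are pulled upward past colored crossings, one has to ensure that no genuinely new element appears. One must also show that any lower-crossing terms produced, such as black/black R3 corrections or dot terms, again land in the span.

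I would handle this by induction on the number of crossings, as in the spanning part of the proof of Proposition \ref{prop:Rbasis}. Using that proposition, I would first write the composite in the basis $B^\kappa_\rho(\Rb_w)$. Then, for a basis element $\psi_{(v,\underline{l},\underline{a})}$, all dots and nails can be pushed to the top; these lie in $\Tb$. The remaining permutation $v$ is cut at the height just above the last colored crossing, and the part below that height is identified with some $y_\phi$ by minimality. Any correction terms have fewer crossings and are handled by the induction hypothesis.
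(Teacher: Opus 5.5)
Your reduction holds up as far as it goes. The base case does follow from surjectivity of $p_0=\sum_I p_0^I$: up to isotopy, $p_0^I$ is the diagram $y_\phi$ for the $\phi\in\Phi^{s_i}_\rho$ that sends $I$ into block $i-1$ and absorbs block $i+1$ into block $i$. No braid moves are needed for this, for red and blue strands alike. The passage through Corollary \ref{cor:Rtensor} is also correct.

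The gap is in the step that carries all the content, $y_{\phi'}\circ y_\psi\in\Tb^{w\cdot\underline{\mu}}\cdot\{y_\phi\}_{\phi\in\Phi^w_\rho}$. Your reason is that each black strand has only moved leftward into blocks or travelled with its colored strand. That is false, because $\phi'$ moves strands left with respect to the order $s_i\cdot\underline{\mu}$, not $\underline{\mu}$. A concrete case:
\begin{itemize}
\item take $\rho=(0,1,0,0)$ with one black strand $b$ to the right of $\mu_2$;
\item let $w$ first swap $\mu_2,\mu_3$ and then swap $\mu_2,\mu_4$;
\item let $\psi\in\Phi^{s_2}_\rho$ carry $b$ along with $\mu_2$ across $\mu_3$;
\item let $\phi'\in\Phi^{s_3}_{\rho_\psi}$ move $b$ one block left (across $\mu_2$, into the block of $\mu_3$) before $\mu_2$ and $\mu_4$ cross.
\end{itemize}
The composite ends in the idempotent $(\mu_1,\mu_3,b,\mu_4,\mu_2)$, so $b$ lands in a block to the right of where it started. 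Every $\phi\in\Phi^w_\rho$ leaves $b$ in the block of $\mu_1$ or of $\mu_2$, and there is no bigon to remove. The composite does lie in the span, but to see it you must use \eqref{eq:RmoveR3p} to reroute $b$. It crosses $\mu_2$ leftward below the colored crossing, passes to its left, and crosses $\mu_3$ rightward above it. This exhibits the composite as $z\cdot y_\phi$ with $\phi$ moving $b$ into block $1$ and $z\in\Tb^{w\cdot\underline{\mu}}$.

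Your fallback does not supply this either: write the composite in the basis of Proposition \ref{prop:Rbasis}, cut above the last colored crossing, and identify the lower part with some $y_\phi$ ``by minimality''. Minimality does not select the right representative. In the example, the equally reduced diagram in which $b$ crosses $\mu_3$ first has a lower part that is no $y_\phi$ at all.

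What is missing is a coset-type factorization $v=z\phi$ of the underlying permutation with lengths adding. Every black strand should first move left into the block of a suitable colored strand. One choice that works: among the colored strands starting to its left and ending to the left of its endpoint, take the one ending furthest right. All rightward black/colored crossings then lie above the colored braid. You then pass from the diagram to $z\cdot y_\phi$ using \eqref{eq:RmoveR3p}, \eqref{eq:RmoveR3pp} and braid moves modulo fewer crossings. This is exactly what the paper imports from Webster's Lemma 6.12, after isotoping the colored crossings below $y=\frac{1}{2}$ and moving dots and nails above.

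Once that argument is available it handles an arbitrary diagram of $\Rb_w1_\rho$ directly, as in the paper. Your base case and induction on $\ell(w)$ are then unnecessary. Their one real benefit is that composites of $y$'s carry no nails, since the nails are absorbed into the surjectivity of $p_0$. That benefit matters, because your claim that nails can be pushed to the top is false. In $\Tb^{(\lambda,\lambda)}$, a nail placed below a black strand's rightward crossing with the second blue strand gives a nonzero basis element. The same nail placed above produces a black--blue bigon and vanishes. So in a direct argument a nailed strand must be treated as having moved left, i.e.\ as belonging to a different $y_\phi$.
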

\begin{proof}
    Again, as in Proposition \ref{prop:Rbasis}, we induct on the number of black/black and black/colored crossings. The base case is similar, so we proceed to the induction step. Let $d \in \Rb_{w}1_{\rho}$ be any diagram. Let us imagine $y=\frac{1}{2}$ denotes the vertical halfway point of the diagram. It is enough to show that $d$ is equal to a sum of diagrams with $y_\phi$'s on their bottom half modulo diagrams with fewer crossings. First, isotope all colored crossings to be below the $y=\frac{1}{2}$ line. As in Prop. \ref{prop:Rbasis}, we can move all dots and nails above $y=\frac{1}{2}$ at the cost of introducing diagrams with fewer crossings. The remainder of the proof follows just as in \cite{Web}, Lemma 6.12.
\end{proof}

Now, we prove a technical lemma that will be useful in the proof of R3 invariance. 
\begin{lem}\label{lem:it.exts.tensor}
    Let $M,N$ be $\Tb$ bimodules with iterated extensions by standards as right and left modules respectively. If the canonical map $M\otimes_\Tb^LN\rightarrow M\otimes_\Tb N$ is surjective, then it is also an isomorphism. 
\end{lem}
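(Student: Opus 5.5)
The plan is a dimension count. Surjectivity will be upgraded to an isomorphism by showing that both sides have the same graded dimension, and that the derived tensor product has no higher homology. Two tools from the Remark in Section~\ref{sec: prelim} make this possible. With $\mu_1$ generic the differential vanishes. The graded dimension $gdim_{x,q}$ is additive on exact sequences and on cones. Hence it suffices to compute both sides on standard modules and then sum over the iterated extensions.

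\textbf{Step 1: reduce to standards.} Filter $M$ (as a right module) and $N$ (as a left module) by their iterated extensions by shifted standards, as in Proposition~\ref{prop:it.ext.proj} and Corollary~\ref{cor:R.it.ext.}. The functor $-\otimes^L_\Tb-$ is exact in each variable, so $M\otimes^L_\Tb N$ acquires an iterated extension whose subquotients are $q^{a}q^{b}\,\Sb_{\phi}\otimes^L_\Tb \Sb_{\psi}$. Here $\Sb_\phi$ denotes the right standard subquotients of $M$ and $\Sb_\psi$ the left standard subquotients of $N$. Additivity of $gdim$ then gives
$$gdim\,(M\otimes^L_\Tb N)=\sum_{\phi,\psi} q^{a_\phi+b_\psi}\, gdim\,(\Sb_\phi\otimes^L_\Tb\Sb_\psi),$$
with the homological grading recorded by $t$.

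\textbf{Step 2: the Ext/Tor orthogonality of standards.} The main obstacle is to show that right and left standards are dual: $\Sb_\phi\otimes^L_\Tb \Sb_\psi$ is $\kk$ (concentrated in degree $0$) when $\phi$ and $\psi$ label the same idempotent, and $0$ otherwise. This is the analogue of Webster's standard/costandard orthogonality used in \cite{Web}, Lemma~6.12. I would prove it directly from the definition of $\Sb$ as a complex of projectives $\Pb_{\rho(\mathbf{j})}[|\mathbf{j}|]$. Tensoring that complex with a left standard $\Sb'_\kappa$ yields the complex $\bigoplus_{\mathbf{j}} 1_{\rho(\mathbf{j})}\Sb'_\kappa$. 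Each of these summands is the span of diagrams in $\Sb'_\kappa$ that avoid moving black strands left across a colored strand.

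Using the basis theorem (Theorem~\ref{thm:Tbasis}), this complex should be a Koszul-type complex. It is acyclic unless $\rho=\kappa$, in which case only the identity diagram survives in degree $0$. The hard part is the sign and grading bookkeeping. I would model it on the proof of Lemma~7.1 of \cite{Na} and on the $p_np_{n-1}=0$ lemma above: pair up diagrams by whether a distinguished strand is pulled across, and construct a contracting homotopy.

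\textbf{Step 3: compare with the underived product.} From Step~2, $M\otimes^L_\Tb N$ has homology only in degree $0$. The canonical map $M\otimes^L_\Tb N\to M\otimes_\Tb N$ is therefore a map from a module, namely $H_0=M\otimes_\Tb N$. Its graded dimension is $\sum_\phi q^{a_\phi+b_\phi}$, summed over matching pairs. Because the canonical map $H_0(M\otimes^L N)\to M\otimes N$ is always an isomorphism on $H_0$, surjectivity together with vanishing of higher Tor gives a quasi-isomorphism. As a consistency check, a surjection between modules of equal finite graded dimension in each bidegree is injective. If the Tor-vanishing in Step~2 proves too hard, I would fall back on this dimension argument: bound $gdim(M\otimes_\Tb N)$ from below by $\sum q^{a_\phi+b_\phi}$ using the generating sets of Lemma~\ref{lem:gen.set}, which forces the Euler characteristic to be concentrated.
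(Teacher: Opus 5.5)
\textbf{There is a genuine gap: the orthogonality claimed in Step 2 is false, and the reason it fails (the nails) is what the paper's proof is built around.} Your skeleton matches the paper's: filter along the two iterated extensions, reduce to a statement about pairs of standards, and use that the degree-$0$ part of $M\otimes^L_\Tb N$ is $M\otimes_\Tb N$. The problem is the central claim of Step 2.

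Test it on the smallest case, $\underline{\mu}=(\lambda)$ and $\rho=(1)$. Here $J_\rho=\emptyset$, so $\Sb_{(1)}=\Pb_{(1)}$, $\dot{\Sb}_{(1)}=1_{(1)}\Tb$, and $\dot{\Sb}_{(1)}\otimes^L_\Tb\Sb_{(1)}=1_{(1)}\Tb^{\lambda}1_{(1)}$. By Theorem~\ref{thm:Tbasis} this has basis $\{y^a,\ y^a\cdot(\text{nail})\}_{a\geq 0}$. That is not $\kk$ in degree $0$: it is infinite-dimensional, and the nailed half sits in homological degree $1$.

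In general, for matching labels you get an idempotent truncation of the algebra $\Tb^{\mu_1}\otimes\cdots\otimes\Tb^{\mu_r}$, with dots, in-block crossings and nails. The situation is standardly stratified, as in \cite{Web}, not highest weight, so the count $\sum_\phi q^{a_\phi+b_\phi}$ in Step 3 is wrong. Your description of $\bigoplus_{\mathbf{j}}1_{\rho(\mathbf{j})}\Sb'_\kappa$ as a span of diagrams in which "only the identity survives" also treats the dg standards of \cite{Na} as Webster-style quotients; they are complexes of projectives.

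Take $M=N=\Tb^{\lambda}$. The lemma holds trivially there, yet $M\otimes^L_\Tb N=\Tb^\lambda$ has homology in positive degree. So the conclusion of your Step 2--3 chain, that $M\otimes^L_\Tb N$ has homology only in degree $0$, is false. Since $d_{\underline\mu}=0$ but nails carry $h$-degree $1$, "no higher homology" is not the right criterion for $\phi$ to be an isomorphism over $\Tb$ itself.

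\textbf{The missing step: separate the nails from the Tor-vanishing, then lift.} The paper passes to $\tilde{\Tb}=\Tb/I$, where $I$ is the ideal generated by nailed diagrams. The iterated extensions descend to $\tilde{M}$ and $\tilde{N}$. The vanishing of $H_k(\tilde{\Sb}_\rho\otimes^L\dot{\tilde{\Sb}}_\kappa)$ for $k>0$, taken from Section 7 of \cite{Na}, combined with your Step 1 long exact sequences, gives $\tilde M\otimes^L\tilde N\simeq\tilde M\otimes\tilde N$. Surjectivity of $\phi$ is then used to form $0\to\ker\phi\to M\otimes^L_\Tb N\to M\otimes_\Tb N\to 0$. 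This yields $\ker\phi\otimes_\Tb\tilde{\Tb}=0$, i.e.\ $\ker\phi=\ker\phi\cdot I$. Finally $\ker\phi=0$, because nonzero elements of $\ker\phi\cdot I^n$ carry at least $n$ nails. Your proposal has no analogue of this reduction-and-lift, and nails never enter it.

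Your fallback does not repair this, for three reasons:
\begin{itemize}
\item Lemma~\ref{lem:gen.set} supplies generating sets, which give \emph{upper} bounds on $gdim$, not lower ones.
\item The $t$-graded additivity you use in Step 1 holds only once the connecting maps are known to vanish, which is what you are trying to prove.
\item Agreement of Euler characteristics cannot force homology to be concentrated in a single degree.
\end{itemize}
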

\begin{proof}
Let $\phi: M\otimes_\Tb^LN\rightarrow M\otimes_\Tb N$ be the canonical morphism in question. We have a short exact sequence, 
$$0\rightarrow ker\hspace{1mm}\phi \rightarrow M\otimes_\Tb^LN\rightarrow M\otimes_\Tb N\rightarrow 0$$
And we hope to show $ker\hspace{1mm} \phi =0$. Our strategy will be very similar to the one in \cite{Web}, with one extra step. Loosely speaking, to successfully apply the methods of \cite{Web}, we will show that the claims follows in the KLRW algebra in the absence of white dots and then argue that their structure allows one to conclude the stronger claim.\\
Let $I$ be the ideal of $\Tb$ generated by all diagrams with white dots. Then $\Tb/I = \Tilde{\Tb}$ is the ring acquired from $\Tb$ by introducing the local relation, 
\begin{align}
	\tikzdiag[yscale=1]{
		\draw (0,0.5)..controls (1,0.5) and (1,0.75) .. (1,1);
\draw (0,0.5)..controls (1,0.5) and (1,0.25) .. (1,0);
\draw[vstdhl] (0,0)--(0,1) node[midway,nail]{};
	} 
	\ &= 0
\end{align}
We will use tildes to denote the respective modules over $\Tilde{\Tb}$. The iterated extensions pass to $\Tilde{M}$ and $\Tilde{N}$ and the consecutive cones now consist of $\{\Tilde{\Sb}^{\underline{\mu}}_\rho\}_{\rho \in \mathcal{M}}$ and $\{\dot{\Tilde{\Sb}}^{\underline{\mu}}_\kappa\}_{\kappa \in \mathcal{N}}$ respectively for indexing sets $\mathcal{M},\mathcal{N}$. Note that for $\tilde{ker \hspace{1mm} \phi} =0$, it is sufficient that $H_k(\Tilde{M}\otimes^L \Tilde{N}) = 0 $ for $k>0$. First, we show that $H_k(\Tilde{\Sb}^{\underline{\mu}}_\rho\otimes^L \Tilde{N}) =0$ for $k>0$. The iterated extension for $\tilde{N}$, 
$$0\rightarrow F^1_N\rightarrow F^2_N\rightarrow \cdots \rightarrow F^r = \tilde{N}$$
and the associated exact triangles, 
$$0\rightarrow F^i_N \rightarrow \dot{\Tilde{\Sb}}^{\underline{\mu}}_\kappa \rightarrow F^{i+1}_N[1]\rightarrow 0$$
induce a long exact sequence on the homology groups for all $i$, 
$$\cdots \rightarrow H_{k}(\Tilde{\Sb}^{\underline{\mu}}_\rho\otimes^L F^{i+1}_N)\rightarrow H_k(\Tilde{\Sb}^{\underline{\mu}}_\rho\otimes^L F^i_N )\rightarrow H_k(\Tilde{\Sb}^{\underline{\mu}}_\rho\otimes^L \dot{\Tilde{\Sb}}^{\underline{\mu}}_\kappa )\rightarrow H_{k-1}(\Tilde{\Sb}^{\underline{\mu}}_\rho\otimes^L F^{i+1}_N)\rightarrow\cdots $$
Note that the results of Section 7 of \cite{Na} imply that $H_k(\Tilde{\Sb}^{\underline{\mu}}_\rho\otimes^L \dot{\Tilde{\Sb}}^{\underline{\mu}}_\kappa ) \cong 0$ for all $k>0$ and $\kappa, \rho$. Therefore, for $k>0$, we get $ H_{k}(\Tilde{\Sb}^{\underline{\mu}}_\rho\otimes^L F^{i+1}_N)\cong H_k(\Tilde{\Sb}^{\underline{\mu}}_\rho\otimes^L F^i_N )$, so that after iterating finitely many times, we find $ H_{k}(\Tilde{\Sb}^{\underline{\mu}}_\rho\otimes^L \tilde{N})\cong 0$ for $k>0$. Now, we repeat the mirror argument for the iterated extension of $\tilde{M}$ to find, $H_k(\Tilde{M}\otimes^L \Tilde{N}) = 0 $ for $k>0$ and therefore $\Tilde{M}\otimes^L \Tilde{N} \cong \Tilde{M}\otimes \Tilde{N}$. This implies $ker\hspace{1mm} \phi \otimes_\Tb  \tilde{\Tb} =0$. However, this is only so if $ker\hspace{1mm} \phi = ker\hspace{1mm} \phi \cdot I $. However, $ker\hspace{1mm} \phi \cdot I^n$ must have diagrams with at least $n$ white dots (if they are non-zero). Thus, by finiteness of string diagrams, we must have $ker\hspace{1mm} \phi =0$, implying the claim. 

\end{proof}

\begin{lem}\label{lem:RRit.ext.}
    For $i\neq j$, $\Bb_i \circ \Bb_j \cong \Bb_{s_i s_j}$. Furthermore, $\Bb_{s_i s_j}(\Pb^{\underline{\mu}}_\rho)$ has an iterated extension by standards determined by $\Phi^w_\rho$ in the sense above. 
\end{lem}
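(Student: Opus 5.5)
We first reduce the claim to a statement about bimodules. By definition $\Bb_i\circ\Bb_j(-)=\Rb_{s_i}\otimes^L_\Tb(\Rb_{s_j}\otimes^L_\Tb -)\cong(\Rb_{s_i}\otimes^L_\Tb\Rb_{s_j})\otimes^L_\Tb-$. Since $i\neq j$, the word $s_is_j$ is reduced, so $l(s_is_j)=l(s_i)+l(s_j)$. Corollary \ref{cor:Rtensor} then gives $\Rb_{s_i}\otimes_\Tb\Rb_{s_j}\cong\Rb_{s_is_j}$. It therefore suffices to show that the canonical map
$$\Rb_{s_i}\otimes^L_\Tb\Rb_{s_j}\longrightarrow\Rb_{s_i}\otimes_\Tb\Rb_{s_j}$$
is a quasi-isomorphism. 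We will obtain this from Lemma \ref{lem:it.exts.tensor}.

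We verify its two hypotheses in turn.

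\textbf{Surjectivity.} The underived tensor product is a quotient of the chain-level tensor product, and composition of diagrams is surjective onto $\Rb_{s_is_j}$ by Proposition \ref{prop:Rbasis}. Hence the canonical map is surjective on $H_0$. It is trivially surjective in higher degrees, because the target is concentrated in degree $0$.

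\textbf{Iterated extensions.} We need $\Rb_{s_j}$ to have an iterated extension by standards as a left module, and $\Rb_{s_i}$ to have one as a right module.
\begin{itemize}
\item For $\Rb_{s_j}$ as a left module, apply Corollary \ref{cor:R.it.ext.} one idempotent at a time, using $\Rb_{s_j}1_\rho=\Bb_j(\Pb_\rho)$.
\item For $\Rb_{s_i}$ as a right module, we use the mirror symmetry of the diagrammatic relations (flipping diagrams upside down exchanges left and right modules). The resolution of Proposition \ref{prop:Rresolution} and the filtration of Corollary \ref{cor:R.it.ext.} should then transfer to $1_\kappa\Rb_{s_i}$, with the generating set of Lemma \ref{lem:gen.set} playing the role of $\Phi^w$.
\end{itemize}
Lemma \ref{lem:it.exts.tensor} then yields $\Rb_{s_i}\otimes^L\Rb_{s_j}\cong\Rb_{s_is_j}$, hence $\Bb_i\circ\Bb_j\cong\Bb_{s_is_j}$.

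For the second statement, we write $\Bb_{s_is_j}(\Pb_\rho)\cong\Bb_i(\Bb_j(\Pb_\rho))$. Corollary \ref{cor:R.it.ext.} gives $\Bb_j(\Pb_\rho)$ an iterated extension with cones $q^{\deg y_\phi}\Sb_{\rho_\phi}$ for $\phi\in\Phi^{s_j}_\rho$. Since $\Bb_i$ is exact on triangles, $\Bb_i$ applied to this filtration gives an iterated extension with cones $q^{\deg y_\phi}\Bb_i(\Sb_{\rho_\phi})$. So it remains to refine each $\Bb_i(\Sb_{\rho_\phi})$ into standards and to match the resulting indices with $\Phi^{s_is_j}_\rho$.

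To refine $\Bb_i(\Sb_{\rho_\phi})$, we argue as follows.
\begin{itemize}
\item A standard module is an iterated cone of projectives, so $\Bb_i(\Sb_{\rho_\phi})$ is an iterated cone of modules $\Bb_i(\Pb)$. Each of these is filtered by standards via Corollary \ref{cor:R.it.ext.}.
\item Alternatively, and more cleanly, we use the identification $\Bb_i(\Sb^{\underline\mu}_{\rho})\cong \mathbb{S}(\ldots)$. When $\mu_i,\mu_{i+1}$ sit in a single tensor factor, the braiding only acts inside that factor. Otherwise the braiding swaps two factors, and the standard is sent to a filtered object whose pieces are standards with black strands slid left. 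This is the categorified statement that the R-matrix is triangular with respect to the tensor basis.
\end{itemize}

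The bookkeeping step is then to check that composing the minimal-crossing representatives gives a bijection $\Phi^{s_j}_\rho\times(\text{pieces})\leftrightarrow\Phi^{s_is_j}_\rho$ with additive degrees, $\deg y_{\phi}+\deg y_{\phi'}=\deg y_{\phi\phi'}$. This matches the Grothendieck-level identity and uses Lemma \ref{lem:gen.set} to see that every element of $\Phi^{s_is_j}_\rho$ occurs exactly once. Finally, the filtration is reordered into one compatible with the preorder, using vanishing of $\mathrm{Ext}$ from larger to smaller standards, as in \cite{Web}.

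\textbf{Main obstacle.} We expect the hardest step to be establishing the right-module iterated extension of $\Rb_{s_i}$, together with the refinement of $\Bb_i$ on standards (rather than on projectives). Here the nail and white-dot relations break naive left/right symmetry, since nails attach only to the leftmost strand. The first thing to try is to pass to $\tilde\Tb=\Tb/I$ as in the proof of Lemma \ref{lem:it.exts.tensor}. There the structure is KLRW-like and Webster's argument applies. We would then lift back using the $I$-adic finiteness argument from that proof.
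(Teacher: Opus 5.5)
Your argument for the first claim is the paper's. You reduce via Corollary \ref{cor:Rtensor} to the canonical map $\Rb_{s_i}\otimes^L_\Tb\Rb_{s_j}\to\Rb_{s_i}\otimes_\Tb\Rb_{s_j}$. You take the left filtration of $\Rb_{s_j}$ from Corollary \ref{cor:R.it.ext.} and the right filtration of $\Rb_{s_i}$ from the upside-down flip $\Rb_{s_j^{-1}}\cong\dot{\Rb}_{s_j}$, and then apply Lemma \ref{lem:it.exts.tensor}. The flip never moves a nail off the leftmost strand, so the obstacle you flag there is not real.

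The gap is in the second claim. Your route needs each $\Bb_i(\Sb_{\rho_\phi})$ to carry an honest iterated extension by standards, and neither of your arguments supplies one. Expanding $\Sb_{\rho_\phi}$ as an iterated cone of projectives and filtering each $\Bb_i(\Pb)$ only presents $\Bb_i(\Sb_{\rho_\phi})$ as a twisted complex whose pieces are standards in several homological degrees. To get each piece exactly once you must show the surplus pieces cancel, which is essentially the statement itself. Already for $\underline{\mu}=(\lambda,\lambda,\lambda)$, $\rho=(0,1,0)$, $i=2$, the pieces are $\Sb_{1,0,0}[1]$, $\Sb_{0,0,1}$ and $\Sb_{1,0,0}$ (up to grading shifts), and one must check that the connecting map identifies the two copies of $\Sb_{1,0,0}$. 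Your second argument just asserts categorified triangularity of the R-matrix on standards. Nothing in the paper computes $\Bb_i$ on standard modules. Moreover, the case ``$\mu_i,\mu_{i+1}$ in one tensor factor'' never arises for the fully split standards $\Sb^{\underline{\mu}}_{\rho_\phi}$. So your bijection and degree bookkeeping is only a Grothendieck-group consistency check. The $\mathrm{Ext}$-vanishing you invoke for reordering is also not established in the paper.

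The missing idea is the paper's graded-dimension squeeze, which never needs $\Bb_i$ on standards. It works directly with $\Rb_{s_is_j}1_\rho$, resolved by $\Rb_i\otimes^L\boldsymbol{p}\Rb^\rho_j$ with every summand re-resolved via Proposition \ref{prop:Rresolution}. Arguing as in Proposition \ref{prop:it.ext.proj}, this resolution carries a filtration $Q_0\to Q_1\to\cdots\to Q_N$ with $N=|\Phi^{s_is_j}_\rho|$. Its consecutive cones are exactly $q^{\deg y_\phi}\Sb^{\underline{\mu}}_{\rho_\phi}$ for $\phi\in\Phi^{s_is_j}_\rho$, with an a priori unknown bottom term $Q_0$. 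This gives $\mathrm{gdim}_{q,x}\Rb_{s_is_j}1_\rho\geq\sum_\phi q^{\deg y_\phi}\mathrm{gdim}_{q,x}\Sb^{\underline{\mu}}_{\rho_\phi}$.

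The reverse inequality comes from the surjection $\sum_I p^I_0:\bigoplus_I q^{f(I)}\Rb_i1_{\rho_I}\to\Rb_{s_is_j}1_\rho$. Three facts feed into it:
\begin{itemize}
\item each $\phi\in\Phi^{s_is_j}_\rho$ factors uniquely as an element of $\Phi^{s_i}_{\rho_I}$ composed with $p^I_0$;
\item for fixed $I$, the elements that occur are exactly those above an explicit minimal element $\hat\phi_I$;
\item by Lemma \ref{lem:gen.set}, the map stays surjective after replacing $\Rb_i1_{\rho_I}$ by the quotient $\Rb_i1_{\rho_I}/Q_{\hat\phi_I}$, which is filtered only by that up-set.
\end{itemize}
Since $d_{\underline{\mu}}=0$, graded dimensions are additive, so the two bounds coincide and force $Q_0\cong 0$. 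The factorization you planned to use for bookkeeping is therefore the key input. It is used as an upper bound on the graded dimension of the actual module, not as a bijection of labels.
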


\begin{proof}
     By Corollary \ref{cor:R.it.ext.}, we may consider iterated extensions of $\Rb_i$ as a right module and $\Rb_j$ as a left module (the ladder follows by noticing, as in \cite{Web}, $ \Rb_{s_j^{-1}} \cong \dot{\Rb}_{s_j} $) where consecutive cones are quasi-isomorphic to standard modules. Now, we wish to examine $\Rb_i \otimes^L \Rb_j \otimes^L \Pb^{\underline{\mu}}_\rho \cong \Rb_i \otimes^L \Rb_j1_\rho$. As usual, denote $\rho = (b_1,b_2,...,b_r)$. Applying Proposition \ref{prop:Rresolution} and tensoring with $\Rb_i$, we get a natural map between the complex, 
    $$\bigoplus_{\underline{\jj} =(\jj_L,\jj,\jj_R)} q^{\#_{\underline{\jj}}} \Rb_i\otimes P_{b_{j-1}+|\jj_L|, \jj , b_{j+1}+|\jj_R|} [|\jj|]$$
    and $\Rb_{s_i s_j}$ via the maps, 
    $$\cdots \rightarrow \bigoplus_{I\in C(b_j)}q^{f(I)}\Rb_i 1_{\rho_I}\overset{\sum_I p^I_0}{\longrightarrow } \Rb_{s_1 s_j} 1_\rho $$
    by viewing the diagrams of $\Rb_i 1_{\rho_I}$ composed with $p^I_0$ as diagrams of $\Rb_{s_1 s_j}$. Note here that we used the shorthand $\rho_I = (b_1,...,b_{j-1}+|I|,0,b_{j+1}+b_j-|I|,b_{j+2},...,b_r)$. This map is easily seen to be surjective (for instance, by noting $\Rb_i \otimes-$ is a right exact functor). Applying Lemma \ref{lem:it.exts.tensor} to these facts, we find that $\Rb_i\otimes^L \Rb_j \cong \Rb_i\otimes \Rb_j$. \\
    To show that $\Rb_{s_is_j}$ has an iterated extension by standards as in the claim, it is again sufficient to consider the case $length(\underline{\mu})=4$,  $b_4 =0$. A projective resolution for $\Rb_{s_is_j}1_\rho$ may be easily acquired by taking $\Rb_i \otimes^L \boldsymbol{p}\Rb^\rho_j$ and then resolving all summands in the complex according to Proposition \ref{prop:Rresolution}. Call the resulting resolution $\boldsymbol{p}\Rb^\rho_{s_is_j}$. From the structure of the differentials and the summands, we can find an iterated extension along the arguments of Proposition \ref{prop:it.ext.proj}, 
    $$Q_0\overset{f_1}{\longrightarrow }Q_1\overset{f_2}{\longrightarrow }\cdots \overset{f_{|\Phi_\rho|}}{\longrightarrow } Q_{|\Phi_\rho|} = \boldsymbol{p}\Rb^\rho_{s_is_j} \cong \Rb_{s_is_j}1_\rho $$
    such that $Cone(f_k) \cong q^{deg \hspace{1mm} y_{\phi_k}}\Sb^{\underline{\mu}}_{\rho_{\phi_k}}$ for $\{\phi_k\}_k = \Phi^{s_is_j}_\rho$ some indexing of $\Phi^{s_is_j}_\rho$ compatible with the partial order from Section \ref{sec:Standards}. The non-trivial part of the claim then comes down to showing $Q_0 \cong 0$. We note that the iterated extension above shows that, $$gdim_{q,x} \hspace{1mm}\Bb(\Pb^{\underline{\mu}}_\rho)\geq \sum_{\phi\in \Phi^{s_is_j}_\rho} q^{deg \hspace{1mm} y_{\phi}}gdim_{q,x} \hspace{1mm}\Sb^{\underline{\mu}}_{\rho_{\phi}}$$
    Therefore, the final part of the claim will follow once we show the complimentary inequality also holds. 
    We focus on the exact sequence from before, 
    $$\bigoplus_{I\in C(b_j)}q^{f(I)}\Rb_i 1_{\rho_I}\overset{\sum_I p^I_0}{\longrightarrow } \Rb_{s_i s_j} 1_\rho \rightarrow 0$$
    Suppose for the moment that for all $I\in C(b_j)$, there exists $\hat{\phi}^I \in \Phi^{s_i}_{\rho_I}$ such that, 
    $$\hat{\Phi}^{s_i}_{\rho_I} \equiv \{\phi \in \Phi^{s_i}_{\rho_I}| \hat{\phi_I}\lesssim \phi\}$$
    $$p_0\bigg (\bigcup_I \hat{\Phi}^{s_i}_{\rho_I}\bigg)= \Phi^{s_is_j}_\rho$$
    $$p_0\bigg (\hat{\Phi}^{s_i}_{\rho_I}\bigg) \bigcap p_0\bigg (\hat{\Phi}^{s_i}_{\rho_J}\bigg) = \emptyset, \hspace{5mm} I\neq J$$
    Then we can form $\Tb$ modules $Q_{\hat{\phi}_I}$ such that the cones $\Rb_i 1_{\rho_I}/Q_{\hat{\phi}_I}$ have iterated extensions governed by $\hat{\Phi}^{s_i}_{\rho_I}$ (this can be done, for instance, by taking the iterated extension of $\Rb_i 1_{\rho_I}$ described by Corollary \ref{cor:R.it.ext.}, permuting the terms such that the first $|\hat{\Phi}^{s_i}_{\rho_I}|$ consecutive cones are the standards associated with $\hat{\Phi}^{s_i}_{\rho_I}$ and letting $Q_{\hat{\phi}_I}$ be the term in the iterated extension such that $Cone (Q_{\hat{\phi}_I} \rightarrow Q_{k_{\hat{\phi}_I}}) \cong \Sb^{\underline{\mu}}_{\rho_{\hat{\phi}_I}}$). Furthermore, by Lemma \ref{lem:gen.set}, the sequence, 
    $$\bigoplus_{I\in C(b_j)}q^{f(I)}\Rb_i 1_{\rho_I}/Q_{\hat{\phi}_I}\overset{\sum_I p^I_0}{\longrightarrow } \Rb_{s_i s_j} 1_\rho \rightarrow 0$$
    remains exact and we therefore conclude,
    $$gdim_{q,x} \hspace{1mm}\Bb(\Pb^{\underline{\mu}}_\rho)\leq \sum_{\phi\in \Phi^{s_is_j}_\rho} q^{deg \hspace{1mm} y_{\phi}}gdim_{q,x} \hspace{1mm}\Sb^{\underline{\mu}}_{\rho_{\phi}}$$
    Now, we show that such an element $\hat{\phi}^I$ always exists. An arbitrary element of $\phi \in \Phi^{s_i s_j}_\rho$ (for the relevant case at hand) may be identified with an element $\phi\in \Z^{b_2}_2 \Z^{b_3}_3$ by considering string diagrams of $\phi$. For instance, if $w = s_3s_2$, the string diagrams could be written as, 
    \[
    \tikzdiagh[yscale=2, xscale=2]{0}{
    \draw[bund] (0.25,0) node[below]{$b_1$}  --(0.25, 1);
    \draw[bund] (1.5,0) node[below]{$b_2$} --(1.5,0.15);
    \draw[bund](1.6,0.15)..controls (1.6,0.5) and (3.25,0.5) ..(3.25,1);
    \draw[bund](1.4,0.15)..controls (1.4,0.5) and (.5,0.5) ..(.5,1);
    \filldraw [fill=white, draw=black] (1.3,.10) rectangle (1.7,.25) node[midway] { $\omega_2$};
    \draw[bund] (2.5,0) node[below]{$b_3$} --(2.5,0.15);
    \draw[bund](2.4,0.15)..controls (2.4,0.5) and (.75,0.5) ..(.75,1);
    \draw[bund](2.55,0.15)..controls (2.55,0.5) and (1.5,0.5) ..(1.5,1);
    \draw[bund](2.6,0.15)..controls (2.6,0.5) and (3.5,0.5) ..(3.5,1);
    \filldraw [fill=white, draw=black] (2.3,.10) rectangle (2.7,.25) node[midway] { $\omega_3$};
    \draw[vstdhl] (0,0)--(0,1);
    \draw[vstdhl] (1,0)..controls (1,0.5) and (3,0.5) ..(3,1);
    \draw[vstdhl] (2,0) ..controls (2,0.5) and (1,0.5) .. (1,1);
    \draw[vstdhl] (3,0) ..controls (3,.5) and (2,.5) .. (2,1);}
    \]
     Where $\omega_i \in NH_{b_i}$ are the elements described in Eq. \eqref{eq:omegas} and specified by the representative in $\Z^{b_2}_2 \Z^{b_3}_3$. The 2 black lines represent parallel, unbraided bundles of black strands. The element $\omega_2$ specifies the set $I$ for which the diagram above may be rewritten as the product, 
     \begin{align}
     \tikzdiag[yscale=2, xscale=2]{
    \draw[bund] (0.25,0) node[below]{$b_1+|I|$}  --(0.25, 1);
    \draw[bund] (2.5,0) node[below]{$b_3+b_2-|I|$} --(2.5,0.15);
    \draw[bund](2.4,0.15)..controls (2.4,0.5) and (.75,0.5) ..(.75,1);
    \draw[bund](2.55,0.15)..controls (2.55,0.5) and (1.5,0.5) ..(1.5,1);
    \draw[bund](2.6,0.15)..controls (2.6,0.5) and (3.5,0.5) ..(3.5,1);
    \filldraw [fill=white, draw=black] (2.3,.10) rectangle (2.7,.30) node[midway] {$\omega_3'$};
    \draw[vstdhl] (0,0)--(0,1);
    \draw[vstdhl] (2,0)..controls (2,0.5) and (3,0.5) ..(3,1);
    \draw[vstdhl] (1,0)--(1,1);
    \draw[vstdhl] (3,0) ..controls (3,.5) and (2,.5) .. (2,1);
    }
    \cdot \hspace{4mm}p_0^I
    \end{align}
The diagram on the left is always an element of $\Phi^{s_i}_{\rho_I}$ and this procedure allows us to define the sets $\hat{\Phi}^{s_i}_{\rho_I}\subset \Phi^{s_i}_{\rho_I}$ that appear in the construction above. It is immediate through this factorization that,
$$p_0\bigg (\bigcup_I \hat{\Phi}^{s_i}_{\rho_I}\bigg)= \Phi^{s_is_j}_\rho$$
$$p_0\bigg (\hat{\Phi}^{s_i}_{\rho_I}\bigg) \bigcap p_0\bigg (\hat{\Phi}^{s_i}_{\rho_J}\bigg) = \emptyset, \hspace{5mm} I\neq J$$
We are left to show $\hat{\Phi}^{s_i}_{\rho_I}$ has a `minimal' element $\hat{\phi}_I$ such that $\hat{\Phi}^{s_i}_{\rho_I}$ may be redefined by taking all elements $\phi \gtrsim \hat{\phi}_I$. But this element can be easily spotted from the procedure above and we recognize it as, 
\begin{align}
    \hat{\phi}_I = 
     \tikzdiagh[yscale=2, xscale=2]{0}{
    \draw[vstdhl] (0,0)--(0,1);
    \draw[vstdhl] (2,0)..controls (2,0.5) and (3,0.5) ..(3,1);
    \draw[vstdhl] (1,0)--(1,1);
    \draw[vstdhl] (3,0) ..controls (3,.5) and (2,.5) .. (2,1);
    \draw (0.20,0)  --(0.20, 1);
    \draw (0.40,0)  --(0.40, 1);
    \draw (2.20,0)..controls (2.20,0.5) and (3.20,0.5) .. (3.20,1);
    \draw (2.40,0)..controls (2.40,0.5) and (3.40,0.5) .. (3.40,1);
    \draw (2.60,0)..controls (2.60,0.5) and (0.6,0.5) .. (0.6,1);
    \draw (2.80,0)..controls (2.80,0.5) and (0.8,0.7) .. (0.8,1);
    \tikzbrace{.2}{.4}{.1}{$b_1+|I|$};
    \tikzbrace{2.2}{2.3}{.1}{$b_2-|I|$};
    \tikzbrace{2.7}{2.8}{.1}{$b_3$};
    \node at(0.3,.5){$...$};
    \node at(0.74,.9){$...$};
    \node at(3.28,.9){$...$};
    \node at(2.3,.1){$...$};
    \node at(2.68,.1){$...$};
    }
\end{align}
The same argument works for $s_2s_3$ and we are done. 

\end{proof}

\begin{thm} For $i,j>1$, we have natural isomorphisms, 
$$\Bb_i \circ \Bb_j \cong \Bb_j \circ \Bb_i \hspace{10mm} |i-j|>1$$
$$\Bb_i \circ \Bb_{i+1} \circ \Bb_i \cong \Bb_{i+1} \circ \Bb_i \circ \Bb_{i+1}$$
\end{thm}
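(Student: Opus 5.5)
The plan is to reduce both relations to statements about bimodules, using Corollary \ref{cor:Rtensor} and the derived-versus-underived comparison of Lemma \ref{lem:it.exts.tensor}. The functors are $\Bb_w = \Rb_w\otimes^L_\Tb -$, and derived tensor products are associative. It therefore suffices to produce quasi-isomorphisms of dg-bimodules
\[
\Rb_i\otimes^L_\Tb \Rb_j \simeq \Rb_j\otimes^L_\Tb \Rb_i \quad (|i-j|>1), \qquad \Rb_i\otimes^L_\Tb\Rb_{i+1}\otimes^L_\Tb\Rb_i \simeq \Rb_{i+1}\otimes^L_\Tb\Rb_i\otimes^L_\Tb\Rb_{i+1}.
\]
In each case the target will be the single bimodule $\Rb_w$ for the relevant permutation $w$. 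For $|i-j|>1$ we have $s_is_j=s_js_i$ with length $2$. Lemma \ref{lem:RRit.ext.} gives $\Rb_i\otimes^L\Rb_j\simeq \Rb_i\otimes\Rb_j$, and Corollary \ref{cor:Rtensor} identifies this with $\Rb_{s_is_j}$. Symmetrically, $\Rb_j\otimes^L\Rb_i\simeq\Rb_{s_js_i}=\Rb_{s_is_j}$. The far-commutativity relation therefore follows immediately.

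For the braid relation I would show $\Rb_i\otimes^L\Rb_{i+1}\otimes^L\Rb_i\simeq \Rb_{w_0}$, where $w_0=s_is_{i+1}s_i=s_{i+1}s_is_{i+1}$. The right-hand side is independent of the chosen reduced word: by definition $\Rb_{w}$ depends only on $w$, since the colored crossing diagrams of two reduced words are identified by the colored triple-point relation \eqref{eq:RmoveR3}. The computation runs in three steps.

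First, Lemma \ref{lem:RRit.ext.} gives $\Rb_i\otimes^L\Rb_{i+1}\simeq\Rb_{s_is_{i+1}}$. That lemma also shows that $\Rb_{s_is_{i+1}}1_\rho$ admits an iterated extension by standards indexed by $\Phi^{s_is_{i+1}}_\rho$, as a left module and, by the mirror argument using $\Rb_{s^{-1}}\cong\dot\Rb_s$, as a right module.

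Second, I form $\Rb_{s_is_{i+1}}\otimes^L\Rb_i$. The composition map $\Rb_{s_is_{i+1}}\otimes\Rb_i\to\Rb_{w_0}$ is an isomorphism on underived tensor products by Corollary \ref{cor:Rtensor}, since the lengths add. It is surjective from the derived tensor product, by the same right-exactness argument used in Lemma \ref{lem:RRit.ext.}: resolve $\Rb_i1_\rho$ by Proposition \ref{prop:Rresolution} and apply $\Rb_{s_is_{i+1}}\otimes-$, so that $H_0$ is the underived product. Both factors carry iterated extensions by standards on the relevant sides, from the first step and Corollary \ref{cor:R.it.ext.}. Lemma \ref{lem:it.exts.tensor} then yields $\Rb_{s_is_{i+1}}\otimes^L\Rb_i\simeq\Rb_{w_0}$.

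Third, the same argument with $i$ and $i+1$ interchanged gives $\Rb_{s_{i+1}s_i}\otimes^L\Rb_{i+1}\simeq\Rb_{w_0}$. The resulting isomorphisms are given by composition of diagrams, so they are natural, bimodule maps, and compatible with the functor structure.

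The main obstacle is the hypothesis of Lemma \ref{lem:it.exts.tensor} in the second step: $\Rb_{s_is_{i+1}}$ must carry an iterated extension by standards \emph{as a right module}, compatible with the preorder. Lemma \ref{lem:RRit.ext.} is phrased for $\Bb_{s_is_j}(\Pb_\rho)$, i.e.\ as a left module. I would obtain the right-module version by applying the anti-involution (vertical reflection) that exchanges $\Rb_w$ with $\dot\Rb_{w^{-1}}$, as in \cite{Web}. If a length-three statement is needed to close the argument, I would repeat the dimension-count argument of Lemma \ref{lem:RRit.ext.} with $\Phi^{w_0}_\rho$. For the lower bound, the factorization $\Phi^{w_0}_\rho\ni\phi=\phi'\cdot p^I_0$ through the $0$th term of Proposition \ref{prop:Rresolution} supplies minimal elements $\hat\phi_I$. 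For the upper bound, the generating set of Lemma \ref{lem:gen.set} gives the matching inequality on $gdim_{q,x}$. Together these show the leftover term $Q_0$ vanishes. Checking that the sets $\hat\Phi_{\rho_I}$ are disjoint and cover $\Phi^{w_0}_\rho$ for three crossings is where I expect the real combinatorial work.
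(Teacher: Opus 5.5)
Your proposal is correct and is essentially the paper's argument. The paper calls far commutativity obvious, and proves the braid relation the way you do: Corollary \ref{cor:Rtensor} reduces it to $\Rb_i\otimes^L\Rb_{i+1}\otimes^L\Rb_i\cong\Rb_i\otimes\Rb_{i+1}\otimes\Rb_i$ (and the analogue for the other word), which then follows from Lemma \ref{lem:it.exts.tensor} applied with the iterated extensions of Lemma \ref{lem:RRit.ext.}. The right-module issue you flag is handled by the reflection $\Rb_{s^{-1}}\cong\dot{\Rb}_{s}$ from the proof of Lemma \ref{lem:RRit.ext.}; alternatively, bracketing as $\Rb_i\otimes^L(\Rb_{i+1}\otimes^L\Rb_i)$ means only $\Rb_i$ needs a right-module extension, so the length-three dimension count in your last paragraph is not needed.
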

\begin{proof}
    The sliding of distant crossings is obvious. To prove the R3 relation, by Corollary \ref{cor:Rtensor}, it is sufficient to show $\Rb_i \otimes^L \Rb_{i+1} \otimes^L \Rb_i \cong \Rb_i \otimes \Rb_{i+1} \otimes \Rb_i$ as well as the analagous statement for the right-hand side. But again, we can use Lemma \ref{lem:it.exts.tensor} with Lemma \ref{lem:RRit.ext.} to show this, and we are done.
\end{proof}
\subsection{Inverse Braiding}
If $\Bb_{\sigma_i} = \Rb_i \otimes^L -$ is the functor we wish to associate to a negative crossing, its inverse, $\Bb^{-1}_{\sigma_i}$, is what we wish to associate to a positive crossing. As explained in \cite{Web}, the appropiate object to consider is, 
$$\Bb^{-1}_{\sigma_i} := RHom_{\Tb^{\sigma_i\cdot \underline{\mu}}}(\Rb_{\sigma_i},-)$$
\begin{prop}\label{prop:Inverse_Braiding}
    $$\Bb^{-1}_{\sigma_i} \circ \Bb_{\sigma_i} \cong \Bb_{\sigma_i}\circ \Bb^{-1}_{\sigma_i}\cong \mathbb{1}$$
\end{prop}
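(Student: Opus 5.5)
Since $\Bb^{-1}_{\sigma_i}=RHom_{\Tb^{\sigma_i\cdot\underline{\mu}}}(\Rb_{\sigma_i},-)$ is right adjoint to $\Bb_{\sigma_i}=\Rb_{\sigma_i}\otimes^L_{\Tb}-$, the statement amounts to showing that the unit $\mathbb{1}\to RHom(\Rb_{\sigma_i},\Rb_{\sigma_i}\otimes^L-)$ and the counit $\Rb_{\sigma_i}\otimes^L RHom(\Rb_{\sigma_i},-)\to\mathbb{1}$ are quasi-isomorphisms. Following Webster, I would do this by checking them on a set of compact generators. Because both functors are triangulated and commute with the relevant (asymptotic) colimits, it suffices to check the unit on the projectives $\Pb^{\underline{\mu}}_\rho$ and the counit on the projectives $\Pb^{\sigma_i\cdot\underline{\mu}}_\kappa$. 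Proposition \ref{prop:it.ext.proj} then lets us pass freely between projectives and standards. So it is in fact enough to test both maps on standard modules $\Sb^{\underline{\mu}}_{\rho}$ and to verify the appropriate Hom computations there.

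For the unit, the key identity is $RHom_{\Tb^{\sigma_i\cdot\underline{\mu}}}(\Rb_{\sigma_i},\Rb_{\sigma_i})\cong\Tb^{\underline{\mu}}$ as bimodules, concentrated in degree $0$. I would establish it in three steps.

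First, show that the underived $Hom_{\Tb}(\Rb_{\sigma_i},\Rb_{\sigma_i})$ equals $\Tb^{\underline{\mu}}$. The map $\Tb^{\underline{\mu}}\to End(\Rb_{\sigma_i})$ is given by right multiplication. It is injective by the basis of Proposition \ref{prop:Rbasis}: a diagram is determined by its image of the generators $1_\rho$, once the crossing is pulled to the bottom. Surjectivity should follow from the generating set of Lemma \ref{lem:gen.set}. An endomorphism is determined by where it sends the $y_\phi$. Comparing graded dimensions via Corollary \ref{cor:R.it.ext.} and Corollary \ref{cor:gdimPandS} then shows that the dimensions of $1_\kappa End(\Rb_{\sigma_i})1_\rho$ and $1_\kappa\Tb1_\rho$ agree.

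Second, show that higher $Ext$ vanishes. Here I would use the iterated extension of $\Rb_{\sigma_i}1_\rho$ by the standards $q^{\deg y_\phi}\Sb_{\rho_\phi}$ (Corollary \ref{cor:R.it.ext.}), together with the analogous extension of $\Rb_{\sigma_i}$ as a left module by costandards, using $\Rb_{s^{-1}}\cong\dot{\Rb}_s$ as in Lemma \ref{lem:RRit.ext.}. The input is the standard--costandard Ext-orthogonality from Section 7 of \cite{Na}, namely $Ext^{k}(\Sb_\rho,\dot{\Sb}_\kappa)=0$ for $k>0$. A long exact sequence argument identical to Lemma \ref{lem:it.exts.tensor} then kills all higher terms. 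As in that lemma, I would first prove the statement over the nail-free quotient $\tilde{\Tb}$ and then lift it by the $I$-adic finiteness argument.

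Third, apply the Hom-tensor adjunction. This gives $RHom(\Rb,\Rb\otimes^L\Pb_\rho)\cong RHom(\Rb,\Rb)\otimes^L\Pb_\rho\cong\Pb_\rho$, with the adjunction unit being the map induced by $\Tb\to End(\Rb)$.

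For the counit, I would use the symmetric argument. Show that $\Rb_{\sigma_i}\otimes^L_{\Tb}RHom(\Rb_{\sigma_i},\Tb^{\sigma_i\cdot\underline{\mu}})\to\Tb^{\sigma_i\cdot\underline{\mu}}$ is a quasi-isomorphism. The dual $RHom(\Rb_{\sigma_i},\Tb)$ should be identified with $\Rb_{\sigma_i^{-1}}$, the bimodule for the opposite crossing, up to shift. The composition map $\Rb_{\sigma_i}\otimes\Rb_{\sigma_i^{-1}}\to\Tb$ would then have to be shown surjective, using the relations (\ref{eq:RmoveR3p}) and (\ref{eq:RmoveR3pp}) to realize the identity as a composite. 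Once surjectivity holds, Lemma \ref{lem:it.exts.tensor} upgrades it to a quasi-isomorphism with the derived tensor product.

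I expect the main obstacle to be the Ext-vanishing step in the presence of white dots. The standard--costandard orthogonality is only available in the nail-free setting through \cite{Na}. The reduction modulo the nail ideal $I$ used in Lemma \ref{lem:it.exts.tensor} must also be made to work for $RHom$ rather than for $\otimes^L$. I would try to handle this by filtering by powers of $I$ and using that each graded piece of fixed $x$-degree involves only finitely many nails, so that the filtration is finite in each degree.
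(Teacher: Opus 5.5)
Your Step 1 for the unit is essentially the paper's entire proof. The paper restricts to $b$ black strands and notes that $\Rb^b_i$ is perfect by Proposition~\ref{prop:Rresolution}. It checks that $1_\rho\Tb^{\underline{\mu}}1_{\rho'}\to Hom(\Rb^b_i1_\rho,\Rb^b_i1_{\rho'})$ is injective, and matches graded dimensions using the standard filtration from the proof of Lemma~\ref{lem:RRit.ext.} together with Corollary~\ref{cor:gdimPandS}. It then stops, with no separate counit argument.

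The genuine gap is in your counit half: $RHom(\Rb_{\sigma_i},\Tb)$ cannot be a shift of $\Rb_{\sigma_i^{-1}}$. In this paper there is only one colored-crossing bimodule, and $\Rb_{\sigma_i^{-1}}\cong\dot{\Rb}_{\sigma_i}$ is that same crossing with the two colors in the other order. Your identification would make $\Bb^{-1}_{\sigma_i}$ a shift of the braiding functor for the swapped colors. The statement you are proving would then say that the double braiding is a shift of the identity.

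This already fails in $K_0$. Take $\mu_i=\mu_{i+1}=\lambda$, so that $\dot{\Rb}_{\sigma_i}\cong\Rb_{\sigma_i}$, and one black strand, recording only the blocks after $\mu_{i-1},\mu_i,\mu_{i+1}$. Proposition~\ref{prop:Rresolution} gives $[\Bb_i\Pb_{1,0,0}]=[\Pb_{1,0,0}]$ and $[\Bb_i\Pb_{0,0,1}]=[\Pb_{0,0,1}]$. It also gives $[\Bb_i\Pb_{0,1,0}]=x[\Pb_{1,0,0}]+x[\Pb_{0,0,1}]-x^2[\Pb_{0,1,0}]$. Hence $[\Bb_i]^2[\Pb_{0,1,0}]=(x-x^3)([\Pb_{1,0,0}]+[\Pb_{0,0,1}])+x^4[\Pb_{0,1,0}]$, which is not a monomial multiple of $[\Pb_{0,1,0}]$. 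Compare the non-trivial $\Bb_i\circ\Bb_i$ in the Hopf link and $U_1$ examples.

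There is also no composition map $\Rb_{\sigma_i}\otimes\Rb_{\sigma_i^{-1}}\to\Tb$ to feed into Lemma~\ref{lem:it.exts.tensor}. Stacking two colored crossings produces a colored bigon, and no relation in the paper removes it. The relations \eqref{eq:RmoveR3p} and \eqref{eq:RmoveR3pp} only move black strands through a single colored crossing.

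Replace the counit argument by a generation argument. Once the unit is an isomorphism, $\Bb_{\sigma_i}$ is fully faithful and commutes with sums. It then suffices that every projective of $\Tb^{\sigma_i\cdot\underline{\mu}}$ lies in the thick subcategory generated by the objects $\Bb_{\sigma_i}(\Pb^{\underline{\mu}}_\rho)$. This follows from Proposition~\ref{prop:Rresolution} by induction on the number of black strands between the two braided colored strands:
\begin{itemize}
\item In $\boldsymbol{p}\Rb^i_\rho$ the projective $P_{b_{i-1},b_i,b_{i+1}}$ occurs exactly once, in top degree $b_i$.
\item Every other term has fewer than $b_i$ strands in the middle block.
\item For $b_i=0$ the complex is a single shifted projective.
\end{itemize}
The counit is then automatically an isomorphism.

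Separately, your Ext-vanishing step lacks its stated input. The mirror isomorphism $\Rb_{s^{-1}}\cong\dot{\Rb}_s$ gives a standard filtration of $\Rb_{\sigma_i}$ as a right $\Tb^{\underline{\mu}}$-module, which is what the Tor argument of Lemma~\ref{lem:it.exts.tensor} uses. It does not give a costandard filtration of $\Rb_{\sigma_i}1_{\rho'}$ as a left $\Tb^{\sigma_i\cdot\underline{\mu}}$-module. That costandard filtration is what $Ext$-orthogonality against the standard filtration of $\Rb_{\sigma_i}1_\rho$ would require. So the vanishing of higher $Ext$ still needs its own argument, and the paper's proof does not supply one either.
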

\begin{proof}
    It is sufficient to show this on any projective. Let $\Pb^{\underline{\mu}}_\rho$ be a projective with $b$ total black strands. In which case, $\Bb_i$ is simply a derived tensor product with $\Rb^{b}_i$ (and similarly for $\Bb^{-1}_i$), the $\Rb_i$ submodule on $b$ black strands. It is therefore sufficient to show $\Rb^b_i \otimes^L -$ is an equivalence. The cofibrant replacement of $\Rb^b_i$ has finite length by Proposition \ref{prop:Rresolution}, so it is perfect. Now, consider the map, 
    $$Hom(\Tb^{\underline{\mu}}1_\rho, \Tb^{\underline{\mu}}1_{\rho'}) \longrightarrow Hom(\Rb_i^b 1_\rho, \Rb_i^b 1_{\rho'})$$

    given by considering an element of $1_\rho \Tb^{\underline{\mu}}1_{\rho'}$ as map on the right hand side through composition. This map is easily seen to be injective. Similarly, 
    $$Hom(\Tb^{\underline{\mu}}, \Tb^{\underline{\mu}}1_{\rho'}) \longrightarrow Hom(\Rb_i^b, \Rb_i^b 1_{\rho'})$$
    is injective. Furthermore, notice that we found in the proof of Lemma \ref{lem:RRit.ext.} the identity, 
    $$gdim_{x,q} \hspace{1mm} (\Rb^b_i1_\rho) = \sum_{\phi\in \Phi^{w_i}_\rho} gdim_{x,q} \hspace{1mm} \Sb^{\underline{\mu}}_{\rho_\phi}$$ 
    Since $gdim_{x,q} \hspace{1mm} (R^b_i1_\rho) = gdim_{x,q} \hspace{1mm} Hom(\Rb^b_i \otimes \Tb^{\underline{\mu}}, \Rb^b_i1_\rho)$, then Corollary \ref{cor:gdimPandS} implies, 
    $$gdim_{x,q} \hspace{1mm}Hom(\Tb^{\underline{\mu}}, \Tb^{\underline{\mu}}1_{\rho'}) = gdim_{x,q} \hspace{1mm} Hom(\Rb_i^b, \Rb_i^b 1_{\rho'})$$

    and the map above must then be an isomorphism, thereby showing $\Rb^b_i \otimes^L -$ is an equivalence. 
    
\end{proof}

\section{Finite Dimensional Strands}\label{sec:fund_strands}
Thus far, we have worked explicitly with braids of strands colored by highest weights of Verma module representations or finite-dimensional irreducible representations of $\mathfrak{sl_2}$. Of course, if our goal is to arrive at knot invariants, we must also develop a framework for including cups and caps for the knots we wish to study. As in \cite{Web}, this is done by introducing cap and cup functors. In this section, we will do this for the case where the strands are colored by the fundamental representation of $\mathfrak{sl_2}$, closely following the steps in \cite{Na2} and \cite{Web2} and extending it to our framework. Of course, we choose the fundamental representation for simplicity, but generalizing the results in this section to arbitrary finite-dimensional representations is straightforward using the more detailed steps in \cite{Web}. The introduction of knots colored by fundamental representation will allow us to connect our theory to the more ambitious goal of defining Khovanov homology for knot and braid complements. 
\subsection{Cups and Caps}
We begin by defining the bimodules that will give rise to the cup and cap functors. We mostly follow \cite{Na2} and \cite{Web}. Let $\underline{\lambda} = (\lambda_1,\lambda_2,...,\lambda_r )$ and $\underline{\lambda^+_i} = (\lambda_1, \lambda_2,...,\lambda_i, 1^*, 1, \lambda_{i+1},...,\lambda_r)$. 
\begin{defn}\label{defn:cup}
    For $r\geq i \geq 1$, we let the cup bimodule $\Cb^{\underline{\lambda}^+_i}_{\underline{\lambda}}$ be the $\Tb^{\underline{\lambda}^+_i} - \Tb^{\underline{\lambda}}$-bimodule generated by diagrams,
    
    \[
    \tikzdiagh[yscale=2,xscale=1.5]{0}{
    \draw[pstdhl] (0,0) node[below]{\small $\lambda_{1}$} -- (0,1);
    \draw[pstdhl] (1,0) node[below]{\small $\lambda_{2}$} -- (1,1);
    \draw[pstdhl] (2,0) node[below]{\small $\lambda_{i}$} -- (2,1);
    \draw (3.5,0.5)--(3.5,1);
    \draw[stdhl] (4,1) node[above]{\small $1$} arc (0:-180:0.5) node[above]{\small $1^*$};
    \draw[pstdhl] (5,0) node[below]{\small $\lambda_{i+1}$} -- (5,1);
    \draw[pstdhl] (6,0) node[below]{\small $\lambda_{r}$} -- (6,1);
    \node at(1.5,.5){\large $...$};  
    \node at(5.5,.5){\large $...$};
    \draw (0.33,0)--(0.33,1);
    \draw (0.66,0) -- (0.66,1); 
    \node at(0.5,.5){$...$};
    \draw (2.33, 0) -- (2.33,1);
    \node at(2.5,.5){$...$};
    \draw (2.66, 0) -- (2.66,1);
    \draw (4.33, 0) -- (4.33,1);
    \node at(4.5,.5){$...$};
    \draw (4.66, 0) -- (4.66,1);
    \draw (6.33, 0) -- (6.33,1);
    \node at(6.5,.5){$...$};
    \draw (6.66, 0) -- (6.66,1);
    \tikzbrace{0.33}{0.66}{.1}{$b_1$};
    \tikzbrace{2.33}{2.66}{.1}{$b_i$};
    \tikzbrace{4.33}{4.66}{.1}{$b_i'$};
    \tikzbrace{6.33}{6.66}{.1}{$b_r$};
    }
    \]
Subject to the dg-enhanced KLRW relations of Section \ref{sec: prelim} as well as the new cup local relations, 
\be 
\tikzdiag[scale=1]{
\draw[stdhl] (0,0.5) --(0,1);
\draw[stdhl] (1,0.5) -- (1,1);
\draw[stdhl] (1,0.5) arc (0:-180:0.5);
\draw (0.5,0) ..controls (0.5,0.5) and (-0.5,0.5) .. (-0.5, 1);
} \hspace{3mm}= \hspace{3mm}\tikzdiag[scale=1]{
\draw[stdhl] (0,0.5) --(0,1);
\draw[stdhl] (1,0.5) -- (1,1);
\draw[stdhl] (1,0.5) arc (0:-180:0.5);
\draw (0.5,0) ..controls (0.5,0.5) and (1.5,0.5) .. (1.5, 1);
}\hspace{3mm} = \hspace{3mm} 0
\ee 

\be
\tikzdiag[scale=1]{
\draw[stdhl] (0,0.5) --(0,1);
\draw[stdhl] (1,0.5) -- (1,1);
\draw[stdhl] (1,0.5) arc (0:-180:0.5);
\draw (0.5,0)--(0.5, 1);
\draw (1.5,-0.5) ..controls (1.5,0.5) and (-0.5,0.5) .. (-0.5, 1);
} = - \tikzdiag[scale=1]{
\draw[stdhl] (0,0.5) --(0,1);
\draw[stdhl] (1,0.5) -- (1,1);
\draw[stdhl] (1,0.5) arc (0:-180:0.5);
\draw (0.5,0)--(0.5, 1);
\draw (1.5,-0.5) ..controls (1,-0.25) and (-0.75,-0.25) .. (-0.5, 1);
}\label{eq:cup_slide}
\ee

\be
\tikzdiag[xscale=-1]{
\draw[stdhl] (0,0.5) --(0,1);
\draw[stdhl] (1,0.5) -- (1,1);
\draw[stdhl] (1,0.5) arc (0:-180:0.5);
\draw (0.5,0)--(0.5, 1);
\draw (1.5,-0.5) ..controls (1.5,0.5) and (-0.5,0.5) .. (-0.5, 1);
} = \tikzdiag[xscale=-1]{
\draw[stdhl] (0,0.5) --(0,1);
\draw[stdhl] (1,0.5) -- (1,1);
\draw[stdhl] (1,0.5) arc (0:-180:0.5);
\draw (0.5,0)--(0.5, 1);
\draw (1.5,-0.5) ..controls (1,-0.25) and (-0.75,-0.25) .. (-0.5, 1);
}\label{eq:cup_slide2}
\ee

Where we let $(b_1,b_2,...,b_i, b_i' , b_{i+1},..., b_r)$ range over all $\mathbb{N}^{r+1}$. Additionally, we set the degree of the cup to, 
\be deg \hspace{2mm }\tikzdiag[xscale=1,yscale=1.25]{
\draw[stdhl] (1,0.5) arc (0:-180:0.5);
\draw (0.5,0)--(0.5, 0.5);
} \hspace{2mm}= 1\ee
\end{defn}

Similarly, we can define the cap bimodule $\Kb_{\underline{\lambda}^+_i}^{\underline{\lambda}}$ to be the $\Tb^{\underline{\lambda}}-\Tb^{\underline{\lambda}^+_i} $-bimodule given by, 

$$\Kb_{\underline{\lambda}^+_i}^{\underline{\lambda}} := q^{-1 }\dot{\Cb}^{\underline{\lambda}^+_i}_{\underline{\lambda}}[-1]$$

We interpret this as simply setting the degree of the cap to, 
\be deg \hspace{2mm }\tikzdiag[xscale=1,yscale=-1.25]{
\draw[stdhl] (1,0.5) arc (0:-180:0.5);
\draw (0.5,0)--(0.5, 0.5);
} \hspace{2mm}= q^{-1} h^{-1}\ee

We define the cup and cap functors ($\Cbb$ and $\Kbb$ respectively) just as in \cite{Na2}, 

$$\Cbb^{\underline{\lambda}^+_i}_{\underline{\lambda}} : = \Cb^{\underline{\lambda}^+_i}_{\underline{\lambda}} \otimes^L_\Tb - $$

$$\Kbb_{\underline{\lambda}^+_i}^{\underline{\lambda}} : = \Kb_{\underline{\lambda}^+_i}^{\underline{\lambda}} \otimes^L_\Tb - $$

.
\begin{rem}
We will largely use simply $\Cb_i$ and $\Kb_i$ when $\underline{\lambda}$ is understood. 
\end{rem} 
We may diagrammatically identify these functors with the cups and caps below,
\[
\begin{tikzcd}[row sep=large, column sep = large]
\tikzdiagh[scale=1]{0}{
\draw[stdhl,mid<-] (2,0) node[above]{$1$}  arc (0:-180:1) node[above]{$1^*$};
} \ar[rr,rightsquigarrow]&& \mbox{\large $\Cbb$} \\
\tikzdiagh[yscale=-1]{0}{
\draw[stdhl,mid->] (2,0) node[below]{$1$}  arc (0:-180:1) node[below]{$1^*$};
} \ar[rr,rightsquigarrow]&& \mbox{\large $\Kbb$} 
\end{tikzcd}
\]
Immediately, we can see, 

\begin{prop}
    The functors $\Cbb_i$ and $\Kbb_i$ commute with 1-morphisms. 
\end{prop}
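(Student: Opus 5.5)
The plan is to mirror the proof given above for the braiding functors $\Bb_i$. Since $\mathcal{F}=\beta_{\mathcal{F}}\otimes_\Tb-$ and $\mathcal{E}=\beta_{\mathcal{E}}\otimes_\Tb-$ are tensor products with bimodules that are projective on each side, the derived tensor products are underived there. It therefore suffices to produce bimodule isomorphisms
\[\beta_{\mathcal{F}}\otimes_\Tb \Cb_i\cong \Cb_i\otimes_\Tb\beta_{\mathcal{F}}, \qquad \beta_{\mathcal{E}}\otimes_\Tb \Cb_i\cong \Cb_i\otimes_\Tb\beta_{\mathcal{E}}.\]
The grading shifts are compared separately below, and the corresponding statements for $\Kb_i$ follow by the definition $\Kb_i=q^{-1}\dot{\Cb}_i[-1]$. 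Associativity of tensor products and naturality then give the isomorphisms of functors $\Cbb_i\circ\mathcal{F}\cong\mathcal{F}\circ\Cbb_i$, and similarly for $\mathcal{E}$ and $\Kbb_i$.

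For $\mathcal{F}$, I would describe both sides diagrammatically. $\Cb_i\otimes_\Tb\beta_{\mathcal{F}}$ is the space of cup diagrams whose rightmost strand at the bottom is a black strand. $\beta_{\mathcal{F}}\otimes_\Tb\Cb_i$ consists of cup diagrams with an extra black strand added on the far right above the cup region. Gluing gives a natural map from the second into the first. I would show this map is injective and surjective as follows.

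\textbf{Surjectivity.} Any diagram in $\Cb_i 1_{\rho,1}$ can be rewritten, using the KLRW relations together with the cup-slide relations \eqref{eq:cup_slide} and \eqref{eq:cup_slide2}, so that the added black strand passes the cup only above it. It may pick up a sign, and diagrams in which it enters the cup region vanish by the first cup relation. The strand then lies entirely above the cup, in the image.

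\textbf{Injectivity.} One option is to build the inverse map directly, pulling the strand down through the cup via \eqref{eq:cup_slide}, and check that it is well defined on the defining relations. Alternatively, one can compare bases, as in Proposition \ref{prop:Rbasis}.

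The case of $\mathcal{E}$ is the mirror argument, with the strand ending at the top and sliding down past the cup. The grading shift $q^{2b+1-|\underline{\mu}|}$ in $\mathcal{E}$ is unaffected because inserting $1^*,1$ changes $|\underline{\mu}|$ by $-1+1=0$. The degree of the crossing of a black strand with the $1^*$ and $1$ strands sums to $q^{-1}q^{1}=1$, so the slide is degree-preserving.

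The main obstacle is injectivity, i.e.\ that no hidden relations are created when the black strand is moved across the cup. This needs a basis, or at least a spanning set with controlled dimension, for the cup bimodule, and the paper has not yet established one. I would first try to construct the inverse map explicitly and verify that it respects each local relation, with the sign in \eqref{eq:cup_slide} being the delicate point. Failing that, I would fall back on the standard result from \cite{Na2} and \cite{Web2} that these cup bimodules are obtained from the identity bimodule by inserting a cap, which commutes with right-hand idempotent insertions.
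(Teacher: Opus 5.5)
This is essentially the paper's proof. It also reduces to the bimodule isomorphism $\beta_{\mathcal{F}}\otimes_{\Tb}\Cb_i\cong\Cb_i\otimes_{\Tb}\beta_{\mathcal{F}}$ and moves the extra black strand across the cup using the slide relations (the paper cites only \eqref{eq:cup_slide2}). The only structural differences are that the paper obtains $\Kbb_i$ by adjunction rather than by mirroring, and it simply asserts that sliding in both directions gives mutually inverse maps. For injectivity, which you rightly flag, the basis of Proposition~\ref{prop:cup_basis} (stated just after this result) is available.

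Some of your asides are inaccurate, though the conclusions they support still hold:
\begin{itemize}
\item The cup generator carries its own black strand, and a black strand crossing $1^*$ has degree $q^{1}$, as the shifts in \eqref{eq:cup_proj_res} show. So the slide in \eqref{eq:cup_slide} has degree $q^{1}q^{-2}q^{1}=1$, not $q^{-1}q^{1}$.
\item The $\mathcal{E}$-shifts agree because $b$ and $|\underline{\mu}|$ increase by $1$ and $2$ respectively, leaving $2b-|\underline{\mu}|$ fixed. It is not that nothing changes. Under your accounting, once the cup's own strand is included, the shifts would actually differ by $q^{2}$.
\item The first cup relation kills the cup's own strand leaving the cup. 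It does not kill an outside strand entering the cup region; such diagrams are nonzero (cf.\ the elements $g^i_\ell$). Your surjectivity step does not need this claim anyway.
\end{itemize}
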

\begin{proof}
    We take $\Cbb_i$ and the result for $\Kbb$ follows by adjunction. It is sufficient to show this for the $\mathcal{F}$ and $\mathcal{E}$ functors. Take $\mathcal{F}$, for instance. Recall that $\mathcal{F}$ acts by a derived tensor product with a cofibrant module, so that the result follows if $\beta_F \otimes \Cbb_i \cong \Cbb_i \otimes \beta_F$. But this follows at the diagram level since we can always slide black strands above the cup to the bottom and vice versa using the relation \ref{eq:cup_slide2}. 
\end{proof}

Following \cite{Na2}, we can write the basis for these bimodules as follows. Let $g^i_\ell$ be the following diagram, 

\[
    \tikzdiagh[yscale=2,xscale=1.5]{0}{
    \draw[pstdhl] (0,0) node[below]{\small $\lambda_{1}$} -- (0,1);
    \draw[pstdhl] (1,0) node[below]{\small $\lambda_{2}$} -- (1,1);
    \draw[pstdhl] (2,0) node[below]{\small $\lambda_{i}$} -- (2,1);
    \draw (4.5,0)..controls (4.5,0.5)  and (4,.5) ..(4,1);
    \draw[stdhl] (4,.5)..controls (4,.75) and (3,.75) ..(3,1);
    \draw[stdhl] (5,.5)--(5,1);
    \draw[stdhl] (5,0.5) arc (0:-180:0.5);
    \draw[pstdhl] (6,0) node[below]{\small $\lambda_{i+1}$} -- (6,1);
    \draw[pstdhl] (7,0) node[below]{\small $\lambda_{r}$} -- (7,1);
    \node at(1.5,.5){\large $\cdots$};  
    \node at(5.5,.5){\large $...$};
    \draw (0.33,0)--(0.33,1);
    \draw (0.66,0) -- (0.66,1); 
    \node at(0.5,.5){$...$};
    \draw (2.33, 0) -- (2.33,1);
    \node at(2.5,.5){$...$};
    \draw (2.66, 0) -- (2.66,1);
    \draw (3,0)..controls (3,.5) and (3.33,.5) .. (3.33,1);
    \draw (3.33,0)..controls (3.33,.5) and (3.66,.5) .. (3.66,1);
    \draw (3.6,0)..controls (3.6,.5) and (4.33,.5) .. (4.33,1);
    \draw (3.9,0)..controls (3.9,.5) and (4.66,.5) .. (4.66,1);
    \draw (5.33, 0) -- (5.33,1);
    \node at(5.5,.5){$...$};
    \node at(3.33,.5){$...$};
    \node at(3.76,.1){$...$};
    \node at(4.44,.8){$...$};
    \node at(6.5,.5){\large $\cdots$};
    \draw (5.66, 0) -- (5.66,1);
    \draw (7.33, 0) -- (7.33,1);
    \node at(7.5,.5){$...$};
    \draw (7.66, 0) -- (7.66,1);
    \tikzbrace{0.33}{0.66}{.1}{$b_1$};
    \tikzbrace{2.33}{2.66}{.1}{$b_i$};
    \tikzbrace{5.33}{5.66}{.1}{$b_{i+2}$};
    \tikzbrace{7.33}{7.66}{.1}{$b_r$};
    \tikzbraceop{3.33}{3.66}{.9}{$b_{i+1}-\ell$};
    \tikzbraceop{4.33}{4.66}{.9}{$\ell-1$};
    }
    \]

This is an element lying in, 
$$g^i_\ell \in 1_\kappa \Cb_i 1_{\kappa'}$$
where, 
$$\kappa = (b_1,b_2,..., b_i,b_{i+1}, b_{i+2}, b_{i+3}, ... , b_r)$$
$$\kappa' = (b_1,b_2,..., b_{i-1}, b_i+b_{i+1}+ b_{i+2}-1, b_{i+3}, ... , b_r)$$
As such, $g^i_\ell$ defines a map via left multiplication, 
$$g^i_\ell : 1_{\kappa'} \Tb 1_\rho \longrightarrow 1_\kappa \Cb_i 1_\rho$$

Furthermore, this element gives us a nice basis for the bimodules. Recall that the set $B^\kappa_\rho (\Tb^{\underline{\lambda}})$ is a basis for $1_\kappa \Tb^{\underline{\lambda}}1_\rho$ (Theorem \ref{thm:Tbasis}). Then, a basis for $\Cb_i$ can be established by the following result. 
\begin{prop}\label{prop:cup_basis}
The set 
$$B^\kappa_\rho (\Cb_i) := \bigsqcup^{b_{i+1}}_{\ell=1} g^i_\ell \cdot B^{\kappa'}_\rho (\Tb^{\underline{\lambda}})$$ 
is a basis for $1_\kappa\Cb_i1_\rho$.
\end{prop}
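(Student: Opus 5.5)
We will show separately that $B^\kappa_\rho(\Cb_i)$ spans $1_\kappa\Cb_i1_\rho$ and that it is linearly independent. We follow the strategy of \cite{Na2} (itself modeled on \cite{Web2}), adapting it to the dg-enhanced setting exactly as the proof of Proposition \ref{prop:Rbasis} adapted \cite{Na}.

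\emph{Spanning.} Take an arbitrary diagram $d\in 1_\kappa\Cb_i1_\rho$. Using the KLR and dot-sliding relations, we first move all dots and nails away from the cup, into the $\Tb$-part, at the cost of terms with fewer crossings; nails are only attached to $\mu_1=\lambda$, far from the cup.

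Next we analyse the black strands that cross the red strands emanating from the cup.
\begin{itemize}
\item A black strand entering the cup region from below and exiting on one side is killed by the first cup relation.
\item A black strand passing over the whole cup can be pulled below it using \eqref{eq:cup_slide} and \eqref{eq:cup_slide2}, at the cost of a sign.
\end{itemize}
Hence, modulo lower-crossing terms, every diagram factors as a top piece (a red cup with a single black strand threaded between its legs, plus strands in the region to its right) times an element of $1_{\kappa'}\Tb1_\rho$. The threaded strand is forced: in $\Tb^{\underline\lambda}$, with $\lambda_{i+1}$-region merged, the weight count requires exactly one black strand per cup. The top piece is determined up to lower terms by which black strand (the $\ell$-th one, $1\le\ell\le b_{i+1}$) becomes the threaded one. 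Any other choice of crossings is rewritten via KLR braid relations and the red/black R3 relation, producing $g^i_\ell$ plus terms with fewer crossings. Induction on crossing number then gives spanning. This mirrors the nilHecke-type argument in \cite{Na2}.

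\emph{Linear independence.} The cleanest route avoids a polynomial representation. We use the fact that, as a left module over the red/fundamental part, the cup bimodule is obtained from $\Tb^{\underline\lambda}$ by an induction along an idempotent truncation. Concretely, following \cite{Web2}, we identify $\Cb_i$ with $\Tb^{\underline{\lambda}^+_i}e\otimes_{e\Tb e}\Tb^{\underline\lambda}$ for a suitable idempotent, or equivalently we construct an action of $\Cb_i$ on $\bigoplus_\rho Pol_b\,\varepsilon_\rho$. For the latter we extend the faithful polynomial action of \cite{Na} (Section 5.2), used already in Proposition \ref{prop:Rbasis}, by letting the cup act as in \cite{Web2}: symmetrization/divided-difference in the threaded variable, with the assignment checked against \eqref{eq:cup_slide} and \eqref{eq:cup_slide2}. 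We then show that the images of the elements $g^i_\ell\cdot\psi$ are independent by a leading-term argument: distinct $\ell$ give distinct leading monomials in the threaded variable, and for fixed $\ell$ independence reduces to Theorem \ref{thm:Tbasis}.

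As a fallback, we compare graded dimensions. Since the spanning set is already established, it suffices to show $gdim_{q,x}1_\kappa\Cb_i1_\rho\ge\sum_\ell q^{\deg g^i_\ell}gdim_{q,x}1_{\kappa'}\Tb1_\rho$. This lower bound can be obtained by exhibiting a quotient, for instance by killing white dots as in Lemma \ref{lem:it.exts.tensor}, where the statement reduces to the known KLRW result of \cite{Web2}. We then lift it back using the nail filtration by powers of the ideal $I$.

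The main obstacle is verifying that the extended polynomial action respects \eqref{eq:cup_slide} with its sign, in the presence of the odd nail variables $\omega_j$. We expect this to work because the cup sits at position $i\ge1$, away from $\mu_1$, so the exterior part is untouched.
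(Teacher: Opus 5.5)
\textbf{Spanning.} Your spanning outline is the standard one, but it leaves out one fact it needs. A dot on the threaded strand cannot be slid down into the $\Tb^{\underline{\lambda}}$ part. You must instead show that it vanishes: by the bigon relation for $1^*$, the dot equals a bigon whose lower crossing is exactly the forbidden exit. Similarly, exchanging strands that enter the cup through $1$ for strands entering through $1^*$ needs \eqref{eq:cup_slide} together with the bigon relation, not only braid moves.

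\textbf{Linear independence.} This is where the genuine gap is: none of your three routes works as stated.

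\emph{The polynomial route is impossible in principle.} In the polynomial representation of \cite{Na} (or its perturbed version) for $\Tb^{\underline{\lambda}^+_i}$, a crossing between a black strand and a non-leftmost red strand acts by the identity or by multiplication by a nonzero even polynomial. It is therefore injective on the free module; the idempotent reached by the exit is not violating, since $\lambda_1$ still lies to its left. Now let $\phi$ be any $\Tb^{\underline{\lambda}^+_i}$-linear map from $\Cb_i\otimes_{\Tb}\bigoplus_\rho Pol_b\,\varepsilon_\rho$ into that representation, $g$ a generating cup diagram and $c$ the exit crossing. Then $c\cdot\phi(g\otimes v)=\phi(cg\otimes v)=0$ forces $\phi(g\otimes v)=0$, so $\phi=0$. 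Any module that detects the $g^i_\ell$ must therefore contain nonzero vectors killed by the exit crossings. In particular the threaded strand carries no nonzero variable (its dot is $0$), so ``leading monomials in the threaded variable'' cannot separate different $\ell$. The odd variables $\omega_j$ are not where the difficulty lies.

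\emph{The identification route is unsupported.} You assert $\Cb_i\cong\Tb^{\underline{\lambda}^+_i}e\otimes_{e\Tb e}\Tb^{\underline{\lambda}}$ without proof. As written it is not even well posed: you specify no algebra map between $e\Tb^{\underline{\lambda}^+_i}e$ and $\Tb^{\underline{\lambda}}$ along which to induce, and there is no evident one.

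\emph{The dimension fallback is circular.} After killing white dots, the leftmost blue strand has no cyclotomic relation, so you are not in the setting of \cite{Web2}. Even granting the result modulo $I$, set $F=\bigoplus_\ell g^i_\ell\Tb$ and let $K$ be the kernel of $F\to\Cb_i$. The exact sequence $\mathrm{Tor}_1(\Cb_i,\Tb/I)\to K/KI\to F/FI\to\Cb_i/\Cb_iI\to 0$ shows you need $\mathrm{Tor}_1(\Cb_i,\Tb/I)=0$ before the nail-counting argument yields $K=0$. That vanishing is essentially the right-module freeness you are trying to prove.

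\textbf{What is missing.} You need a module in which the exit relations hold non-trivially. Two options:
\begin{enumerate}
\item Use the slide relations to see that $\Cb_i1_\rho$ is cyclic over $\Tb^{\underline{\lambda}^+_i}$. Identify it with the quotient of the corresponding projective by the submodule generated by the two exit diagrams, checking that the transported right $\Tb^{\underline{\lambda}}$-action is well defined there. Then extract a basis from Theorem~\ref{thm:Tbasis}, as one does for standard modules.
\item Define the left $\Tb^{\underline{\lambda}^+_i}$-action directly on the free right module $\bigoplus_\ell g^i_\ell\,1_{\kappa'}\Tb^{\underline{\lambda}}$ and verify all defining and cup relations, so that $\Cb_i$ surjects onto it.
\end{enumerate}
This is the input the paper takes wholesale from Corollary~5.2 of \cite{Na2}.
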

\begin{proof}
    The proof is almost identical to Corollary 5.2 of \cite{Na2}, changing details to our case. 
\end{proof}

To do calculations, it will be necessary to know the projective resolutions of $\Cb_i$. Consider a projective of the form $\Pb^{\underline{\lambda}}_\rho $ (with $\rho = (b_1,...,b_r)$) and the following chain complex, 

\be \begin{tikzcd}[row sep=tiny,column sep=small]
&q \hspace{2mm}\tikzdiagh[yscale=1.5]{0}{
\draw[pstdhl](0,0) node[below]{$\lambda_i$} --(0,0.5);
\draw[stdhl](0.5,0)--(.5,0.5);
\draw[stdhl](1.5,0)--(1.5,0.5);
\draw (2,0)--(2,.5);
\draw (2.25,0)--(2.25,.5);
\draw (1.75,0)--(1.75,0.5);
\filldraw [fill=white, draw=black] (-0.25,.5) rectangle (2.5,1) node[midway] { $\Tb$};
\node at (-0.3,.25) {$...$};
\node at (2.13,.25) {\small $..$};
\node at (2.6,.25) {$...$};
\tikzbrace{2.1}{2.2}{0.1}{$b_i$}
}
\ar{rd}
\ar[dash]{rd}{
\tikzdiag[scale=1]{
\draw[pstdhl](0,0)--(0,0.5);
\draw[stdhl](0.5,0)--(0.5,0.5);
\draw[stdhl](1,0)--(1,0.5);
\draw (1.25,0.5)..controls (1.25,0.25) and (0.75,0.25)..(0.75,0);
}
}
& &\\
     q^2 \hspace{2mm}\tikzdiagh[yscale=1.5]{0}{
\draw[pstdhl](0,0) node[below]{$\lambda_i$} --(0,0.5);
\draw[stdhl](0.5,0)--(.5,0.5);
\draw[stdhl](1.5,0)--(1.5,0.5);
\draw (2,0)--(2,.5);
\draw (2.25,0)--(2.25,.5);
\draw (1,0)--(1,0.5);
\filldraw [fill=white, draw=black] (-0.25,.5) rectangle (2.5,1) node[midway] { $\Tb$};
\node at (-0.3,.25) {$...$};
\node at (2.13,.25) {\small $..$};
\node at (2.6,.25) {$...$};
\tikzbrace{2.1}{2.2}{0.1}{$b_i$}
}
\ar{ru}
\ar[dash]{ru}{-\hspace{1mm}
\tikzdiag[scale=1]{
\draw[pstdhl](0,0)--(0,0.5);
\draw[stdhl](0.5,0)--(0.5,0.5);
\draw[stdhl](1,0)--(1,0.5);
\draw (1.25,0)..controls (1.25,0.25) and (0.75,0.25)..(0.75,0.5);
}
}
\ar{rd}
\ar[swap,dash]{rd}{
\tikzdiag[scale=1]{
\draw[pstdhl](0,0)--(0,0.5);
\draw[stdhl](0.5,0)--(0.5,0.5);
\draw[stdhl](1,0)--(1,0.5);
\draw (.25,0)..controls (.25,0.25) and (0.75,0.25)..(0.75,0.5);
}
}
& \bigoplus &
\tikzdiagh[yscale=1.5]{0}{
\draw[pstdhl](0,0) node[below]{$\lambda_i$} --(0,0.5);
\draw[stdhl](0.5,0)--(.5,0.5);
\draw[stdhl](1.5,0)--(1.5,0.5);
\draw (2,0)--(2,.5);
\draw (2.25,0)--(2.25,.5);
\draw (1,0)--(1,0.5);
\filldraw [fill=white, draw=black] (-0.25,.5) rectangle (2.5,1) node[midway] { $\Tb$};
\node at (-0.3,.25) {$...$};
\node at (2.13,.25) {\small $..$};
\node at (2.6,.25) {$...$};
\tikzbrace{2.1}{2.2}{0.1}{$b_i$}
}
\ar{rrr}
\ar[dash]{rrr}{
\tikzdiag[yscale=2]{
\draw[pstdhl](0,0)--(0,0.25);
\draw[stdhl](1,0.25) arc (0:-180:0.25);
\draw (.75,0)--(0.75,0.25);
}
}
&&&
\tikzdiagh[yscale=1.5]{0}{
\draw[pstdhl](0,0) node[below]{$\lambda_i$} --(0,0.5);
\draw (.33,0)--(.33,.5);
\draw (.66,0)--(.66,.5);
\filldraw [fill=white, draw=black] (-0.25,.5) rectangle (1,1) node[midway] { $\Cb_i$};
\node at (-0.3,.25) {$...$};
\node at (.5,.25) {\small $..$};
\tikzbrace{.33}{.66}{0.1}{$b_i$}
}\\
&  q \hspace{2mm}\tikzdiagh[yscale=1.5]{0}{
\draw[pstdhl](0,0) node[below]{$\lambda_i$} --(0,0.5);
\draw[stdhl](0.5,0)--(.5,0.5);
\draw[stdhl](1.5,0)--(1.5,0.5);
\draw (2,0)--(2,.5);
\draw (2.25,0)--(2.25,.5);
\draw (.25,0)--(.25,0.5);
\filldraw [fill=white, draw=black] (-0.25,.5) rectangle (2.5,1) node[midway] { $\Tb$};
\node at (-0.3,.25) {$...$};
\node at (2.13,.25) {\small $..$};
\node at (2.6,.25) {$...$};
\tikzbrace{2.1}{2.2}{0.1}{$b_i$}
} 
\ar{ru}
\ar[swap,dash]{ru}{
\tikzdiag[scale=1]{
\draw[pstdhl](0,0)--(0,0.5);
\draw[stdhl](0.5,0)--(0.5,0.5);
\draw[stdhl](1,0)--(1,0.5);
\draw (.75,0)..controls (.75,0.25) and (0.25,0.25)..(0.25,0.5);
}
}
&&
\end{tikzcd} \label{eq:cup_proj_res}
\ee 

\begin{prop}
The complex above \eqref{eq:cup_proj_res} is a projective resolution for $\Cbb_i(\Pb^{\underline{\lambda}}_\rho)$. 
\end{prop}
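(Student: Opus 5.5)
The plan is to verify the three properties that make a finite complex of projectives a resolution. First, the differentials square to zero. Second, the augmentation onto $\Cbb_i(\Pb^{\underline{\lambda}}_\rho)=\Cb_i 1_\rho$ is surjective. Third, the complex is exact in homological degrees $2$ and $1$. Throughout I will use that, as in the rest of the paper, $\mu_1=\lambda$ is generic, so $d_{\underline{\mu}}=0$ and graded-dimension arguments are available.

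\emph{Step 1: $d^2=0$ and surjectivity.} The composite from the $q^2$ term (black strand between $1^*$ and $1$) to the $q^0$ term splits into two paths. One path moves the strand out to the right of the $1$ strand and back in. The other moves it out to the left of the $1^*$ strand and back in. Each composite is the identity idempotent decorated by a red--black bigon on a fundamental ($\mu=1$) red strand, so each equals a single dot by the red bigon relation. The sign $-1$ on one arrow then gives cancellation, and this is a direct local check. For $d_1\circ\partial=0$, where $\partial$ is the augmentation (the cup with a black strand inside), I will compose the cup with a black strand that then leaves to the left or to the right. These are exactly the two diagrams that the first cup relation sets to zero. Surjectivity of the augmentation follows from Proposition \ref{prop:cup_basis}: every basis element has the form $g^i_\ell\cdot B^{\kappa'}_\rho(\Tb)$, and each $g^i_\ell$ is the generating cup diagram multiplied on the left by elements of $\Tb^{\underline{\lambda}^+_i}$. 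Hence $\Cb_i1_\rho$ is generated by the image of the degree-$0$ generator.

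\emph{Step 2: exactness at the left.} The map out of the $q^2$ term is injective. I will prove this by composing it with the projection to the $q\,$(strand right of $1$) summand and checking injectivity there. That component is right multiplication by a single black/red crossing. Using the basis of Theorem \ref{thm:Tbasis}, it sends basis diagrams to distinct basis diagrams modulo lower crossing terms, or I can argue via the faithful polynomial representation used in Proposition \ref{prop:Rbasis}.

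\emph{Step 3: exactness in the middle (main obstacle).} This is the hard part, and I will attack it through a graded dimension count rather than diagrammatically. By Step 2 and surjectivity, it suffices to show the following identity, where $P_m$, $P_l$, $P_r$ denote the projectives with the black strand in the middle, on the left and on the right respectively:
$$gdim\, \Cb_i1_\rho = gdim\,P_m - q\,gdim\,P_l - q\,gdim\,P_r + q^2 gdim\,P_m.$$
Here $gdim\,\Cb_i1_\rho$ is read off from Proposition \ref{prop:cup_basis} as $\sum_\ell q^{\deg g_\ell}gdim\,1_{\kappa'}\Tb1_\rho$. I will compare the two sides using Corollary \ref{cor:gdimPandS}, expanding each projective in standards. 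The generic-weight strands lie far to the left and do not interact with the red pair $1^*,1$. I therefore expect the two-red-strand computation of \cite{Na2} (their cup resolution for the fundamental strands) to go through essentially verbatim. The blue strands only contribute the common factor coming from the standard filtration, and the relation \eqref{eq:cup_slide2} lets strands to the right pass under the cup freely.

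If the dimension count becomes unwieldy, my fallback is a different argument for the middle. I would recognize the top two terms as a shifted copy of the standard-module complex $\Sb$ for the pair $(1^*,1)$ with one black strand, and likewise the bottom two. I would then filter the whole complex so that its pieces are cones of these standards, and invoke the vanishing of higher homology of the standard modules from Section 7 of \cite{Na}, as in Lemma \ref{lem:it.exts.tensor}.
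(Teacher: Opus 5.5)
\textbf{Gap in Step 3.} The reduction at the start of Step 3 does not work. Take Step 2 (injectivity of $q^2P_m\to qP_l\oplus qP_r$) and surjectivity of the augmentation $\partial\colon P_m\to\Cb_i1_\rho$ as given. The augmented complex $0\to q^2P_m\to qP_l\oplus qP_r\to P_m\to \Cb_i1_\rho\to 0$ then still has two homology groups you have not controlled:
\begin{itemize}
\item $\ker d_1/\mathrm{im}\,d_2$, at the middle term;
\item $\ker\partial/\mathrm{im}\,d_1$, at $P_m$.
\end{itemize}
Surjectivity of $\partial$ only says $H_0\to\Cb_i1_\rho$ is onto, not that it is an isomorphism. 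Since $d_{\underline\mu}=0$, $gdim_{x,q}$ is additive on exact sequences. Under that additivity your identity is equivalent to
\[
gdim(\ker d_1/\mathrm{im}\,d_2)=gdim(\ker\partial/\mathrm{im}\,d_1),
\]
and this forces neither group to vanish. So even granting the dimension count, which you only sketch by appeal to \cite{Na2}, you need one more exactness statement proved independently. (The paper's own proof is just that citation.)

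The natural candidate is $\ker\partial=\mathrm{im}\,d_1$. Write $\tau_l\in1_l\Tb1_m$ and $\tau_r\in1_r\Tb1_m$ for the crossings defining $d_1$. You would show that $P_m/(\Tb\tau_l+\Tb\tau_r)$ has graded dimension at most that of $\Cb_i1_\rho$, for instance by a spanning set indexed like the basis $\bigsqcup_\ell g^i_\ell\cdot B^{\kappa'}_\rho(\Tb)$ of Proposition \ref{prop:cup_basis}. Modulo $\mathrm{im}\,d_1$, a dot on the bottom-middle strand is a red bigon and so lies in $\Tb\tau_r$. Diagrams in which that strand leaves the region between $1^*$ and $1$ can be rewritten so that it leaves first. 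Combined with surjectivity this gives $P_m/\mathrm{im}\,d_1\cong\Cb_i1_\rho$, and only then do your identity and Step 2 kill $\ker d_1/\mathrm{im}\,d_2$. Alternatively, compute $\ker d_1$ by hand, as the paper does for the braided complex in Proposition \ref{prop:cup_r1}.

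Your fallback has the same defect. Writing the complex as a cone of two standard-type two-term complexes, the long exact sequence identifies the middle homology with the kernel of the induced map between their $H_0$'s, and the total $H_0$ with its cokernel. Neither is computed.

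\textbf{Step 2.} Right multiplication by the crossing $\tau_r'\in1_m\Tb1_r$ does not send basis diagrams to distinct basis diagrams modulo fewer crossings. If the middle strand of a basis diagram ends to the right of the strand $1$, you create a red bigon; for example, $\tau_r\in P_m$ is sent to the dotted idempotent $y1_r$. A cleaner argument: $a\tau_r'\tau_r=ay$, where $y$ is the dot on the middle strand. Right multiplication by $y$ is injective by triangularity in the basis of Theorem \ref{thm:Tbasis}, because the dot slides freely through colored crossings and nails, and through black crossings up to terms with fewer black crossings.

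\textbf{Surjectivity.} Knowing that each $g^i_\ell$ is a left multiple of a cup generator is not enough, because of the right factor from $B^{\kappa'}_\rho(\Tb)$. You also have to slide that factor up past the birth of the cup using \eqref{eq:cup_slide} and \eqref{eq:cup_slide2}.
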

\begin{proof}
Again, we refer to \cite{Na2}, since the statement can be proven similarly. 
\end{proof}

We can similarly work out the projective resolutions of the cap bimodules. Consider the map, 
\be \begin{tikzcd}[row sep=tiny]
q^{-1}\bigoplus_{[b_{i+1}]_q} \hspace{2mm}\tikzdiagh[yscale=1.5]{0}{
\draw[pstdhl](0,0) --(0,0.5);
\draw (.33,0)--(.33,.5);
\draw (.66,0)--(.66,.5);
\filldraw [fill=white, draw=black] (-0.25,.5) rectangle (1,1) node[midway] { $\Tb$};
\node at (-0.3,.25) {$...$};
\node at (.5,.25) {\small $..$};
\tikzbrace{.33}{.66}{0.1}{$b_i+b_{i+1}+b_{i+2}-1$}
}[-1]
\ar{rr}
&&
\tikzdiagh[yscale=1.5]{0}{
\draw[pstdhl](0,0) node[below]{$\lambda_i$} --(0,0.5);
\draw[stdhl](1,0)--(1,0.5);
\draw[stdhl](2,0)--(2,0.5);
\draw (.33,0)--(.33,.5);
\draw (.66,0)--(.66,0.5);
\draw (1.33,0)--(1.33,.5);
\draw (1.66,0)--(1.66,0.5);
\draw (2.33,0)--(2.33,.5);
\draw (2.66,0)--(2.66,0.5);
\filldraw [fill=white, draw=black] (-0.25,.5) rectangle (2.9,1) node[midway] { $\Kb_i$};
\node at (-0.3,.25) {$...$};
\node at (1.5,.25) {\small $..$};
\node at (.5,.25) {\small $..$};
\node at (2.5,.25) {\small $..$};
\node at (2.9,.25) {$...$};
\tikzbrace{1.33}{1.66}{0.1}{$b_{i+1}$};
\tikzbrace{.33}{.66}{0.1}{$b_{i}$};
\tikzbrace{2.33}{2.66}{0.1}{$b_{i+2}$};
}
\end{tikzcd} \label{eq:cap_proj_res}
\ee 

given by right multiplication on the summand in the appropriate degree by $\dot{g}^i_\ell$. That is, by the diagram, 
\[
    \tikzdiagh[yscale=-2,xscale=1.5]{0}{
    \draw[pstdhl] (2,0) node[above]{\small $\lambda_{i}$} -- (2,1);
    \draw (4.5,0)..controls (4.5,0.5)  and (4,.5) ..(4,1);
    \draw[stdhl] (4,.5)..controls (4,.75) and (3,.75) ..(3,1);
    \draw[stdhl] (5,.5)--(5,1);
    \draw[stdhl] (5,0.5) arc (0:-180:0.5);
    \node at(1.5,.5){\large $\cdots$};  
    \node at(5.5,.5){\large $...$};
    \draw (2.33, 0) -- (2.33,1);
    \node at(2.5,.5){$...$};
    \draw (2.66, 0) -- (2.66,1);
    \draw (3,0)..controls (3,.5) and (3.33,.5) .. (3.33,1);
    \draw (3.33,0)..controls (3.33,.5) and (3.66,.5) .. (3.66,1);
    \draw (3.6,0)..controls (3.6,.5) and (4.33,.5) .. (4.33,1);
    \draw (3.9,0)..controls (3.9,.5) and (4.66,.5) .. (4.66,1);
    \draw (5.33, 0) -- (5.33,1);
    \node at(5.5,.5){$...$};
    \node at(3.33,.5){$...$};
    \node at(3.76,.1){$...$};
    \node at(4.44,.8){$...$};
    \node at(6.5,.5){\large $\cdots$};
    \draw (5.66, 0) -- (5.66,1);
    \tikzbrace{2.33}{2.66}{1.6}{$b_i$};
    \tikzbrace{5.33}{5.66}{1.6}{$b_{i+2}$};
    \tikzbrace{3.33}{3.66}{1.6}{$b_{i+1}-\ell$};
    \tikzbrace{4.33}{4.66}{1.6}{$\ell-1$};
    }
    \]

\begin{prop}
The map described above \eqref{eq:cap_proj_res} is an isomorphism. 
\end{prop}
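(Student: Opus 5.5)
The plan is to reduce the statement to the basis result Proposition \ref{prop:cup_basis}, transported through the mirror-image (dot) construction. Since $\Kb_i = q^{-1}\dot{\Cb}_i[-1]$, the module $\Kb_i 1_\kappa$, with $\kappa=(\dots,b_i,b_{i+1},b_{i+2},\dots)$, is the mirror of $1_\kappa \Cb_i$ as a right module. Hence Proposition \ref{prop:cup_basis}, read upside down, gives that $1_{\rho}\Kb_i 1_\kappa$ has $\kk$-basis $\bigsqcup_{\ell=1}^{b_{i+1}} \dot{B}^{\kappa'}_{\rho}(\Tb)\cdot \dot{g}^i_\ell$, where $\kappa'$ has $b_i+b_{i+1}+b_{i+2}-1$ black strands after $\lambda_i$. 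Here $\dot{B}$ is the mirrored basis of Theorem \ref{thm:Tbasis}, which is again a basis of $1_\rho\Tb 1_{\kappa'}$ because the anti-involution of $\Tb$ flipping diagrams preserves all the defining relations.

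First I will check that the map is well defined as a map of left $\Tb$-modules. It is given on the $\ell$-th summand by $t\mapsto t\cdot \dot g^i_\ell$, so it commutes with the left action automatically. The grading shifts are forced: the $\ell$-th copy sits in degree $q^{-1}h^{-1}\deg(\text{crossings of } \dot g^i_\ell)$. The $\ell-1$ and $b_{i+1}-\ell$ strand crossings contribute $q^{-2(\ell-1)}$-type factors, which assemble into the quantum integer $[b_{i+1}]_q$ up to the overall $q^{-1}$. The homological shift $[-1]$ matches the $h^{-1}$ of the cap. This bookkeeping is routine, and I would only verify that the shift labelled $\bigoplus_{[b_{i+1}]_q}$ is exactly $\bigoplus_\ell q^{\deg \dot g^i_\ell}$.

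Next, surjectivity and injectivity both follow at once from the basis. Every element of $1_\rho\Kb_i1_\kappa$ is a unique combination of elements $\dot t\cdot \dot g^i_\ell$ with $\dot t$ ranging over a basis of $1_\rho\Tb1_{\kappa'}$. Such an element is precisely the image of $(\dots,\dot t,\dots)$ in the $\ell$-th summand, and distinct basis tuples map to distinct basis elements. Thus the map sends a $\kk$-basis of the source, namely the disjoint union over $\ell$ of bases of $\Tb 1_{\kappa'}$, bijectively onto a $\kk$-basis of the target, and is therefore an isomorphism. In particular $\Kb_i1_\kappa$ is projective, so this is also the projective resolution needed for computing $\Kbb_i$.

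The main obstacle is justifying the mirrored form of Proposition \ref{prop:cup_basis}. That proposition is only cited from \cite{Na2}, and its statement uses left multiplication by $g^i_\ell$ on $B^{\kappa'}_\rho(\Tb)$ in a specific order. I need that flipping diagrams upside down takes the cup relations \eqref{eq:cup_slide}, \eqref{eq:cup_slide2} to the corresponding cap relations, including the sign in \eqref{eq:cup_slide}, and respects the nail relations. The signs do not affect spanning or independence, and the nail relations are symmetric under the flip (the double-nail relation flips into itself up to sign). If that turns out to be delicate, the fallback is to rerun the spanning argument directly for caps. I would slide the black strands through the cap using the flipped \eqref{eq:cup_slide2} so that exactly one black strand passes between the two red strands; the position $\ell$ of that strand indexes the summand, and the rest of the diagram lies in $\Tb$. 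I would then deduce independence by comparing graded dimensions with $\dot{\Cb}_i$, which equals $\Kb_i$ up to shift.
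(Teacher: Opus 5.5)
Your proposal is correct and is the paper's argument: the paper's entire proof is to mirror Proposition \ref{prop:cup_basis} across the horizontal, so that the map sends the $\kk$-basis of $\bigoplus_\ell \Tb 1_{\kappa'}$ bijectively onto the basis $\bigsqcup_\ell \dot{B}^{\kappa'}_\rho(\Tb)\cdot \dot g^i_\ell$ of $\Kb_i 1_\kappa$. The "main obstacle" you flag is automatic, because $\Kb_i$ is defined as the shifted mirror $q^{-1}\dot{\Cb}_i[-1]$; the cap relations are therefore by definition the flipped cup relations, and the only ingredient needed is that the flip is a relation-preserving anti-involution of $\Tb$.
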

\begin{proof}
    This is immediate from taking the mirror along the horizontal of Proposition \ref{prop:cup_basis}. 
\end{proof}
\subsection{Ribbon Structure}
Now, we are ready to prove topological invariance of the red strands under Reidemeister 1 moves (recall that R2 and R3 have been shown in Section \ref{sec:braiding}). 
\be
\tikzdiagh[yscale=2]{0}{
\draw[pstdhl](0,0) node[below]{$\lambda_i$}--(0,1);
\draw[pstdhl](3,0) node[below]{$\lambda_{i+2}$}--(3,1);
\draw[stdhl] (1,0.5)..controls (1,0.1) and (2,0.1) .. (2,1);
\draw[stdhl](2,0) node[below]{$1$}..controls (2,0.9) and (1,0.9)..(1,0.5);
\node[color=mypurple] at (-0.5,0.5) {$\cdots$};
\node[color=mypurple] at (3.5,0.5) {$\cdots$};
}\hspace{3mm}
\sim \hspace{3mm}
\tikzdiagh[yscale=2]{0}{
\draw[pstdhl](0,0) node[below]{$\lambda_i$}--(0,1);
\draw[pstdhl](3,0) node[below]{$\lambda_{i+2}$}--(3,1);
\draw[stdhl](1.5,0) node[below]{$1$} --(1.5,1);
\node[color=mypurple] at (-0.5,0.5) {$\cdots$};
\node[color=mypurple] at (3.5,0.5) {$\cdots$};
}\label{eq:R1}
\ee
That is, under an R1 move, the ribbon structure of the quantum knot invariants in question becomes prominent in the form of overall degree shifts. We closely follow \cite{Web} for most of this discussion. \\
Let us begin by defining the cups and caps with their orientations reversed. 
\begin{defn}
Let $\bar{\Cb}^{\underline{\lambda}^+_i}_{\underline{\lambda}}$ be the bimodule defined in the same way as $\Cb^{\underline{\lambda}^+_i}_{\underline{\lambda}}$, except we flip $1^*$ and $1$, so that we send $(1^*,1)\rightarrow (1,1^*)$ in Definition \ref{defn:cup}. Then, we define, 

$$\bar{\Cbb}^{\underline{\lambda}^+_i}_{\underline{\lambda}} := \bar{\Cb}^{\underline{\lambda}^+_i}_{\underline{\lambda}} \otimes^L_\Tb -$$

Define $\bar{\Kb}_{\underline{\lambda}^+_i}^{\underline{\lambda}}$ in the same way and subsequently, 
$$\bar{\Kbb}_{\underline{\lambda}^+_i}^{\underline{\lambda}}:= \bar{\Kb}_{\underline{\lambda}^+_i}^{\underline{\lambda}} \otimes^L_\Tb -$$

\end{defn}
We diagrammatically identify these with the following pictures, 
\[
\begin{tikzcd}[row sep=large, column sep = large]
\tikzdiagh[scale=1]{0}{
\draw[stdhl,mid->] (2,0) node[above]{$1^*$}  arc (0:-180:1) node[above]{$1$};
} \ar[rr,rightsquigarrow]&& \mbox{\large $\bar{\Cbb}$} \\
\tikzdiagh[yscale=-1]{0}{
\draw[stdhl,mid<-] (2,0) node[below]{$1^*$}  arc (0:-180:1) node[below]{$1$};
} \ar[rr,rightsquigarrow]&& \mbox{\large $\bar{\Kbb}$} 
\end{tikzcd}
\]
We wish to show these four cups/caps defined thus far form a consistent ribbon structure. To do so, it is necessary to study their behavior under braiding. 
\[
\tikzdiag[scale=1]{
\draw[stdhl,mid<-] (2,0)  arc (0:-180:1);
\draw[stdhl] (0,0)..controls (0,1) and (2,1) .. (2,2);
\draw[stdhl] (2,0) ..controls (2,1) and (0,1) .. (0,2);
} \hspace{4mm}=\hspace{4mm} q^{\frac{3}{2}} h \tikzdiag[scale=1]{
\draw[stdhl,mid->] (2,0)  arc (0:-180:1);}
\]
\begin{prop}\label{prop:cup_r1}
    $\Bb_{i+1} \circ \Cbb_i \cong q^{\frac{3}{2}}\bar{\Cbb}_i[1]$ 
\end{prop}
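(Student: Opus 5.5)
It suffices to compare the two bimodules: I will show that $\Rb_{i+1}\otimes^L_\Tb \Cb_i$ is quasi-isomorphic, as a dg-bimodule, to $q^{\frac{3}{2}}\bar{\Cb}_i[1]$. I will check this on each projective $\Pb^{\underline{\lambda}}_\rho$ and make sure the identifications are natural in $\rho$, i.e.\ compatible with right multiplication by $\Tb^{\underline{\lambda}}$. The strategy follows the corresponding argument of Webster \cite{Web} for the R1 twist, adapted to the resolutions available here.

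The first step is to resolve. Take the projective resolution \eqref{eq:cup_proj_res} of $\Cbb_i(\Pb^{\underline{\lambda}}_\rho)$. It is a three-term complex whose terms are projectives $P_{\ldots,b_i,0,0,\ldots}$, with one black strand placed respectively left of $1^*$, between $1^*$ and $1$, or right of $1$. Apply $\Bb_{i+1}$ to each term and replace it by its resolution from Proposition \ref{prop:Rresolution}, with $\mu_{i+1}=1^*$ and $\mu_{i+2}=1$. Since at most one black strand sits between the two red strands being braided, each such resolution has length at most one, as in the $b_i=1$ example. The result is a total complex of projectives over $\Tb^{\underline{\lambda}^+_i}$ with the colours ordered $(1,1^*)$. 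The differentials are the $\tau^L,\tau^R$ strand-moving maps combined with the single black crossings coming from \eqref{eq:cup_proj_res}.

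The second step is to cancel. Several components of this total complex are identity maps between identical projectives. They arise because pulling a black strand across the crossing to the left, and then over $1^*$, produces the same idempotent as the other route. I will Gaussian-eliminate these components. What should survive is, up to a shift by one homological degree, exactly the resolution \eqref{eq:cup_proj_res} written for $\bar{\Cb}_i$, with the roles of $1$ and $1^*$ exchanged. The $q$-shift is read off from the degrees of the red crossing, $q^{-\frac12}$ for colours $1,1$, together with the cup degree and the black/red crossing degrees. Likewise the homological shift $[1]$ comes from the surviving generator lying in the $|\jj|=1$ part of the resolution of Proposition \ref{prop:Rresolution}. I expect the bookkeeping to give the total $q^{\frac32}h$.

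The third step is to identify the result with $\bar{\Cb}_i$ itself. I would write down an explicit bimodule map: it sends the generator of $\bar{\Cb}_i$ to the class of the red crossing composed with the cup and a black strand pulled through, which is a cycle in degree one. I would then check that it is a quasi-isomorphism by comparing graded dimensions. Proposition \ref{prop:cup_basis} gives a basis of $\bar{\Cb}_i$, and the iterated-extension arguments (Corollary \ref{cor:R.it.ext.}, Lemma \ref{lem:it.exts.tensor}) bound the homology of $\Rb_{i+1}\otimes^L\Cb_i$ from above.

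The main obstacle is the cancellation step. One has to verify that the surviving differentials really are the ones of $\bar{\Cb}_i$'s resolution, with the correct signs. This requires using the cup relations \eqref{eq:cup_slide}, \eqref{eq:cup_slide2} together with the red R1-type relation: a black strand wrapping the cup yields a dot, or zero. My first attempt would be to do the computation for $b_i=0$ and a single black strand, where everything is explicit. I would then argue that the general case reduces to this one by sliding the extra black strands past the cup, using naturality with respect to $\mathcal{F}$. This is legitimate since $\Cbb_i$ and $\Bb_{i+1}$ both commute with 1-morphisms.
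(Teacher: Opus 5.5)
Your plan is essentially the paper's proof. The paper resolves $\Cbb_i(\Pb^{\underline{\lambda}}_\rho)$ by \eqref{eq:cup_proj_res}, braids and resolves each term via Proposition \ref{prop:Rresolution} to obtain \eqref{eq:braided_cup_res}, and then removes the two identity components by computing $\ker d_1/\mathrm{Im}\,d_2$ and $\ker d_2$ directly rather than by Gaussian elimination, arriving at exactly the resolution of $q^{\frac32}\bar{\Cbb}_i[1]$; so your Step 3 is unnecessary. Two peripheral steps would not work as stated. The extra strands should be handled by locality (they ride along unchanged in both resolutions, as in the paper), not by naturality in $\mathcal{F}$, which only adds strands at the far right. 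And Lemma \ref{lem:it.exts.tensor} could not bound anything in Step 3, since $H_0$ vanishes here, i.e.\ $\Rb_{i+1}\otimes_\Tb\Cb_i=0$.
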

\begin{proof}
Take any projective $P^{\underline{\lambda}}_\rho$, apply $\Cbb_i$ and resolve the find the complex \eqref{eq:cup_proj_res}. We opt not to draw the strands to the left and to the right of the cup since they have no bearing on the calculation and write only the red strands coming from the cup. 
\[
\begin{tikzcd}[column sep=large,row sep=tiny]
& q\hspace{2mm}\tikzdiagh[yscale=1.5]{0}{
\draw[stdhl](0.5,0)--(.5,0.5);
\draw[stdhl](1.5,0)--(1.5,0.5);
\draw (1.75,0)--(1.75,0.5);
\filldraw [fill=white, draw=black] (0,.5) rectangle (2,1) node[midway] { $\Tb$};
}
\ar{rd}
\ar[dash]{rd}{
\tikzdiag[xscale=0.75]{
\draw[stdhl](0.5,0)--(0.5,0.5);
\draw[stdhl](1.5,0)--(1.5,0.5);
\draw (1,0)..controls (1,0.25) and (2,0.25)..(2,0.5);
}
}
& \\
q^2\hspace{2mm}\tikzdiagh[yscale=1.5]{0}{
\draw[stdhl](0.5,0)--(.5,0.5);
\draw[stdhl](1.5,0)--(1.5,0.5);
\draw (1,0)--(1,0.5);
\filldraw [fill=white, draw=black] (0,.5) rectangle (2,1) node[midway] {$\Tb$};
}
\ar{ru}
\ar[dash]{ru}{-
\tikzdiag[xscale=0.75,yscale=-1]{
\draw[stdhl](0.5,0)--(0.5,0.5);
\draw[stdhl](1.5,0)--(1.5,0.5);
\draw (1,0)..controls (1,0.25) and (2,0.25)..(2,0.5);
}
}
\ar{rd}
\ar[swap,dash]{rd}{
\tikzdiag[xscale=-0.75,yscale=-1]{
\draw[stdhl](0.5,0)--(0.5,0.5);
\draw[stdhl](1.5,0)--(1.5,0.5);
\draw (1,0)..controls (1,0.25) and (2,0.25)..(2,0.5);
}
}
&
\bigoplus &
\tikzdiagh[yscale=1.5]{0}{
\draw[stdhl](0.5,0)--(.5,0.5);
\draw[stdhl](1.5,0)--(1.5,0.5);
\draw (1,0)--(1,0.5);
\filldraw [fill=white, draw=black] (0,.5) rectangle (2,1) node[midway] { $\Tb$};
} \\
& 
q\hspace{2mm}\tikzdiagh[yscale=1.5]{0}{
\draw[stdhl](0.5,0)--(.5,0.5);
\draw[stdhl](1.5,0)--(1.5,0.5);
\draw (0.25,0)--(0.25,0.5);
\filldraw [fill=white, draw=black] (0,.5) rectangle (2,1) node[midway] { $\Tb$};
}
\ar{ru}
\ar[swap,dash]{ru}{
\tikzdiag[xscale=-0.75,yscale=1]{
\draw[stdhl](0.5,0)--(0.5,0.5);
\draw[stdhl](1.5,0)--(1.5,0.5);
\draw (1,0)..controls (1,0.25) and (2,0.25)..(2,0.5);
}
}
&
\end{tikzcd}
\]
We can braid this complex and resolve using Proposition \ref{prop:Rresolution}. The result is given by, 

\be 
\begin{tikzcd}[column sep=large,row sep=tiny]
& & q\hspace{2mm}\tikzdiagh[yscale=1.5]{0}{
\draw[stdhl](0.5,0)--(.5,0.5);
\draw[stdhl](1.5,0)--(1.5,0.5);
\draw (1.75,0)--(1.75,0.5);
\filldraw [fill=white, draw=black] (0,.5) rectangle (2,1) node[midway] { $\Tb$};
}
\ar{rd}{\mathbb{1}}
& \\
& q^3\hspace{2mm}\tikzdiagh[yscale=1.5]{0}{
\draw[stdhl](0.5,0)--(.5,0.5);
\draw[stdhl](1.5,0)--(1.5,0.5);
\draw (1.75,0)--(1.75,0.5);
\filldraw [fill=white, draw=black] (0,.5) rectangle (2,1) node[midway] { $\Tb$};
}
\ar{rd}
\ar[dash]{rd}{
\tikzdiag[xscale=0.75]{
\draw[stdhl](0.5,0)--(0.5,0.5);
\draw[stdhl](1.5,0)--(1.5,0.5);
\draw (1,0)..controls (1,0.25) and (2,0.25)..(2,0.5);
}
}
\ar{rddd}
\ar{ru}
\ar[dash]{ru}{-
\tikzdiag[xscale=0.75]{
\draw[stdhl](0.5,0)--(0.5,0.5);
\draw[stdhl](1.5,0)--(1.5,0.5);
\draw (2,0)..controls (2,0.25) and (2,0.25)..(2,0.5) node[pos=0.5, tikzdot]{};
}
}
& \bigoplus &
q\hspace{2mm}\tikzdiagh[yscale=1.5]{0}{
\draw[stdhl](0.5,0)--(.5,0.5);
\draw[stdhl](1.5,0)--(1.5,0.5);
\draw (1.75,0)--(1.75,0.5);
\filldraw [fill=white, draw=black] (0,.5) rectangle (2,1) node[midway] { $\Tb$};
}\\
q^4\hspace{2mm}\tikzdiagh[yscale=1.5]{0}{
\draw[stdhl](0.5,0)--(.5,0.5);
\draw[stdhl](1.5,0)--(1.5,0.5);
\draw (1,0)--(1,0.5);
\filldraw [fill=white, draw=black] (0,.5) rectangle (2,1) node[midway] {$\Tb$};
}
\ar{ru}
\ar[dash]{ru}{-
\tikzdiag[xscale=0.75,yscale=-1]{
\draw[stdhl](0.5,0)--(0.5,0.5);
\draw[stdhl](1.5,0)--(1.5,0.5);
\draw (1,0)..controls (1,0.25) and (2,0.25)..(2,0.5);
}
}
\ar{rd}
\ar[swap,dash]{rd}{
\tikzdiag[xscale=-0.75,yscale=-1]{
\draw[stdhl](0.5,0)--(0.5,0.5);
\draw[stdhl](1.5,0)--(1.5,0.5);
\draw (1,0)..controls (1,0.25) and (2,0.25)..(2,0.5);
}
}
&
\bigoplus &q^2\hspace{2mm}
\tikzdiagh[yscale=1.5]{0}{
\draw[stdhl](0.5,0)--(.5,0.5);
\draw[stdhl](1.5,0)--(1.5,0.5);
\draw (1,0)--(1,0.5);
\filldraw [fill=white, draw=black] (0,.5) rectangle (2,1) node[midway] { $\Tb$};
} 
\ar{rd}
\ar[swap,dash]{rd}{-
\tikzdiag[xscale=-0.75,yscale=-1]{
\draw[stdhl](0.5,0)--(0.5,0.5);
\draw[stdhl](1.5,0)--(1.5,0.5);
\draw (1,0)..controls (1,0.25) and (2,0.25)..(2,0.5);
}
}
\ar{ru}
\ar[dash]{ru}{
\tikzdiag[xscale=0.75,yscale=-1]{
\draw[stdhl](0.5,0)--(0.5,0.5);
\draw[stdhl](1.5,0)--(1.5,0.5);
\draw (1,0)..controls (1,0.25) and (2,0.25)..(2,0.5);
}
}
& 
\bigoplus \\
& 
q^3\hspace{2mm}\tikzdiagh[yscale=1.5]{0}{
\draw[stdhl](0.5,0)--(.5,0.5);
\draw[stdhl](1.5,0)--(1.5,0.5);
\draw (0.25,0)--(0.25,0.5);
\filldraw [fill=white, draw=black] (0,.5) rectangle (2,1) node[midway] { $\Tb$};
}
\ar{ru}
\ar[swap,dash]{ru}{
\tikzdiag[xscale=-0.75]{
\draw[stdhl](0.5,0)--(0.5,0.5);
\draw[stdhl](1.5,0)--(1.5,0.5);
\draw (1,0)..controls (1,0.25) and (2,0.25)..(2,0.5);
}
}
\ar{ruuu}
\ar{rd}
\ar[swap,dash]{rd}{
\tikzdiag[xscale=-0.75,yscale=1]{
\draw[stdhl](0.5,0)--(0.5,0.5);
\draw[stdhl](1.5,0)--(1.5,0.5);
\draw (2,0)..controls (2,0.25) and (2,0.25)..(2,0.5) node[pos=.5,tikzdot]{};
}
}
& \bigoplus & q\hspace{2mm}\tikzdiagh[yscale=1.5]{0}{
\draw[stdhl](0.5,0)--(.5,0.5);
\draw[stdhl](1.5,0)--(1.5,0.5);
\draw (0.25,0)--(0.25,0.5);
\filldraw [fill=white, draw=black] (0,.5) rectangle (2,1) node[midway] { $\Tb$};
} \\
&&
q\hspace{2mm}\tikzdiagh[yscale=1.5]{0}{
\draw[stdhl](0.5,0)--(.5,0.5);
\draw[stdhl](1.5,0)--(1.5,0.5);
\draw (0.25,0)--(0.25,0.5);
\filldraw [fill=white, draw=black] (0,.5) rectangle (2,1) node[midway] { $\Tb$};
}
\ar{ru}{\mathbb{1}}&
\end{tikzcd}
\label{eq:braided_cup_res}\ee

all multiplied by all overall $q^{-\frac{1}{2}}$ factor coming from the red-red crossing. Above, we don't write two of the degree 2 differentials for the sake of legibility, but the reader should note they are obvious maps with signs consistent with the rest of the diagram. \\
First, note that the 0th homology is clearly trivial. Now, for $H_1$, we can compute the kernel of $d_1$ to find, 
\[
Ker\hspace{1mm} d_1 = \tikzdiag[yscale=1.5,scale=0.75]{
\draw[stdhl](0.5,0)--(.5,0.5);
\draw[stdhl](1.5,0)--(1.5,0.5);
\draw (1.8,0)..controls (1.8,.25) and (1,.25) ..(1,.5);
\filldraw [fill=white, draw=black] (0,.5) rectangle (2,0.8) node[midway] {$-x$};
}
\oplus \tikzdiag[yscale=1.5,scale=0.75]{
\draw[stdhl](0.5,0)--(.5,0.5);
\draw[stdhl](1.5,0)--(1.5,0.5);
\draw (1,0)--(1,0.5);
\filldraw [fill=white, draw=black] (0,.5) rectangle (2,0.8) node[midway] {$x$};
}
\oplus \tikzdiag[yscale=1.5,scale=0.75,xscale=-1]{
\draw[stdhl](0.5,0)--(.5,0.5);
\draw[stdhl](1.5,0)--(1.5,0.5);
\draw (1.8,0)..controls (1.8,.25) and (1,.25) ..(1,.5);
\filldraw [fill=white, draw=black] (0,.5) rectangle (2,0.8) node[midway] {$x$};
}
\]

Where $x$ ranges over all elements of $\hspace{2mm}\tikzdiag[yscale=1.5,scale=0.5]{
\draw[stdhl](0.5,0)--(.5,0.5);
\draw[stdhl](1.5,0)--(1.5,0.5);
\draw (1,0)--(1,0.5);
\filldraw [fill=white, draw=black] (0,.5) rectangle (2,1) node[midway] {$\Tb$};
}$. On the other hand, the image of $d_2$ is easily seen to be, 

\[
Im\hspace{1mm} d_2 = \tikzdiag[yscale=1.5,scale=0.75]{
\draw[stdhl](0.5,0)--(.5,0.5);
\draw[stdhl](1.5,0)--(1.5,0.5);
\draw (1.8,0)--(1.8,0.5) node[pos=0.5,tikzdot]{};
\filldraw [fill=white, draw=black] (0,.5) rectangle (2,0.8) node[midway] {$-z$};
}\oplus
\tikzdiag[yscale=1.5,scale=0.75]{
\draw[stdhl](0.5,0)--(.5,0.5);
\draw[stdhl](1.5,0)--(1.5,0.5);
\draw (1,0)..controls (1,.25) and (1.8,.25) ..(1.8,0.5);
\filldraw [fill=white, draw=black] (0,.5) rectangle (2,0.8) node[midway] {$z$};
} 
\oplus 
\tikzdiag[yscale=1.5,scale=0.75]{
\draw[stdhl](0.5,0)--(.5,0.5);
\draw[stdhl](1.5,0)--(1.5,0.5);
\draw (.2,0)..controls (.2,.25) and (1.8,.25) ..(1.8,0.5);
\filldraw [fill=white, draw=black] (0,.5) rectangle (2,0.8) node[midway] {$z$};
} \hspace{2mm} + \hspace{2mm}
\tikzdiag[yscale=1.5,scale=0.75,xscale=-1]{
\draw[stdhl](0.5,0)--(.5,0.5);
\draw[stdhl](1.5,0)--(1.5,0.5);
\draw (.2,0)..controls (.2,.25) and (1.8,.25) ..(1.8,0.5);
\filldraw [fill=white, draw=black] (0,.5) rectangle (2,0.8) node[midway] {$-y$};
}\oplus 
\tikzdiag[yscale=1.5,scale=0.75,xscale=-1]{
\draw[stdhl](0.5,0)--(.5,0.5);
\draw[stdhl](1.5,0)--(1.5,0.5);
\draw (1,0)..controls (1,.25) and (1.8,.25) ..(1.8,0.5);
\filldraw [fill=white, draw=black] (0,.5) rectangle (2,0.8) node[midway] {$y$};
} \oplus 
\tikzdiag[yscale=1.5,scale=0.75,xscale=-1]{
\draw[stdhl](0.5,0)--(.5,0.5);
\draw[stdhl](1.5,0)--(1.5,0.5);
\draw (1.8,0)--(1.8,0.5) node[pos=0.5,tikzdot]{};
\filldraw [fill=white, draw=black] (0,.5) rectangle (2,0.8) node[midway] {$y$};
}
\]

Where $z$ ranges over all elements of $\hspace{2mm}\tikzdiag[yscale=1.5,scale=0.5]{
\draw[stdhl](0.5,0)--(.5,0.5);
\draw[stdhl](1.5,0)--(1.5,0.5);
\draw (1.8,0)--(1.8,0.5);
\filldraw [fill=white, draw=black] (0,.5) rectangle (2,1) node[midway] {$\Tb$};
}\hspace{1mm}$ and $y$ ranges over $\hspace{2mm}\tikzdiag[xscale=-1,yscale=1.5,scale=0.5]{
\draw[stdhl](0.5,0)--(.5,0.5);
\draw[stdhl](1.5,0)--(1.5,0.5);
\draw (1.8,0)--(1.8,0.5);
\filldraw [fill=white, draw=black] (0,.5) rectangle (2,1) node[midway] {$\Tb$};
}$. We can therefore see that if we view $x\in H_1$ as an element of $Ker\hspace{1mm}d_1$, $x$ will be trivial in $H_1$ if it is of the form, 
\[ x=\tikzdiag[yscale=1.5,scale=0.75]{
\draw[stdhl](0.5,0)--(.5,0.5);
\draw[stdhl](1.5,0)--(1.5,0.5);
\draw (1,0)..controls (1,.25) and (1.8,.25) ..(1.8,0.5);
\filldraw [fill=white, draw=black] (0,.5) rectangle (2,0.8) node[midway] {$x'$};
} \hspace{5mm} or \hspace{5mm} x=\tikzdiag[xscale=-1, yscale=1.5,scale=0.75]{
\draw[stdhl](0.5,0)--(.5,0.5);
\draw[stdhl](1.5,0)--(1.5,0.5);
\draw (1,0)..controls (1,.25) and (1.8,.25) ..(1.8,0.5);
\filldraw [fill=white, draw=black] (0,.5) rectangle (2,0.8) node[midway] {$x'$};
}\]

As a result, we conclude, 
\[\frac{Ker\hspace{1mm}d_1}{Im \hspace{1mm}d_2} \cong \frac{\tikzdiag[yscale=1.5,scale=0.5]{
\draw[stdhl](0.5,0)--(.5,0.5);
\draw[stdhl](1.5,0)--(1.5,0.5);
\draw (1,0)--(1,0.5);
\filldraw [fill=white, draw=black] (0,.5) rectangle (2,1) node[midway] {$\Tb$};
}}{Im\hspace{1mm} \bigg(\tikzdiag[xscale=0.75,scale=0.8]{
\draw[stdhl](0.5,0)--(0.5,0.5);
\draw[stdhl](1.5,0)--(1.5,0.5);
\draw (1,0)..controls (1,0.25) and (2,0.25)..(2,0.5);
} + \tikzdiag[xscale=-0.75,scale=0.8]{
\draw[stdhl](0.5,0)--(0.5,0.5);
\draw[stdhl](1.5,0)--(1.5,0.5);
\draw (1,0)..controls (1,0.25) and (2,0.25)..(2,0.5);
}\bigg)}\]

This suggests the complex \eqref{eq:braided_cup_res} is quasi-isomorphic to, 
\[
\begin{tikzcd}[column sep=large,row sep=tiny]
& q^3\hspace{2mm}\tikzdiagh[yscale=1.5]{0}{
\draw[stdhl](0.5,0)--(.5,0.5);
\draw[stdhl](1.5,0)--(1.5,0.5);
\draw (1.75,0)--(1.75,0.5);
\filldraw [fill=white, draw=black] (0,.5) rectangle (2,1) node[midway] { $\Tb$};
}
\ar{rd}
\ar[dash]{rd}{
\tikzdiag[xscale=0.75]{
\draw[stdhl](0.5,0)--(0.5,0.5);
\draw[stdhl](1.5,0)--(1.5,0.5);
\draw (1,0)..controls (1,0.25) and (2,0.25)..(2,0.5);
}
}
& \\
q^4\hspace{2mm}\tikzdiagh[yscale=1.5]{0}{
\draw[stdhl](0.5,0)--(.5,0.5);
\draw[stdhl](1.5,0)--(1.5,0.5);
\draw (1,0)--(1,0.5);
\filldraw [fill=white, draw=black] (0,.5) rectangle (2,1) node[midway] {$\Tb$};
}
\ar{ru}
\ar[dash]{ru}{-
\tikzdiag[xscale=0.75,yscale=-1]{
\draw[stdhl](0.5,0)--(0.5,0.5);
\draw[stdhl](1.5,0)--(1.5,0.5);
\draw (1,0)..controls (1,0.25) and (2,0.25)..(2,0.5);
}
}
\ar{rd}
\ar[swap,dash]{rd}{
\tikzdiag[xscale=-0.75,yscale=-1]{
\draw[stdhl](0.5,0)--(0.5,0.5);
\draw[stdhl](1.5,0)--(1.5,0.5);
\draw (1,0)..controls (1,0.25) and (2,0.25)..(2,0.5);
}
}
&
\bigoplus &
q^2\hspace{1mm}\tikzdiagh[yscale=1.5]{0}{
\draw[stdhl](0.5,0)--(.5,0.5);
\draw[stdhl](1.5,0)--(1.5,0.5);
\draw (1,0)--(1,0.5);
\filldraw [fill=white, draw=black] (0,.5) rectangle (2,1) node[midway] { $\Tb$};
} \ar{r} & 0 \\
& 
q^3\hspace{2mm}\tikzdiagh[yscale=1.5]{0}{
\draw[stdhl](0.5,0)--(.5,0.5);
\draw[stdhl](1.5,0)--(1.5,0.5);
\draw (0.25,0)--(0.25,0.5);
\filldraw [fill=white, draw=black] (0,.5) rectangle (2,1) node[midway] { $\Tb$};
}
\ar{ru}
\ar[swap,dash]{ru}{
\tikzdiag[xscale=-0.75,yscale=1]{
\draw[stdhl](0.5,0)--(0.5,0.5);
\draw[stdhl](1.5,0)--(1.5,0.5);
\draw (1,0)..controls (1,0.25) and (2,0.25)..(2,0.5);
}
}
&
\end{tikzcd}
\]
Again, multiplied by an overall factor of $q^{-\frac{1}{2}}$. And indeed, after checking $Ker\hspace{1mm} d_2$ (which is easy and we leave it as an exercise), we find that the quasi-isomorphism holds. Notably, the complex above is precisely the cofibrant replacement for $q^{\frac{3}{2}}\bar{\Cbb}_i[1]$.

\end{proof}

For the ribbon structure to be consistent, we must be able to reverse the orientation of the cups and caps through braiding in the following way, 
\[
\tikzdiag[scale=1.5]{
\draw[stdhl] (1,0.5)..controls (1,0.1) and (2,0.1) .. (2,1);
\draw[stdhl](3,1) ..controls (3,-.5) and (2,-.5)..(2,-1);
\draw[stdhl,mid<-] (3,-1) ..controls (3,-1.5) and (2,-1.5) ..(2,-1);
\draw[stdhl](2,0)..controls (2,-.5) and (3,-.5) ..(3,-1);
\draw[stdhl](2,0) ..controls (2,0.9) and (1,0.9)..(1,0.5) ;
}
\hspace{4mm}= \hspace{4mm}\tikzdiag[scale=2
]{
\draw[stdhl,mid->] (3,-1) ..controls (3,-1.5) and (2,-1.5) ..(2,-1);
}
\]

This translates to, 
\begin{prop}
    $\Kbb_i \circ\Bb_{i+2}\circ \Cbb_i\circ \Bb_{i+1}\circ \Cbb_i \cong \bar{\Cbb_i}$
\end{prop}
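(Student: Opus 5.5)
We want $\Kbb_i \circ\Bb_{i+2}\circ \Cbb_i\circ \Bb_{i+1}\circ \Cbb_i \cong \bar{\Cbb_i}$, a zigzag-type (snake) identity twisted by braiding. The plan is to first use Proposition \ref{prop:cup_r1} to replace the inner composite $\Bb_{i+1}\circ\Cbb_i$ by $q^{\frac{3}{2}}\bar{\Cbb}_i[1]$. The left side then becomes $q^{\frac32}\,\Kbb_i\circ\Bb_{i+2}\circ\Cbb_i\circ\bar{\Cbb}_i[1]$ (up to relabelling of indices, since the second cup is created on the strand produced by the first). In other words, we are reduced to a configuration where an oppositely oriented cup sits next to a second cup, one leg of which braids with a leg of the first cup before being capped off.

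Next we would compute this reduced composite on a projective $\Pb^{\underline\lambda}_\rho$, exactly as in the proof of Proposition \ref{prop:cup_r1}. We resolve $\Cbb_i\circ\bar\Cbb_i(\Pb_\rho)$ by applying \eqref{eq:cup_proj_res} twice. This yields a three-term, cube-shaped complex in each cup factor, and since the basis of Proposition \ref{prop:cup_basis} shows the cup bimodules are free over $\Tb$, the derived and underived tensor products agree. We then apply $\Bb_{i+2}$ termwise, resolving each summand by Proposition \ref{prop:Rresolution}, and finally $\Kbb_i$. For the last step we use that the cap bimodule restricted to each idempotent is the free module described in \eqref{eq:cap_proj_res}, so applying $\Kbb_i$ replaces each projective by a direct sum $q^{-1}\bigoplus_{[b]_q}$ of projectives on one fewer strand, shifted by $[-1]$.

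The grading shifts should then combine as follows. The $q^{\frac32}$ and $[1]$ from Proposition \ref{prop:cup_r1}, the red–red crossing degree $q^{-\frac12}$, and the $q^{-1}h^{-1}$ of the cap should cancel up to the normalization of $\bar\Cbb_i$. Tracking these exponents gives a first consistency check. It is also worth checking at the level of the Grothendieck group that the resulting class equals $[\bar\Cb_i\otimes\Pb_\rho]$, which reduces to the decategorified snake identity for $V(1)$.

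The main obstacle is the homology computation of the large total complex. The strategy is Gaussian elimination. Every component of the differential that is an identity map (as in the two $\mathbb 1$ arrows appearing in \eqref{eq:braided_cup_res}), or an isomorphism coming from the cap–cup relations \eqref{eq:cup_slide}, \eqref{eq:cup_slide2} and the red bigon relation, gets cancelled. What remains should be precisely the two-step cofibrant replacement of $\bar\Cbb_i(\Pb_\rho)$, i.e. the mirrored version of \eqref{eq:cup_proj_res}. If direct elimination becomes unwieldy, the fallback is the approach used for Proposition \ref{prop:cup_r1}: compute $\ker d$ and $\operatorname{im} d$ degree by degree using the basis theorems (Theorem \ref{thm:Tbasis} and Proposition \ref{prop:cup_basis}). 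One shows that all homology outside a single degree vanishes and that the surviving homology is the quotient presenting $\bar\Cb_i1_\rho$. One then promotes this to a quasi-isomorphism by constructing an explicit chain map from the resolution of $\bar\Cbb_i(\Pb_\rho)$, given by the obvious diagram inclusion, which is natural in $\rho$ and hence yields a natural isomorphism of functors.
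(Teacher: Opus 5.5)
Your argument is essentially the paper's: you first use Proposition \ref{prop:cup_r1} to reduce to showing $\Kbb_i\circ\Bb_{i+2}\circ\Cbb_i\circ\bar{\Cbb}_i\cong q^{-\frac{3}{2}}\bar{\Cbb}_i[-1]$, and then verify this by braiding and resolving with Proposition \ref{prop:Rresolution}. The only difference is bookkeeping. Instead of Gaussian-eliminating the full total complex, the paper treats $\Kbb_i\circ\Bb_{i+2}\circ\Cbb_i$ as a Reidemeister I kink that sends each projective in the resolution of $\bar{\Cb}_i$ to itself shifted by $q^{-\frac{3}{2}}h^{-1}$, which is exactly the crossing degree times the cap degree you identified, so the differentials simply carry over.
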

\begin{proof}
By Proposition \ref{prop:cup_r1}, it is sufficient to show, $\Kbb_i \circ\Bb_{i+2}\circ \Cbb_i\circ \bar{\Cbb}_i \cong q^{-\frac{3}{2}}\bar{\Cbb}_i[-1]$. For this case, this is very simple as $\Kbb_i \circ\Bb_{i+2}\circ \Cbb_i$ applied to each projective in the resolution of $\bar{\Cb}_i$ yields itself shifted by $q^{-\frac{3}{2}} h^{-1}$ (one can check this by repeating the same techniques of braiding and resolving using Prop \ref{prop:Rresolution} we have been using in this section) and the differentials just carry over to the shifted complex. 
\end{proof}

We also check the S-move:
\[
\tikzdiag[scale=2]{
\draw[stdhl,mid->] (0,0)..controls (0,2) and (1,2) ..(1,1) ..controls (1,0) and (2,0) ..(2,2);
} \hspace{4mm}=\hspace{4mm} \tikzdiag[scale=2]{
\draw[stdhl,mid->] (0,0) -- (0,2);
}
\]

\begin{lem}
    $\bar{\Kbb}_{i-1} \circ \Cbb_i \cong \mathbb{1}$
\end{lem}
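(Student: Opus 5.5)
The plan is to prove the zigzag identity at the level of bimodules. The functor $\bar{\Kbb}_{i-1} \circ \Cbb_i$ is the derived tensor product $\bar{\Kb}_{i-1}\otimes^L_\Tb \Cb_i \otimes^L_\Tb -$, so it suffices to show two things. First, the derived tensor product $\bar{\Kb}_{i-1}\otimes^L_\Tb \Cb_i$ is concentrated in degree zero. Second, the underived tensor product is isomorphic to the diagonal bimodule $\Tb^{\underline{\lambda}}$, via the map that straightens the S-shaped red strand.

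As in the proof of Proposition \ref{prop:Inverse_Braiding}, I will check this on projectives $\Pb^{\underline{\lambda}}_\rho$ and then argue naturality. Applying $\Cbb_i$ to $\Pb^{\underline{\lambda}}_\rho$ gives the resolution \eqref{eq:cup_proj_res}. This is a complex whose terms are projectives with the red pair $(1^*,1)$ inserted after $\lambda_i$, and whose black strand near the cup sits in three positions: left of $1^*$, between the two red strands, or right of $1$. Next I apply $\bar{\Kbb}_{i-1}$ termwise. The cap joins the strand to the left of $1^*$ (the new red strand created at position $i-1$) with $1^*$, leaving the red strand $1$ in its place.

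By the cap isomorphism \eqref{eq:cap_proj_res}, with orientations reversed for $\bar{\Kb}$, each term $\bar{\Kb}_{i-1}\Pb_{\kappa}$ is a direct sum $q^{-1}\bigoplus_{[b]_q}\Pb[-1]$ of projectives. The number of summands is governed by the number $b$ of black strands trapped between the capped red strands. Because the cap sits just to the left, only the black strands between $\lambda_i$'s block and $1^*$ are relevant here. The terms with the black strand to the right of $1^*$ contribute $[0]_q=0$ or $[1]_q$ summands. The term with the black strand to the left of $1^*$ picks up an extra summand.

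The main step is the explicit cancellation. I expect the differentials of \eqref{eq:cup_proj_res} to become identity maps between matching summands after applying the cap. This uses the cup sliding relations \eqref{eq:cup_slide}, \eqref{eq:cup_slide2}, and the cap analogues of the relation killing a black strand that crosses a cup. Gaussian elimination then removes all but a single copy of $\Pb^{\underline{\lambda}}_\rho$. The grading shifts must cancel: the cup has degree $q$, the cap has degree $q^{-1}h^{-1}$, and the homological shift $[-1]$ must be absorbed by the resolution degree. Checking this bookkeeping fixes the normalisation.

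The main obstacle is proving that the surviving map is exactly the isotopy-straightening map and that it is an isomorphism rather than merely a quasi-isomorphism of graded dimensions. For this I would compare bases. Proposition \ref{prop:cup_basis} and its cap mirror give explicit bases for $1_\kappa\bar{\Kb}_{i-1}\otimes\Cb_i 1_\rho$. I would show that, after using the black/red relations for the strand $1$ (the red double-crossing relation and the dot relation with $\mu=1$), every basis element reduces to a diagram of $\Tb$ with a straight red strand. I would then match graded dimensions against Theorem \ref{thm:Tbasis}, exactly as in the dimension argument of Proposition \ref{prop:Inverse_Braiding}. Naturality follows because all maps are given by composition of diagrams, and so commute with the right $\Tb$-action.
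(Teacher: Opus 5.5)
Your plan is essentially the paper's argument: apply $\bar{\Kbb}_{i-1}$ termwise to the cup resolution \eqref{eq:cup_proj_res} and use the cap isomorphism to count surviving summands by the number of black strands trapped between the capped strands $\lambda_i$ and $1^*$. It is simpler than you anticipate, though. The three terms whose black strand lies between $1^*$ and $1$ or to the right of $1$ trap no black strand, so each contributes $[0]_q=0$ summands and vanishes outright. Only the bottom degree-one term survives, and its shift $qh$ is cancelled exactly by the cap's $q^{-1}h^{-1}$. Consequently no Gaussian elimination is needed, and the basis comparison is likewise unnecessary.
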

\begin{proof}
Take the resolution of $\Cbb_i$ on any projective \eqref{eq:cup_proj_res}. Notice only the bottom projective in degree 1 is not killed by the cap being applied to its left. Its grading is precisely $q h$, which is exactly canceled by the degree of the cap, leaving us with the original projective. 
\end{proof}

The last move we must check is the ``pitchfork'' move from \cite{Web}, 
\[
\tikzdiag[scale=2]{
\draw[pstdhl,lower->] (0,0) node[below]{\small $i$} ..controls (0,0.5) and (0.5,0.5) .. (0.5,1);
\draw[stdhl] (1,1) arc (0:-180:0.5);
} \hspace{2mm} = \hspace{2mm}\tikzdiag[scale=2,xscale=-1]{
\draw[pstdhl,lower->] (0,0) node[below]{\small $i$} ..controls (0,0.5) and (0.5,0.5) .. (0.5,1);
\draw[stdhl] (1,1) arc (0:-180:0.5);
}
\]
\begin{lem}
With the labeling of strands and the braiding permutations being understood as in the diagram above, the pitchfork move is obeyed, 
$$\Bb_\sigma \circ \Cbb_i \cong \Bb^{-1}_{\sigma'} \circ \Cbb_{i-1}$$
\end{lem}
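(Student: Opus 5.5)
The pitchfork move involves a cup (strands $1^*,1$) and a colored strand $\mu_i$ (red or blue) passing over it. On the left, the cup sits to one side of the colored strand and the strand is braided across both cup ends by the two-crossing braid $\sigma$. On the right, the cup is created on the other side and the strand is pulled across with the inverse braiding $\sigma'$. Since $\Bb^{-1}_{\sigma'}$ is inverse to $\Bb_{\sigma'}$ by Proposition \ref{prop:Inverse_Braiding}, I would prove the equivalent statement
$$\Bb_{\sigma'}\circ\Bb_\sigma\circ\Cbb_i\cong\Cbb_{i-1}.$$
Equivalently, I would compute $\Bb_\sigma\circ\Cbb_i$ and $\Bb_{\sigma'}\circ\Cbb_{i-1}$ directly and match them up to the same shift. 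It suffices to check this on an arbitrary projective $\Pb^{\underline{\mu}}_\rho$, naturally in $\rho$. Exactly as in the proof of Proposition \ref{prop:cup_r1}, strands far from the cup and the colored strand play no role, because all functors commute with $\mathcal{F},\mathcal{E}$ and with distant strands. So I only draw the colored strand $\mu_i$, the two cup strands, and the black strands near them.

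First I take the cofibrant replacement \eqref{eq:cup_proj_res} of $\Cbb_i(\Pb_\rho)$. This is a three-term complex: the summand $q^2$ in degree $0$, the two summands $q$ in degree $1$, and then the projective itself. I apply $\Bb_\sigma=\Bb_{s}\circ\Bb_{s'}$, the two single crossings, which by Lemma \ref{lem:RRit.ext.} is underived-equal to $\Rb_\sigma\otimes-$. I resolve each term with Proposition \ref{prop:Rresolution}, which produces a total complex just as in \eqref{eq:braided_cup_res}. The key simplification is that the red strands labelled $1$ carry at most one black strand between them. The resolutions are therefore short: each has at most the $b_i=1$ shape of the first example after Proposition \ref{prop:Rresolution}.

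Next I Gaussian-eliminate every identity component (components $\mathbb{1}$ appear exactly as in \eqref{eq:braided_cup_res}). I also use the cup relations \eqref{eq:cup_slide}, \eqref{eq:cup_slide2} and \eqref{eq:RmoveR3pp} to slide black strands through the colored crossings and past the cup. The expected survivor is a complex of the same shape as \eqref{eq:cup_proj_res}, but with the cup now on the other side of $\mu_i$. The $q$-shifts should be multiplied by the crossing degrees $q^{\pm\mu_i/2}$ from the two red/colored crossings, and these cancel between $\sigma$ and $\sigma'$. I would check that the result is precisely the cofibrant replacement of $\Cbb_{i-1}(\Pb_{\rho'})$, with $\rho'$ the black strand configuration obtained by sliding. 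Alternatively, I would run the same computation for $\Bb_{\sigma'}\Cbb_{i-1}$ and compare, using the basis of Proposition \ref{prop:cup_basis} to produce an explicit bimodule map and checking that it is bijective, as in Corollary \ref{cor:Rtensor}.

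The main obstacle is the case where $\mu_i$ is blue (generic). There the Verma relation (black strand double-crossing a blue strand equals zero) replaces the red relation, and nails may appear when $\mu_i$ is leftmost. The homology computation of $H_1$, which for red strands used dots $x^{\mu_i}$, must then be redone with the resulting "$-$" terms missing. I would first try to avoid explicit homology by a dimension count instead. I would show the natural map $\Rb_\sigma\otimes\Cb_i\to\Rb_{\sigma'}^{-1}\otimes\Cb_{i-1}$ is surjective, using the generating set of Lemma \ref{lem:gen.set}. I would then compare $gdim_{x,q}$ via the standard-module filtrations (Corollary \ref{cor:R.it.ext.} and Corollary \ref{cor:gdimPandS}), and conclude with Lemma \ref{lem:it.exts.tensor} that the derived and underived tensor products agree.
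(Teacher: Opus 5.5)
Your main line is the paper's proof. You reduce via Proposition~\ref{prop:Inverse_Braiding} to $\Bb_{\sigma'}\circ\Bb_\sigma\circ\Cbb_i\cong\Cbb_{i-1}$, resolve $\Cbb_i(\Pb_\rho)$ by \eqref{eq:cup_proj_res}, and resolve the braided terms with Proposition~\ref{prop:Rresolution}. You then cancel the identity components and recognize the resolution of $\Cbb_{i-1}$. The paper treats only blue $\mu_i$, since the red case is Webster's. It organizes the computation as $Cone\big(Cone(\bigcirc_3\to\bigcirc_4)\to Cone(\bigcirc_2\to\bigcirc_1)\big)$, braids each two-term cone separately, and then tracks the connecting maps.

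Some bookkeeping corrections:
\begin{itemize}
\item $\sigma$ is the \emph{single} crossing of $\mu_i$ with the $1^*$ leg. Your two-crossing ``$\Bb_\sigma=\Bb_s\circ\Bb_{s'}$'' is really $\Bb_{\sigma'}\circ\Bb_\sigma$. With that relabeling, your computation is the right one.
\item Both colored crossings in $\Bb_{\sigma'}\circ\Bb_\sigma$ have degree $q^{-\mu_i/2}=x^{-1/2}$, so they do not cancel each other. The net $x^{-1}$ is absorbed by the factor $x=q^{\mu_i}$ from the black/blue crossing in the surviving resolution terms.
\item $\mu_i$ is never the leftmost strand here, since a strand must sit to its left for $\Cbb_{i-1}$ to make sense. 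So nails play no role.
\end{itemize}

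The real problem is the shortcut you say you would try first in the blue case, which is exactly the case that needs proof. It fails for three reasons.
\begin{itemize}
\item $\Bb^{-1}_{\sigma'}=RHom(\Rb_{\sigma'},-)$ is not tensoring with a bimodule ``$\Rb^{-1}_{\sigma'}$''. The natural map you want to test for surjectivity therefore does not exist as written.
\item Lemma~\ref{lem:it.exts.tensor} needs $\Cb_i1_\rho$ to have an iterated extension by standards as a left module, and it has none. In \eqref{eq:cup_proj_res}, the projective whose new black strand lies to the right of both cup legs appears only in homological degree one. So the class of $\Cb_i1_\rho$ has coefficient $-q$ on the corresponding standard; this is the minus sign in the coevaluation. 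A module with such an iterated extension would have a positive class.
\item Proposition~\ref{prop:cup_r1} shows that $\Rb_{i+1}\otimes\Cb_i1_\rho=H_0$ can vanish while $\Rb_{i+1}\otimes^L\Cb_i1_\rho\neq0$. Dimension counts on underived tensor products therefore cannot control these functors.
\end{itemize}

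Drop the shortcut and do the elimination directly; no $H_1$ analysis is needed.
\begin{itemize}
\item In $Cone(\bigcirc_3\to\bigcirc_4)$, the first crossing is a pure shift on $\bigcirc_3$. The term $\bigcirc_4$, with one black strand between $\mu_i$ and $1^*$, gets the $b_i=1$ resolution of Proposition~\ref{prop:Rresolution}. The image of $\bigcirc_3$ maps by the identity onto the summand where the black strand passes to the right of both strands.
\item After cancelling, the surviving terms are $x^{1/2}q^2\,\Pb$ and $x^{1/2}q\,\Pb$. Neither has a black strand between $\mu_i$ and the leg $1$, so the second crossing is a pure $x^{-1/2}$ shift.
\item Symmetrically, in $Cone(\bigcirc_2\to\bigcirc_1)$ the first crossing is a pure shift. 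The second crossing resolves $\bigcirc_1$ with a summand hit by the identity from $\bigcirc_2$.
\end{itemize}
The survivors are the two halves of the resolution of $\Cbb_{i-1}$, and it remains to check the connecting maps.
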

\begin{proof}
    We focus on the case that the purple strand is colored by a Verma module, since this is the only new case not considered in \cite{Web}. By Proposition \ref{prop:Inverse_Braiding}, the statement is equivalent to $\Bb_{\sigma'}\circ \Bb_\sigma \circ \Cbb_i \cong \Cbb_{i-1}$. Without loss of generality, assume $b_{i-1}=0$ and consider the resolution of $\Cb_i$, 
    \[
    \begin{tikzcd}[row sep=tiny,column sep=small]
&q \hspace{2mm}\tikzdiagh[yscale=1.5]{0}{
\draw[pstdhl](0,0) node[below]{$\lambda_i$} --(0,0.5);
\draw[stdhl](0.5,0)--(.5,0.5);
\draw[stdhl](1.5,0)--(1.5,0.5);
\draw (1.75,0)--(1.75,0.5);
\filldraw [fill=white, draw=black] (-0.25,.5) rectangle (2.5,1) node[midway] { $\Tb$};
\node at (-0.3,.25) {$...$};
\node at (2.2,.25) {$...$};

}
\ar{rd}
\ar[dash]{rd}{
\tikzdiag[scale=1]{
\draw[pstdhl](0,0)--(0,0.5);
\draw[stdhl](0.5,0)--(0.5,0.5);
\draw[stdhl](1,0)--(1,0.5);
\draw (1.25,0.5)..controls (1.25,0.25) and (0.75,0.25)..(0.75,0);
}
}
& &\\
     q^2 \hspace{2mm}\tikzdiagh[yscale=1.5]{0}{
\draw[pstdhl](0,0) node[below]{$\lambda_i$} --(0,0.5);
\draw[stdhl](0.5,0)--(.5,0.5);
\draw[stdhl](1.5,0)--(1.5,0.5);
\draw (1,0)--(1,0.5);
\filldraw [fill=white, draw=black] (-0.25,.5) rectangle (2.5,1) node[midway] { $\Tb$};
\node at (-0.3,.25) {$...$};
\node at (2.2,.25) {$...$};
}
\ar{ru}
\ar[dash]{ru}{-\hspace{1mm}
\tikzdiag[scale=1]{
\draw[pstdhl](0,0)--(0,0.5);
\draw[stdhl](0.5,0)--(0.5,0.5);
\draw[stdhl](1,0)--(1,0.5);
\draw (1.25,0)..controls (1.25,0.25) and (0.75,0.25)..(0.75,0.5);
}
}
\ar{rd}
\ar[swap,dash]{rd}{
\tikzdiag[scale=1]{
\draw[pstdhl](0,0)--(0,0.5);
\draw[stdhl](0.5,0)--(0.5,0.5);
\draw[stdhl](1,0)--(1,0.5);
\draw (.25,0)..controls (.25,0.25) and (0.75,0.25)..(0.75,0.5);
}
}
& \bigoplus &
\tikzdiagh[yscale=1.5]{0}{
\draw[pstdhl](0,0) node[below]{$\lambda_i$} --(0,0.5);
\draw[stdhl](0.5,0)--(.5,0.5);
\draw[stdhl](1.5,0)--(1.5,0.5);
\draw (1,0)--(1,0.5);
\filldraw [fill=white, draw=black] (-0.25,.5) rectangle (2.5,1) node[midway] { $\Tb$};
\node at (-0.3,.25) {$...$};
\node at (2.2,.25) {$...$};
}
\\
&  q \hspace{2mm}\tikzdiagh[yscale=1.5]{0}{
\draw[pstdhl](0,0) node[below]{$\lambda_i$} --(0,0.5);
\draw[stdhl](0.5,0)--(.5,0.5);
\draw[stdhl](1.5,0)--(1.5,0.5);
\draw (.25,0)--(.25,0.5);
\filldraw [fill=white, draw=black] (-0.25,.5) rectangle (2.5,1) node[midway] { $\Tb$};
\node at (-0.3,.25) {$...$};
\node at (2.2,.25) {$...$};
} 
\ar{ru}
\ar[swap,dash]{ru}{
\tikzdiag[scale=1]{
\draw[pstdhl](0,0)--(0,0.5);
\draw[stdhl](0.5,0)--(0.5,0.5);
\draw[stdhl](1,0)--(1,0.5);
\draw (.75,0)..controls (.75,0.25) and (0.25,0.25)..(0.25,0.5);
}
}
&&
\end{tikzcd}
    \]
Going counter clockwise starting at the projective on the far right, label each projective with $\bigcirc_1, \bigcirc_2, \bigcirc_3, \bigcirc_4$. We can view the resolution as a nested cone $Cone\bigg(Cone(\bigcirc_3\rightarrow \bigcirc_4)\longrightarrow Cone(\bigcirc_2\rightarrow \bigcirc_1)\bigg)$ with the maps as shown above. We apply the first braiding functor to $Cone(\bigcirc_3\rightarrow \bigcirc_4)$ and resolve. The induced map on the projective resolutions is also displayed, 
\[
\begin{tikzcd}[row sep=small]
x^{-\frac{1}{2}}q^2\hspace{2mm}\tikzdiagh{0}{
    \draw[stdhl](0,0) --(0,0.5);
    \draw(1.25,0)--(1.25,0.5);
    \draw[pstdhl] (1,0)--(1,0.5) ;
    \filldraw [fill=white, draw=black] (-0.5,.5) rectangle (1.5,1) node[midway] { $\Tb_i$};
    }
    \ar{rr}
\ar[dash]{rr}{
\tikzdiagh[scale=0.75]{0}{
\draw[pstdhl](0,0)..controls (0,.5) and (1,.5) .. (1,1);
\draw[stdhl](1,0)..controls (1,.5) and (0,.5) .. (0,1);
\draw (1.5,0)--(1.5,1);
}
}
\ar{rdd}{\mathbb{1}}
\ar[pos=0.25]{rdddd}{0}
    &&
    q^2\hspace{1mm}\tikzdiagh{0}{
    \draw[pstdhl](0,0) node[below]{\small $\lambda_i$}--(0,0.5) node[color=black, pos=.5,left=2mm]{$\cdots$};
    \draw(1.25,0)--(1.25,0.5);
    \draw[stdhl] (1,0)--(1,0.5) node[color=black, pos=.5,right=2mm]{$\cdots$};
    \filldraw [fill=white, draw=black] (-0.5,.5) rectangle (1.5,1) node[midway] { $\Rb_i$};
    }
    \ar{ddd}
    \ar[dash]{ddd}{
\tikzdiag[scale=0.75]{
\draw[pstdhl](0,0)--(0,0.5);
\draw[stdhl](1,0)--(1,0.5);
\draw (.5,0)..controls (.5, 0.25) and (1.33,0.25) ..(1.33,0.5);
} }
    \\ \\
& x^{-\frac{1}{2}}q^2 \hspace{2mm} \tikzdiagh{0}{
    \draw[stdhl](0,0)--(0,0.5);
    \draw(1.25,0)--(1.25,0.5);
    \draw[pstdhl] (1,0)--(1,0.5);
    \filldraw [fill=white, draw=black] (-0.5,.5) rectangle (1.5,1) node[midway] { $\Tb$};
    } \ar{rd}
\ar[dash]{rd}{
\tikzdiagh[scale=0.75]{0}{
\draw[pstdhl](0,0)..controls (0,.5) and (1,.5) .. (1,1);
\draw[stdhl](1,0)..controls (1,.5) and (0,.5) .. (0,1);
\draw (.5,0)..controls (.5, 0.5) and (1.33,0.5) ..(1.33,1);
}
}& \\
    x^\frac{1}{2} q^2\hspace{2mm} \tikzdiagh{0}{
    \draw[stdhl](0,0)--(0,0.5);
    \draw(.5,0)--(.5,0.5);
    \draw[pstdhl] (1,0)--(1,0.5);
    \filldraw [fill=white, draw=black] (-0.5,.5) rectangle (1.5,1) node[midway] { $\Tb$};
    }
    \ar{rd}
    \ar[dash,swap]{rd}{-
\tikzdiag[scale=0.75,yscale=-1,xscale=-1]{
\draw[pstdhl](0,0)--(0,0.5);
\draw[stdhl](1,0)--(1,0.5);
\draw (.5,0)..controls (.5, 0.25) and (1.33,0.25) ..(1.33,0.5);
} }
\ar{ru}
    \ar[dash]{ru}{
\tikzdiag[scale=0.75,yscale=-1]{
\draw[stdhl](0,0)--(0,0.5);
\draw[pstdhl](1,0)--(1,0.5);
\draw (.5,0)..controls (.5, 0.25) and (1.33,0.25) ..(1.33,0.5);
} }
    &
    \bigoplus
    &
    q\hspace{1mm}\tikzdiagh{0}{
    \draw[pstdhl](0,0) node[below]{\small $\lambda_i$}--(0,0.5) node[color=black, pos=.5,left=2mm]{$\cdots$};
    \draw(.5,0)--(.5,0.5);
    \draw[stdhl] (1,0)--(1,0.5) node[color=black, pos=.5,right=2mm]{$\cdots$};
    \filldraw [fill=white, draw=black] (-0.5,.5) rectangle (1.5,1) node[midway] { $\Rb_i$};
    } \\
    & x^\frac{1}{2} q \hspace{2mm} \tikzdiagh{0}{
    \draw[stdhl](0,0)--(0,0.5);
    \draw(-.25,0)--(-.25,0.5);
    \draw[pstdhl] (1,0)--(1,0.5);
    \filldraw [fill=white, draw=black] (-0.5,.5) rectangle (1.5,1) node[midway] { $\Tb$};
    } \ar{ru}
    \ar[dash,swap]{ru}{
\tikzdiagh[scale=0.75]{0}{
\draw[pstdhl](0,0)..controls (0,.5) and (1,.5) .. (1,1);
\draw[stdhl](1,0)..controls (1,.5) and (0,.5) .. (0,1);
\draw (.5,0)..controls (.5, 0.5) and (-.33,0.5) ..(-.33,1);
}
}
\end{tikzcd}\]
Using this, it is not difficult to show that the resulting complex, i.e. $\Rb_{i+1} \otimes^L \Rb_i \otimes^L Cone(\bigcirc_3\rightarrow \bigcirc_4)$, is quasi-isomorphic to, 
\[ \begin{tikzcd}
 q^2\hspace{2mm} \tikzdiagh{0}{
    \draw[stdhl](0,0)--(0,0.5);
    \draw(.33,0)--(.33,0.5);
    \draw[stdhl] (0.66,0)--(0.66,.5);
    \draw[pstdhl] (1,0)--(1,0.5);
    \filldraw [fill=white, draw=black] (-0.5,.5) rectangle (1.5,1) node[midway] { $\Tb$};
    }
    \ar{rr}
    \ar[dash,swap]{rr}{
\tikzdiag[scale=1]{
\draw[pstdhl](1,0)--(1,0.5);
\draw[stdhl](0,0)--(0,0.5);
\draw[stdhl](.5,0)--(.5,0.5);
\draw (-.25,0)..controls (-.25, 0.25) and (.25,0.25) ..(.25,0.5);
} } && q \hspace{2mm} \tikzdiagh{0}{
    \draw[stdhl](0,0)--(0,0.5);
    \draw(-.25,0)--(-.25,0.5);
    \draw[stdhl] (0.66,0)--(0.66,.5);
    \draw[pstdhl] (1,0)--(1,0.5);
    \filldraw [fill=white, draw=black] (-0.5,.5) rectangle (1.5,1) node[midway] { $\Tb$};
    }
\end{tikzcd}\]
Notice that the second application of $\Bb_i$ only shifts the complex by $x^{-\frac{1}{2}}$ since there are no projectives with black strands to braid after the quasi-isomorphism. Repeating a very similar argument, one finds that for $\Rb_{i+1} \otimes^L \Rb_i \otimes^L Cone(\bigcirc_2\rightarrow \bigcirc_1)$, we have a quasi-isomorphism to, 

\[ \begin{tikzcd}
 q\hspace{2mm} \tikzdiagh{0}{
    \draw[stdhl](0,0)--(0,0.5);
    \draw[stdhl] (.33,0)--(.33,0.5);
    \draw (0.66,0)--(0.66,.5);
    \draw[pstdhl] (1,0)--(1,0.5);
    \filldraw [fill=white, draw=black] (-0.5,.5) rectangle (1.5,1) node[midway] { $\Tb$};
    }
    \ar{rr}
    \ar[dash,swap]{rr}{
\tikzdiag[scale=1]{
\draw[pstdhl](1,0)--(1,0.5);
\draw[stdhl](0,0)--(0,0.5);
\draw[stdhl](.5,0)--(.5,0.5);
\draw (.25,0)..controls (.25, 0.25) and (.75,0.25) ..(.75,0.5);
} } &&  \tikzdiagh{0}{
    \draw[stdhl](0,0)--(0,0.5);
    \draw(.33,0)--(.33,0.5);
    \draw[stdhl] (0.66,0)--(0.66,.5);
    \draw[pstdhl] (1,0)--(1,0.5);
    \filldraw [fill=white, draw=black] (-0.5,.5) rectangle (1.5,1) node[midway] { $\Tb$};
    }
\end{tikzcd}\]
If one also tracks the maps between these two chain complexes through the braiding, the resulting cone is precisely the resolution for $\Cb_{i-1}$ mentioned several times throughout this section
\end{proof}

\subsection{Khovanov Homology For Braid Complements} \label{sec:khov_hom_braids}
In \cite{Web}, an invariant, $\Phi(-)$ , associated to a colored $(p,q)-$tangle, $T$, was defined. There, $\Phi(T)$ was a functor $\mathcal{V}^{\underline{\lambda}} \rightarrow \mathcal{V}^{\underline{\mu}}$, where $\underline{\lambda}$ and $\underline{\mu}$ are the appropiate list of weights assigned to the bottom and top of the tangle, respectively, and $\mathcal{V}^{\underline{\lambda}}$ was the relevant category associated to the tensor product specified by $\underline{\lambda}$. Of course, when $p=q=0$, $\Phi(T)$ is a homological invariant we can assign to knots. We wish to extend this construction to the case of strands colored by Verma module representations of generic weight. This would, in principle, define a theory of Khovanov homology on knot complements in $S^3$. However, to do so in full generality, we need to work out what the cups and caps for Verma colored strands are. This will be the focus of the subsequent paper. For this section, we will consider tangles where the Verma colored strands form a braid. In other words, we concern ourselves with diagrams where there are no blue cups or caps. 

\[
\tikzdiagh[yscale=-0.75]{0}{
\draw[rstdhl] (1.5, 2) ..controls (1.75,2) and (1.75,4) ..(1.5,4);
\draw (-1,6) ..controls (-1,5.5) and (3,5.5) .. (3,6);
\draw (-1,6) ..controls (-1,6.5) and (3,6.5) .. (3,6);
\draw (-1,0)--(-1,6);
\draw (3,0)--(3,6);
\draw (-1,6) ..controls (-1,5.5) and (3,5.5) .. (3,6);
\draw[bstdhl](2,0)..controls (2,1) and (0,1) ..(0,2);
\draw[bstdhl] (0,0)..controls (0,1) and (2,1)..(2,2);
\draw[bstdhl] (2,2)..controls (2,3) and (0,3) ..(0,4);
\draw[bstdhl](0,2)..controls (0,3) and (2,3) ..(2,4);
\draw[bstdhl] (2,4)..controls (2,5) and (0,5) ..(0,6);
\draw[bstdhl](0,4)..controls (0,5) and (2,5) ..(2,6);
\draw[rstdhl] (1.5, 2) ..controls (1.25,2) and (1.25,4) ..(1.5,4);
\draw (-1,0) ..controls (-1,-.5) and (3,-.5) .. (3,0);
\draw (-1,0) ..controls (-1,.5) and (3,.5) .. (3,0);
}
\]

\begin{defn} \label{def:tangle_functor}
    Let $D$ be the diagram of an oriented tangle whose components are colored by either highest weight Verma module representations of $\mathfrak{sl}_2$ or the fundamental representation of $\mathfrak{sl}_2$. Assume further that $D$ has no cups or caps in the strands colored by Verma modules. Let $\underline{\mu} = (\mu_1,\mu_2,...,\mu_N)$ be the strands at the bottom of the tangle and $\underline{\nu} = (\nu_1,\nu_2,...,\nu_M)$ the strands at the top. Then, we let $\Phi(D): \mathcal{D}_{dg}(\Tb^{\underline{\mu}}) \rightarrow \mathcal{D}_{dg}(\Tb^{\underline{\nu}})$\textbf{ be the functor associated to $D$} by the following rules. Reading $D$ from bottom to top, we compose the sequence of functors,
    \begin{itemize}
        \item To the far left of the diagram, add an upward moving, vertical, Verma-colored strand to the diagram, not braiding with any other strand
        \item To a negative crossing between the $i$th and $i+1$th strands, we associate $\Bb_i$
        \item To a positive crossing between the $i$th and $i+1$th strand, we associate $\Bb^{-1}_i$
    \end{itemize}
    Now, referring strictly to the caps and caps colored by fundamental representation, we have,
    \begin{itemize}
        \item To a clockwise oriented cup, we associate $\bar{\Cbb}$
        \item To a counter-clockwise oriented cup, we associate $\Cbb$
        \item To a clockwise oriented cup, we associate $\bar{\Kbb}$
        \item To a counter-clockwise oriented cup, we associate $\Kbb$ 
    \end{itemize}
\end{defn}
\begin{rem}
    Note that we have often worked with the case where all Verma strands are colored by the same highest weight $\lambda$, but this is not a requirement. We can treat each blue strand to have its own unique highest weight $\lambda_i$. In which case, the presence of each strand introduces a new grading $x_i$ to the dg-KLRW algebra and the resulting functors corresponding to the holonomy around the punctured strand. 
\end{rem}
The results of the previous sections immediately imply, 

\begin{thm}
    Let $D$ and $D'$ be 2 diagrams for the same isotopy class of ribbon tangle of the kind admissible by Def. \ref{def:tangle_functor}. Then, $\Phi(D) \cong \Phi(D')$. 
\end{thm}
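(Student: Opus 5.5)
The strategy is the standard Reidemeister-type reduction used by Webster: pass to a finite list of local moves and check that the functor $\Phi$ is invariant under each. Two admissible diagrams $D,D'$ of the same isotopy class of ribbon tangle, with no blue cups or caps, are related by a finite sequence of planar isotopies (level-preserving isotopies and exchanges of heights of distant critical points/crossings) and local moves. These local moves are: framed R1 (the ribbon version with the twist), R2, R3, the S-move (zigzag), the pitchfork move, and moves sliding cups, caps and crossings past one another. Because blue strands have no critical points, every local move involving a blue strand is a braid relation, a red–blue R2/R3, or a pitchfork in which the blue strand passes through a red cup or cap. Since $\Phi$ is a composition of functors read bottom to top, we only need a natural isomorphism for each local piece. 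Tensoring with the identity on the untouched strands then preserves it, because all bimodules involved are built strandwise and the derived tensor products are taken over $\Tb$.

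The checks proceed in this order. First, height exchanges of distant pieces give commuting squares. For crossings, this is the $|i-j|>1$ case of the braid theorem. For cups and caps against distant crossings or other cups and caps, the bimodule diagrams can be isotoped past each other (a diagram-level isomorphism, as in Corollary \ref{cor:Rtensor}). These cases also use the fact that all the functors commute with $\mathcal{F},\mathcal{E}$. Second, R2 follows from Proposition \ref{prop:Inverse_Braiding}, and R3 from the braid theorem; both hold for any colors. Third, the red-strand ribbon moves follow from the ribbon-structure results above:
\begin{itemize}
\item R1 with its framing shift $q^{3/2}h$ from Proposition \ref{prop:cup_r1};
\item the orientation-reversal identity $\Kbb_i\circ\Bb_{i+2}\circ\Cbb_i\circ\Bb_{i+1}\circ\Cbb_i\cong\bar{\Cbb}_i$;
\item the S-move lemma $\bar{\Kbb}_{i-1}\circ\Cbb_i\cong\mathbb{1}$, together with its mirror variants, obtained by the same resolution argument or by adjunction.
\end{itemize}
Fourth, the pitchfork lemma handles a blue or red strand passing through a red cup. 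The cap version follows by adjunction, since $\Kb$ is the shifted dual of $\Cb$, and inverse crossings follow from Proposition \ref{prop:Inverse_Braiding}.

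The main obstacle is completeness. We must ensure the listed moves really generate ribbon isotopy for tangles whose Verma components are braids, and that each check holds in every orientation and color combination, not just the drawn representative. For orientations, I would reduce all cup/cap variants to $\Cbb,\Kbb$ using the orientation-reversal proposition. For colors, I would observe that blue strands only ever enter through crossings and pitchforks, both of which were checked with a generic color. A secondary concern is naturality and coherence of the isomorphisms. Since the theorem only asserts $\Phi(D)\cong\Phi(D')$, composing the local isomorphisms along any chosen move sequence suffices, and no coherence between different sequences is needed.
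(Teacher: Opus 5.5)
Your proposal is correct and follows the same route as the paper. The paper simply asserts that the theorem is immediate from the local invariances established earlier: the braid relations, R2 via Proposition \ref{prop:Inverse_Braiding}, framed R1 and orientation reversal via Proposition \ref{prop:cup_r1} and the proposition after it, the S-move lemma, and the pitchfork lemma. The points you flag but leave open are the completeness of the move set when blue components remain braided, the adjunction used to handle caps, and the locality of the local isomorphisms; the paper's one-line justification takes these for granted too.
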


Of course, we can also make the functor an honest invariant of oriented tangles by correcting for the ribbon structure,
$$\Phi'(D) := \prod^F_{i=1}(q^{\frac{3}{2}}h)^{w_i} \Phi(D)$$
Here $w_i = n^+_i-n_i^-$ is the writhe of the $i$th component colored by fundamental and $F$ is total number of components colored by the fundamental representation. 

\begin{rem}
In \cite{Park}, to get well-defined quantum knot invariants from Verma modules, one must restrict to the case where the knots in question are braid-positive (or negative). One might therefore worry that we have no such restriction on the braid word $\beta$ above. This is simply a consequence of the fact that we have not yet `traced' over the category. That is, for $D$ a diagram of a braid colored by Vermas $\beta$ and a link in its complement colored by fundamentals, $L$, $\Phi(D)$ categorifies linear maps $\hat{Z}(\beta, L): V^{\underline{\mu}} \rightarrow V^{\beta(\underline{\mu})}$ (see Section \ref{sec:intro}). For any projective $\Pb^{\underline{\mu}}_\rho$, applying the functor $\Phi(D)$ consists of a finite composition of functors with finite projective resolutions, so it is always well-defined. Of course, we expect that once the cups and caps are introduced to our category in the upcoming paper, we will also succumb to putting restrictions on $\beta$ in the context of knot homologies. 
\end{rem}

\section{An Example}\label{sec:example}
In this section, we will use the constructions in the previous sections to explicitly compute knot homologies in the case that the braid traced out by the Verma module colored strands is a single strand. In Section \ref{sec:KhS3}, we briefly review $S^3$ and the connection between \cite{Web} and Khovanov Homology \cite{Kh} (which are related by an overall normalization factor). In Section \ref{sec:AKh}, we consider the homology theory constructed in this paper in the case that the Verma colored strands (blue strands) trace out an unknot. We compare some calculations in this theory to Annular Khovanov Homology as constructed in \cite{APS} and propose that the theories are very closely related and conjecture their dimensions are, in fact, the same. \\ \\
We will use the following notation for this section. Let $D$ be a diagram as in Definition \ref{def:tangle_functor}. We can separate 
\be D=V\cup L \label{eq:diag_decomp}\ee
where $V$ is the diagram corresponding to all the blue strands and $L$ is the diagram corresponding to all the red strands. We will always assume $L$ is a link here. In the case that $V$ is empty, we write, 

$$\mathcal{H}^{S^3}(L,\kk):=\Phi(D) $$

\begin{rem}For all of the sections below, we consider only embedded knots colored by the fundamental representation of $\mathfrak{sl_2}$. 
\end{rem}

\subsection{\texorpdfstring{Knot Homology in $S^3$}{Knot Homology in S3}}\label{sec:KhS3}
The precise connection between Khovanov homology \cite{Kh} and the KLRW approach due to Webster \cite{Web} was carefully worked out in \cite{Web2}. There, it was shown that the homology defined through KLRW algebras coincides with colored Khovanov homology as defined in \cite{CK}. As such, we feel no need to dwell on this section, only providing a few examples for illustrative purposes. \\
\begin{ex}{\textbf{Unknot.}}
For the unknot in $S^3$, 
$$\tikzdiag[yscale=1]{
\draw[rstdhl] (0,0) circle (0.5);
	} $$
We start with the projective resolution of a cup, $\cupp $, 
\begin{equation}
\begin{tikzcd} q \hspace{2mm}
\tikzdiagh[xscale=0.75]{0}{
\draw[stdhl](0,0)--(0,0.5);
\draw[stdhl](1,0)--(1,0.5);
\draw (0.5,0)--(0.5,0.5);
\filldraw [fill=white, draw=black] (-0.25,.5) rectangle (1.5,1) node[midway] { $\Tb$};
}
\ar{rr}
\ar[dash]{rr}{
\tikzdiag[xscale=0.75]{
	\draw[stdhl](0,0)--(0,0.5);
\draw[stdhl](1,0)--(1,0.5);
\draw (1.4,0)..controls (1.4,0.25) and (0.5,0.25)..(0.5,0.5);
}
}
& &
\tikzdiagh[xscale=0.75]{0}{
\draw[stdhl](0,0)--(0,0.5);
\draw[stdhl](1,0)--(1,0.5);
\draw (1.4,0)--(1.4,0.5);
\filldraw [fill=white, draw=black] (-0.25,.5) rectangle (1.5,1) node[midway] { $\Tb$};
}
\ar{rr}
\ar[dash]{rr}{
\tikzdiag[xscale=0.75]{
	\draw[stdhl](0,0)--(0,0.5);
\draw[stdhl](1,0)--(1,0.5);
\draw (0.5,0)..controls (0.5,0.25) and (1.4,0.25)..(1.4,0.5);
}
}
& & q^{-1} \hspace{2mm}
\tikzdiagh[xscale=0.75]{0}{
\draw[stdhl](0,0)--(0,0.5);
\draw[stdhl](1,0)--(1,0.5);
\draw (0.5,0)--(0.5,0.5);

\filldraw [fill=white, draw=black] (-0.25,.5) rectangle (1.5,1) node[midway] { $\Tb$};
}
\ar{rr}
\ar[dash]{rr}{
\tikzdiag[xscale=0.75]{
\draw (0.5,0)-- (0.5,0.5);
\draw[stdhl] (1,0.5) arc (0:-180:0.5);
}
}
& & \cupp
\end{tikzcd}
\label{eq:cap_res}
\end{equation}
Tensoring the with cap, we find, 
$$q \hspace{1mm} \kk \longrightarrow 0 \longrightarrow q^{-1} \hspace{1mm} \kk $$
So, the homology of the unknot is simply $$\mathcal{H}^{S^3}(U, \kk) \cong q^{-1} \hspace{1mm} \kk [-1] \oplus q \hspace{1mm} \kk [1] $$
Notice that in the usual formulation of Khovanov homology \cite{Kh} and in its extension to $I$-bundles over surfaces \cite{APS}, we have, 
$Kh(U,\kk) \cong q^{-1} \hspace{1mm} \kk [0] \oplus q \hspace{1mm} \kk [0]$. In other words, the unknot is normalized differently in the two theories. 
    
\end{ex}

\begin{ex}{\textbf{Hopf Link.}} 
Let us compute the homology for the Hopf link in the KLRW approach,
\[
\tikzdiag[yscale=0.75]{
\draw[rstdhl] (1,0) arc (0:-180:1);
\draw[rstdhl] (2,0) arc (0:-180:2);
\draw[rstdhl] (1,0) ..controls (1,0.5) and (2,0.5) .. (2,1);
\draw[rstdhl] (2,0) ..controls (2,0.5) and (1,0.5) .. (1,1);
\draw[rstdhl] (1,1) ..controls (1,1.5) and (2,1.5) .. (2,2);
\draw[rstdhl] (2,1) ..controls (2,1.5) and (1,1.5) .. (1,2);
\draw[rstdhl] (-1,0) -- (-1,2);
\draw[rstdhl] (-2,0) -- (-2,2);
\draw[rstdhl] (1,2) arc (0:180:1);
\draw[rstdhl] (2,2) arc (0:180:2);
}
\]
Applying the cap to the resolution \eqref{eq:cap_res} and resolving once again, we find, 
\begin{equation}
\label{eq:doublecap_res}
\begin{tikzcd}[row sep=tiny,column sep=small]
&
q^2 \hspace{2mm} \tikzdiagh[xscale=0.5]{0}{
\draw[stdhl](0,0)--(0,0.5);
\draw[stdhl](1,0)--(1,0.5);
\draw[stdhl](2,0)--(2,0.5);
\draw[stdhl](3,0)--(3,0.5);
\draw (0.5,0)--(0.5,0.5);
\draw (2.5,0)--(2.5,0.5);
\filldraw [fill=white, draw=black] (-0.5,.5) rectangle (3.5,1) node[midway] { $\Tb$};
}
\ar{rd}
\ar[dash]{rd}{
\tikzdiagh[xscale=0.4,yscale=0.75]{0}{
\draw[stdhl](0,0)--(0,0.5);
\draw[stdhl](1,0)--(1,0.5);
\draw[stdhl](2,0)--(2,0.5);
\draw[stdhl](3,0)--(3,0.5);
\draw (0.5,0)--(0.5,0.5);
\draw (1.5,0)..controls (1.5,.25) and (2.5,.25) .. (2.5,0.5);
}
}
&\\ 
q^3  \hspace{2mm}\tikzdiagh[xscale=0.5]{0}{
\draw[stdhl](0,0)--(0,0.5);
\draw[stdhl](1,0)--(1,0.5);
\draw[stdhl](2,0)--(2,0.5);
\draw[stdhl](3,0)--(3,0.5);
\draw (0.5,0)--(0.5,0.5);
\draw (1.5,0)--(1.5,0.5);
\filldraw [fill=white, draw=black] (-0.5,.5) rectangle (3.5,1) node[midway] { $\Tb$};
}
\ar{rddd}
\ar{ru}
\ar[dash]{ru}{ - \ \tikzdiag[xscale=0.4,yscale=0.75]{
\draw[stdhl](0,0)--(0,0.5);
\draw[stdhl](1,0)--(1,0.5);
\draw[stdhl](2,0)--(2,0.5);
\draw[stdhl](3,0)--(3,0.5);
\draw (0.5,0)--(0.5,0.5);
\draw (2.5,0)..controls (2.5,.25) and (1.5,.25) .. (1.5,0.5);
}
}
\ar{rd}
\ar[swap,dash]{rd}{
\tikzdiagh[xscale=0.4,yscale=0.75]{0}{
\draw[stdhl](0,0)--(0,0.5);
\draw[stdhl](1,0)--(1,0.5);
\draw[stdhl](2,0)--(2,0.5);
\draw[stdhl](3,0)--(3,0.5);
\draw (0.33,0)--(0.33,0.5);
\draw (.66,0)..controls (.66,.25) and (1.5,.25) .. (1.5,0.5);
}
}
& \bigoplus \ar{rddd} &
q \hspace{2mm }\tikzdiagh[xscale=0.5]{0}{
\draw[stdhl](0,0)--(0,0.5);
\draw[stdhl](1,0)--(1,0.5);
\draw[stdhl](2,0)--(2,0.5);
\draw[stdhl](3,0)--(3,0.5);
\draw (0.5,0)--(0.5,0.5);
\draw (1.5,0)--(1.5,0.5);
\filldraw [fill=white, draw=black] (-0.5,.5) rectangle (3.5,1) node[midway] { $\Tb$};
} \ar{rddd}\\
 & 
q^2 \hspace{2mm} \tikzdiagh[xscale=0.5]{0}{
\draw[stdhl](0,0)--(0,0.5);
\draw[stdhl](1,0)--(1,0.5);
\draw[stdhl](2,0)--(2,0.5);
\draw[stdhl](3,0)--(3,0.5);
\draw (0.33,0)--(0.33,0.5);
\draw (.66,0)--(.66,0.5);
\filldraw [fill=white, draw=black] (-0.5,.5) rectangle (3.5,1) node[midway] { $\Tb$};
\node[orange] at(0.5,.75){$\bullet$}
}
\ar{ru}
\ar[swap,dash]{ru}{
\tikzdiagh[xscale=0.4,yscale=0.75]{0}{
\draw[stdhl](0,0)--(0,0.5);
\draw[stdhl](1,0)--(1,0.5);
\draw[stdhl](2,0)--(2,0.5);
\draw[stdhl](3,0)--(3,0.5);
\draw (0.33,0)--(0.33,0.5);
\draw (1.5,0)..controls (1.5,.25) and (.66,.25) .. (.66,0.5);
}
}
&\bigoplus \\
& &
q \hspace{2mm} \tikzdiagh[xscale=0.5]{0}{
\draw[stdhl](0,0)--(0,0.5);
\draw[stdhl](1,0)--(1,0.5);
\draw[stdhl](2,0)--(2,0.5);
\draw[stdhl](3,0)--(3,0.5);
\draw (3.5,0)--(3.5,0.5);
\draw (2.5,0)--(2.5,0.5);
\filldraw [fill=white, draw=black] (-0.5,.5) rectangle (3.5,1) node[midway] { $\Tb$};
\node[orange] at(0.5,.75){$\bullet$}
}
\ar{rd}
\ar[dash]{rd}{
\tikzdiagh[xscale=0.4,yscale=0.75]{0}{
\draw[stdhl](0,0)--(0,0.5);
\draw[stdhl](1,0)--(1,0.5);
\draw[stdhl](2,0)--(2,0.5);
\draw[stdhl](3,0)--(3,0.5);
\draw (3.5,0)--(3.5,0.5);
\draw (1.5,0)..controls (1.5,.25) and (2.5,.25) .. (2.5,0.5);
}
}
&\\ & 
q^2  \hspace{2mm}\tikzdiagh[xscale=0.5]{0}{
\draw[stdhl](0,0)--(0,0.5);
\draw[stdhl](1,0)--(1,0.5);
\draw[stdhl](2,0)--(2,0.5);
\draw[stdhl](3,0)--(3,0.5);
\draw (3.5,0)--(3.5,0.5);
\draw (1.5,0)--(1.5,0.5);
\filldraw [fill=white, draw=black] (-0.5,.5) rectangle (3.5,1) node[midway] { $\Tb$};
\node[orange] at(0.5,.75){$\bullet$}
}
\ar{rddd}
\ar{ru}
\ar[dash]{ru}{ - \ \tikzdiag[xscale=0.4,yscale=0.75]{
\draw[stdhl](0,0)--(0,0.5);
\draw[stdhl](1,0)--(1,0.5);
\draw[stdhl](2,0)--(2,0.5);
\draw[stdhl](3,0)--(3,0.5);
\draw (3.5,0)--(3.5,0.5);
\draw (2.5,0)..controls (2.5,.25) and (1.5,.25) .. (1.5,0.5);
}
}
\ar{rd}
\ar[swap,dash]{rd}{
\tikzdiagh[xscale=0.4,yscale=0.75]{0}{
\draw[stdhl](0,0)--(0,0.5);
\draw[stdhl](1,0)--(1,0.5);
\draw[stdhl](2,0)--(2,0.5);
\draw[stdhl](3,0)--(3,0.5);
\draw (3.5,0)--(3.5,0.5);
\draw (.66,0)..controls (.66,.25) and (1.5,.25) .. (1.5,0.5);
}
}
& \bigoplus \ar{rddd} &
\tikzdiagh[xscale=0.5]{0}{
\draw[stdhl](0,0)--(0,0.5);
\draw[stdhl](1,0)--(1,0.5);
\draw[stdhl](2,0)--(2,0.5);
\draw[stdhl](3,0)--(3,0.5);
\draw (3.5,0)--(3.5,0.5);
\draw (1.5,0)--(1.5,0.5);
\filldraw [fill=white, draw=black] (-0.5,.5) rectangle (3.5,1) node[midway] { $\Tb$};
\node[orange] at(0.5,.75){$\bullet$};
} \ar{rddd}\\ &
 & 
q \hspace{2mm} \tikzdiagh[xscale=0.5]{0}{
\draw[stdhl](0,0)--(0,0.5);
\draw[stdhl](1,0)--(1,0.5);
\draw[stdhl](2,0)--(2,0.5);
\draw[stdhl](3,0)--(3,0.5);
\draw (3.5,0)--(3.5,0.5);
\draw (.66,0)--(.66,0.5);
\filldraw [fill=white, draw=black] (-0.5,.5) rectangle (3.5,1) node[midway] { $\Tb$};
\node[orange] at(0.5,.75){$\bullet$};
}
\ar{ru}
\ar[swap,dash]{ru}{
\tikzdiagh[xscale=0.4,yscale=0.75]{0}{
\draw[stdhl](0,0)--(0,0.5);
\draw[stdhl](1,0)--(1,0.5);
\draw[stdhl](2,0)--(2,0.5);
\draw[stdhl](3,0)--(3,0.5);
\draw (3.5,0)--(3.5,0.5);
\draw (1.5,0)..controls (1.5,.25) and (.66,.25) .. (.66,0.5);
}
}
&\bigoplus \\ &
& &
\tikzdiagh[xscale=0.5]{0}{
\draw[stdhl](0,0)--(0,0.5);
\draw[stdhl](1,0)--(1,0.5);
\draw[stdhl](2,0)--(2,0.5);
\draw[stdhl](3,0)--(3,0.5);
\draw (0.5,0)--(0.5,0.5);
\draw (2.5,0)--(2.5,0.5);
\filldraw [fill=white, draw=black] (-0.5,.5) rectangle (3.5,1) node[midway] { $\Tb$};
}
\ar{rd}
\ar[dash]{rd}{
\tikzdiagh[xscale=0.4,yscale=0.75]{0}{
\draw[stdhl](0,0)--(0,0.5);
\draw[stdhl](1,0)--(1,0.5);
\draw[stdhl](2,0)--(2,0.5);
\draw[stdhl](3,0)--(3,0.5);
\draw (0.5,0)--(0.5,0.5);
\draw (1.5,0)..controls (1.5,.25) and (2.5,.25) .. (2.5,0.5);
}
}
&\\ & &
q \hspace{2mm}\tikzdiagh[xscale=0.5]{0}{
\draw[stdhl](0,0)--(0,0.5);
\draw[stdhl](1,0)--(1,0.5);
\draw[stdhl](2,0)--(2,0.5);
\draw[stdhl](3,0)--(3,0.5);
\draw (0.5,0)--(0.5,0.5);
\draw (1.5,0)--(1.5,0.5);
\filldraw [fill=white, draw=black] (-0.5,.5) rectangle (3.5,1) node[midway] { $\Tb$};
}
\ar{ru}
\ar[dash]{ru}{ - \ \tikzdiag[xscale=0.4,yscale=0.75]{
\draw[stdhl](0,0)--(0,0.5);
\draw[stdhl](1,0)--(1,0.5);
\draw[stdhl](2,0)--(2,0.5);
\draw[stdhl](3,0)--(3,0.5);
\draw (0.5,0)--(0.5,0.5);
\draw (2.5,0)..controls (2.5,.25) and (1.5,.25) .. (1.5,0.5);
}
}
\ar{rd}
\ar[swap,dash]{rd}{
\tikzdiagh[xscale=0.4,yscale=0.75]{0}{
\draw[stdhl](0,0)--(0,0.5);
\draw[stdhl](1,0)--(1,0.5);
\draw[stdhl](2,0)--(2,0.5);
\draw[stdhl](3,0)--(3,0.5);
\draw (0.33,0)--(0.33,0.5);
\draw (.66,0)..controls (.66,.25) and (1.5,.25) .. (1.5,0.5);
}
}
& \bigoplus &
    q^{-1} \hspace{2mm}\tikzdiagh[xscale=0.5]{0}{
\draw[stdhl](0,0)--(0,0.5);
\draw[stdhl](1,0)--(1,0.5);
\draw[stdhl](2,0)--(2,0.5);
\draw[stdhl](3,0)--(3,0.5);
\draw (0.5,0)--(0.5,0.5);
\draw (1.5,0)--(1.5,0.5);
\filldraw [fill=white, draw=black] (-0.5,.5) rectangle (3.5,1) node[midway] { $\Tb$};
}\\ &
& & 
\tikzdiagh[xscale=0.5]{0}{
\draw[stdhl](0,0)--(0,0.5);
\draw[stdhl](1,0)--(1,0.5);
\draw[stdhl](2,0)--(2,0.5);
\draw[stdhl](3,0)--(3,0.5);
\draw (0.33,0)--(0.33,0.5);
\draw (.66,0)--(.66,0.5);
\filldraw [fill=white, draw=black] (-0.5,.5) rectangle (3.5,1) node[midway] { $\Tb$};
\node[orange] at(0.5,.75){$\bullet$};
}
\ar{ru}
\ar[swap,dash]{ru}{
\tikzdiagh[xscale=0.4,yscale=0.75]{0}{
\draw[stdhl](0,0)--(0,0.5);
\draw[stdhl](1,0)--(1,0.5);
\draw[stdhl](2,0)--(2,0.5);
\draw[stdhl](3,0)--(3,0.5);
\draw (0.33,0)--(0.33,0.5);
\draw (1.5,0)..controls (1.5,.25) and (.66,.25) .. (.66,0.5);
}
}
&
\end{tikzcd}
\end{equation}
The unspecified differentials above moving down diagonally are determined by extending the maps in \eqref{eq:cap_res} through the projective resolutions of each term. Now, we apply the braiding functors to the chain complex above. Notice that it is easy to derive the relevant projective resolutions, 

\[
\begin{tikzcd}[row sep=tiny]
& q^4  \hspace{2mm}\tikzdiagh[xscale=0.5]{0}{
\draw[stdhl](1,0)--(1,0.5);
\draw[stdhl](2,0)--(2,0.5);
\draw (2.4,0)--(2.4,0.5);
\filldraw [fill=white, draw=black] (0,.5) rectangle (3,1) node[midway] { $\Tb$};
\node at(0.1,.3){$...$};
\node at(2.9,.3){$...$};
}
\ar{rdd}
&q^2  \hspace{2mm}\tikzdiagh[xscale=0.5]{0}{
\draw[stdhl](1,0)--(1,0.5);
\draw[stdhl](2,0)--(2,0.5);
\draw (2.4,0)--(2.4,0.5);
\filldraw [fill=white, draw=black] (0,.5) rectangle (3,1) node[midway] { $\Tb$};
\node at(0.1,.3){$...$};
\node at(2.9,.3){$...$};
} 
\ar{rd}
\\
q^5  \hspace{2mm}\tikzdiagh[xscale=0.5]{0}{
\draw[stdhl](1,0)--(1,0.5);
\draw[stdhl](2,0)--(2,0.5);
\draw (1.5,0)--(1.5,0.5);
\filldraw [fill=white, draw=black] (0,.5) rectangle (3,1) node[midway] { $\Tb$};
\node at(0.1,.3){$...$};
\node at(2.9,.3){$...$};
}
\ar{ru}
\ar{rd}
&
\bigoplus & \bigoplus & \Bb_i \circ \Bb_i \bigg( \hspace{2mm}\tikzdiagh[xscale=0.5]{0}{
\draw[stdhl](1,0) node[below]{$i$} --(1,0.5);
\draw[stdhl](2,0) node[below]{$i+1$} --(2,0.5);
\draw (1.5,0)--(1.5,0.5);
\filldraw [fill=white, draw=black] (0,.5) rectangle (3,1) node[midway] { $\Tb$};
\node at(0.1,.3){$...$};
\node at(2.9,.3){$...$};
}\bigg) \\
& 
q^4  \hspace{2mm}\tikzdiagh[xscale=0.5]{0}{
\draw[stdhl](1,0)--(1,0.5);
\draw[stdhl](2,0)--(2,0.5);
\draw (.6,0)--(.6,0.5);
\filldraw [fill=white, draw=black] (0,.5) rectangle (3,1) node[midway] { $\Tb$};
\node at(0.1,.3){$...$};
\node at(2.9,.3){$...$};
}
\ar{ruu}
&
q^2  \hspace{2mm}\tikzdiagh[xscale=0.5]{0}{
\draw[stdhl](1,0)--(1,0.5);
\draw[stdhl](2,0)--(2,0.5);
\draw (.6,0)--(.6,0.5);
\filldraw [fill=white, draw=black] (0,.5) rectangle (3,1) node[midway] { $\Tb$};
\node at(0.1,.3){$...$};
\node at(2.9,.3){$...$};
}
\ar{ru}
\end{tikzcd}
\]

\[\begin{tikzcd}
q\hspace{2mm}\tikzdiagh[xscale=0.5]{0}{
\draw[stdhl](1,0) node[below]{$i$} --(1,0.5);
\draw[stdhl](2,0) node[below]{$i+1$} --(2,0.5);
\filldraw [fill=white, draw=black] (0,.5) rectangle (3,1) node[midway] { $\Tb$};
\node at(0.1,.3){$...$};
\node at(2.9,.3){$...$};
}\ar{r}&
    \Bb_i \circ \Bb_i \bigg( \hspace{2mm}\tikzdiagh[xscale=0.5]{0}{
\draw[stdhl](1,0) node[below]{$i$} --(1,0.5);
\draw[stdhl](2,0) node[below]{$i+1$} --(2,0.5);
\filldraw [fill=white, draw=black] (0,.5) rectangle (3,1) node[midway] { $\Tb$};
\node at(0.1,.3){$...$};
\node at(2.9,.3){$...$};
}\bigg)
\end{tikzcd}\]

Where all arrows above are the obvious permutations of the bottom indempotent. We may notice at this point that after capping twice (as in the diagram we drew for the Hopf link), all terms in \ref{eq:doublecap_res} with an orange bullet \color{orange} $ \bullet \hspace{2mm}$\color{black}  will give a trivial contribution to the final chain complex. So, in the end, the relevant terms that will survive the capping are two isolated shifted copies of, 
\[
\begin{tikzcd}[row sep =tiny]
& & q^4 \hspace{2mm}
\tikzdiagh[xscale=0.5]{0}{
\draw[stdhl](0,0)--(0,0.5);
\draw[stdhl](1,0)--(1,0.5);
\draw[stdhl](2,0)--(2,0.5);
\draw[stdhl](3,0)--(3,0.5);
\draw (0.5,0)--(0.5,0.5);
\draw (1.5,0)--(1.5,0.5);
\filldraw [fill=white, draw=black] (-0.5,.5) rectangle (3.5,1) node[midway] { $\Tb$};
} \ar{rd} \\
&
& \bigoplus &
q^2\hspace{2mm}\tikzdiagh[xscale=0.5]{0}{
\draw[stdhl](0,0)--(0,0.5);
\draw[stdhl](1,0)--(1,0.5);
\draw[stdhl](2,0)--(2,0.5);
\draw[stdhl](3,0)--(3,0.5);
\draw (0.5,0)--(0.5,0.5);
\draw (3.5,0)--(3.5,0.5);
\filldraw [fill=white, draw=black] (-0.5,.5) rectangle (3.5,1) node[midway] { $\Tb$};
\node[orange] at(0.5,.75){$\bullet$};
}
\ar{rd}
&\\ & q^5 \hspace{2mm}\tikzdiagh[xscale=0.5]{0}{
\draw[stdhl](0,0)--(0,0.5);
\draw[stdhl](1,0)--(1,0.5);
\draw[stdhl](2,0)--(2,0.5);
\draw[stdhl](3,0)--(3,0.5);
\draw (0.5,0)--(0.5,0.5);
\draw (2.5,0)--(2.5,0.5);
\filldraw [fill=white, draw=black] (-0.5,.5) rectangle (3.5,1) node[midway] { $\Tb$};
\node[orange] at(0.5,.75){$\bullet$};
}
\ar{ruu} 
\ar{rdd} 
&
q^2 \hspace{2mm}\tikzdiagh[xscale=0.5]{0}{
\draw[stdhl](0,0)--(0,0.5);
\draw[stdhl](1,0)--(1,0.5);
\draw[stdhl](2,0)--(2,0.5);
\draw[stdhl](3,0)--(3,0.5);
\draw (0.5,0)--(0.5,0.5);
\draw (1.5,0)--(1.5,0.5);
\filldraw [fill=white, draw=black] (-0.5,.5) rectangle (3.5,1) node[midway] { $\Tb$};
}
\ar{ru}
\ar{rd}{\mathbb{I}}
& \bigoplus &
\tikzdiagh[xscale=0.5]{0}{
\draw[stdhl](0,0)--(0,0.5);
\draw[stdhl](1,0)--(1,0.5);
\draw[stdhl](2,0)--(2,0.5);
\draw[stdhl](3,0)--(3,0.5);
\draw (0.5,0)--(0.5,0.5);
\draw (1.5,0)--(1.5,0.5);
\filldraw [fill=white, draw=black] (-0.5,.5) rectangle (3.5,1) node[midway] { $\Tb$};
}\\ &
& \bigoplus & 
q^2 \hspace{2mm}
\tikzdiagh[xscale=0.5]{0}{
\draw[stdhl](0,0)--(0,0.5);
\draw[stdhl](1,0)--(1,0.5);
\draw[stdhl](2,0)--(2,0.5);
\draw[stdhl](3,0)--(3,0.5);
\draw (0.5,0)--(0.5,0.5);
\draw (1.5,0)--(1.5,0.5);
\filldraw [fill=white, draw=black] (-0.5,.5) rectangle (3.5,1) node[midway] { $\Tb$};
}
\ar{ru}
\ar[swap,dash]{ru}{
\tikzdiagh[xscale=0.4,yscale=0.75]{0}{
\draw[stdhl](0,0)--(0,0.5);
\draw[stdhl](1,0)--(1,0.5);
\draw[stdhl](2,0)--(2,0.5);
\draw[stdhl](3,0)--(3,0.5);
\draw (0.5,0)--(0.5,0.5);
\draw (1.5,0)-- (1.5,0.5) node[pos=.5,tikzdot]{} ;
}
}
& \\ 
& & q^4\hspace{2mm}\tikzdiagh[xscale=0.5]{0}{
\draw[stdhl](0,0)--(0,0.5);
\draw[stdhl](1,0)--(1,0.5);
\draw[stdhl](2,0)--(2,0.5);
\draw[stdhl](3,0)--(3,0.5);
\draw (0.5,0)--(0.5,0.5);
\draw (3.5,0)--(3.5,0.5);
\filldraw [fill=white, draw=black] (-0.5,.5) rectangle (3.5,1) node[midway] { $\Tb$};
\node[orange] at(0.5,.75){$\bullet$};
}
\ar{ru}
\end{tikzcd}
\]
Again, we label the projectives that are killed upon capping by an orange bullet. Note that the map with a black dot is trivial after capping. Therefore, the two terms connected by the isomorphism in the middle don't contribute to the homology. As a result, the homology of the Hopf Link is simply, 
$$\mathcal{H}^{S^3}(H,\kk) \cong q^{2} \hspace{1mm} \kk [-1]\oplus q^{4} \hspace{1mm} \kk[1] \oplus q^{2}\hspace{1mm} \kk[1] \oplus q^{6} \hspace{1mm} \kk [3]$$

Comparing with $Kh(\cdot)$ as described in \cite{APS}, \cite{Mel}, we have, 
$$\mathcal{H}^{S^3}(H,\kk) \otimes_\kk (q^{-1}\hspace{1mm} \kk \oplus q\hspace{1mm} \kk ) \cong Kh(H,\kk) \otimes_\kk (q^{-1}\hspace{1mm} \kk[-1] \oplus q\hspace{1mm} \kk [1])$$
And more generally, we expect this relation to hold for any link, $L$, 
$$\mathcal{H}^{S^3}(L,\kk) \otimes_\kk Kh(U,\kk) \cong Kh(L,\kk) \otimes_\kk \mathcal{H}^{S^3}(U,\kk)$$

\end{ex}

\subsection{Unknot and Annular Khovanov Homology}\label{sec:AKh}
In \cite{APS}, a generalization of Khovanov Homology for $I$-bundles over surfaces was introduced and studied. In the case of $S^1 \times D^2$, we can directly compare our homology theory (Definition \ref{def:tangle_functor}) to their construction. In this section, we will do this by explicit computations of some simple knots in $S^1\times D^2$. Given the evidence for a possible equivalence between the two homology theories provided here, it would be interesting to compare them in a more systematic manner in the future. \\ \\
One might worry that the blue strands must be traced over their braid in order to speak of the homology associated to a knot complement. However, we remark that the difference between tracing over every strand and tracing over every strand but one is simply a constant overall tensor factor. In the decategorified setting of computing colored Jones polynomials, this is analogous to the fact that we can consider the diagram of a knot as the diagram of a $(1,1)$-tangle but `opening up' one of the strands. The linear map associated to the $(1,1)$-tangle is simply scalar multiplication, with this scalar being the \textit{normalized} colored Jones polynomial. In our categorified setting, the $(1,1)$-tangle associated to the unknot, is a single vertical blue strand. The functor associated to this tangle by Definition \ref{def:tangle_functor} sends projectives to themselves tensor some homology groups with $\kk$ coefficients. Throughout this section, we will write these factors without mention to the actual functor. In other words, let $D=V\cup L$ be the diagram in question, with $V$ in \eqref{eq:diag_decomp} being a single vertical blue strand and $L$ specifying some link in $S^1 \times D^2$. Then, we use the notation, 
$$\Phi(D)(\Pb^\lambda_{\rho}) = \mathcal{H}^{S^1\times D^2}(D,\kk) \otimes_\kk\Pb^\lambda_{\rho}$$

All computations in Annular Khovanov Homology were done closely following \cite{Mel}, so we point the unfamiliar reader there for an exposition. 
\begin{rem}
    Below, we will often write the homologies for both Annular Khovanov Homology ($AKh(\hspace{1mm}\cdot\hspace{1mm}, \kk)$) and $\mathcal{H}^{S^1\times D^2}(\hspace{1mm}\cdot\hspace{1mm},\kk)$ up to an overall grading shift. This can easily be fixed by ensuring the Euler characteristics of the homologies match, but we will not do this here.
\end{rem}
\begin{ex}{\textbf{Unknot on Non-Trivial Cycle.}}\label{ex:U1} Here we focus on the homology of an unknot wrapped around the $1\in \pi_1(S^1\times D^2)$ class. We will call this knot $U_1$. On the Annular Khovanov Homology side, we draw this as, 
$$\tikzdiag[yscale=1]{
\draw[rstdhl] (0,0) circle (0.5);
\node[blue] at (0,0){$\times$}; 
} $$
It has a trivial cube of resolutions and its homology is simply, 
\be \label{eq:AkhU1}AKh(U_1,\kk) \cong x q\hspace{1mm} \kk \oplus x^{-1} q^{-1}\hspace{1mm} \kk
\ee 

In the KLRW approach, we draw this knot as, 
\[
\tikzdiag[yscale=0.75]{
\draw[rstdhl] (1,0) arc (0:-180:1);
\draw[rstdhl] (1,0) ..controls (1,0.5) and (2,0.5) .. (2,1);
\draw[bstdhl] (2,-1) ..controls (2,0.5) and (1,0.5) .. (1,1);
\draw[bstdhl] (1,1) ..controls (1,1.5) and (2,1.5) .. (2,3);
\draw[rstdhl] (2,1) ..controls (2,1.5) and (1,1.5) .. (1,2);
\draw[rstdhl] (-1,0) -- (-1,2);
\draw[rstdhl] (1,2) arc (0:180:1);
}
\]
To compute its homology, we begin by applying the cup to the projective module $\Pb^{\lambda, \lambda}_{0,0}$ and computing its projective resolution, 
\[
\begin{tikzcd}[row sep=tiny]
    &
 \tikzdiagh[xscale=0.5]{0}{
\draw[stdhl](1,0)--(1,0.5);
\draw[stdhl](2,0)--(2,0.5);
\draw[vstdhl](3,0)--(3,0.5);
\draw (2.5,0)--(2.5,0.5);
\filldraw [fill=white, draw=black] (-0.5,.5) rectangle (3.5,1) node[midway] { $\Tb$};
}
\ar{rd}
\ar[dash]{rd}{
\tikzdiagh[xscale=0.4,yscale=0.75]{0}{
\draw[stdhl](1,0)--(1,0.5);
\draw[stdhl](2,0)--(2,0.5);
\draw[vstdhl](3,0)--(3,0.5);
\draw (1.5,0)..controls (1.5,.25) and (2.5,.25) .. (2.5,0.5);
}
}
&\\ 
q  \hspace{2mm}\tikzdiagh[xscale=0.5]{0}{
\draw[stdhl](1,0)--(1,0.5);
\draw[stdhl](2,0)--(2,0.5);
\draw[vstdhl](3,0)--(3,0.5);
\draw (1.5,0)--(1.5,0.5);
\filldraw [fill=white, draw=black] (-0.5,.5) rectangle (3.5,1) node[midway] { $\Tb$};
}
\ar{ru}
\ar[dash]{ru}{ - \ \tikzdiag[xscale=0.4,yscale=0.75]{
\draw[stdhl](1,0)--(1,0.5);
\draw[stdhl](2,0)--(2,0.5);
\draw[vstdhl](3,0)--(3,0.5);
\draw (2.5,0)..controls (2.5,.25) and (1.5,.25) .. (1.5,0.5);
}
}
\ar{rd}
\ar[swap,dash]{rd}{
\tikzdiagh[xscale=0.4,yscale=0.75]{0}{
\draw[stdhl](1,0)--(1,0.5);
\draw[stdhl](2,0)--(2,0.5);
\draw[vstdhl](3,0)--(3,0.5);
\draw (.5,0)..controls (.5,.25) and (1.5,.25) .. (1.5,0.5);
}
}
& \bigoplus &
q^{-1} \hspace{2mm }\tikzdiagh[xscale=0.5]{0}{
\draw[stdhl](1,0)--(1,0.5);
\draw[stdhl](2,0)--(2,0.5);
\draw[vstdhl](3,0)--(3,0.5);
\draw (1.5,0)--(1.5,0.5);
\filldraw [fill=white, draw=black] (-0.5,.5) rectangle (3.5,1) node[midway] { $\Tb$};
}\\
 & 
\tikzdiagh[xscale=0.5]{0}{
\draw[stdhl](1,0)--(1,0.5);
\draw[stdhl](2,0)--(2,0.5);
\draw[vstdhl](3,0)--(3,0.5);
\draw (.5,0)--(.5,0.5);
\filldraw [fill=white, draw=black] (-0.5,.5) rectangle (3.5,1) node[midway] { $\Tb$};
}
\ar{ru}
\ar[swap,dash]{ru}{
\tikzdiagh[xscale=0.4,yscale=0.75]{0}{
\draw[stdhl](1,0)--(1,0.5);
\draw[stdhl](2,0)--(2,0.5);
\draw[vstdhl](3,0)--(3,0.5);
\draw (1.5,0)..controls (1.5,.25) and (.5,.25) .. (.5,0.5);
}
}
& \\
\end{tikzcd}
\]
In applying the braiding functor, only the topmost projective contributes a resolution of length $>1$. Applying the braiding functor once and resolving gives, 
\[
\begin{tikzcd}[row sep=tiny]
    &
x^{\frac{1}{2}} x \hspace{2mm}\tikzdiagh[xscale=0.5]{0}{
\draw[stdhl](1,0)--(1,0.5);
\draw[vstdhl](2,0)--(2,0.5);
\draw[stdhl](3,0)--(3,0.5);
\draw (3.5,0)--(3.5,0.5);
\filldraw [fill=white, draw=black] (-0.5,.5) rectangle (3.5,1) node[midway] { $\Tb$};
}
\ar{rd}
\ar[dash]{rd}{
\tikzdiagh[xscale=0.4]{0}{
\draw[stdhl](1,0)--(1,0.5);
\draw[stdhl](2,0)..controls (2,.4) and (3,.4) ..(3,0.5);
\draw[vstdhl](3,0)..controls (3,0.4) and (2,0.4) .. (2,0.5);
\draw (2.5,0)..controls (2.5,.25) and (3.5,.25) .. (3.5,0.5);
}
}
&\\ 
x^{\frac{1}{2}} x q  \hspace{2mm}\tikzdiagh[xscale=0.5]{0}{
\draw[stdhl](1,0)--(1,0.5);
\draw[vstdhl](2,0)--(2,0.5);
\draw[stdhl](3,0)--(3,0.5);
\draw (2.5,0)--(2.5,0.5);
\filldraw [fill=white, draw=black] (-0.5,.5) rectangle (3.5,1) node[midway] { $\Tb$};
}
\ar{ru}
\ar[dash]{ru}{ - \ \tikzdiag[xscale=0.4,yscale=0.75]{
\draw[stdhl](1,0)--(1,0.5);
\draw[vstdhl](2,0)--(2,0.5);
\draw[stdhl](3,0)--(3,0.5);
\draw (3.5,0)..controls (3.5,.25) and (2.5,.25) .. (2.5,0.5);
}
}
\ar{rd}
\ar[swap,dash]{rd}{
\tikzdiagh[xscale=0.4,yscale=0.75]{0}{
\draw[stdhl](1,0)--(1,0.5);
\draw[vstdhl](2,0)--(2,0.5);
\draw[stdhl](3,0)--(3,0.5);
\draw (1.5,0)..controls (1.5,.25) and (2.5,.25) .. (2.5,0.5);
}
}
& \bigoplus &
\tikzdiagh[xscale=0.5]{0}{
\draw[stdhl](1,0)--(1,0.5);
\draw[stdhl](2,0)--(2,0.5);
\draw[vstdhl](3,0)--(3,0.5);
\draw (1.5,0)--(1.5,0.5);
\filldraw [fill=white, draw=black] (-0.5,.5) rectangle (3.5,1) node[midway] { $\Rb$};
}\\
 & 
x^{\frac{1}{2}}q \hspace{2mm}\tikzdiagh[xscale=0.5]{0}{
\draw[stdhl](1,0)--(1,0.5);
\draw[vstdhl](2,0)--(2,0.5);
\draw[stdhl](3,0)--(3,0.5);
\draw (1.5,0)--(1.5,0.5);
\filldraw [fill=white, draw=black] (-0.5,.5) rectangle (3.5,1) node[midway] { $\Tb$};
}
\ar{ru}
\ar[swap,dash]{ru}{
\tikzdiagh[xscale=0.4]{0}{
\draw[stdhl](1,0)--(1,0.5);
\draw[stdhl](2,0)..controls (2,.4) and (3,.4) ..(3,0.5);
\draw[vstdhl](3,0)..controls (3,0.4) and (2,0.4) .. (2,0.5);
\draw (2.5,0)..controls (2.5,.25) and (1.5,.25) .. (1.5,0.5);
}
}
& \\
\end{tikzcd}
\]
Applying it once more and resolving, we find the chain complex, 
\be
\begin{tikzcd}[row sep=tiny]
    &
x^2 q^2 \hspace{2mm}\tikzdiagh[xscale=0.5]{0}{
\draw[stdhl](1,0)--(1,0.5);
\draw[stdhl](2,0)--(2,0.5);
\draw[vstdhl](3,0)--(3,0.5);
\draw (3.5,0)--(3.5,0.5);
\filldraw [fill=white, draw=black] (-0.5,.5) rectangle (3.5,1) node[midway] { $\Tb$};
}
\ar{rdd}
\ar[dash]{rdd}{
\tikzdiagh[xscale=0.4,yscale=0.75]{0}{
\draw[stdhl](1,0)--(1,0.5);
\draw[stdhl](2,0)--(2,0.5);
\draw[vstdhl](3,0)-- (3,0.5);
\draw (3.5,0)..controls (3.5,.25) and (1.5,.25) .. (1.5,0.5);
}
}
&x^2\hspace{2mm}\tikzdiagh[xscale=0.5]{0}{
\draw[stdhl](1,0)--(1,0.5);
\draw[stdhl](2,0)--(2,0.5);
\draw[vstdhl](3,0)--(3,0.5);
\draw (3.5,0)--(3.5,0.5);
\filldraw [fill=white, draw=black] (-0.5,.5) rectangle (3.5,1) node[midway] { $\Tb$};
}\\ 
x^3 q^2  \hspace{2mm}\tikzdiagh[xscale=0.5]{0}{
\draw[stdhl](1,0)--(1,0.5);
\draw[stdhl](2,0)--(2,0.5);
\draw[vstdhl](3,0)--(3,0.5);
\draw (2.5,0)--(2.5,0.5);
\filldraw [fill=white, draw=black] (-0.5,.5) rectangle (3.5,1) node[midway] { $\Tb$};
}
\ar{ru}
\ar[dash]{ru}{ - \ \tikzdiag[xscale=0.4,yscale=0.75]{
\draw[stdhl](1,0)--(1,0.5);
\draw[stdhl](2,0)--(2,0.5);
\draw[vstdhl](3,0)--(3,0.5);
\draw (3.5,0)..controls (3.5,.25) and (2.5,.25) .. (2.5,0.5);
}
}
\ar{rd}
\ar[swap,dash]{rd}{
\tikzdiagh[xscale=0.4,yscale=0.75]{0}{
\draw[stdhl](1,0)--(1,0.5);
\draw[stdhl](2,0)--(2,0.5);
\draw[vstdhl](3,0)--(3,0.5);
\draw (1.5,0)..controls (1.5,.25) and (2.5,.25) .. (2.5,0.5);
}
}
& \bigoplus &
\bigoplus \\
 & 
x^{3}q \hspace{2mm}\tikzdiagh[xscale=0.5]{0}{
\draw[stdhl](1,0)--(1,0.5);
\draw[stdhl](2,0)--(2,0.5);
\draw[vstdhl](3,0)--(3,0.5);
\draw (1.5,0)--(1.5,0.5);
\filldraw [fill=white, draw=black] (-0.5,.5) rectangle (3.5,1) node[midway] { $\Tb$};
}
\ar{ruu}
\ar[swap, dash]{ruu}{
\tikzdiagh[xscale=0.4, yscale=0.75]{0}{
\draw[stdhl](1,0)--(1,0.5);
\draw[stdhl](2,0)--(2,0.5);
\draw[vstdhl](3,0)-- (3,0.5);
\draw (1.5,0)..controls (1.5,.25) and (3.5,.25) .. (3.5,0.5);
}
}
& x q\hspace{2mm}\tikzdiagh[xscale=0.5]{0}{
\draw[stdhl](1,0)--(1,0.5);
\draw[stdhl](2,0)--(2,0.5);
\draw[vstdhl](3,0)--(3,0.5);
\draw (1.5,0)--(1.5,0.5);
\filldraw [fill=white, draw=black] (-0.5,.5) rectangle (3.5,1) node[midway] { $\Tb$};
}\\
\end{tikzcd}
\label{eq:R^2_res}
\ee
Now, we apply the cap and only the projectives with a black strand between the two red strands survive. Taking the homology of the total complex gives, 
$$\mathcal{H}^{S^1\times D^2}(U_1,\kk) \cong x^{-1}q^{-1}\hspace{1mm}\kk[-1] \oplus xq \hspace{1mm}\kk [1] $$
This is not the same as \eqref{eq:AkhU1}, but they differ in a very similar way to the unknots in the $S^3$ case. We will see this is likely a more general feature of the two theories. 

\end{ex}

\begin{ex}{\textbf{Trefoil on Non-Trivial Cycle.}} 
Let us consider an embedded ``trefoil"  $K\subset S^1\times D^2$, which may be pictured as, 
\[
\tikzdiag[xscale = 0.8,yscale=0.6]{
\draw[rstdhl] (1,0) arc (0:-180:1);
\draw[rstdhl] (2,0) arc (0:-180:2);
\draw[rstdhl] (1,0) ..controls (1,0.5) and (2,0.5) .. (2,1);
\draw[rstdhl] (2,0) ..controls (2,0.5) and (1,0.5) .. (1,1);
\draw[rstdhl] (1,1) ..controls (1,1.5) and (2,1.5) .. (2,2);
\draw[rstdhl] (2,1) ..controls (2,1.5) and (1,1.5) .. (1,2);
\draw[rstdhl] (1,2) ..controls (1,2.5) and (2,2.5) .. (2,3);
\draw[rstdhl] (2,2) ..controls (2,2.5) and (1,2.5) .. (1,3);
\draw[rstdhl] (-1,0) -- (-1,3);
\draw[rstdhl] (-2,0) -- (-2,3);
\draw[rstdhl] (1,3) arc (0:180:1);
\draw[rstdhl] (2,3) arc (0:180:2);
\node[blue] at (1.5,-0.5){$\times$};
}
\]
The cube of resolutions for the chain complex in AKh is given by, 
\[
\begin{tikzcd}
& 
\tikzdiag[xscale = 0.3,yscale=0.2]{
\draw[rstdhl] (1,0) arc (0:-180:1);
\draw[rstdhl] (2,0) arc (0:-180:2);
\draw[rstdhl] (2,3) arc (0:-180:0.5);
\draw[rstdhl] (2,0) arc (0:180:0.5);
\draw[rstdhl] (-1,0) -- (-1,3);
\draw[rstdhl] (-2,0) -- (-2,3);
\draw[rstdhl] (1,3) arc (0:180:1);
\draw[rstdhl] (2,3) arc (0:180:2);
\node[blue] at (1.5,-0.5){ \Large $\times$};
}
\ar{r}
\ar{rd}
&
\tikzdiag[xscale = 0.3,yscale=0.2]{
\draw[rstdhl] (1,0) arc (0:-180:1);
\draw[rstdhl] (2,0) arc (0:-180:2);
\draw[rstdhl] (2,3) arc (0:-180:0.5);
\draw[rstdhl] (2,0) arc (0:180:0.5);
\draw[rstdhl] (-1,0) -- (-1,3);
\draw[rstdhl] (-2,0) -- (-2,3);
\draw[rstdhl] (1,3) arc (0:180:1);
\draw[rstdhl] (2,3) arc (0:180:2);
\node[blue] at (1.5,-0.5){ \Large $\times$};
\draw[rstdhl] (3,2) circle (0.7);
}
\ar{rd}
\\
\tikzdiag[xscale = 0.3,yscale=0.2]{
\draw[rstdhl] (1,0) arc (0:-180:1);
\draw[rstdhl] (2,0) arc (0:-180:2);
\draw[rstdhl] (1,0) -- (1,3);
\draw[rstdhl] (2,0) --(2,3);
\draw[rstdhl] (-1,0) -- (-1,3);
\draw[rstdhl] (-2,0) -- (-2,3);
\draw[rstdhl] (1,3) arc (0:180:1);
\draw[rstdhl] (2,3) arc (0:180:2);
\node[blue] at (1.5,-0.5){ \Large $\times$};
} 
\ar{ru}
\ar{r}
\ar{rd}
& 
\tikzdiag[xscale = 0.3,yscale=0.2]{
\draw[rstdhl] (1,0) arc (0:-180:1);
\draw[rstdhl] (2,0) arc (0:-180:2);
\draw[rstdhl] (2,3) arc (0:-180:0.5);
\draw[rstdhl] (2,0) arc (0:180:0.5);
\draw[rstdhl] (-1,0) -- (-1,3);
\draw[rstdhl] (-2,0) -- (-2,3);
\draw[rstdhl] (1,3) arc (0:180:1);
\draw[rstdhl] (2,3) arc (0:180:2);
\node[blue] at (1.5,-0.5){ \Large $\times$};
} 
\ar{ru}
\ar{rd}
& 
\tikzdiag[xscale = 0.3,yscale=0.2]{
\draw[rstdhl] (1,0) arc (0:-180:1);
\draw[rstdhl] (2,0) arc (0:-180:2);
\draw[rstdhl] (2,3) arc (0:-180:0.5);
\draw[rstdhl] (2,0) arc (0:180:0.5);
\draw[rstdhl] (-1,0) -- (-1,3);
\draw[rstdhl] (-2,0) -- (-2,3);
\draw[rstdhl] (1,3) arc (0:180:1);
\draw[rstdhl] (2,3) arc (0:180:2);
\node[blue] at (1.5,-0.5){ \Large $\times$};
\draw[rstdhl] (3,2) circle (0.7);
}
\ar{r}
& 
\tikzdiag[xscale = 0.3,yscale=0.2]{
\draw[rstdhl] (1,0) arc (0:-180:1);
\draw[rstdhl] (2,0) arc (0:-180:2);
\draw[rstdhl] (2,3) arc (0:-180:0.5);
\draw[rstdhl] (2,0) arc (0:180:0.5);
\draw[rstdhl] (-1,0) -- (-1,3);
\draw[rstdhl] (-2,0) -- (-2,3);
\draw[rstdhl] (1,3) arc (0:180:1);
\draw[rstdhl] (2,3) arc (0:180:2);
\node[blue] at (1.5,-0.5){ \Large $\times$};
\draw[rstdhl] (3,2) circle (0.7);
\draw[rstdhl] (3,0) circle (0.7);
}\\
& 
\tikzdiag[xscale = 0.3,yscale=0.2]{
\draw[rstdhl] (1,0) arc (0:-180:1);
\draw[rstdhl] (2,0) arc (0:-180:2);
\draw[rstdhl] (2,3) arc (0:-180:0.5);
\draw[rstdhl] (2,0) arc (0:180:0.5);
\draw[rstdhl] (-1,0) -- (-1,3);
\draw[rstdhl] (-2,0) -- (-2,3);
\draw[rstdhl] (1,3) arc (0:180:1);
\draw[rstdhl] (2,3) arc (0:180:2);
\node[blue] at (1.5,-0.5){ \Large $\times$};
}
\ar{ru}
\ar{r}
&
\tikzdiag[xscale = 0.3,yscale=0.2]{
\draw[rstdhl] (1,0) arc (0:-180:1);
\draw[rstdhl] (2,0) arc (0:-180:2);
\draw[rstdhl] (2,3) arc (0:-180:0.5);
\draw[rstdhl] (2,0) arc (0:180:0.5);
\draw[rstdhl] (-1,0) -- (-1,3);
\draw[rstdhl] (-2,0) -- (-2,3);
\draw[rstdhl] (1,3) arc (0:180:1);
\draw[rstdhl] (2,3) arc (0:180:2);
\node[blue] at (1.5,-0.5){ \Large $\times$};
\draw[rstdhl] (3,2) circle (0.7);
}
\ar{ru}
\end{tikzcd}
\]

We find that the homology of this complex can be computed to be, 
$$AKh(K,\kk) \cong \begin{matrix}
    x\hspace{1mm}\kk \oplus x^{-1}q^{-2}\hspace{1mm}\kk\oplus \\
    xq^4\hspace{1mm}\kk[2] \oplus x^{-1}q^2\hspace{1mm}\kk[2] \oplus \\
    xq^6\hspace{1mm}\kk[3] \oplus x^{-1}q^4\hspace{1mm}\kk[3]
\end{matrix}$$

In the KLRW approach, we consider the representative diagram for $K$,
\[
\tikzdiag[xscale = 0.8,yscale=0.6]{
\draw[bstdhl] (1.5,-0.5)..controls (1.5,0) and (3, 0) .. (3,5);
\draw[rstdhl] (1,0) arc (0:-180:1);
\draw[rstdhl] (2,0) arc (0:-180:2);
\draw[rstdhl] (1,0) ..controls (1,0.5) and (2,0.5) .. (2,1);
\draw[rstdhl] (2,0) ..controls (2,0.5) and (1,0.5) .. (1,1);
\draw[rstdhl] (1,1) ..controls (1,1.5) and (2,1.5) .. (2,2);
\draw[rstdhl] (2,1) ..controls (2,1.5) and (1,1.5) .. (1,2);
\draw[rstdhl] (1,2) ..controls (1,2.5) and (2,2.5) .. (2,3);
\draw[rstdhl] (2,2) ..controls (2,2.5) and (1,2.5) .. (1,3);
\draw[rstdhl] (-1,0) -- (-1,3);
\draw[rstdhl] (-2,0) -- (-2,3);
\draw[rstdhl] (1,3) arc (0:180:1);
\draw[rstdhl] (2,3) arc (0:180:2);
\draw[bstdhl] (3,-2) ..controls (2.5,-1.5) and (1.5,-1.5) .. (1.5,-0.5);
}
\]

We omit the calculation for the homology of this diagram and leave it as an exercise to the reader. In the next example, we give another way to derive this result. We find, 
$$\mathcal{H}^{S^1\times D^2}(K,\kk) \cong x\hspace{1mm} \kk \hspace{1mm} \oplus x^{-1} q^{-2} \hspace{1mm} \kk[2]\hspace{1mm} \oplus x q^{4} \hspace{1mm} \kk[2]\hspace{1mm} \oplus x q^{6} \hspace{1mm} \kk[3]\hspace{1mm} \oplus  x^{-1} q^{2} \hspace{1mm} \kk[4]\hspace{1mm} \oplus x^{-1} q^{4} \hspace{1mm} \kk[5]$$

It is easy to see, 
$$\mathcal{H}^{S^1\times D^2}(K,\kk) \otimes_\kk AKh(U_1,\kk) \cong AKh(K,\kk) \otimes_\kk \mathcal{H}^{S^1\times D^2}(U_1,\kk)$$
\end{ex}

\begin{ex}
Let us generalize the previous example and consider any knot $K \subset S^1 \times D^2$ that can be described by a diagram of the kind, 
\[
\tikzdiag[xscale = 0.8,yscale=0.6]{
\draw[bstdhl] (1.5,-0.5)..controls (1.5,0) and (3, 0) .. (3,5);
\draw[rstdhl] (1,0) arc (0:-180:1);
\draw[rstdhl] (2,0) arc (0:-180:2);
\draw[rstdhl] (2,0) -- (2,3);
\draw[rstdhl] (1,0) -- (1,3);
\draw[rstdhl] (-1,0) -- (-1,3);
\draw[rstdhl] (-2,0) -- (-2,3);
\draw[rstdhl] (1,3) arc (0:180:1);
\draw[rstdhl] (2,3) arc (0:180:2);
\draw[bstdhl] (3,-2) ..controls (2.5,-1.5) and (1.5,-1.5) .. (1.5,-0.5);
\filldraw [fill=white, draw=red] (0.8,1) rectangle (2.2,2) node[red,midway] {$\beta_K$};
\node[red] at (1.5,0.7) {$\cdots$};
\node[red] at (1.5,2.3) {$\cdots$};
\node[red] at (-1.5,1.5) {$\cdots$};
}
\]
Where $\beta_K$ is any braid word. Let $K' \subset S^3$ be the knot acquired from the diagram above by erasing the blue strand. We can write the homology of $K'$ and $K$ as, 
$$\mathcal{H}^{S^3}(K',\kk) \cong \mathcal{H} (\mathcal{F}_\beta (\cupp))$$
$$\mathcal{H}^{S^1\times D^2}(K,\kk) \cong \mathcal{H} \big((\mathcal{F}_\beta \circ \Bb_2 \circ \Bb_2)(\cupp)\big)$$
where $\mathcal{F}_\beta$ is some composition of braiding, cap, and cup functors. Furthermore, if we recall the resolution of the cap \eqref{eq:cap_res}, we can notice that the projective in the middle never contributes to $\mathcal{H} (\mathcal{F}_\beta (\cupp))$ since it has a black strand to the right of all red strands and the gradual application of $\mathcal{F}_\beta$ never changes this at any step. Therefore, when we eventually apply the final cap, the contributions from this projective will all be trivial. Thus, we can substitute the resolution of $\cupp$ above with the simpler complex, 
\begin{equation}
\begin{tikzcd} q \hspace{2mm}
\tikzdiagh[xscale=0.75]{0}{
\draw[stdhl](0,0)--(0,0.5);
\draw[stdhl](1,0)--(1,0.5);
\draw (0.5,0)--(0.5,0.5);
\filldraw [fill=white, draw=black] (-0.25,.5) rectangle (1.5,1) node[midway] { $\Tb$};
}
\ar{rr}
& &
0
\ar{rr}
& & q^{-1} \hspace{2mm}
\tikzdiagh[xscale=0.75]{0}{
\draw[stdhl](0,0)--(0,0.5);
\draw[stdhl](1,0)--(1,0.5);
\draw (0.5,0)--(0.5,0.5);

\filldraw [fill=white, draw=black] (-0.25,.5) rectangle (1.5,1) node[midway] { $\Tb$};
}
\end{tikzcd}
\label{eq:K'res}
\end{equation}
With this in mind, let us see how $\cupp$ behaves under braiding with the blue strand with the goal of connecting the homologies of $K$ and $K'$. We can write a projective resolution resolution of $(\Bb_2 \circ \Bb_2) (\mathbf{p}\cupp )$,

\[
\begin{tikzcd}[row sep=tiny]
& xq\hspace{1mm}\tikzdiagh[xscale=0.5]{0}{
\draw[stdhl](0,0)--(0,0.5);
\draw[stdhl](1,0)--(1,0.5);
\draw[vstdhl](2,0)--(2,0.5);
\draw (0.5,0)--(0.5,0.5);
\filldraw [fill=white, draw=black] (-0.5,0.5) rectangle (2.5,1) node[midway] { $\Tb$};
}
\ar{r}
\ar{rdddd}
&
x\hspace{1mm}\tikzdiagh[xscale=0.5]{0}{
\draw[stdhl](0,0)--(0,0.5);
\draw[stdhl](1,0)--(1,0.5);
\draw[vstdhl](2,0)--(2,0.5);
\draw (-0.3,0)--(-0.3,0.5);
\filldraw [fill=white, draw=black] (-0.5,0.5) rectangle (2.5,1) node[midway] { $\Tb$};
\node[orange] at (0.4,0.75){$\bullet$};
}
\ar{rdd}
& \\ & \bigoplus &\bigoplus & \\
x^3q^{2}\hspace{1mm}\tikzdiagh[xscale=0.5]{0}{
\draw[stdhl](0,0)--(0,0.5);
\draw[stdhl](1,0)--(1,0.5);
\draw[vstdhl](2,0)--(2,0.5);
\draw (0.5,0)--(0.5,0.5);
\filldraw [fill=white, draw=black] (-0.5,0.5) rectangle (2.5,1) node[midway] { $\Tb$};
\node[orange] at (0.4,0.75){$\bullet$};
}
\ar{r}
\ar{rdd}
&
x^2q^{2}\hspace{1mm}\tikzdiagh[xscale=0.5]{0}{
\draw[stdhl](0,0)--(0,0.5);
\draw[stdhl](1,0)--(1,0.5);
\draw[vstdhl](2,0)--(2,0.5);
\draw (1.5,0)--(1.5,0.5);
\filldraw [fill=white, draw=black] (-0.5,0.5) rectangle (2.5,1) node[midway] { $\Tb$};
\node[orange] at (0.4,0.75){$\bullet$};
}
\ar{rdd}
&
x^2\hspace{1mm}\tikzdiagh[xscale=0.5]{0}{
\draw[stdhl](0,0)--(0,0.5);
\draw[stdhl](1,0)--(1,0.5);
\draw[vstdhl](2,0)--(2,0.5);
\draw (2.3,0)--(2.3,0.5);
\filldraw [fill=white, draw=black] (-0.5,0.5) rectangle (2.5,1) node[midway] { $\Tb$};
\node[orange] at (0.4,0.75){$\bullet$};
}
\ar{r}
&
xq^{-1}\hspace{1mm}\tikzdiagh[xscale=0.5]{0}{
\draw[stdhl](0,0)--(0,0.5);
\draw[stdhl](1,0)--(1,0.5);
\draw[vstdhl](2,0)--(2,0.5);
\draw (0.5,0)--(0.5,0.5);
\filldraw [fill=white, draw=black] (-0.5,0.5) rectangle (2.5,1) node[midway] { $\Tb$};
}\\
& \bigoplus & \bigoplus \\
& x^3q\hspace{1mm}\tikzdiagh[xscale=0.5]{0}{
\draw[stdhl](0,0)--(0,0.5);
\draw[stdhl](1,0)--(1,0.5);
\draw[vstdhl](2,0)--(2,0.5);
\draw (0.5,0)--(0.5,0.5);
\filldraw [fill=white, draw=black] (-0.5,0.5) rectangle (2.5,1) node[midway] { $\Tb$};
}
\ar{ruu}
&
xq\hspace{1mm}\tikzdiagh[xscale=0.5]{0}{
\draw[stdhl](0,0)--(0,0.5);
\draw[stdhl](1,0)--(1,0.5);
\draw[vstdhl](2,0)--(2,0.5);
\draw (0.5,0)--(0.5,0.5);
\filldraw [fill=white, draw=black] (-0.5,0.5) rectangle (2.5,1) node[midway] { $\Tb$};
}
\ar{ruu}
\end{tikzcd}
\]

Notice that all summands with the orange bullet will not contribute to the homology $\mathcal{H}^{S^1\times D^2}(K,\kk)$ since we know we will be capping off the red strands at the top of the diagram, and any projective with a black strand to the right/left of all red strands must eventually be killed by the caps. This allows us to replace the complex above by, 
\[
\begin{tikzcd}[row sep=tiny]
& xq\hspace{1mm}\tikzdiagh[xscale=0.5]{0}{
\draw[stdhl](0,0)--(0,0.5);
\draw[stdhl](1,0)--(1,0.5);
\draw[vstdhl](2,0)--(2,0.5);
\draw (0.5,0)--(0.5,0.5);
\filldraw [fill=white, draw=black] (-0.5,0.5) rectangle (2.5,1) node[midway] { $\Tb$};
}
\ar{r}
\ar[dash]{r}{
\tikzdiagh[xscale=0.4,yscale=0.75]{0}{
\draw[stdhl](1,0)--(1,0.5);
\draw[stdhl](2,0)--(2,0.5);
\draw[vstdhl](3,0)--(3,0.5);
\draw (.5,0)..controls (.5,.25) and (1.5,.25) .. (1.5,0.5);
}
}
\ar{rdd}{\mathbb{I}}
&
x\hspace{1mm}\tikzdiagh[xscale=0.5]{0}{
\draw[stdhl](0,0)--(0,0.5);
\draw[stdhl](1,0)--(1,0.5);
\draw[vstdhl](2,0)--(2,0.5);
\draw (-0.3,0)--(-0.3,0.5);
\filldraw [fill=white, draw=black] (-0.5,0.5) rectangle (2.5,1) node[midway] { $\Tb$};
\node[orange] at (0.4,0.75){$\bullet$};
}
\ar{rd}
\ar[dash]{rd}{
\tikzdiagh[xscale=0.4,yscale=0.75]{0}{
\draw[stdhl](1,0)--(1,0.5);
\draw[stdhl](2,0)--(2,0.5);
\draw[vstdhl](3,0)--(3,0.5);
\draw (1.5,0)..controls (1.5,.25) and (.5,.25) .. (.5,0.5);
}
}
& \\ 
& \bigoplus & \bigoplus &
xq^{-1}\hspace{1mm}\tikzdiagh[xscale=0.5]{0}{
\draw[stdhl](0,0)--(0,0.5);
\draw[stdhl](1,0)--(1,0.5);
\draw[vstdhl](2,0)--(2,0.5);
\draw (0.5,0)--(0.5,0.5);
\filldraw [fill=white, draw=black] (-0.5,0.5) rectangle (2.5,1) node[midway] { $\Tb$};
}
\\
& x^3q\hspace{1mm}\tikzdiagh[xscale=0.5]{0}{
\draw[stdhl](0,0)--(0,0.5);
\draw[stdhl](1,0)--(1,0.5);
\draw[vstdhl](2,0)--(2,0.5);
\draw (0.5,0)--(0.5,0.5);
\filldraw [fill=white, draw=black] (-0.5,0.5) rectangle (2.5,1) node[midway] { $\Tb$};
}
&
xq\hspace{1mm}\tikzdiagh[xscale=0.5]{0}{
\draw[stdhl](0,0)--(0,0.5);
\draw[stdhl](1,0)--(1,0.5);
\draw[vstdhl](2,0)--(2,0.5);
\draw (0.5,0)--(0.5,0.5);
\filldraw [fill=white, draw=black] (-0.5,0.5) rectangle (2.5,1) node[midway] { $\Tb$};
}
\ar{ru}
\ar[swap,dash]{ru}{
\tikzdiagh[xscale=0.4,yscale=0.75]{0}{
\draw[stdhl](1,0)--(1,0.5);
\draw[stdhl](2,0)--(2,0.5);
\draw[vstdhl](3,0)--(3,0.5);
\draw (1.5,0) -- (1.5,.5) node[pos=.5,tikzdot]{};
}
}
\end{tikzcd}
\]

Furthermore, we can replace this complex by an even simpler complex by noticing that the top center summand has trivial contributions to the final chain complex and, $$Im\bigg (\tikzdiag[xscale=0.4,yscale=0.75]{
\draw[stdhl](1,0)--(1,0.5);
\draw[stdhl](2,0)--(2,0.5);
\draw[vstdhl](3,0)--(3,0.5);
\draw (1.5,0) -- (1.5,.5) node[pos=.5,tikzdot]{};
}\bigg) 
\subset
Im\bigg (\tikzdiag[xscale=0.4,yscale=0.75]{
\draw[stdhl](1,0)--(1,0.5);
\draw[stdhl](2,0)--(2,0.5);
\draw[vstdhl](3,0)--(3,0.5);
\draw (1.5,0)..controls (1.5,.25) and (.5,.25) .. (.5,0.5);
}\bigg)$$
So, in the end we have the replacement by the complex, 
\[
\begin{tikzcd}
x^3q\hspace{1mm}\tikzdiagh[xscale=0.5]{0}{
\draw[stdhl](0,0)--(0,0.5);
\draw[stdhl](1,0)--(1,0.5);
\draw[vstdhl](2,0)--(2,0.5);
\draw (0.5,0)--(0.5,0.5);
\filldraw [fill=white, draw=black] (-0.5,0.5) rectangle (2.5,1) node[midway] { $\Tb$};
} 
\ar{rr}&& 0 \ar{rr}&& xq^{-1}\hspace{1mm}\tikzdiagh[xscale=0.5]{0}{
\draw[stdhl](0,0)--(0,0.5);
\draw[stdhl](1,0)--(1,0.5);
\draw[vstdhl](2,0)--(2,0.5);
\draw (0.5,0)--(0.5,0.5);
\filldraw [fill=white, draw=black] (-0.5,0.5) rectangle (2.5,1) node[midway] { $\Tb$};
}
\end{tikzcd}
\]
We can compare this to \eqref{eq:K'res} to reach the conclusion that the two homologies are related by an overall tensor factor, 
$$\mathcal{H}^{S^1\times D^2}(K,\kk) \otimes_\kk (q^{-1}\hspace{1mm}\kk[-1]\oplus q\hspace{1mm}\kk[1])\cong \mathcal{H}^{S^3}(K',\kk)\otimes_\kk (x^{-1}q^{-1}\hspace{1mm}\kk[-1]\oplus x q\hspace{1mm}\kk[1])$$

The connection to AKh then follows. However, this calculation hints at a more general feature of the two theories. That is, let $K\subset S^1\times D^2$ be any annular knot. Suppose $[K] = n \in \pi_1(S^1\times D^2)$. Let $U_n$ denote an unknot wrapping the non-trivial cycle of the solid torus $n$ times (see next example for diagram). Then, it is tempting to conjecture, 
\begin{conj}
$\mathcal{H}^{S^1\times D^2}(K,\kk) \otimes_\kk AKh(U_n,\kk) \cong AKh(K,\kk) \otimes_\kk \mathcal{H}^{S^1\times D^2}(U_n,\kk)$
\end{conj}
\end{ex}

\begin{ex}{$\mathbf{U_n.}$} Let $U_n$ be the knot in $S^1\times D^2$ described by the following diagram,
\[
\tikzdiag[xscale = 0.8,yscale=0.6]{
\draw[rstdhl] (1,0) arc (0:-180:1);
\draw[bstdhl] (2,-1) -- (2,4);
\draw[rstdhl] (1,0) -- (1,3);
\draw[rstdhl] (-1,0) -- (-1,3);
\draw[rstdhl] (1,3) arc (0:180:1);
\filldraw [fill=white, draw=black] (0.8,1) rectangle (2.2,2) node[black,midway] {$n$};
}
\]
Where the box indicates $n$ full twists. The calculation of its homology is very similar to the case of $U_1$ in Example \ref{ex:U1}. Except here, we must repeatedly braid and resolve the complex \eqref{eq:R^2_res}. It is easy to do in general and we find its homology to be, 
$$\mathcal{H}^{S^1\times D^2}(U_n, \kk) \cong x^n \bigg( q^{-1}\hspace{1mm} \kk \oplus \bigoplus^n_{i=1} x^{2i}q^{2i-1}\hspace{1mm} \kk[2i] \oplus \bigoplus^{n-1}_{i=1} x^{2i}q^{2i+1}\hspace{1mm} \kk[2i+1]  \bigg)$$

For the AKh of this annular knot, we use the cube of resolutions, 
\[
\tikzdiag[xscale = 0.7,yscale=0.6]{
\draw[rstdhl] (1,0) arc (0:-180:1);
\draw[rstdhl] (2,0) arc (0:-180:2);
\draw[rstdhl] (1,0) ..controls (1,0.5) and (2,0.5) .. (2,1);
\draw[rstdhl] (2,0) ..controls (2,0.5) and (1,0.5) .. (1,1);
\draw[rstdhl] (-1,0) -- (-1,1);
\draw[rstdhl] (-2,0) -- (-2,1);
\draw[rstdhl] (1,1) arc (0:180:1);
\draw[rstdhl] (2,1) arc (0:180:2);
\node[blue] at (0,0.5){$\times$};
} \hspace{5mm}
\mbox{\large $\rightsquigarrow$} \hspace{5mm}
\begin{tikzcd}
    \tikzdiag[xscale = 0.5,yscale=0.4]{
\draw[rstdhl] (1,0) arc (0:-180:1);
\draw[rstdhl] (2,0) arc (0:-180:2);
\draw[rstdhl] (1,0) -- (1,1);
\draw[rstdhl] (2,0) -- (2,1);
\draw[rstdhl] (-1,0) -- (-1,1);
\draw[rstdhl] (-2,0) -- (-2,1);
\draw[rstdhl] (1,1) arc (0:180:1);
\draw[rstdhl] (2,1) arc (0:180:2);
\node[blue] at (0,0.5){$\times$};
}
\ar{r}
&
\tikzdiag[xscale = 0.5,yscale=0.4]{
\draw[rstdhl] (1,0) arc (0:-180:1);
\draw[rstdhl] (2,0) arc (0:-180:2);
\draw[rstdhl] (1,0) .. controls (1,0.3) and (2,0.3) .. (2,0);
\draw[rstdhl] (1,1) .. controls (1,0.7) and (2,0.7) .. (2,1);
\draw[rstdhl] (-1,0) -- (-1,1);
\draw[rstdhl] (-2,0) -- (-2,1);
\draw[rstdhl] (1,1) arc (0:180:1);
\draw[rstdhl] (2,1) arc (0:180:2);
\node[blue] at (0,0.5){$\times$};
} 
\end{tikzcd}
\]
to find that, 
$$AKh(U_2, \kk) \cong \kk \oplus x^2 q^2 \hspace{1mm} \kk \oplus x^{-2} q^{-2} \hspace{1mm} \kk \oplus q^2 \hspace{1mm} \kk[1]$$
In particular, the two Poincare polynomials to compare are, 
$$\mathcal{P}(AKh(U_2, \kk)) \cong x^2 q^2 +x^{-2}q^{-2} +1 +q^2 t$$
$$\mathcal{P}(\mathcal{H}^{S^1\times D^2}(U_2, \kk)) \cong x^2 q^2 t^4 +x^{-2}q^{-2} +t^2 +q^2 t^3$$
They could be related in some way (for instance, multiplying the top by $x^2$ and shifting $x\rightarrow x t$), but it seems strange to require them to be the same when even in $S^3$ we see different normalizations... Nevertheless, we can expect that for general $n$, we have,  
$$dim_\kk AKh(U_n, \kk)) = dim_\kk \mathcal{H}^{S^1\times D^2}(U_n, \kk)$$

And more generally, we conjecture the following,

\begin{conj}
For any annular knot $K \subset S^1\times D^2$, we have, 
$$dim_\kk AKh(K, \kk)) = dim_\kk \mathcal{H}^{S^1\times D^2}(K, \kk)$$
\end{conj}

\end{ex}

\section{Discussion} \label{sec:discussion}
\subsection{Towards Cups and Caps for Verma Modules}
In \cite{Web}, a beautiful description of a diagrammatic categorification of the cups and caps for finite dimensional representations was given. We have done our best to translate Webster's description in the case of the fundamental representation in Section \ref{sec:fund_strands}. However, the key point there was choosing a certain identification of the representation and its dual. Unfortunately, this is unavailable for the case of Verma modules and we must instead look to a different identification, which we may coherently generalize to the infinite dimensional case. In this section, we describe such an identification. 
\subsubsection{Decategorified Cups and Caps}\label{sec:decat_cups_caps}
Here, we will explicitly discuss the case for Verma modules, but the reader should note that it includes irreducible, finite-dimensional representations as a subcase (simply set $2\lambda \rightarrow N$ in the expressions). \\ \\
In the decategorified setting of quantum invariants from Verma module representations, a cap ought to be an $\mathfrak{sl_2}$-invariant map, 
$$\hat{C}: M^*_\lambda\otimes M_\lambda \rightarrow \C((x,q))$$
We wish to write this map down explicitly so that we may ponder its categorification. Let us begin by choosing the dual basis in the following way, 

$$v_{\lambda,0}^*(v_{\lambda,i})=c_0 \delta_{i,0}$$

$$v_{\lambda,i}^* := E^iv_{\lambda,0}^*$$

Where $c_0 \in \C((x,q))$ is some constant and $v_{\lambda, i}$ is the basis described in Section \ref{sec: prelim}. It follows by a direct calculation that, 
\begin{prop}
$$v_{\lambda,i}^*(v_{\lambda,j}) = \delta_{i,j} \cdot e_i \cdot c_0$$

$$e_i := (-1)^i x^i q^{i-i^2}\prod^i_{k=1}\bigg( [k]_q [2\lambda-k+1]_q\bigg)$$
\end{prop}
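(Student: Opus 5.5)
The plan is a direct induction on $i$. It uses only the definition of the dual module through the antipode, $(u\cdot f)(v) = f(S(u)\,v)$ for $u\in U_q(\mathfrak{sl}_2)$, $f\in M^*_\lambda$, $v\in M_\lambda$. I will assume this is the intended left module structure on $M^*_\lambda$; it is the standard one compatible with the Hopf structure in Section \ref{sec: prelim}. With $S(E) = -KE$ and the Verma action from Section \ref{sec: prelim}, the first step is the one-step formula
$$(E\cdot f)(v_{\lambda,j}) = -f(KE\,v_{\lambda,j}) = -[j]_q[2\lambda-j+1]_q\, x q^{-2(j-1)}\, f(v_{\lambda,j-1}),$$
read as $0$ when $j=0$ because $E v_{\lambda,0}=0$. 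Here we used $K v_{\lambda,j-1} = xq^{-2(j-1)}v_{\lambda,j-1}$.

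Next, apply this with $f = v^*_{\lambda,i-1}$, using $v^*_{\lambda,i} = E\,v^*_{\lambda,i-1}$. This gives the recursion
$$v^*_{\lambda,i}(v_{\lambda,j}) = -x q^{-2(j-1)}[j]_q[2\lambda-j+1]_q\, v^*_{\lambda,i-1}(v_{\lambda,j-1}).$$
The base case $i=0$ is the defining normalization $v^*_{\lambda,0}(v_{\lambda,j}) = c_0\delta_{j,0}$, so $e_0=1$ (empty product).

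For the inductive step, split into cases on $j$. If $j=0$ and $i\ge 1$, the value vanishes since $E v_{\lambda,0}=0$. If $j\ge1$, the inductive hypothesis gives $v^*_{\lambda,i-1}(v_{\lambda,j-1}) = \delta_{i-1,j-1}e_{i-1}c_0$, so the value is zero unless $j=i$. In the case $j=i$ we get $e_i = -xq^{-2(i-1)}[i]_q[2\lambda-i+1]_q\, e_{i-1}$.

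The last step unrolls this recursion. The signs give $(-1)^i$ and the $x$ factors give $x^i$. The $q$-powers multiply to $\prod_{k=1}^i q^{-2(k-1)} = q^{-i(i-1)} = q^{i-i^2}$. The remaining factors give $\prod_{k=1}^i [k]_q[2\lambda-k+1]_q$. Together these reproduce the stated $e_i$.

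The only real obstacle is fixing conventions rather than computation. One must use the antipode (not $S^{-1}$, and with the left module structure) so that the factor $K$ appears to the left of $E$ in $S(E) = -KE$. This is what produces the weight $xq^{-2(j-1)}$ of $v_{\lambda,j-1}$ rather than that of $v_{\lambda,j}$, and hence the exponent $q^{i-i^2}$. I would check this once for $i=1$ against the formula, $e_1 = -x[1]_q[2\lambda]_q$, before running the induction.
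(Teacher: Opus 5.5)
Your proposal is correct and takes the same approach as the paper. The paper invokes $(f\cdot v^*)(v)=v^*(S(f)\cdot v)$ with $S(E)=-KE$ and calls the rest a straightforward calculation. Your induction on $i$, using the recursion $e_i=-xq^{-2(i-1)}[i]_q[2\lambda-i+1]_q\,e_{i-1}$, is that calculation written out in full, including the correct placement of $K$ after $E$ that produces $q^{i-i^2}$.
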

\begin{proof}
    Using the fact that for an element $f\in U_q(\mathfrak{sl_2})$, we have, 
    $$(f \cdot v^*)(v) = v^* (S(f)\cdot v)$$
    for the antipode $S$ described in Section \ref{sec: prelim}, this is a straightforward calculation. 
\end{proof}
Similarly, it is easy to work out the $\mathfrak{sl_2}$ action on this basis, and we find, 

$$K \cdot v_{\lambda,i}^* = x^{-1}q^{2i}v_{\lambda,i}^*$$
$$F\cdot v_{\lambda,i}^* = [i]_q [2\lambda-i+1]_qv_{\lambda,i-1}^*$$
$$E\cdot v_{\lambda,i}^* = v_{\lambda,i+1}^*$$

Using this action, it is easy to write down a suitable cap, 
\begin{prop} \label{prop:decat_cap_formula}
Up to an overall constant, the cap is given by, 
$$\hat{C} (v_{\lambda,i}^* \otimes v_{\lambda,j}) = \delta_{i,j} \cdot e_i $$
\end{prop}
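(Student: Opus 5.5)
The natural approach is to use weight considerations to force the cap to be diagonal, and then use invariance under $E$ (or $F$) to pin down the diagonal entries up to one overall constant.

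\emph{Diagonality.} An $U_q(\mathfrak{sl}_2)$-invariant map $\hat C: M^*_\lambda\otimes M_\lambda\to \C((x,q))$ must intertwine the action of $K$ with the counit. Invariance means $\hat C(\Delta(K)\cdot w)=\varepsilon(K)\hat C(w)=\hat C(w)$. We have $\Delta(K)=K\otimes K$, and by the displayed action $K\cdot v^*_{\lambda,i}=x^{-1}q^{2i}v^*_{\lambda,i}$ and $K\cdot v_{\lambda,j}=xq^{-2j}v_{\lambda,j}$. So $v^*_{\lambda,i}\otimes v_{\lambda,j}$ is a $K$-eigenvector with eigenvalue $q^{2(i-j)}$. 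For $i\neq j$ this eigenvalue differs from $1$, which forces $\hat C(v^*_{\lambda,i}\otimes v_{\lambda,j})=0$. Hence $\hat C(v^*_{\lambda,i}\otimes v_{\lambda,j})=\delta_{i,j}\,\alpha_i$ for some scalars $\alpha_i$.

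\emph{Recursion from $E$.} Impose $\hat C(\Delta(E)\cdot w)=\varepsilon(E)\hat C(w)=0$ with $\Delta(E)=E\otimes 1+K^{-1}\otimes E$, applied to $w=v^*_{\lambda,i}\otimes v_{\lambda,i+1}$. Using $E\cdot v^*_{\lambda,i}=v^*_{\lambda,i+1}$, $K^{-1}\cdot v^*_{\lambda,i}=xq^{-2i}v^*_{\lambda,i}$, and $E\cdot v_{\lambda,i+1}=[i+1]_q[2\lambda-i]_q v_{\lambda,i}$, we get
$$\alpha_{i+1}+xq^{-2i}[i+1]_q[2\lambda-i]_q\,\alpha_i=0 .$$
Therefore $\alpha_{i+1}=-xq^{-2i}[i+1]_q[2\lambda-i]_q\,\alpha_i$. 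Iterating from $\alpha_0$ gives
$$\alpha_i=(-1)^i x^i q^{-2\sum_{k=0}^{i-1}k}\prod_{k=1}^i[k]_q[2\lambda-k+1]_q\,\alpha_0 .$$
Since $-2\sum_{k=0}^{i-1}k=i-i^2$, this is exactly $e_i\alpha_0$.

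\emph{Remaining checks.} One should verify that the $F$-relation is then automatic, using $\Delta(F)=F\otimes K+1\otimes F$ on $v^*_{\lambda,i+1}\otimes v_{\lambda,i}$. It yields $[i+1][2\lambda-i]\,xq^{-2i}\alpha_i+\alpha_{i+1}=0$, the same recursion, so it imposes nothing new. Invariance under $K^{-1}$ is clear from the $K$ computation. Finally, the overall constant is $\alpha_0$, which may be set to $1$. This is consistent with the previous proposition: the pairing $v^*(v)$ itself realizes the cap with $\alpha_0=c_0$.

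The main obstacle is bookkeeping rather than conceptual: the $q$-exponent must come out as $i-i^2$ given the conventions $\Delta(E)=E\otimes1+K^{-1}\otimes E$ and the dual action. The order of tensor factors also matters, and has to be matched carefully to the coproduct as written.
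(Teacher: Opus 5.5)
Your proposal is correct and is essentially the paper's argument: the paper just asserts that invariance of the stated formula can be checked generator by generator using the coproduct, and your $K$, $E$, $F$ computations carry out exactly that check (the exponent $i-i^2$ and the matching $F$-recursion both come out right). Running the check as a derivation rather than a verification also shows that the invariant pairing is unique up to the constant $\alpha_0$. Your closing remark gives a one-line alternative: the evaluation $v^*\otimes v\mapsto v^*(v)$ is automatically invariant, since $\sum S(f_{(1)})f_{(2)}=\varepsilon(f)$, so the formula follows directly from the preceding proposition.
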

\begin{proof}
    It suffices to check invariance under $\mathfrak{sl}_2$ (i.e. $\hat{C}\bigg(f\cdot(v_{\lambda,i}^* \otimes v_{\lambda,j}) \bigg)  =0$). This is easily done using the coproduct in Section \ref{sec: prelim}. 
\end{proof}

The cup is also easy to work out using the same basis and it is given by, 
\begin{prop} \label{prop:decat_cup_formula}
    Up to an overall constant, the cup is given by, 
    $$\check{C}(1) = \sum^\infty_{i=0} \frac{q^{2i}}{e_i} \cdot v^*_{\lambda,i}\otimes v_{\lambda,i}$$
\end{prop}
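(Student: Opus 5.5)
The plan is to verify directly that $\check{C}(1)$ is an $U_q(\mathfrak{sl}_2)$-invariant vector in (a suitable completion of) $M^*_\lambda\otimes M_\lambda$. This means checking $f\cdot \check{C}(1)=\varepsilon(f)\check{C}(1)$ for the generators $f=K^{\pm1},E,F$, using the coproduct from Section \ref{sec: prelim} and the action on the dual basis $v^*_{\lambda,i}$ computed just above. Write $a_i:=q^{2i}/e_i$, so that $\check{C}(1)=\sum_i a_i\, v^*_{\lambda,i}\otimes v_{\lambda,i}$. The infinite sum is understood in the completion of $M^*_\lambda\otimes M_\lambda$ along the weight filtration. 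Each coefficient of the image under $f$ involves only finitely many terms, so the check can be done coefficient by coefficient.

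First, $K$-invariance is immediate. Since $\Delta(K)=K\otimes K$, we get
$$(K\otimes K)(v^*_{\lambda,i}\otimes v_{\lambda,i})=x^{-1}q^{2i}\cdot xq^{-2i}\,(v^*_{\lambda,i}\otimes v_{\lambda,i})=v^*_{\lambda,i}\otimes v_{\lambda,i},$$
and the same holds for $K^{-1}$.

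Next, for $F$ I apply $\Delta(F)=F\otimes K+1\otimes F$:
$$\Delta(F)(v^*_{\lambda,i}\otimes v_{\lambda,i})=[i]_q[2\lambda-i+1]_q\,xq^{-2i}\,v^*_{\lambda,i-1}\otimes v_{\lambda,i}+v^*_{\lambda,i}\otimes v_{\lambda,i+1}.$$
Collecting the coefficient of $v^*_{\lambda,i}\otimes v_{\lambda,i+1}$, vanishing is equivalent to the recursion
$$\frac{a_{i+1}}{a_i}=-\frac{q^{2i+2}}{x\,[i+1]_q[2\lambda-i]_q}.$$
From the explicit formula for $e_i$ one reads off
$$\frac{e_{i+1}}{e_i}=-x\,q^{-2i}[i+1]_q[2\lambda-i]_q,$$
hence
$$\frac{a_{i+1}}{a_i}=q^2\frac{e_i}{e_{i+1}},$$
which is exactly the required ratio.

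The $E$ check is the analogous computation with $\Delta(E)=E\otimes 1+K^{-1}\otimes E$. It gives the terms $v^*_{\lambda,i+1}\otimes v_{\lambda,i}$ and $x^{-1}q^{2i}[i]_q[2\lambda-i+1]_q\, v^*_{\lambda,i}\otimes v_{\lambda,i-1}$. Vanishing of the coefficient of $v^*_{\lambda,i+1}\otimes v_{\lambda,i}$ produces a second recursion for $a_{i+1}/a_i$. I must check that it coincides with the one from $F$. This consistency check between the $E$- and $F$-recursions is where I expect any difficulty: a mismatch of $q$-powers there would force a different normalization (for example $q^{-2i}$ instead of $q^{2i}$) in the numerator.

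Finally, to justify ``up to an overall constant'', I would argue uniqueness. An invariant vector must have weight zero, so it lies in the span of the $v^*_{\lambda,i}\otimes v_{\lambda,i}$. The $F$-recursion then determines all $a_i$ from $a_0$, because $[i+1]_q[2\lambda-i]_q\neq 0$ for generic $\lambda$. As a consistency check, I would also verify the zigzag identity $(\hat C\otimes 1)\circ(1\otimes \check C)\propto \mathrm{id}$ on $v_{\lambda,j}$, using Proposition \ref{prop:decat_cap_formula}. Only the $i=j$ term survives, giving $e_j\cdot q^{2j}/e_j=q^{2j}$. This is a $K$-twist (the expected pivotal factor) rather than a constant, and that is consistent with $\check C$ being the correctly paired cup.
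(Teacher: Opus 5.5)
Your strategy is the right one: check $\Delta(f)\,\check C(1)=\varepsilon(f)\,\check C(1)$ coefficientwise in the weight-completed $M^*_\lambda\otimes M_\lambda$. The paper gives no argument for the cup, but it proves Proposition \ref{prop:decat_cap_formula} by exactly this kind of direct coproduct check, so this is evidently what it intends. Your $K$- and $F$-computations, the ratio $e_{i+1}/e_i=-xq^{-2i}[i+1]_q[2\lambda-i]_q$, and the uniqueness argument are all correct.

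The weak point is the $E$-step. You single it out as the crux but leave it undone, and you have set it up with the wrong scalar. Since $K\cdot v^*_{\lambda,i}=x^{-1}q^{2i}v^*_{\lambda,i}$, the term $K^{-1}\otimes E$ of $\Delta(E)$ produces
$$K^{-1}v^*_{\lambda,i}\otimes Ev_{\lambda,i}=xq^{-2i}[i]_q[2\lambda-i+1]_q\,v^*_{\lambda,i}\otimes v_{\lambda,i-1}.$$
You wrote $x^{-1}q^{2i}$, which is the eigenvalue of $K$, not of $K^{-1}$.

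With the correct scalar, the coefficient of $v^*_{\lambda,i+1}\otimes v_{\lambda,i}$ in $\Delta(E)\,\check C(1)$ is
\[
a_i+a_{i+1}\,x\,q^{-2i-2}[i+1]_q[2\lambda-i]_q ,
\]
This is verbatim the coefficient of $v^*_{\lambda,i}\otimes v_{\lambda,i+1}$ in $\Delta(F)\,\check C(1)$. So the two recursions coincide and both vanish for $a_i=q^{2i}/e_i$. There is no normalization ambiguity, and writing this line out completes the proof.

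With your scalar, $E$-invariance would instead require $a_{i+1}/a_i=-xq^{-2i-2}/([i+1]_q[2\lambda-i]_q)$. This is off from the $F$-recursion by a factor $x^{2}q^{-4i-4}$. Both recursions must hold at once, so such a mismatch could not be cured by switching $q^{2i}$ to $q^{-2i}$ as you anticipate. It would force all $a_i=0$ and wrongly suggest that no invariant vector exists.

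Finally, your zigzag remark is only heuristic and is not needed for the argument. As written it applies $\hat C$ to $M_\lambda\otimes M^*_\lambda$ rather than $M^*_\lambda\otimes M_\lambda$, and the naive pairing there is not an intertwiner.
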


The resulting invariant of the unknot is, up to an overall constant, given by, 
$$\hat{C}\big(\check{C}(1)\big) = \sum^\infty_{i=0}q^{2i} = \frac{1}{1-q^2}$$
Of course, the overall constant of the cup and the cap can be fixed by some topological constraints. That is, the unnormalized unknot in the state-sum formalism is given by \cite{Park}, 
$$\tilde{f}_{O} = x^{-1}q\frac{1}{1-q^2}$$

\subsubsection{Finite Dimensional Representations and Oriented Black Strands}
In the section above, we have diverged from \cite{Web} by picking a different basis and identification for the duals. In this section, we will argue that our choices also have a very natural categorification in the KLRW framework by passing to the diagrammatic algebra which includes upward moving strands as well as downward moving strands. This gives us a way of categorifying $E$ and $F$ ``together" and allows us to explicitly write down the bimodules for the caps (and hence the cups as well). \\ \\
We will introduce a new diagrammatic algebra with oriented strands. This was done in \cite{Web}, where they were called ``double tensor product algebras.'' These algebras are Morita-equivalent to KLRW algebras \cite{Web}. Following Webster, we must now keep track of the weight spaces in each region of a diagram. This is done by requiring the labeling of regions to be in line with the following rules, 
\[
\begin{tikzcd}
\tikzdiag[scale=1]{
\draw[mid->] (0,0)--(0,1) node[left,midway]{$\beta \hspace{2mm}$} node[right,midway]{$ \hspace{2mm}\beta - 2$}
} && \tikzdiag[scale=1]{
\draw[mid<-] (0,0)--(0,1) node[left,midway]{$\beta \hspace{2mm}$} node[right,midway]{$ \hspace{2mm}\beta + 2$}
}\\
\tikzdiag[scale=1]{
\draw[pstdhl,mid->] (0,0) node[below]{\small $\mu_i$}--(0,1) node[color=black,left,midway]{$\beta \hspace{2mm}$} node[color=black,right,midway]{$ \hspace{2mm}\beta + \mu_i$}
} && \tikzdiag[scale=1]{
\draw[pstdhl,mid<-] (0,0) node[below]{\small $\mu_i^*$}--(0,1) node[color=black,left,midway]{$\beta \hspace{2mm}$} node[color=black,right,midway]{$ \hspace{2mm}\beta - \mu_i$}
}
\end{tikzcd}
\]
For any given diagram, the labeling of regions is uniquely determined by the rules above, so we will often not write the weights of each region. We will allow black strands to close into bubbles, self-intersect, and form caps. So, there are black caps and cups as new diagrammatic generators, 
\[
\begin{tikzcd}
\tikzdiag[xscale=1,yscale=1]{
\draw[mid->] (0,0) arc (0:180:1) ;
} && \tikzdiag[xscale=-1,yscale=1]{
\draw[mid->] (0,0) arc (0:180:1);
}\\
\tikzdiag[xscale=1,yscale=-1]{
\draw[mid->] (0,0) arc (0:180:1);
} && \tikzdiag[xscale=-1,yscale=-1]{
\draw[mid->] (0,0) arc (0:180:1);
}
\end{tikzcd}
\]
By a \textbf{double string diagram}, we will mean string diagram with oriented strands as described above where the far left region is labeled by the weight $0$. Note that the string diagrams from the dg-KLRW algebras considered thus far in this paper may be considered as double string diagrams with strictly downward oriented black strands and upward oriented colored strands. 
\begin{defn} \label{defn:double_dg_KLRW}
The \textbf{double dg-KLRW algebra} $D\Tb^{\underline{\mu}}$ is the $\kk$-algebra spanned by double string diagrams such that, 
\begin{itemize}
    \item It is $\Z \times \Z^2$ graded as in Definition \ref{def:dgKLRWalgebra}, with the grading of a digram dictated by, 
     \[
        deg\hspace{2mm} \tikzdiag[scale=1]{
        \draw[upper->] (1,0)..controls (1,0.5) and (0,0.5) .. (0,1);
        \draw[upper->] (0,0) ..controls (0,0.5) and (1,0.5) .. (1,1);
        }\hspace{2mm} = \hspace{2mm} deg\hspace{2mm} \tikzdiag[yscale=-1]{
        \draw[upper->] (1,0)..controls (1,0.5) and (0,0.5) .. (0,1);
        \draw[upper->] (0,0) ..controls (0,0.5) and (1,0.5) .. (1,1);
        }\hspace{2mm} =
        \hspace{2mm} q^{-2}
        \]
        \[
        deg \hspace{2mm} \tikzdiag[yscale=1]{
        \draw[upper->] (0,0)--(0,1) node[tikzdot,pos=0.25]{};
        }\hspace{2mm} = \hspace{2mm} deg \hspace{2mm} \tikzdiag[yscale=1]{
        \draw[upper<-] (0,0)--(0,1) node[tikzdot,pos=0.25]{};
        }\hspace{2mm} = \hspace{2mm} q^2
        \]
        \[\begin{tikzcd}
        deg \hspace{2mm} \tikzdiag[scale=1]{
        \draw[mid<-] (0,0) arc (0:180:0.5) node[midway,below=3pt]{\small $\beta$};
        } \hspace{2mm} = \hspace{2mm} q^{\beta -1} && deg \hspace{2mm} \tikzdiag[scale=1]{
        \draw[mid->] (0,0) arc (0:180:0.5) node[midway,below=3pt]{\small $\beta$};
        } \hspace{2mm} = \hspace{2mm} q^{-\beta -1} \\
        deg \hspace{2mm} \tikzdiag[yscale=-1]{
        \draw[mid->] (0,0) arc (0:180:0.5) node[midway,above=3pt]{\small $\beta$};
        } \hspace{2mm} = \hspace{2mm} q^{\beta -1} && deg \hspace{2mm} \tikzdiag[yscale=-1]{
        \draw[mid<-] (0,0) arc (0:180:0.5) node[midway,above=3pt]{\small $\beta$};
        } \hspace{2mm} = \hspace{2mm} q^{-\beta -1}
        \end{tikzcd}
        \]
        \[
        deg\hspace{2mm} \tikzdiag[scale=1]{
        \draw (1,0)..controls (1,0.5) and (0,0.5) .. (0,1);
        \draw[pstdhl] (0,0) node[below]{\small $\mu_i$} ..controls (0,0.5) and (1,0.5) .. (1,1);
        }\hspace{2mm} = \begin{cases}
            q^{\mu_i} &\text{if the strands are oppositely oriented} \\
            1&\text{if the strands are oriented the same way}
        \end{cases}
        \]
        \[
        deg\hspace{2mm} \tikzdiag[scale=1]{
        \draw[lower->] (1,0)..controls (1,0.5) and (0,0.5) .. (0,0.5);
        \draw[upper->] (0,0.5) ..controls (0.02,0.5) and (1,0.5) .. (1,1);
        \draw[pstdhl,<-] (0,0) node[below=3pt]{\small $\mu_i$} --(0,1) node[midway,nail]{};
        } \hspace{2mm} = \hspace{2mm} deg\hspace{2mm} \tikzdiag[scale=1]{
        \draw[lower<-] (1,0)..controls (1,0.5) and (0,0.5) .. (0,0.5);
        \draw[upper<-] (0,0.5) ..controls (0.02,0.5) and (1,0.5) .. (1,1);
        \draw[pstdhl,->] (0,0) node[below=3pt]{\small $\mu_i$} --(0,1) node[midway,nail]{};
        } \hspace{2mm}= \hspace{2mm} h q^{2 \mu_i}
        \]
        \item Multiplication is defined in the same way as in Definition \ref{def:dgKLRWalgebra}
        \item Any violated diagram is 0 (any diagram with a black strand at the far left)
        \item The algebra is subjected to all the relations in Definition \ref{def:dgKLRWalgebra} (viewed as double string diagrams) as well as the new relations, 
        \begin{enumerate}
            \item Black strand relations,
            \[
           \tikzdiag[scale=1]{
            \draw[lower->] (1,0) arc (0:180:0.5);
            \draw[upper<-] (0.5,0) ..controls (0.5,0.5) and (1,0.5) .. (1,1); 
            } \hspace{2mm} = \hspace{2mm} \tikzdiag[scale=1]{
            \draw[lower->] (1,0) arc (0:180:0.5);
            \draw[upper<-] (0.5,0) ..controls (0.5,0.5) and (0,0.5) .. (0,1); 
            } \hspace{30mm}
            \tikzdiag[scale=1]{
            \draw[lower->] (1,0) arc (0:180:0.5);
            \draw[upper->] (0.5,0) ..controls (0.5,0.5) and (1,0.5) .. (1,1); 
            } \hspace{2mm} = \hspace{2mm} \tikzdiag[scale=1]{
            \draw[lower->] (1,0) arc (0:180:0.5);
            \draw[upper->] (0.5,0) ..controls (0.5,0.5) and (0,0.5) .. (0,1); 
            } \hspace{5mm}\text{and their horizontal and vertical mirrors}
            \]
            \[
            \tikzdiag[scale=1.5]{
            \draw (1,0)..controls (1,0.5) and (0,0.5) .. (0,1);
            \draw (0,0) ..controls (0,0.5) and (1,.5) .. (1,1);
            \draw[mid->] (1,1) arc (0:180:0.5);
            } \hspace{2mm}=\hspace{2mm} - \sum_{a+b = -1} \hspace{2mm} \tikzdiag[scale=1.5]{
            \draw[lower->] (1,1) arc (0:360:0.4) node[tikzdot, pos=0.5]{} node[left, pos=0.5]{$a$};
            \draw[lower<-] (1,0) arc (0:180:0.5) node[tikzdot, pos=0.75]{} node[left, pos=0.75]{$b$};
            }
            \] \\ \\
            \[
            \tikzdiag[scale=1.5]{
            \draw (1,0)..controls (1,0.5) and (0,0.5) .. (0,1);
            \draw (0,0) ..controls (0,0.5) and (1,.5) .. (1,1);
            \draw[mid<-] (1,1) arc (0:180:0.5);
            } \hspace{2mm}=\hspace{2mm}  \sum_{a+b = -1} \hspace{2mm} \tikzdiag[scale=1.5]{
            \draw[lower<-] (1,1) arc (0:360:0.4) node[tikzdot, pos=0.5]{} node[left, pos=0.5]{$a$};
            \draw[lower->] (1,0) arc (0:180:0.5) node[tikzdot, pos=0.75]{} node[left, pos=0.75]{$b$};
            }
            \] \\ \\
            \[
            \tikzdiag[scale=1.5]{
            \draw[->] (1,0)..controls (1,0.5) and (0,0.5) .. (0,1);
            \draw (0,0) ..controls (0,0.5) and (1,.5) .. (1,1);
            \draw[<-] (1,1) ..controls (1,1.5) and (0,1.5) ..(0,2);
            \draw (0,1) ..controls (0,1.5) and (1,1.5) .. (1,2);
            } \hspace{2mm}=\hspace{2mm} - \hspace{2mm}\tikzdiag[scale=1.5]{
            \draw[mid<-](0,0)--(0,2);
            \draw[mid->](1,0)--(1,2);
            }  \hspace{2mm}+\hspace{2mm}\sum_{a+b+c = -2} \hspace{2mm} \tikzdiag[scale=1.5]{
            \draw[upper<-] (1,2) arc (0:-180:0.5) node[pos=0.25, tikzdot]{} node[pos=0.25,right ]{$c$};
            \draw[lower<-] (2,1) arc (0:360:0.4) node[tikzdot, pos=0.5]{} node[left, pos=0.5]{$a$};
            \draw[lower->] (1,0) arc (0:180:0.5) node[tikzdot, pos=0.75]{} node[left, pos=0.75]{$b$};
            }
            \] \\ \\
            \[
            \tikzdiag[scale=1.5,yscale=-1]{
            \draw[->] (1,0)..controls (1,0.5) and (0,0.5) .. (0,1);
            \draw (0,0) ..controls (0,0.5) and (1,.5) .. (1,1);
            \draw[<-] (1,1) ..controls (1,1.5) and (0,1.5) ..(0,2);
            \draw (0,1) ..controls (0,1.5) and (1,1.5) .. (1,2);
            } \hspace{2mm}=\hspace{2mm} - \hspace{2mm}\tikzdiag[scale=1.5,yscale=-1]{
            \draw[mid<-](0,0)--(0,2);
            \draw[mid->](1,0)--(1,2);
            }  \hspace{2mm}+\hspace{2mm}\sum_{a+b+c = -2} \hspace{2mm} \tikzdiag[scale=1.5,yscale=-1]{
            \draw[upper<-] (1,2) arc (0:-180:0.5) node[pos=0.25, tikzdot]{} node[pos=0.25,right ]{$c$};
            \draw[lower<-] (2,1) arc (0:360:0.4) node[tikzdot, pos=0.5]{} node[left, pos=0.5]{$a$};
            \draw[lower->] (1,0) arc (0:180:0.5) node[tikzdot, pos=0.75]{} node[left, pos=0.75]{$b$};
            }
            \] 
            We also allow for bubbles (closed circles of black strands) to have a negative number of black dots and declare that any bubble of degree $1$ is equal to $1$ and if the degree is of the form $x^0 q^{-n}$ for $n$ a positive integer, then it is 0. 

            \item Colored strand relations (and their horizontal and vertical mirrors), 
            \be
           \tikzdiag[scale=2]{
            \draw[lower->] (1,0) arc (0:180:0.5);
            \draw[pstdhl,upper<-] (0.5,0) ..controls (0.5,0.5) and (1,0.5) .. (1,1); 
            } \hspace{2mm} = \hspace{2mm} \tikzdiag[scale=2]{
            \draw[lower->] (1,0) arc (0:180:0.5);
            \draw[pstdhl,upper<-] (0.5,0) ..controls (0.5,0.5) and (0,0.5) .. (0,1); } \ee \\
            
            \be 
            \tikzdiag[scale=2]{
            \draw[->] (0,0)--(1,1);
            \draw[->] (1,0)--(0,1);
            \draw[pstdhl,mid->] (0.5,0) ..controls (0,0.25) and (0,.75) .. (0.5,1); 
            } \hspace{2mm}= \hspace{2mm} \tikzdiag[scale=2]{
            \draw[->] (0,0)--(1,1);
            \draw[->] (1,0)--(0,1);
            \draw[pstdhl,mid->] (0.5,0) ..controls (1,0.25) and (1,.75) .. (0.5,1); 
            }
            \ee \\
            
            \be
            \tikzdiag[scale=2]{
            \draw[->] (0,0)--(1,1);
            \draw[pstdhl,->] (1,0)--(0,1);
            \draw[mid->] (0.5,0) ..controls (0,0.25) and (0,.75) .. (0.5,1); 
            } \hspace{2mm} = \hspace{2mm} \tikzdiag[scale=2]{
            \draw[->] (0,0)--(1,1);
            \draw[pstdhl,->] (1,0)--(0,1);
            \draw[mid->] (0.5,0) ..controls (1,0.25) and (1,.75) .. (0.5,1); 
            }
            \ee \\

            \be
            \tikzdiag[scale=2]{
            \draw[->] (0,0)--(1,1) node[tikzdot, pos=0.2]{};
            \draw[pstdhl,->] (1,0)--(0,1); 
            } \hspace{2mm} = \hspace{2mm} \tikzdiag[scale=2]{
            \draw[->] (0,0)--(1,1) node[tikzdot, pos=0.8]{};
            \draw[pstdhl,->] (1,0)--(0,1); 
            }
            \ee \\

            \be 
            \tikzdiag[scale=2,scale =1]{
      \draw[pstdhl,mid->] (0,0) ..controls (1,0.25) and (1,0.75) .. (0,1);
      \draw[mid->] (1,0) ..controls (0,0.25) and (0,0.75) .. (1,1);
      } \hspace{2mm}= \hspace{2mm} \tikzdiag[scale=2,scale =1]{
      \draw[pstdhl,mid->] (0,0) -- (0,1);
      \draw[mid->] (1,0) -- (1,1) ;
      } \ee \\
      \be 
      \tikzdiag[scale=2,xscale =0.5]{
      \draw[pstdhl,mid->] (0,0) node[below=1mm]{\small $\mu_1$} -- (0,1);
      \draw[mid->] (1,0) -- (1,1) node[pos=0.5,right=5mm]{$\cdots$};
      } \hspace{2mm} = \hspace{2mm} 0
      \ee \\ 
      
      \item Red strand relations (and mirrors across horizontal axis), 
      
      \be 
      \tikzdiag[scale=2,xscale = 1]{
      \draw[stdhl,mid->] (0,0) node[below=1mm]{\small $\mu_1$} -- (0,1);
      \draw[mid<-] (1,0.5) arc (0:360:0.3) node[pos=0.25,tikzdot]{} node[pos=0.25,above=1mm]{\small $n$};
      } \hspace{2mm} = \hspace{2mm} 0 \hspace{10mm} n\geq -\mu_i
      \ee \\
      \item Blue strand relations (and mirrors across horizontal axis), 
      \be 
      \tikzdiag[scale=2,xscale = 1]{
      \draw[vstdhl,mid->] (0,0) node[below=1mm]{\small $\mu_1$} -- (0,1);
      \draw[mid<-] (1,0.5) arc (0:360:0.3) node[pos=0.25,tikzdot]{} node[pos=0.25,above=1mm]{\small $n$};
      } \hspace{2mm} = \hspace{2mm} 0 \hspace{10mm} n\in \mathbb{Z}
      \ee
      \item Finally, we can give $D\Tb^{\underline{\mu}}$ a dg-algebra structure through the differential \eqref{eq:differential}, just as in Definition \ref{def:dgKLRWalgebra}.

        \end{enumerate}
\end{itemize}
\end{defn}

Just as in the dg-KLRW algebra, the projective modules over this algebra may be labeled by the idempotents. In an attempt to maintain a shred of notational consistency, we will label these by $1_\rho$, where now $\rho=(b_1,b_2,...,b_\ell)$ is a tuple of words $b_i \in M_{\pm}$ where $M_{\pm}$ is the set of all words in 2 generators $+$ and $-$ (``$0$'' will denote the empty word). For instance, if $\underline{\mu}=(\lambda,\lambda)$ and $\rho = (--+,-+)$, then, 
\[1_\rho \hspace{2mm} = \hspace{2mm}
\tikzdiag[xscale=1.5,yscale=0.75]{
\draw[vstdhl] (0,0)--(0,1);
\draw[mid<-] (0.25,0)-- (.25,1);
\draw[mid<-] (.5,0) --(.5,1);
\draw[mid->] (.75,0)--(.75,1);
\draw[vstdhl] (1,0)--(1,1);
\draw[mid<-] (1.25,0)--(1.25,1);
\draw[mid->] (1.5,0) --(1.5,1);

}
\]

Now, let us define a cap bimodule on $D\Tb^{\underline{\mu}}$. We will focus on the case $\underline{\mu} = (\mu_i^*,\mu_i)$ in this section. 
\begin{defn}
    The cap bimodule $D\Kb^\kk_{(\mu_i^*,\mu_i)}$ for the double dg-KLRW algebra $D\Tb^{(\mu_i^*,\mu_i)}$ is a $\kk-D\Tb^{(\mu_i^*,\mu_i)}$ bimodule generated by the diagram, 
    \[
    \tikzdiag[scale=1.5]{
    \draw[pstdhl,mid->] (0,1) node[below=2pt]{\small $\mu_i$} arc (0:180:0.5) node[below]{\small $\mu_i^*$};
    }
    \]
    subject to all the relations of Definition \ref{defn:double_dg_KLRW} as well as the local relations, 
    \[
    \tikzdiag[scale=1.5]{
    \draw[pstdhl,mid->] (1,0) arc (0:180:0.5);
    \draw[upper<-] (1.2,0) arc (0:180:0.3) node[pos=0.8,left=2pt]{$...$};
    } \hspace{2mm} = \hspace{2mm} 0 
    \]
    \[
    \tikzdiag[scale=1.5]{
    \draw[vstdhl,lower->] (1,0) arc (0:180:0.5);
    \draw (0.75,0).. controls (0.5,.25) and (.25,.4) .. (0.5,.5);
    \draw (0.25,0).. controls (0.5,.25) and (.75,.4) .. (0.5,.5) node[pos=1,nail]{};
    } \hspace{2mm} = \hspace{2mm} 0 
    \]
    \[
    \tikzdiag[scale=1.5]{
    \draw[vstdhl,lower->] (1,0) arc (0:180:0.5);
    \draw (0.75,.25) arc (0:360:.25) node[pos=.25,nail]{};
    } \hspace{2mm} = \hspace{2mm} 0 
    \]
    The cap functor is defined as before as $D\Kbb^\kk_{(\mu_i^*,\mu_i)}(-) := D\Kb^\kk_{(\mu_i^*,\mu_i)} \bigotimes^L_{D\Tb} -$. When the colored strand labelings are understood, we will often write just $D\Kb = D\Kb^\kk_{(\mu_i^*,\mu_i)}$ for legibility.
\end{defn}

Let us study this cap in some simple colored strand configurations, where there are no Verma modules. 
\begin{ex}{$\mathbf{\mu_i=1}$}
    Set $\underline{\mu} = (1^*,1)$. As usual, in order to understand how these caps and cups behave, let us try applying them to projectives. We find vector spaces $D\Kb \otimes_{D\Tb} \Pb_\rho $, which are only non-zero for 2 projectives, $\Pb_{0,0}$ and $\Pb_{+-,0}$. Using the definitions and relations above, it is easy to see that each vectors space is 1-dimensional with grading $1$,
    \[
    D\Kb \cdot 1_{0,0} \hspace{2mm} \cong \hspace{2mm} \kk \hspace{2mm}\bigg\langle  \tikzdiag[scale=1]{
    \draw[stdhl] (0,1) arc (0:180:0.5);
    }\bigg\rangle 
    \]
    \[
    D\Kb \cdot 1_{+-,0} \hspace{2mm} \cong \hspace{2mm} \kk \hspace{2mm}\bigg\langle  \tikzdiag[scale=1]{
    \draw[stdhl] (0,1) arc (0:180:0.5);
    \draw[mid<-] (-0.2,1) arc (0:180:0.3);
    }\bigg\rangle 
    \]
    This means that $D\Kb \otimes^L_{D\Tb} \Sb_{0,0} \cong \kk $ and $D\Kb \otimes^L_{D\Tb} \Sb_{+,-} \cong q\cdot \kk[1] $. Recall that Proposition \ref{prop:decat_cap_formula} gives, $\hat{C}(v^*_{1,0}\otimes v_{1,0} ) = 1$ and $\hat{C}(v^*_{1,1}\otimes v_{1,1} ) = -q$, so the cap bimodule $D\Kb$ exactly categorifies the expression for the cap in the previous section. \\
    We can define the cup bimodule by reflecting across the horizontal axis as before, $D\Cb := \dot{D\Kb}$ and study its resolution by projectives. It is straightforward to derive this chain complex and the result is given by, 
    \[
    \begin{tikzcd}
        q^2\hspace{2mm}\tikzdiagh[xscale=0.5]{0}{
\draw[stdhl](0,0)--(0,0.5);
\draw[stdhl](2,0)--(2,0.5);
\draw[mid->] (0.66,0)--(0.66,0.5);
\draw[mid<-] (1.33,0) --(1.33,0.5);
\filldraw [fill=white, draw=black] (-0.5,0.5) rectangle (2.5,1) node[midway] { $\Tb$};
} 
\ar{rr}
\ar[dash]{rr}{
\tikzdiag[xscale=0.75,yscale=0.5]{
\draw[stdhl] (0,0) --(0,1);
\draw[stdhl] (1,0) --(1,1);
\draw[mid->] (0.33,0)--(0.33,1);
\draw[mid<-] (1.5,0)..controls (1.5,.5) and (0.66,.5) .. (0.66,1);
} 
}
&&
q\hspace{2mm}\tikzdiagh[xscale=0.5]{0}{
\draw[stdhl](0,0)--(0,0.5);
\draw[stdhl](1.33,0)--(1.33,0.5);
\draw[mid->] (0.66,0)--(0.66,0.5);
\draw[mid<-] (2,0) --(2,0.5);
\filldraw [fill=white, draw=black] (-0.5,0.5) rectangle (2.5,1) node[midway] { $\Tb$};
}
\ar{rr}
\ar[dash]{rr}{
\tikzdiag[xscale=0.75,yscale=0.5]{
\draw[stdhl] (0,0) --(0,1);
\draw[stdhl] (1,0) --(1,1);
\draw[upper->] (1.5,1) arc (0:-180:0.5);
} 
}
&&
\tikzdiagh[xscale=0.5]{0}{
\draw[stdhl](0,0)--(0,0.5);
\draw[stdhl](1.33,0)--(1.33,0.5);
\filldraw [fill=white, draw=black] (-0.5,0.5) rectangle (2.5,1) node[midway] { $\Tb$};
}
\ar{rr}
&&
D\Cb
    \end{tikzcd}
    \]
Of course, using this we can compute the homology of the unknot and we find precisely the same result as Webster,
$$D\Kb \otimes^L_{D\Tb }D\Cb \cong q\hspace{1mm}\kk[1] \oplus q^{-1}\hspace{1mm}\kk[-1]  $$
\end{ex}

\begin{ex}{$\mu_i=2$} Now, we set $\underline{\mu}=(2^*,2)$. In this case, the relevant projectives are $\Pb_{0,0}$, $\Pb_{+-,0}$, and $\Pb_{++--,0}$. After a bit of work, one can write a basis for the cap evaluated at each of these modules and we find, 
\[
    D\Kb \cdot 1_{0,0} \hspace{2mm} \cong \hspace{2mm} \kk \hspace{2mm}\bigg\langle  \tikzdiag[scale=1]{
    \draw[stdhl] (0,1) arc (0:180:0.5);
    }\bigg\rangle 
    \]
    \[
    D\Kb \cdot 1_{+-,0} \hspace{2mm} \cong \hspace{2mm} \kk \hspace{2mm}\bigg\langle  \tikzdiag[scale=1]{
    \draw[stdhl] (0,1) arc (0:180:0.5);
    \draw[upper<-] (-0.2,1) arc (0:180:0.3);
    }\hspace{2mm},\hspace{2mm} \tikzdiag[scale=1]{
    \draw[stdhl] (0,1) arc (0:180:0.5);
    \draw[upper<-] (-0.2,1) arc (0:180:0.3) node[pos=0.25,tikzdot]{};
    } \bigg\rangle 
    \]
    \[
    D\Kb \cdot 1_{++--,0} \hspace{2mm} \cong \hspace{2mm} \kk \hspace{2mm}\bigg\langle  \tikzdiag[scale=1.5]{
    \draw[stdhl] (0,1) arc (0:180:0.5);
    \draw[upper<-] (-0.1,1) arc (0:180:0.4);
    \draw[mid<-] (-0.3,1) arc (0:180:0.2);
    }\hspace{2mm},\hspace{2mm} \tikzdiag[scale=1.5]{
    \draw[stdhl] (0,1) arc (0:180:0.5);
    \draw[lower<-] (-0.1,1) arc (0:180:0.3);
    \draw[upper<-] (-0.3,1) arc (0:180:0.3);
    }\hspace{2mm},\hspace{2mm} \tikzdiag[scale=1.5]{
    \draw[stdhl] (0,1) arc (0:180:0.5);
    \draw[lower<-] (-0.1,1) arc (0:180:0.3) node[pos=0.75,tikzdot]{};
    \draw[upper<-] (-0.3,1) arc (0:180:0.3);
    }\hspace{2mm},\hspace{2mm} \tikzdiag[scale=1.5]{
    \draw[stdhl] (0,1) arc (0:180:0.5);
    \draw[lower<-] (-0.1,1) arc (0:180:0.3) node[pos=0.75,tikzdot]{} node[pos=0.35,tikzdot]{};
    \draw[upper<-] (-0.3,1) arc (0:180:0.3);
    }
     \bigg\rangle 
    \]

Using these we find,
$$\begin{matrix}
    D\Kb \otimes^L_{D\Tb} \Sb_{0,0} \cong \kk &&&&& D\Kb \otimes^L_{D\Tb} \Sb_{+,-} \cong (q \hspace{1mm} \kk  \oplus q^3 \hspace{1mm} \kk) [1] \end{matrix}$$
    
    $$ D\Kb \otimes^L_{D\Tb} \Sb_{++,--} \cong \hspace{1mm}( \kk  \oplus q^2 \hspace{1mm} \kk \oplus  q^2 \hspace{1mm}  \kk \oplus  q^4 \hspace{1mm} \kk)[2]$$

Once again, these categorify the expressions in Proposition \ref{prop:decat_cap_formula} and the cap functor categorifies the evaluation map in the decategorified setting. We suspect that after computing a projective resolution for the cup, as in the previous example, and taking the homology of the unknot $D\Kb \otimes^L_{D\Tb }D\Cb$, we will find the same result as Webster \cite{Web} for the irreducible representation of dimension $3$. As a sanity check, the resulting homologies of the unknot in our case are inifinite-dimensional since $D\Cb$ categorifies the rational functions $e_i^{-1}$ in Proposition \ref{prop:decat_cup_formula}. 
\end{ex}

In general, for finite-dimensional representations, the quasi-isomorphism of functors in \cite{Lau}, $\mathcal{E}\mathcal{F}\boldsymbol{1}_n \cong \mathcal{F} \mathcal{E}\boldsymbol{1}_n\oplus_{[n]_q}\boldsymbol{1}_n$ for positive weight spaces and $\mathcal{F}\mathcal{E}\boldsymbol{1}_n \cong \mathcal{E} \mathcal{F}\boldsymbol{1}_n\oplus_{[-n]_q}\boldsymbol{1}_n$ for negative weight spaces ensure that the cap we've defined always categorifies the expression in Proposition \ref{prop:decat_cap_formula}. In the second part of this paper, we will study the projective resolutions of the resulting cups and their relation to the invariants defined in \cite{Web}. 

\subsubsection{Caps for Verma Modules}
We would like to now briefly tackle the primary ambition of this section, which is the categorification of the caps in the case that the strands are colored by Verma modules. So, for the rest of this section, we consider $\underline{\mu} = (\lambda^*,\lambda)$. \\ \\ 
Just as in the finite-dimensional case, projectives of the form $\Pb^{(\lambda^*,\lambda)}_{(+)^i(-)^i,0}$ for $i\in \mathbb{N}$ will be of primary interest to us. Note that these are not the only projectives upon which the cap will be non-trivial. For instance, the cap will have a non-trivial basis on $\Pb^{(\lambda^*,\lambda)}_{+-+-+-,0}$. However, $\Pb^{(\lambda^*,\lambda)}_{(+)^i(-)^i,0}$ are the only non-trivial projective modules in the chain complex for a standard of the form $\Sb^{(\lambda^*,\lambda)}_{(+)^i,(-)^i}$. We calculate the first few bases for these indempotents and they are given by, 

\[
    D\Kb \cdot 1_{0,0} \hspace{2mm} \cong \hspace{2mm} \kk \hspace{2mm}\bigg\langle  \tikzdiag[scale=2]{
    \draw[vstdhl] (0,1) node[below]{\small $\lambda$} arc (0:180:0.5) node[below]{\small $\lambda^*$};
    }\bigg\rangle 
    \]
    
\[
    D\Kb \cdot 1_{+-,0} \hspace{2mm} \cong \hspace{2mm} \kk \hspace{2mm}\bigg\langle  \tikzdiag[scale=1]{
    \draw[vstdhl] (2,0) arc (0:180:1);
    \draw[mid<-] (1.5,0) arc (0:180:.5) node[pos=.75,tikzdot]{} node[pos=.75,above]{\small $p$};
    }\hspace{2mm}, \hspace{2mm} \tikzdiag[scale=1]{
    \draw[vstdhl] (2,0) arc (0:180:1);
    \draw[upper->] (.05,.25) .. controls (.5,.25) and (1.5,1) .. (1.5,0) node[pos=.5,tikzdot]{} node[pos=.5,above]{\small $p$};
    \draw (.5,0) .. controls (.5,.25) and (.1,.25) ..(.05,.25) node[pos=1,nail]{};
    }\bigg\rangle_{p\geq 0} 
    \]

\[
D\Kb \cdot 1_{++--,0} \hspace{2mm} \cong \hspace{2mm} \kk \hspace{2mm} \left\langle \begin{Bmatrix}
    \tikzdiag[scale=1]{
    \draw[vstdhl] (2,0) arc (0:180:1);
    \draw[mid<-] (1.75,0) arc (0:180:.75) node[pos=.75,tikzdot]{};
    \draw[mid<-] (1.5,0) arc (0:180:.5) node[pos=.75,tikzdot]{};
    }\hspace{2mm}, \hspace{2mm} \tikzdiag[scale=1]{
    \draw[vstdhl] (2,0) arc (0:180:1);
    \draw[mid<-] (1.75,0) arc (0:90:.75) node[pos=.75,tikzdot]{};
    \draw[mid<-] (1.5,0) arc (0:90:.5) node[pos=.75,tikzdot]{};
    \draw (0.5,0) ..controls (.5,.25) and (.5,.75) .. (1,.75);
    \draw (.25,0) ..controls (.25,.25) and (.25,.5) .. (1,.5);
    }\\
    \tikzdiag[scale=1]{
    \draw[vstdhl] (2,0) arc (0:180:1);
    \draw[upper->] (.05,.25) .. controls (.5,.25) and (1.5,1) .. (1.5,0) node[pos=.5,tikzdot]{};
    \draw (.5,0) .. controls (.5,.25) and (.1,.25) ..(.05,.25) node[pos=1,nail]{};
    \draw[lower<-] (1.75,0) arc (0:180:.75) node[pos=.5,tikzdot]{};
    }\hspace{2mm},\hspace{2mm} \tikzdiag[scale=1]{
    \draw[vstdhl] (2,0) arc (0:180:1);
    \draw[upper->] (.05,.25) .. controls (.4,.25) and (1.5,1.3) .. (1.75,0) node[pos=.9,tikzdot]{};
    \draw (.25,0) .. controls (.25,.25) and (.1,.25) ..(.05,.25) node[pos=1,nail]{};
    \draw[lower<-] (1.5,0) arc (0:180:.5) node[pos=.1,tikzdot]{};
    }\hspace{2mm},\hspace{2mm}\tikzdiag[scale=1]{
    \draw[vstdhl] (2,0) arc (0:180:1);
    \draw[mid->] (.05,.25) .. controls (.3,.25) and (1.5,1.3) .. (1.5,0) node[pos=.2,tikzdot]{};
    \draw (.5,0) .. controls (.5,.25) and (.1,.25) ..(.05,.25) node[pos=1,nail]{};
    \draw[lower<-] (1.75,0) arc (0:180:.5) node[pos=.8,tikzdot]{};
    }\hspace{2mm},\hspace{2mm} \tikzdiag[scale=1]{
    \draw[vstdhl] (2,0) arc (0:180:1);
    \draw[mid->] (.15,.4) .. controls (.3,.4) and (1.5,1.3) .. (1.5,0) node[pos=.9,tikzdot]{};
    \draw (.5,0) .. controls (.5,.25) and (.1,.25) ..(.15,.4) node[pos=1,nail]{};
    \draw[lower<-] (1.25,0) arc (0:180:.5) node[pos=.1,tikzdot]{};
    } \\
    \tikzdiag[scale=1]{
    \draw[vstdhl] (2,0) arc (0:180:1);
    \draw[upper->] (.05,.25) .. controls (.4,.25) and (1.5,1.3) .. (1.75,0) node[pos=.9,tikzdot]{};
    \draw (.25,0) .. controls (.25,.25) and (.1,.25) ..(.05,.25) node[pos=1,nail]{};
    \draw[lower->] (.25,.6) .. controls (.4,.6) and (1,1.3) .. (1.5,0) node[pos=.9,tikzdot]{};
    \draw (.5,0) .. controls (.5,.5) and (.25,.5) ..(.25,.6) node[pos=1,nail]{};
    }\hspace{2mm}, \hspace{2mm} \tikzdiag[scale=1]{
    \draw[vstdhl] (2,0) arc (0:180:1);
    \draw[upper->] (.05,.25) .. controls (.4,.25) and (1.,1.) .. (1.5,0) node[pos=.9,tikzdot]{};
    \draw (.25,0) .. controls (.25,.25) and (.1,.25) ..(.05,.25) node[pos=1,nail]{};
    \draw[lower->] (.25,.6) .. controls (.4,.6) and (1.5,1.3) .. (1.75,0) node[pos=.9,tikzdot]{};
    \draw (.5,0) .. controls (.5,.5) and (.25,.5) ..(.25,.6) node[pos=1,nail]{};
    }
\end{Bmatrix} \right \rangle_{p_1,p_2\geq 0}
\]

In the last expression, by each unlabeled black dot , we mean that it is labeled by $p_1$ or $p_2$, before considering all configurations of black dots on the 2 strands (running over $p_1,p_2\geq 0$). This allows us to compute the graded dimensions of the cap applied to standards, and we find, 

$$gdim_{x,q} \big( D\Kb \otimes ^L_{D\Tb}\Sb^{\lambda^*,\lambda}_{0,0}\big) =1$$
$$gdim_{x,q} \big( D\Kb \otimes ^L_{D\Tb}\Sb^{\lambda^*,\lambda}_{+,-}\big) = \frac{1}{1-q^2}(qh +qh^2 x^2) $$
$$gdim_{x,q} \big( D\Kb \otimes ^L_{D\Tb}\Sb^{\lambda^*,\lambda}_{++,--}\big) = \frac{1}{(1-q^2)^2}\big (h^2 +q^2h^2 +2h^3x^2+\frac{h^3x^2}{q^2}+q^2h^3x^2+h^4x^4+\frac{h^4x^4}{q^2} \big ) $$

In general, one can workout the bases of the caps for the indempotent $1_{(+)^i (-)^i,0}$ to show, 
$$gdim_{x,q} \big( D\Kb \otimes ^L_{D\Tb}\Sb^{\lambda^*,\lambda}_{(+)^i,(-)^i}\big) = \frac{h^i x^i q^{2i-i^2}}{(1-q^2)^i} [i]_q!\prod^i_{k=1}(hxq^{-k+1}+x^{-1}q^{k-1})$$
Comparing with Proposition \ref{prop:decat_cap_formula}, we easily conclude, 
$$gdim_{x,q} \big( D\Kb \otimes ^L_{D\Tb}\Sb^{\lambda^*,\lambda}_{(+)^i,(-)^i}\big)|_{h=-1} = \hat{C}(v^*_{\lambda,i} \otimes v_{\lambda,i})$$

Thus, we find that the cap bimodule for Verma modules categorifies the cap described in the previous section. Moreover, they are compatible with the differentials \eqref{eq:differential} in the sense that for finite-dimensional representations, we should recover the results from Webster's caps (for the case of the fundamental representation, at least).  \\

This is a very surprising and interesting result which essentially establishes the road for defining homological quantum knot invariants from Verma modules (the categorification of the Gukov-Manolescu series). That is, we've seemingly surpassed the hurdle of describing the duals of Verma modules by simply including the `doubled' part of the tensor product algebra of \cite{Web}. This allowed us to describe the categorified caps and cups as functors which commute with 1-morphisms of categorified $U_q(\mathfrak{sl}_2)$ (\cite{Lau}) and which reduce to the appropiate objects from Section \ref{sec:decat_cups_caps} in the Grothendieck group. This provides a very useful and explicit workaround to the issue of `taking the large color limit' of Webster's constructions for finite dimensional representations, which are not conducive to the generalization to Verma modules.  \\

We note here that Definition \ref{defn:double_dg_KLRW} should include some additional nail relations, which were not written in this paper. For instance, bubbles with nails must be appropriately dealt with using local relations. This will be addressed in the upcoming sequel to this paper, which will build on this section to a much greater extent. \\ \\
One might worry that in this doubled category, braiding functors might take a more complicated and less explicit form. Although this is true in general, our more noble goal will be to define homological knot invariants for braid negative (or positive) knots. If one then uses braid diagrams to define knot homologies as in Section \ref{sec:khov_hom_braids}, it becomes apparent that all braiding will take place between upward-moving, colored strands. And though we might have to deal with downward moving black strands in this setting, this is a non-issue in light of the categorified $[E,F]$ relation, which allows us to rewrite any projective with downward-moving black strands in the upward-moving colored section into sums of projectives encompassed by Proposition \ref{prop:Rresolution}. In other words, the doubled category is just as computable as the rest of this paper. \\ \\
As such, immediately upon the construction of the categorified cups/caps, we will be able to extend the construction in Section \ref{sec:khov_hom_braids} to include Khovanov Homology in knot complements. As mentioned, we do this in the follow-up paper, where we will also explicitly compute the homology in the case of $3_1$ and $5_1$. 

\section*{Acknowledgments}
We would like to thank Raphael Rouquier, Mrunmay Jagadale, Sergei Gukov, Pedro Vaz, and Ben Webster for helpful discussions. We also especially thank Gregoire Naisse for insightful conversations, comments on this draft, and for sharing an early draft of his work on the braiding bimodules of dg-KLRW algebras. Finally, we thank Ciprian Manolescu and Mikhail Khovanov for helpful comments on this draft. 

\bibliographystyle{unsrt}
\bibliography{bibliography}

\end{document}